\newtheorem{theorem}{Theorem}[section]
\newtheorem{corollary}[theorem]{Corollary}
\newtheorem{definition}[theorem]{Definition}
\newtheorem{hypothesis}[theorem]{Hypothesis}
\newtheorem{lemma}[theorem]{Lemma}
\newtheorem{proposition}[theorem]{Proposition}
\newtheorem{remark}[theorem]{Remark}
\newtheorem{properties}[theorem]{Properties}
\numberwithin{equation}{section}
\newcommand{\N}{\mathbb{N}}
\newcommand{\Z}{\mathbb{Z}}
\newcommand{\R}{\mathbb{R}}
\newcommand{\C}{\mathbb{C}}
\newcommand{\Ebb}{\mathbb{E}}
\newcommand{\Lbb}{\mathbb{L}}
\newcommand{\Pbb}{\mathbb{P}}
\newcommand{\Sbb}{\mathbb{S}}
\newcommand{\Tbb}{\mathbb{T}}
\newcommand{\Bcal}{\mathcal{B}}
\newcommand{\Ccal}{\mathcal{C}}
\newcommand{\Ecal}{\mathcal{E}}
\newcommand{\Fcal}{\mathcal{F}}
\newcommand{\Gcal}{\mathcal{G}}
\newcommand{\Ical}{\mathcal{I}}
\newcommand{\Jcal}{\mathcal{J}}
\newcommand{\Lcal}{\mathcal{L}}
\newcommand{\Ncal}{\mathcal{N}}
\newcommand{\Pcal}{\mathcal{P}}
\newcommand{\Scal}{\mathcal{S}}
\newcommand{\Ycal}{\mathcal{Y}}
\newcommand{\Zcal}{\mathcal{Z}}
\newcommand{\Afrak}{\mathfrak{A}}
\newcommand{\norm}[2]{\left\| #1 \right\|_{#2}}
\newcommand{\dd}{\;{\rm d}}
\newcommand{\de}{{\rm d}}
\DeclareMathOperator{\Leb}{Leb}
\DeclareMathOperator{\Lip}{Lip}
\DeclareMathOperator{\Einf}{Einf}
\DeclareMathOperator{\sgn}{sgn}
\title{Potential kernel, hitting probabilities and distributional asymptotics}
\author{Fran\c{c}oise P\`ene}
\address{Universit\'e de Brest and Institut Universitaire de France,
Laboratoire de Math\'ematiques de Bretagne Atlantique, UMR CNRS 6205, 29238 Brest Cedex, France}
\email{francoise.pene@univ-brest.fr}
\author{Damien Thomine}
\address{D\'epartement de Math\'ematiques d'Orsay, Universit\'e Paris-Sud, 
UMR CNRS 8628, F-91405 Orsay Cedex, France}
\email{damien.thomine@math.u-psud.fr}
\date\today
\begin{document}

\maketitle

\begin{abstract}
$\Z^d$-extensions of probability-preserving dynamical systems are themselves dynamical systems 
preserving an infinite measure, and generalize random walks. 
Using the method of moments, we prove a generalized central limit theorem for additive functionals of 
the extension of integral zero, under spectral assumptions. As a corollary, we get the fact that 
Green-Kubo's formula is invariant under induction. This allows us to relate the hitting probability 
of sites with the symmetrized potential kernel, giving an alternative proof and generalizing 
a theorem of Spitzer. Finally, this relation is used to improve in turn the asumptions of the 
generalized central limit theorem. Applications to Lorentz gases in finite horizon and to the 
geodesic flow on abelian covers of compact manifolds of negative curvature are discussed.
\end{abstract}

\section*{Introduction}

Given a recurrent random walk $(S_n)$ on $\Z^d$, with $d \in \{1,2\}$, a natural question is how much time the walker 
spends in any region of the space -- the so-called occupation times. More generally, one may 
choose an observable $f : \Z^d \to \R$, and consider the Birkhoff averages $n^{-1} \sum_{k=0}^{n-1} f(S_k)$. 
When $f$ is summable and the walk is well-behaved, it is known that $a_n^{-1} \sum_{k=0}^{n-1} f(S_k)$ 
converges in distribution to a Mittag-Leffler random variable, for well-chosen coefficients 
$(a_n)_{n \geq 0}$~\cite{Levy:1940}. This behaviour generalizes to 
null-recurrent Markov processes~\cite{DarlingKac:1957, Aaronson:1997}.

\smallskip

When $f$ has integral zero, this family of results is not sharp enough, and we must look at a higher 
order. In the same way that a central limit theorem replaces the weak law of large numbers, one 
can get a generalized central limit theorem for observables of null-recurrent Markov processes. 
Typically, $a_n^{-1/2} \sum_{k=0}^{n-1} f(S_k)$ converges in distribution, with an explicit limit. 
The story of these central limit theorems starts from Dobrushin~\cite{Dobrushin:1955} where 
$(S_n)$ is the simple random walk on $\Z$. Then these results were generalized to Markov 
processes~\cite{Kesten:1962, Kasahara:1981, Kasahara:1985}, and later included invariance 
principles~\cite{Kasahara:1984, Borodin:1986a, Borodin:1986b}.

\smallskip

In this article, we are interested not in Markov processes, but in a family of dynamical systems 
preserving an infinite measure: $\Z^d$-extensions, which are a generalization of random walks. 
Starting from a dynamical system preserving a probability measure $(A, \mu, T)$ and a function $F : A \to \Z^d$, 
we work with the transformation $\widetilde{T}: (x,p) \mapsto (T(x), p+F(x))$ on $A \times \Z^d$. This class of systems 
include random walks on $\Z^d$, as well as, for instance, Lorentz gases~\cite{BunimovichSinai:1981, BunimovichChernovSinai:1991} 
and the geodesic flow on abelian covers of complete manifolds~\cite{KatsudaSunada:1990, Sharp:1993, PollicottSharp:1994}.
Given an observable $f: A \times \Z^d \to \R$, we want to understand the limit in distribution 
of $\sum_{k=0}^{n-1} f \circ \widetilde{T}^k$.

\smallskip

In two previous works by the second-named author~\cite{Thomine:2014, Thomine:2015}, adapting previous 
methods~\cite{CsakiCsorgoFoldesRevesz:1992, CsakiFoldes:1998, CsakiFoldes:2000}, the case 
where $(A, \mu, T)$ is a Gibbs-Markov map was investigated. In the current article, we are able to get a generalized 
central limit theorem under spectral hypotheses on the transfer operator of the system $(A, \mu, T)$, 
which has much wider applications. The downside is that we need the observable $f(x,p)$ to depend only on 
$p$ and decay fast enough at infinity. This is Theorem~\ref{thm:gene}, which we prove using the method 
of moments (an approach which, to our knowledge, is new for this problem), and apply to Lorentz gases 
with finite horizon.

\smallskip

An interesting corollary of Theorem~\ref{thm:gene} and~\cite[Theorem~6.8]{Thomine:2015} is that, for $\Z^d$-extensions 
of Gibbs-Markov maps, Green-Kubo's formula -- which appears as the asymptotic variance in the central limit theorem -- 
is invariant under induction. This is the content of Corollary~\ref{cor:InvarianceInduction}. 
By choosing the observable $f$ carefully, in Theorem~\ref{thm:ConvergenceNp} 
we are able to related the probability that an excursion from $A \times \{0\}$ 
hits a site $A \times \{p\}$, and the symmetrized potential kernel associated to the $\Z^d$-extension.
Our proof relies on the first hitting time of small target statistics. This method provides a new proof of 
an earlier proposition by Spitzer~\cite[Chap.~III.11, P5]{Spitzer:1976}, and generalizes it 
to $\Z^d$-extensions (for which harmonic analysis as used in~\cite{Spitzer:1976} does not make sense). 
Finally, the estimates from Theorem~\ref{thm:ConvergenceNp} are used to relax the assumptions from~\cite{Thomine:2015}: 
in Theorem~\ref{thm:GM}, the observables need only to decay polynomially at infinity, instead of having 
bounded support. We apply it to the geodesic flow on abelian covers of compact manifolds with negative curvature.

\smallskip

This article is organized as follow. We present our setting and our results in Section~\ref{sec:Main}, 
as well as our applications to Lorentz gases (Sub-subsection~\ref{subsubsec:IntroBillard}) and to geodesic flows 
(Sub-subsection~\ref{subsubsec:IntroFlotGeodesique}). In Section~\ref{sec:operators} we present our spectral assumptions, 
and prove Theorem~\ref{thm:gene} using the method of moments. In Section~\ref{sec:proofGM} we prove 
Theorems~\ref{thm:ConvergenceNp} and~\ref{thm:GM}, and in Section~\ref{sec:Applications} the two 
applications mentioned above. We discuss Green-Kubo's formula in the Appendix.

\section*{Acknowledgements}

The second author is indebted to S\'ebastien Gou\"ezel, both for it guidance during his PhD and for his discussions 
on the subject, as well as to P\'eter N\'andori for suggesting part of the argument of Section~\ref{sec:proofGM}.

\smallskip

We also wish to thank the Erwin Schr\"odinger Institute for the program ``Mixing flow and averaging methods'' (Vienna, 2016), 
the CNRS for the grant PEPS ``Moyennisation et analyse stochastique des syst\`emes lents-rapides'' (2016), and the IUF 
for their support.

\section{Main results}
\label{sec:Main}

\subsection{Setting and goals}

We consider conservative ergodic dynamical systems given by $\Z^d$-extensions of 
probability-preserving dynamical systems, where the underlying dynamical system is 
sufficiently hyperbolic and $d \in \{1,2\}$. We shall deem a system hyperbolic enough if its transfer 
operator satisfies good properties. For some applications, we use the stronger assumption that 
the underlying dynamical system is Gibbs-Markov.

\smallskip

Let $(A,\mu,T)$ be a probability-preserving dynamical system. 
Let $F: A \to \Z^d$ with $d\in\{1,2\}$ be a $\mu$-integrable
function such that $\int_A F\dd \mu=0$. The $\Z^d$-\textit{extension}
$(\widetilde{A},\tilde{\mu},\widetilde{T})$ of $(A,\mu,T)$ with step function $F$
is the dynamical system given by:
\begin{itemize}
\item $\widetilde{A} := A \times \Z^d$;
\item $\tilde{\mu} := \sum_{p\in\Z^d} \mu \otimes \delta_p$;
\item $\widetilde{T}(x,p)=(x,p+F(x))$.
\end{itemize}
Note that $\widetilde{T}$ preserves the infinite measure $\tilde{\mu}$. 
We shall always assume that $(\widetilde{A}, \tilde{\mu}, \widetilde{T})$ is 
ergodic. If $(A, \mu, T)$ has a Markov partition $\pi$, we may also asume that 
the step function $F$ is $\sigma (\pi)$-measurable -- that is, constant almost everywhere 
on elements of the partition. We then say that $(\widetilde{A},\tilde{\mu},\widetilde{T})$ 
is a \textit{Markov extension} of $(A, \mu, T)$.

\smallskip

Let $S_n := S_n^T F := \sum_{k=0}^{n-1} F \circ T^k$ be the second coordinate 
of $\widetilde{T}^n (x,0)$. Heuristically, the sequence $(S_n)_{n \geq 0}$, under the distribution 
$\mu$, behaves much like a random walk, the randomness being generated by the dynamical 
system $(A,\mu,T)$. Indeed, this family of extensions includes every random walk on $\Z^d$, as well 
as some physically or geometrically interesting systems such as Lorentz gases (Sub-subsection~\ref{subsubsec:IntroBillard})
or the geodesic flow on $\Z^d$-periodic manifolds of negative curvature (Sub-subsection~\ref{subsubsec:IntroFlotGeodesique})
\footnote{Up to some lengthy, but in our case not particularly challenging, legwork to go from discrete time to continuous time.}.

\smallskip

In the present paper, we will make assumptions ensuring the convergence in distribution 
of $(S_n/\mathfrak{a}_n)_n$ to a L\'evy stable distribution, for some normalizing sequence $(\mathfrak{a}_n)_n$. 
Our main goals are the following:
\begin{itemize}
\item[(A)] Given $f:\widetilde{A} \rightarrow \R$ integrable
and such that $\int_{\widetilde{A}} f \dd \tilde\mu=0$, we are interested 
in the asymptotic behaviour of the ergodic sum
\begin{equation*}
S_n^{\widetilde{T}} f
= \sum_ {k=0}^{n-1} f \circ \widetilde{T}^k,
\end{equation*}
as $n\rightarrow +\infty$. More precisely, we are looking for a non-trivial 
strong convergence in distribution:
\begin{equation}\label{TCLMI}
\frac{S_n^{\widetilde{T}} f}{\Afrak_n} 
\stackrel{\text{dist.}}{\Rightarrow} \sigma(f) \Ycal, 
\mbox{ with } \Afrak_n := \sqrt{\sum_{k=1}^n \mathfrak{a}_k^{-d}},
\end{equation}
where $\sigma (f)$ is some constant which depends on the pushforward of the measure $\tilde{\mu}$ by $(f \circ \widetilde{T}^n)_{n \geq 0}$, 
whereas the random variable $\Ycal$ depends only 
on\footnote{Up to a constant, $\Ycal$ actually depends only on the index $\alpha$ of the L\'evy stable distribution 
that is the limit of $(S_n/\mathfrak{a}_n)_n$.} the distribution of $(F\circ T^k)_k$ (with respect to $\mu$). 
\item[(B)] In the context of Gibbs-Markov maps, we consider the probability
$\alpha(p)^{-1}$, starting from the $A \times \{0\}$ endowed with the measure $\mu$, 
to visit $A \times \{p\}$ before coming back to $A \times \{0\}$. 
By applying the limit theorems we have proved before to $f_p (x,q)
:= (\mathbf{1}_{\{p\}}-\mathbf{1}_{\{0\}})(q)$, we are able to prove that
\begin{equation*}
\alpha (p) 
\sim \frac{\sigma(f_p)}{2} \text{ as } p \text{ goes to } \infty,
\end{equation*}
which provides a new proof of~\cite[Chap.~III.11, P5]{Spitzer:1976}, and generalizes it to systems which are not random walks.
\end{itemize}
The next sub-sections present in more details these two goals, and the precise statements we get.

\subsection{Distributional limit theorems}
\label{subsec:TheoremesLimiteDistribution}

\subsubsection{Convergence and limit distributions}
\label{subsubsec:ConvergenceLoisLimite}

When working with spaces endowed with an infinite measure, there is no natural notion 
of convergence in distribution. We shall instead use the notion of \textit{strong convergence in distribution}. 
The reader may consult e.g.\ \cite[Chapter~3.6]{Aaronson:1997} for an introduction to this notion 
and applications to ergodic dynamical systems whose invariant measure is infinite.

\begin{definition}[Strong convergence in distribution]

Let $(\widetilde{A}, \tilde{\mu})$ be a measured space. Let $(X_n)_{n \geq 0}$ be a sequence of 
measurable functions from $\widetilde{A}$ to $\R$. Let $X$ be a real-valued random variable. 
We say that $(X_n)$ converges strongly in distribution to $X$ if, for all probability measures 
$\nu \ll \tilde{\mu}$,
\begin{equation*}
X_n \to_{n \to + \infty} X \text{ in distribution on } (\widetilde{A}, \nu).
\end{equation*}
\end{definition}

Now that we have defined our mode of convergence, we introduce our limit objects: 
Mittag-Leffler random variables, and Mittag-Leffler -- Gaussian mixtures.

\begin{definition}[$ML$ and $MLGM$ random variables]

Let $\gamma \in [0,1]$. Let $X$ be a non-negative real-valued random variable. We say that $X$ 
follows a standard Mittag-Leffler distribution of index $\gamma$ if, for all $z \in \C$ 
(or all $z \in B(0,1)$ if $\alpha = 0$),
\begin{equation*}
\Ebb [e^{z X}] 
= \sum_{n=0}^{+ \infty} \frac{\Gamma(1+\gamma)^n z^n}{\Gamma (1+n \gamma)}.
\end{equation*}
If this is the case, we shall write that $X$ has a $ML (\gamma)$ distribution.

\smallskip

Let $X$ be a real-valued random variable. We say that $X$ 
follows a standard Mittag-Leffler -- Gaussian mixture distribution of index $\gamma$ 
if $X=\sqrt{Y}\cdot Z$, where $Y$ and $Z$ are two independent random variables 
with respective distribution $ML(\gamma)$ and standard normal $\Ncal (0,1)$.
If this is the case, we shall write that $X$ has a $MLGM (\gamma)$ distribution. 
See~\cite[Chapitre~1.4]{Thomine:2013} for a partial description of the MLGM distributions.
\end{definition}

For $\gamma = 0$, these distributions take more common forms: a $ML(0)$ distribution 
is an exponential distribution of parameter $1$, while a  $MLGM(0)$ distribution 
is a Laplace distribution of parameter $1/\sqrt{2}$. A $ML(1/2)$ random variable is 
the absolute value of a centered Gaussian of variance $\pi/2$.

\subsubsection{Main distributional theorem}
\label{subsubsec:TheoremDistriutionnelPrincipal}

Mittag-Leffler distributions appear when one studies the distributional convergence 
of the local time of null recurrent Markov processes, or chaotic enough $\sigma$-finite 
ergodic dynamical systems. For the Brownian motion, the result goes back to P.~L\'evy~\cite{Levy:1940}, 
and to Darling-Kac's theorem for Markov chains~\cite{DarlingKac:1957}. 
We refer the reader to~\cite{MolchanovOstrovskii:1969} for $\alpha$-stable L\'evy processes, 
and to~\cite[Corollary~3.7.3]{Aaronson:1997} for dynamical systems in infinite ergodic theory. 
For instance~\cite[Corollary~3.7.3]{Aaronson:1997} and Hopf's ergodic 
theorem~\cite[\S$14$, Individueller Ergodensatz f\"ur Abbildungen]{Hopf:1937} yield:

\begin{proposition}

Let $(\widetilde{A},\tilde{\mu},\widetilde{T})$ be a measure-preserving transformation of a $\sigma$-finite measure space. 
Assume that $\widetilde{T}$ is pointwise dual ergodic with return sequence $(a_n)_n$ (see~\cite[Chapter~3.5]{Aaronson:1997} 
for definitions). Assume that $(a_n)_n$ has regular variation of index $\alpha \in [0,1]$, i.e.\ $a_n = n^{1/\alpha} L(n)$ 
for some sequence $L$ which varies slowly at infinity. 
Then, for all $f \in \Lbb^1 (\widetilde{A}, \tilde{\mu})$,
\begin{equation*}
\frac{S_n^{\widetilde{T}} f}{a_n}
\Rightarrow \int_{\widetilde{A}} f \dd \tilde{\mu} \cdot \Ycal,
\end{equation*}
where $\Ycal$ is a standard $ML(\alpha)$ random variable and the convergence is strong in distribution.
\end{proposition}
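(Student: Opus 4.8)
The statement to prove is the Proposition asserting that, for a pointwise dual ergodic transformation $(\widetilde{A},\tilde\mu,\widetilde{T})$ with return sequence $(a_n)$ of regular variation of index $\alpha\in[0,1]$, one has the strong distributional convergence $S_n^{\widetilde{T}}f/a_n\Rightarrow (\int f\,d\tilde\mu)\,\Ycal$ with $\Ycal$ standard $ML(\alpha)$.

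The plan is to deduce this essentially as a bookkeeping combination of two quoted ingredients: Aaronson's Darling--Kac theorem for pointwise dual ergodic transformations and Hopf's ratio ergodic theorem. First I would recall that pointwise dual ergodicity means there is a reference set $Y$ with $0<\tilde\mu(Y)<\infty$ and an ``abstract Darling--Kac set'' behaviour such that $a_n^{-1}S_n^{\widetilde{T}}\mathbf{1}_Y\to\tilde\mu(Y)\cdot\Ycal$ strongly in distribution, where $\Ycal\sim ML(\alpha)$; this is precisely \cite[Corollary~3.7.3]{Aaronson:1997}, and the regular variation of index $\alpha$ of $(a_n)$ is what guarantees the Mittag--Leffler limit with that index (via the Darling--Kac theorem and the Tauberian/regular-variation machinery of \cite[Chapter~3.6]{Aaronson:1997}). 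So the result is already known when $f=\mathbf{1}_Y$ for a suitable $Y$, and by linearity when $f=\sum c_i\mathbf{1}_{Y_i}$ is a finite linear combination of indicators of ``nice'' sets. The content of the Proposition is to bootstrap from such $f$ to arbitrary $f\in\Lbb^1(\widetilde{A},\tilde\mu)$.

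The key step is a two-sided comparison using Hopf's ergodic theorem to control the error. Fix $f\in\Lbb^1$ with $\int f\,d\tilde\mu=c$. Given $\varepsilon>0$, choose a simple function $g$ (a finite linear combination of indicators of sets of finite measure on which we have Darling--Kac behaviour, e.g.\ measurable subsets of the Darling--Kac set $Y$, which generate enough of the $\sigma$-algebra after applying $\widetilde{T}^{-k}$; in fact it suffices to take $g$ of the form $h\cdot\mathbf{1}_Y$ for bounded $h$ by pointwise dual ergodicity in the form of \cite[Proposition~3.7.1]{Aaronson:1997}, or simply to approximate in $\Lbb^1$) with $\norm{f-g}{\Lbb^1}<\varepsilon$ and $\int g\,d\tilde\mu=c$ (adjusting by a small multiple of $\mathbf{1}_Y$). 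Split $f=g+(f-g)$, so that $S_n^{\widetilde{T}}f = S_n^{\widetilde{T}}g + S_n^{\widetilde{T}}(f-g)$. For $g$, the quoted result gives $a_n^{-1}S_n^{\widetilde{T}}g\Rightarrow c\,\Ycal$ strongly in distribution. For the remainder, write $|f-g|=\psi$, so $\psi\in\Lbb^1$ and $\norm{\psi}{\Lbb^1}<\varepsilon$; since $\widetilde{T}$ is conservative ergodic, $\psi$ and $\mathbf{1}_Y$ are both in $\Lbb^1$ with $\mathbf{1}_Y>0$ on a set of positive measure, Hopf's ratio ergodic theorem gives $S_n^{\widetilde{T}}\psi/S_n^{\widetilde{T}}\mathbf{1}_Y\to \norm{\psi}{\Lbb^1}/\tilde\mu(Y)$ $\tilde\mu$-a.e., hence (using $a_n^{-1}S_n^{\widetilde{T}}\mathbf{1}_Y\Rightarrow\tilde\mu(Y)\Ycal$ and that $\Ycal$ is a.s.\ finite) $\limsup_n a_n^{-1}|S_n^{\widetilde{T}}(f-g)|$ is, in distribution, dominated by $(\varepsilon/\tilde\mu(Y))\cdot\tilde\mu(Y)\,\Ycal=\varepsilon\,\Ycal$. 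Passing to distributions along any $\nu\ll\tilde\mu$ and applying this with $\varepsilon\to0$, a standard Slutsky-type argument (the perturbation is uniformly small in probability once $\Ycal$ is tight) upgrades the convergence for $g$ to the claimed convergence for $f$, for every $\nu\ll\tilde\mu$; this is exactly strong convergence in distribution.

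The main obstacle I anticipate is the remainder estimate, i.e.\ making the Hopf-ergodic-theorem comparison genuinely uniform enough to conclude. The subtlety is that Hopf's theorem is an a.e.\ statement about the ratio $S_n^{\widetilde{T}}\psi/S_n^{\widetilde{T}}\mathbf{1}_Y$, and one must combine it cleanly with the distributional convergence of $a_n^{-1}S_n^{\widetilde{T}}\mathbf{1}_Y$ without circular reasoning; the correct packaging is to note that on a set of full $\tilde\mu$-measure the ratio converges, so for any $\delta>0$ there is $n_0$ and a set of measure arbitrarily close to full on which $a_n^{-1}|S_n^{\widetilde{T}}\psi|\le (\norm{\psi}{\Lbb^1}/\tilde\mu(Y)+\delta)\,a_n^{-1}S_n^{\widetilde{T}}\mathbf{1}_Y$ for $n\ge n_0$, and then the right side is tight by the Darling--Kac convergence. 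Feeding this into the definition of strong distributional convergence (restricting to $\nu\ll\tilde\mu$, using that $\nu$ of the bad set is small) closes the argument. Everything else is routine; the regular-variation hypothesis on $(a_n)$ enters only through the cited Corollary~3.7.3, which already encodes that the limit is $ML(\alpha)$ with the stated index.
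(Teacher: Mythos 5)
Your proof is correct and follows essentially the same route as the paper, which derives the proposition directly from \cite[Corollary~3.7.3]{Aaronson:1997} combined with Hopf's ratio ergodic theorem. (A minor simplification: rather than the $\varepsilon$-approximation, one can write $a_n^{-1}S_n^{\widetilde{T}}f$ as the product of the Hopf ratio $S_n^{\widetilde{T}}f/S_n^{\widetilde{T}}\mathbf{1}_Y$, which converges a.e.\ to the constant $\int f\dd\tilde\mu/\tilde\mu(Y)$, with $a_n^{-1}S_n^{\widetilde{T}}\mathbf{1}_Y$, and conclude by Slutsky.)
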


\smallskip

However, this kind of result is not sharp enough when the integral of the observable $f$ 
is zero. We want to get more precise asymptotics, that is to say, some kind of 
central limit theorem for observables of $\sigma$-finite ergodic dynamical systems whose integral is $0$. 
We need to add some regularity condition on the observable $f$, as well as stronger 
integrability conditions -- as is usual in ergodic theory, for instance to get a central limit theorem 
\cite{Nagaev:1957, Nagaev:1961}. In this article, we shall prove the following result:

\begin{theorem}
\label{thm:gene}

Let $(\widetilde{A}, \tilde{\mu}, \widetilde{T})$ be an ergodic and aperiodic $\Z^d$-extension 
of $(A, \mu, T)$ with step function $F$ and  $\alpha \in [d,2]$. Assume Hypothesis~\ref{hyp:HHH}.
Let $(\mathfrak{a}_n)_n$ be an $\alpha^{-1}$-regularly varying
sequence of positive numbers and $Y$ be an $\alpha$-stable random variable $Y$
such that
\begin{equation*}
S_n/\mathfrak{a}_n
\stackrel{distrib.}{\Rightarrow} Y.
\end{equation*}
Let $\Afrak_n:=\sqrt{\sum_{k=1}^n \mathfrak{a}_k^{-d}}$. Let $\beta: \Z^d \rightarrow \R$ be such that:
\begin{itemize}
\item $\sum_{p\in\Z^d}|p|^{\frac{\alpha-d}{2}+\varepsilon}|\beta(p)|<+\infty$ for some $\varepsilon >0$;
\item $\sum_{p\in\Z^d} \beta(p)=0$.
\end{itemize}
Let $f(x,p):=\beta(p)$. Then the following sums over $k$ are absolutely convergent:
\begin{equation}
\label{AAA2}
\sigma_{GK}^2 (f, \widetilde{A}, \tilde{\mu}, \widetilde{T})
= \int_{\widetilde{A}} f^2 \dd \tilde{\mu} + 2\sum_{k \geq 1} \int_{\widetilde{A}} f \cdot f \circ \widetilde{T}^k \dd \tilde{\mu}.
\end{equation}
Moreover, 
\begin{equation}\label{eq:ConvergenceDistributionGene}
\frac{S_n^{\widetilde{T}} f}{\sqrt{\Phi(0)}\Afrak_n}
\Rightarrow \sigma_{GK} (f, \widetilde{A}, \tilde{\mu}, \widetilde{T}) \Ycal,
\end{equation}
where $\Ycal$ is a standard $MLGM(1-\frac{d}{\alpha})$ random variable and the convergence is strong in distribution, 
and where $\Phi$ is the continuous version of the density function of $Y$.
\end{theorem}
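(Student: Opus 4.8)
The plan is to prove~\eqref{eq:ConvergenceDistributionGene} by the method of moments: show that for every integer $m \geq 0$, the moments of $S_n^{\widetilde{T}} f / (\sqrt{\Phi(0)} \Afrak_n)$ converge to those of $\sigma_{GK}(f) \Ycal$, where $\Ycal \sim MLGM(1 - d/\alpha)$, and then invoke a moment-determinacy argument to upgrade moment convergence to convergence in distribution (strong, by testing against probability densities $\nu \ll \tilde\mu$ and using the usual density/approximation arguments of infinite ergodic theory, as in~\cite[Chapter~3.6]{Aaronson:1997}). Since $MLGM(\gamma)$ is a mixture $\sqrt{Y} Z$ with $Z$ standard Gaussian, all its odd moments vanish and its $(2m)$-th moment equals $\frac{(2m)!}{2^m m!} \Ebb[Y^m]$ with $Y \sim ML(\gamma)$; because $ML(\gamma)$ has moments growing slowly enough (its moment generating function is entire for $\gamma \in (0,1]$ and it is a bounded random variable perturbation — more precisely the moment sequence satisfies a Carleman-type bound), the limit law is determined by its moments, so this strategy is sound.

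First I would expand the $(2m)$-th moment:
\begin{equation*}
\Ebb_{\tilde\mu}\!\left[\left(S_n^{\widetilde{T}} f\right)^{2m}\right]
= \sum_{0 \leq k_1, \dots, k_{2m} \leq n-1} \int_{\widetilde{A}} \prod_{j=1}^{2m} f \circ \widetilde{T}^{k_j} \dd \tilde\mu,
\end{equation*}
and then — this is the combinatorial heart — reorganize the sum over ordered tuples $(k_1 \leq \cdots \leq k_{2m})$ according to which consecutive gaps $k_{i+1} - k_i$ are ``small'' versus ``large.'' The large gaps decouple the product into near-independent blocks, and the contribution of each block is governed by a correlation sum that, after summing over the free time index, produces a factor of $\sigma_{GK}^2(f)$ (here I would use Hypothesis~\ref{hyp:HHH} to control the transfer operator, expressing $\int f \cdot f\circ\widetilde{T}^k$ via the dominating eigenvalue / Fourier-coefficient expansion of the twisted transfer operator, which both gives absolute convergence of the series in~\eqref{AAA2} and the needed decay estimates — the condition $\sum |p|^{(\alpha-d)/2 + \varepsilon}|\beta(p)| < \infty$ is exactly what makes these sums converge given the $|p|$-growth of the relevant kernels). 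The number of ways to distribute $2m$ times into $m$ blocks of size $2$, times the local-time normalization, yields the combinatorial prefactor $\frac{(2m)!}{2^m m!}$.

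Next, the remaining ``global'' degrees of freedom — the positions of the $m$ blocks among $\{0,\dots,n-1\}$, weighted by return-probability kernels coming from $\int (\text{indicator-type observable}) \circ \widetilde{T}^k$, which behave like $\Phi(0) \mathfrak{a}_k^{-d}$ by the local limit theorem implicit in $S_n/\mathfrak{a}_n \Rightarrow Y$ — must be shown to contribute, after dividing by $\Afrak_n^{2m} = \left(\sum_{k=1}^n \mathfrak{a}_k^{-d}\right)^m$ and by $\Phi(0)^m$, exactly $\Ebb[Y^m]$ with $Y \sim ML(1 - d/\alpha)$. Here I would recognize the $m$-fold nested sum $\sum_{k_1 < \cdots < k_m \leq n} \prod \mathfrak{a}_{k_i - k_{i-1}}^{-d} / \Afrak_n^{2m}$ as a Riemann-sum approximation of the moment integral for the Mittag-Leffler law, using the $\alpha^{-1}$-regular variation of $(\mathfrak{a}_n)$ (so $\mathfrak{a}_n^{-d}$ has regular variation of index $-d/\alpha \in [-1, 0]$ and $\Afrak_n$ of index $1 - d/(2\alpha)$); this is the standard Darling–Kac moment computation.

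I expect the main obstacle to be the error-term bookkeeping in the decoupling step: one must show that configurations where some ``large'' gap is not large enough to fully decouple, or where three or more times cluster together, or boundary effects near $0$ and $n$, all contribute $o(\Afrak_n^{2m})$. This requires quantitative mixing bounds uniform in the spatial shift $p$ (hence the spectral gap in Hypothesis~\ref{hyp:HHH} and the tail condition on $\beta$), and careful splitting of the time-gap ranges at an intermediate scale. A secondary technical point is justifying that the odd moments vanish in the limit — this follows from the same decoupling since an odd number of factors always leaves one unpaired block whose correlation sum, being essentially $\sum_k \int f\circ\widetilde{T}^k \dd\tilde\mu$-type but against a single $f$, is lower order — together with the symmetry of the limit. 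Finally, to control moments uniformly in $n$ (needed for the moment method and for strong convergence), I would establish an a priori bound $\Ebb_\nu[|S_n^{\widetilde{T}} f|^{2m}] \leq C_m \Afrak_n^{2m}$ for $\nu \ll \tilde\mu$ in a suitable dense class, again by the same expansion but keeping only upper bounds.
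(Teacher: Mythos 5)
Your proposal follows essentially the same route as the paper: the method of moments, pairing combinatorics yielding the factor $(2m)!/(2^m m!)\,\sigma_{GK}^{2m}$ with the decoupling implemented via the spectral decomposition of the twisted transfer operators $P_u$ (the paper's splitting $Q_{\ell,a}=Q^{(0)}_{\ell,a}+Q^{(1)}_{\ell,a}$ and the classification of multiplicity patterns in Lemma~\ref{lem:normeB} is exactly your ``error-term bookkeeping''), a Darling--Kac-type Riemann sum (Lemma~\ref{lem:Integrale}) for the Mittag-Leffler moments, and Carleman plus Zweim\"uller's theorem for strong convergence. The one correction needed is that the moment expansion must be carried out under a probability measure --- the paper uses $\mu\otimes\delta_0$, i.e.\ starting from the fiber $A\times\{0\}$ --- rather than under the infinite measure $\tilde\mu$ as in your first display, since the latter is not the moment of a random variable.
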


Under the hypotheses of Theorem~\ref{thm:gene}, we have in addition:
\begin{equation}
\label{AAA1}
\sigma_{GK}^2 (f, \widetilde{A}, \tilde{\mu}, \widetilde{T}) 
= \sum_{a \in \Z^d} \beta(a)^2 + 2 \sum_{k \geq 1}\sum_{a,b \in \Z^d} \beta(a)\beta(b)\mu (S_k=a-b).
\end{equation}

\begin{remark}
\label{rmk:Aperiodicite}

For a definition of aperiodicity, see Definition~\ref{def:Aperiodicite}. It is not necessary 
in the statement in the theorem, but appears as a result of Hypothesis~\ref{hyp:HHH}, and we prefer 
to make this assumption explicit.

\smallskip

We do not expect aperiodicity to be necessary in the statement of Theorem~\ref{thm:gene}, 
up to the necessary modification in Hypotheses~\ref{hyp:HHH}. 
Proving this generalization would be straightforward if $f$ were allowed to depend on $x$; 
however, allowing such a dependence would make the proof of Theorem~\ref{thm:gene} 
much more difficult. We choose to leave the non-aperiodic case aside, except for 
a couple of later results, Theorem~\ref{thm:ConvergenceNp} and Theorem~\ref{thm:GM}.
\end{remark}

Theorem~\ref{thm:gene} shall be proved in Section~\ref{sec:operators} with the method of moments 
and is based on refinements of the local limit theorem for $S_n$, which says that $\Pbb (S_n=0)\sim\Phi(0)\mathfrak{a}_n^{-d}$.
Under our hypotheses, the normalization $\sqrt{\Phi(0)} \Afrak_n$ is equivalent to $\sqrt{\sum_{k=0}^{n-1} \mu (S_k=0)}$.
See e.g.\ \cite{AaronsonDenker:2001} for a spectral proof of the local limit theorem, which holds under 
Hypotheses~\ref{hyp:HHH}, and implies the equivalence of the normalizations.

\smallskip

In special cases, the normalization $\Afrak_n$ can be made explicit:
\begin{equation*}
\Afrak_n \sim \left\{
\begin{array}{c}
\sqrt{\frac{\alpha}{\alpha-1}\frac{n}{\mathfrak{a}_n}} \quad \text{ if } d=1 \text{ and } \alpha>1 \\
\sqrt{\log n} \quad \text{ if } d=\alpha \text{ and } a_n \sim n^{1/\alpha} \\
\sqrt{\log\log n} \quad \text{ if } d=\alpha \text{ and } a_n \sim (n \log n)^{1/\alpha}
\end{array}
\right. .
\end{equation*}

\subsubsection{Symmetrized potential kernel}
\label{subsubsec:NoyauPotentielSymetrise}

The case when $f=f_p$ of Theorem~\ref{thm:gene} is especially interesting. 
Then the computation of $\sigma_{GK} (f_p, \widetilde{A}, \tilde{\mu}, \widetilde{T})$ boils down to an estimation 
of the symmetrized potential kernel $g$ of the $\Z^d$-extension:
\begin{equation*}
\sigma_{GK}^2 (f_p, \widetilde{A}, \tilde{\mu}, \widetilde{T}) 
= 2g(p)-2,
\end{equation*}
with
\begin{equation*}
g(p) 
:=\sum_{n\geq 0} \left(2\, \mu(S_n=0)-\mu(S_n=p)-\mu(S_n=-p) \right),
\end{equation*}
which is well defined under the assumptions of Theorem \ref{thm:gene}. 
We estimate the asymptotic growth of $g(p)$ in Subsection~\ref{subsec:renewal}, 
adapting the methods of~\cite{Spitzer:1976} to dynamical systems. We get:

\begin{proposition}\label{prop:renouvellement}

Let $(\widetilde{A}, \tilde{\mu}, \widetilde{T})$ be an ergodic, aperiodic $\Z^d$-extension 
of $(A, \mu, T)$ with step function $F$. Let $(\Bcal,\norm{\cdot}{\Bcal})$ be a 
complex Banach space of functions defined on $A$. Assume Hypothesis~\ref{hyp:HHH} 
holds with $(\Bcal,\norm{\cdot}{\Bcal})$ and $\alpha \in [d,2]$. 
If $\alpha = d$, let $I$ be the functions defined by Equation~\eqref{eq:FormuleI}.

\smallskip

If $d=1$ and $\alpha \in (1,2]$,
\begin{equation*}
g(p)
\sim_{p\rightarrow \infty} \frac{1}{\vartheta (1+\zeta^2)\Gamma (\alpha)\sin \left( \frac{(\alpha-1) \pi}{2} \right)} \frac{|p|^{\alpha-1}}{L(|p|)}.
\end{equation*}

If $d=\alpha=1$, 
\begin{equation*}
g(p)
\sim_{p\rightarrow \infty} \frac{2}{\pi \vartheta (1+\zeta^2)} I(|p|^{-1}).
\end{equation*}

\smallskip

If $d=\alpha=2$, 
\begin{equation*}
g(p)
\sim_{p\rightarrow \infty} \frac{2}{\pi \sqrt{\det (\Sigma)}} I(|p|^{-1}).
\end{equation*}
\end{proposition}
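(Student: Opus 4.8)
The plan is to derive the asymptotics of $g(p)$ from a Fourier-analytic representation of $g$, mimicking the structure of Spitzer's argument in~\cite{Spitzer:1976} but replacing the characteristic function of a random walk by the ``twisted'' transfer operator of the extension. Under Hypothesis~\ref{hyp:HHH}, the local limit theorem and its refinements give control of $\mu(S_n = q)$ through the perturbed eigenvalue $\lambda(t)$ of the operator $P_t$ (the transfer operator of $T$ twisted by $e^{\ii \langle t, F \rangle}$), defined for $t$ near $0$ in the dual torus $\mathbb{T}^d$. First I would write
\begin{equation*}
g(p) = \sum_{n \geq 0} \int_{\mathbb{T}^d} \bigl(2 - e^{-\ii \langle t, p \rangle} - e^{\ii \langle t, p \rangle}\bigr) \widehat{\mu_n}(t) \dd t
= \int_{\mathbb{T}^d} \bigl(2 - 2\cos \langle t, p \rangle\bigr) \sum_{n \geq 0} \widehat{\mu_n}(t) \dd t,
\end{equation*}
where $\widehat{\mu_n}(t)$ is the relevant spectral quantity (morally $\lambda(t)^n$ plus lower-order spectral terms, integrated against the relevant projector applied to $\mathbf{1}$). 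The factor $2 - 2\cos\langle t,p\rangle$ vanishes to second order at $t = 0$, which is exactly what is needed to kill the non-integrable singularity of $\sum_n \lambda(t)^n \approx (1 - \lambda(t))^{-1}$ near $t = 0$; so the integrand is genuinely integrable and $g(p)$ is well-defined, as claimed. The remaining spectral contributions (from the complement of the top eigenspace and, in the aperiodic case, from the absence of other eigenvalues of modulus $1$) are uniformly bounded and contribute $O(1)$ or lower-order terms, since there the geometric series converges uniformly.

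The heart of the computation is then the asymptotic evaluation of $\int_{\mathbb{T}^d} (2 - 2\cos\langle t, p\rangle)\, (1 - \lambda(t))^{-1} \dd t$ as $|p| \to \infty$, where by Hypothesis~\ref{hyp:HHH} one has an expansion of the form $1 - \lambda(t) \sim \vartheta(1 + \zeta^2)\, |t|^\alpha L(1/|t|)$ in dimension $1$ (and $1 - \lambda(t) \sim \frac{1}{2}\langle \Sigma t, t\rangle$ in the $d = \alpha = 2$ case), with the appropriate slowly-varying correction when $\alpha < 2$. Substituting $t = s/|p|$ and using dominated convergence / Karamata's theorem for the slowly varying part, the integral becomes, up to the stated constants,
\begin{equation*}
\frac{|p|^{\alpha - d}}{L(|p|)} \int_{\R^d} \frac{2 - 2\cos\langle u, s \rangle}{\vartheta(1+\zeta^2)|s|^\alpha} \dd s \quad (\text{for } \alpha > d),
\end{equation*}
where $u = p/|p|$; the resulting Fourier-type integral $\int_{\R} (2 - 2\cos s)|s|^{-\alpha}\dd s$ is classical and evaluates to $2\Gamma(1-\alpha)\cos(\pi\alpha/2)^{-1}\cdot(\ldots)$, which after the reflection formula for $\Gamma$ collapses to the factor $\Gamma(\alpha)^{-1}\sin\bigl(\tfrac{(\alpha-1)\pi}{2}\bigr)^{-1}$ appearing in the statement. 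In the critical cases $d = \alpha$, the substitution instead produces a logarithmically divergent integral that, after truncation at scale $1/|p|$ versus $O(1)$, yields the quantity $I(|p|^{-1})$ defined by~\eqref{eq:FormuleI}; here one must be careful that the divergence is only logarithmic so that the $(2 - 2\cos)$ factor tames it to leading order $I(|p|^{-1})$ while the error is $o(I(|p|^{-1}))$ since $I$ is slowly varying and tends to infinity.

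The main obstacle I expect is \emph{not} the final scalar integral but the uniform control of the spectral remainder over the whole torus: one must show that replacing $\widehat{\mu_n}(t)$ by $\lambda(t)^n$ times the leading spectral data, and then $\sum_n \lambda(t)^n$ by $(1-\lambda(t))^{-1}$, introduces only an $O(1)$ error (or $o$ of the main term) after integration against $2 - 2\cos\langle t,p\rangle$ — this requires (i) the spectral gap of $P_t$ away from $t = 0$, furnished by aperiodicity and Hypothesis~\ref{hyp:HHH}, to bound the non-leading part uniformly, and (ii) a quantitative handle on $\lambda(t)$ and its associated eigenprojector near $t = 0$ to justify the scaling limit, including the slowly-varying corrections when $\alpha < 2$. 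A secondary technical point is the passage from $\widehat{\mu_n}$ to a clean $\lambda(t)^n$: the eigenprojector evaluated at $\mathbf{1}$ and integrated against $\mathbf{1}$ equals $1 + o(1)$ as $t \to 0$, and the $o(1)$ piece must be shown to contribute lower order after multiplication by $(2-2\cos\langle t,p\rangle)(1-\lambda(t))^{-1}$ — harmless because $2 - 2\cos\langle t,p\rangle = O(|t|^2|p|^2 \wedge 1)$ provides an extra vanishing at $t=0$ beyond what is strictly needed. Once these uniformity estimates are in place, the three stated asymptotics follow by the substitution and Karamata arguments sketched above.
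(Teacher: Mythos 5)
Your proposal follows essentially the same route as the paper's proof: the Fourier representation of $g$, reduction to the leading eigenvalue $\lambda_u$ with a uniformly bounded (in $p$) spectral error, the rescaling $t = s/|p|$ controlled by Potter's bound and dominated convergence, and the classical integral $\int_0^{+\infty}(1-\cos w)w^{-\alpha}\dd w$ evaluated via the reflection formula. The only points your sketch understates are in the critical cases $d=\alpha$: the claim that the oscillatory part contributes $o(I(|p|^{-1}))$ does not follow from slow variation alone but requires a Riemann--Lebesgue-type integration by parts against $F(v)=1/(vL(v^{-1}))$ (and, for $d=\alpha=2$, an extra angular decomposition because the oscillation degenerates in the direction orthogonal to $p$); also note that the factor $(1+\zeta^2)$ enters through $\Re(1/\psi)$ after taking real parts, not through an asymptotic for $|1-\lambda_t|$ itself.
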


\subsection{Hitting probabilities of excursions}
\label{sec:ProbabiliteAtteinteExcursions}

We leave aside for a moment the distributional asymptotics of the Birkhoff sums, 
and focus on the probability that an excursion hits a given site (Section~\ref{sec:proofGM}). 
We now assume that $(A, \mu, T)$ is a Gibbs-Markov map. The leading theme of this 
section is the study of the probability that an excursion from $A \times \{0\}$ hits 
$A \times \{p\}$, and its asymptotics as $p$ goes to infinity.

\subsubsection{Induced transformations}
\label{subsubsec:TransformationsInduites}

Let us describe the terminology.
We define $\varphi_{\{0\}}:A \rightarrow \N_+\cup\{\infty\}$, where
$\varphi_{\{0\}}(x)$ is the length of an excursion starting from $(x,0)$:
\begin{equation*}
\varphi_{\{0\}} (x)
:= \inf \{k>0\ :\ S_k(x)=0\}.
\end{equation*}
Then, define the corresponding induced map by $\widetilde{T}_{\{0\}} (x) := T^{\varphi_{\{0\}}(x)} (x)$, 
which is well-defined for almost every $x$. Note that $(A, \mu, \widetilde{T}_{\{0\}})$ is a measure-preserving 
ergodic dynamical system~\cite{Kakutani:1943}. For any observable $f : \widetilde{A} \to \R$, let:
\begin{equation*}
f_{\{0\}} (x) 
:=\sum_{k=0}^{\varphi_{\{0\}}(x)-1} f \circ \widetilde{T}^k (x,0).
\end{equation*}

\smallskip

Let us introduce a few more objects: the time $N_p$ that 
an excursion from $A \times \{0\}$ spends at $A \times \{p\}$, and the inverse probability 
$\alpha(p)$ that an excursion from $A \times \{0\}$ hits $A \times \{p\}$, and the number 
of times $N_{0,p}$ that the system goes back to $A \times \{0\}$ before hitting $A \times \{p\}$. 
Formally,
\begin{equation*}
N_p(x)
:= \#\left\{k=0,\ldots,\varphi_{\{0\}} (x)-1\ :\ S_k (x)= p\right\} 
= 1+f_{p,\{0\}} (x),
\end{equation*}
\begin{equation*}
\alpha(p)
:= \mu (N_p>0)^{-1} 
= \mu (\exists \ 0 \leq k < \varphi(x): \ S_k (x)=p)^{-1},
\end{equation*}
and:
\begin{equation*}
N_{0,p} (x)
:= \inf \{n \geq 0: \ T_{\{0\}}^n (x) \in \{N_p >0\} \}.
\end{equation*}

The following theorem explains how these quantities are related in the limit $p \to \infty$.

\begin{theorem}\label{thm:ConvergenceNp}

Let $(\widetilde{A},\tilde{\mu},\widetilde{T})$ be a conservative and ergodic\footnote{The extension need not be aperiodic for this theorem.} 
Markov $\Z^d$-extension of a Gibbs-Markov map $(A, \mu, T)$. Then:
\begin{itemize}
\item As $p\rightarrow +\infty$,
\begin{equation*}
\alpha(p) 
\sim \alpha(-p)
\sim \Ebb_\mu [N_p|N_p>0] 
\sim \Ebb_\mu [N_{0,p}] 
\sim \frac{\sigma_{GK}^2 (f_{p,\{0\}}, A, \mu, \widetilde{T}_{\{0\}})}{2}
\sim \frac{\Ebb_\mu [f_{p,\{0\}}^2]}{2}.
\end{equation*}
\item The conditional distributions $\alpha(p)^{-1} N_p | \{N_p > 0\}$ have exponential tails, 
uniformly in $p$: there exist $C \geq 0$ and $\kappa>0$ such that, for every $t>0$,
\begin{equation*}
\sup_{p \in \Z^d} \mu \left((\alpha(p))^{-1} N_p > t | N_p > 0 \right)
\leq Ce^{-\kappa t};
\end{equation*}
\item $\alpha(p)^{-1} N_p | \{N_p > 0\}$ converges in distribution and in moments to an exponential 
random variable of parameter $1$ as $p$ goes to infinity. In particular, for all $q > 1$,
\begin{equation*}
\Ebb_\mu \left[|f_{p,\{0\}}|^q \right] 
\sim \Gamma (1+q) \alpha (p)^{q-1}.
\end{equation*}
\end{itemize}
\end{theorem}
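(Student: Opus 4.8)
The plan is to exploit the fact that, from the point of view of the induced map $\widetilde{T}_{\{0\}}$, the variable $N_p$ counts the number of visits to the ``small target'' $\{N_p>0\}$ during one excursion, so that the three problems --- asymptotics of $\alpha(p)$, the exponential tails, and the exponential limit law --- all reduce to first-hitting-time statistics for a shrinking target in the Gibbs--Markov system $(A,\mu,\widetilde{T}_{\{0\}})$. Concretely, $N_{0,p}$ is the first hitting time of the set $B_p := \{N_p>0\}$ under $\widetilde{T}_{\{0\}}$, and $\mu(B_p) = \alpha(p)^{-1} \to 0$ as $p\to\infty$ (recurrence of the extension forces $\alpha(p)<\infty$, and transience ``at infinity'' of the excursion forces $\alpha(p)\to\infty$; this last point needs the local limit theorem / the divergence of $g(p)$ from Proposition~\ref{prop:renouvellement}, or a direct Borel--Cantelli argument). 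The first equalities in the chain, $\alpha(p)\sim\alpha(-p)$ and $\alpha(p)\sim\Ebb_\mu[N_{0,p}]$, are then Kac's formula together with a symmetry/time-reversal argument: reversing an excursion from $0$ that hits $p$ gives an excursion from $0$ that hits $-p$ (for Gibbs--Markov maps one passes through the natural extension / the dual transfer operator), yielding $\alpha(p)/\alpha(-p)\to 1$.

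The heart of the matter is showing that the target sets $B_p$ are, in a uniform quantitative sense, ``spread out'' enough that the classical exponential-law machinery for shrinking targets applies with constants independent of $p$. The steps I would carry out: (i) reduce everything to the induced Gibbs--Markov system and identify $N_p-1 = f_{p,\{0\}}$, so that $N_p\mid\{N_p>0\}$ is the number of returns to $B_p$ before leaving it --- an excursion statistic that classical arguments (Galton--Watson / renewal structure of returns to a set) turn into a geometric-type variable with success probability $\sim \mu(B_p)/(\text{something})$; (ii) establish a \emph{uniform mixing / decorrelation} estimate: using the spectral gap of the Gibbs--Markov transfer operator and the fact that $B_p$ is a union of cylinders (or well-approximated by one), show $\mu(B_p\cap \widetilde{T}_{\{0\}}^{-n} B_p) = \mu(B_p)^2(1+o(1)) + O(\mu(B_p)\theta^n)$ uniformly in $p$, which both gives the asymptotic independence needed for the exponential limit and, via a standard large-deviation/subadditivity argument (as in Abadi--Galves-type results), the uniform exponential tail $\mu(\alpha(p)^{-1}N_p>t\mid N_p>0)\le Ce^{-\kappa t}$; (iii) deduce convergence in distribution of $\alpha(p)^{-1}N_p\mid\{N_p>0\}$ to $\mathrm{Exp}(1)$, and upgrade to convergence of all moments using the uniform exponential tail (which provides the uniform integrability of every power); (iv) read off the remaining asymptotic equivalences: $\Ebb_\mu[N_p\mid N_p>0]\sim\alpha(p)$ is the first moment of the limit law, $\Ebb_\mu[N_{0,p}]\sim\alpha(p)$ is Kac applied to $B_p$, and $\Ebb_\mu[f_{p,\{0\}}^2] = \Ebb_\mu[(N_p-1)^2\mathbf 1_{N_p>0}] \sim \mu(B_p)\cdot 2\alpha(p)^2 = 2\alpha(p)$ using the second moment $\Gamma(3)=2$ of $\mathrm{Exp}(1)$; finally $\sigma_{GK}^2(f_{p,\{0\}},A,\mu,\widetilde{T}_{\{0\}}) \sim \Ebb_\mu[f_{p,\{0\}}^2]$ because the induced observable $f_{p,\{0\}}$ is a coboundary-free excursion sum whose autocorrelations at lag $\ge 1$ are $o(\Ebb_\mu[f_{p,\{0\}}^2])$ --- again from the uniform decorrelation estimate in (ii), since a single excursion only ``sees'' $B_p$ once with probability $1-o(1)$.

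The displayed moment asymptotic $\Ebb_\mu[|f_{p,\{0\}}|^q]\sim\Gamma(1+q)\alpha(p)^{q-1}$ in the last bullet then follows immediately: $|f_{p,\{0\}}| = (N_p-1)\mathbf 1_{N_p>0}$ up to the event $\{N_p=0\}$ on which it vanishes, so $\Ebb_\mu[|f_{p,\{0\}}|^q] = \mu(N_p>0)\,\Ebb_\mu[(N_p-1)^q\mid N_p>0] = \alpha(p)^{-1}\cdot\alpha(p)^q\,\Ebb[\mathrm{Exp}(1)^q](1+o(1)) = \Gamma(1+q)\alpha(p)^{q-1}(1+o(1))$, where $\Ebb[\mathrm{Exp}(1)^q]=\Gamma(1+q)$ and the passage to the limit inside the $q$-th moment is justified by the uniform exponential tail from the second bullet.

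I expect the main obstacle to be step (ii): proving the decorrelation estimate \emph{uniformly in $p$}. The sets $B_p=\{N_p>0\}\subset A$ depend on $p$ in a complicated way --- $B_p$ is the set of points whose excursion from level $0$ reaches level $p$ --- and is not literally a finite union of cylinders of bounded rank, so one must control how well $B_p$ is approximated by such unions and show the approximation error is negligible compared to $\mu(B_p)$. This is where the Gibbs--Markov structure (bounded distortion, big-image property, exponential mixing of $\widetilde{T}_{\{0\}}$, which is itself Gibbs--Markov by a result one should cite or re-derive, e.g.\ along the lines of Aaronson--Denker) is essential, and where the ``argument suggested by P\'eter N\'andori'' acknowledged in the introduction presumably enters.
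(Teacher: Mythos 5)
Your overall strategy is the paper's: reduce everything to hitting-time statistics of shrinking targets for the induced Gibbs--Markov map $\widetilde{T}_{\{0\}}$, prove uniform exponential tails plus an exponential limit law, deduce moment convergence, and use Kac's formula to fix the constants. One step, however, does not go through as you describe: the equivalence $\alpha(p)\sim\alpha(-p)$. This is not a cosmetic item, because the hitting-time reduction naturally produces $\alpha(-p)$, not $\alpha(p)$, as the normalizer --- on $\{N_p>0\}$, the variable $N_p-1$ is the first hitting time of $A_{-p}$ (the set of points whose excursion reaches $-p$ before $0$) under a copy of $\widetilde{T}_{\{0\}}$, started from the push-forward measure $\widetilde{T}_{\{0,p\}*}\mu(\cdot\mid A_p)$ --- so until $\alpha(p)\sim\alpha(-p)$ is established the first and third bullets cannot be read with $\alpha(p)$. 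Your time-reversal argument does not supply it for a non-reversible Gibbs--Markov map: reversing an excursion from $0$ yields an excursion of the \emph{inverse} dynamics that visits the same set of sites, so at best it relates $\alpha_{\widetilde T}(p)$ to $\alpha_{\widetilde T^{-1}}(p)$, not to $\alpha_{\widetilde T}(-p)$. The paper instead applies Kac's formula to the system induced on $A\times\{0,p\}$ to get the exact identity $\Ebb_\mu[N_p]=1$, and combines it with $\Ebb_\mu[N_p\mid N_p>0]\sim\alpha(-p)$ (from the exponential limit law for the hitting time of $A_{-p}$) to conclude $\alpha(-p)/\alpha(p)\to 1$. The same Kac-type computation on a two-point extension is also what identifies the parameter of the limiting exponential as exactly $1$, which your sketch leaves as ``success probability $\sim\mu(B_p)/(\text{something})$''.

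On the other hand, the obstacle you single out as the hard part --- approximating $B_p=\{N_p>0\}$ by cylinders --- is not actually there: $A_p$ and $A_{-p}$ are \emph{exactly} unions of elements of the induced partition $\pi_{\{0\}}$, since whether an excursion visits a given site is determined by the excursion itself, i.e.\ by the element of $\pi_{\{0\}}$ containing the point. The paper's hitting-time statements (Propositions~\ref{prop:Tension} and~\ref{prop:ConvergenceVersExponentielle}) are therefore proved for arbitrary $\sigma(\pi)$-measurable targets and arbitrary initial densities bounded in $\Lip^\infty$ (the bound on the densities $\alpha(p)P_{\{0,p\}}\mathbf{1}_{A_p}$ coming from the distortion Lemma~\ref{lem:EncoreUneBorneUniforme}); your Abadi--Galves-style decorrelation route would work equally well once this exact measurability is observed, whereas the paper runs a loss-of-memory argument on the operators $P_{\mathbf{1}-\mathbf{1}_{A_{-p}}}^n$. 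The remaining steps of your sketch --- moment convergence from uniform exponential tightness, the moment identities for $f_{p,\{0\}}$, and $\sigma_{GK}^2(f_{p,\{0\}})\sim\Ebb_\mu[f_{p,\{0\}}^2]$ from the uniform boundedness of the lag-$k$ correlations of the $\pi_{\{0\}}$-measurable function $f_{p,\{0\}}$ --- match the paper.
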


The proof of Theorem~\ref{thm:ConvergenceNp} rests on two main points: the exponential tightness of 
$\alpha(p)^{-1} N_p | \{N_p > 0\}$ (Subsection~\ref{subsec:Tightness}), and its convergence to 
an exponential random variable (Subsection~\ref{subsec:ConvergenceLoiPetitesBoules}). The later 
point is an interesting application of the general fact that, for many hyperbolic dynamical systems, 
the hitting time of small balls, once renormalized, converges in distribution to an exponential 
random variable (see e.g.\ the reviews~\cite{Coelho:2000, Saussol:2009, Haydn:2013}). Once we have tightness and 
convergence in distribution, we can evaluate the moments of $N_p$. 

\smallskip

For random walks, many estimates are more explicit. For instance, the distribution of 
$N_p | \{N_p > 0\}$ is geometric, so its moments are exactly known (as functions of $\alpha (p)$). 
With this improvement, one can recover part of~\cite[Chap.~III.11, P5]{Spitzer:1976} -- that is, the 
equivalents in Theorem~\ref{thm:ConvergenceNp} and Corollary~\ref{cor:AlphaG} can be made into equalities:
\begin{equation*}
\alpha(p) 
= \alpha(-p)
= \Ebb_\mu [N_p|N_p>0] 
= 1+\Ebb_\mu [N_{0,p}] 
= 1+\frac{\sigma_{GK}^2 (f_{p,\{0\}}, A, \mu, \widetilde{T}_{\{0\}})}{2}
= 1+\frac{\Ebb_\mu [f_{p,\{0\}}^2]}{2} 
= g(p).
\end{equation*}

\subsubsection{Induction invariance of the Green-Kubo formula}
\label{subsubsec:InductionInvariance}

While Theorem~\ref{thm:ConvergenceNp} gives asymptotic relationships between many quantities, it does not provide 
any way to effectively compute them. For random walks, $\alpha (p)$ and $g(p)$ are related through 
a probabilistic interpretation of the symmetrized potential kernel:

\begin{proposition}\cite[Chap.~III.11, P5]{Spitzer:1976}
\label{prop:Spitzer}

Consider an ergodic aperiodic random walk on $\Z^2$. For all $p \in \Z^2$,
\begin{equation*}
\alpha(p) 
= g(p).
\end{equation*}
\end{proposition}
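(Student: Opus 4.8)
The plan is to reprove Spitzer's identity via the potential kernel of the walk together with elementary discrete harmonic function theory, the Fourier-analytic input being confined to the standard asymptotics of the potential kernel. Write $p_n(x):=\mu(S_n=x)$ and let $\mathcal{P}$ denote the operator $\mathcal{P}\psi(x):=\sum_{z\in\Z^2}p_1(z)\,\psi(x+z)=\Ebb[\psi(S_1)\mid S_0=x]$. Recall the potential kernel $a(x):=\sum_{n\geq 0}\bigl(p_n(0)-p_n(x)\bigr)$; for a recurrent aperiodic walk on $\Z^2$ this series converges, $a(0)=0$, $a(x)>0$ for $x\neq 0$, $a(x)\to+\infty$ and $a(x+y)-a(x)\to 0$ as $|x|\to\infty$ with $y$ fixed (see~\cite{Spitzer:1976}). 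A short computation with $p_{n+1}=p_1*p_n$ and a telescoping sum yields the harmonicity identities
\begin{equation*}
\mathcal{P}\bigl(a(-\cdot)\bigr)=a(-\cdot)+\mathbf{1}_{\{0\}},\qquad \mathcal{P}\bigl(a(q-\cdot)\bigr)=a(q-\cdot)+\mathbf{1}_{\{q\}}\quad(q\in\Z^2),
\end{equation*}
so that $a(-\cdot)$ is $\mathcal{P}$-harmonic off $0$ and $a(q-\cdot)$ is $\mathcal{P}$-harmonic off $q$. Note also that $g(p)=a(p)+a(-p)>0$ for $p\neq 0$. Fix such a $p$ and set $\sigma:=\inf\{n\geq 0:S_n\in\{0,p\}\}$, which is $\mu$-a.s.\ finite by recurrence.

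First I would record that bounded functions which are $\mathcal{P}$-harmonic on $\Z^2\setminus\{0,p\}$ are determined by their two boundary values: if $\psi$ is such a function, then $\psi(S_{n\wedge\sigma})$ is a bounded martingale under $\Pbb_y$ for every starting point $y$, and optional stopping gives $\psi(y)=\psi(0)\,\Pbb_y(S_\sigma=0)+\psi(p)\,\Pbb_y(S_\sigma=p)$. I would then exhibit one explicitly: the function
\begin{equation*}
\psi(y):=\frac{1}{g(p)}\bigl(a(-y)-a(p-y)+a(p)\bigr)
\end{equation*}
is $\mathcal{P}$-harmonic off $\{0,p\}$ by the identities above (the defects being carried by $a(-\cdot)$ at $0$ and by $-a(p-\cdot)$ at $p$), it is bounded because the two logarithmically growing terms cancel at infinity (here one uses $a(-y)-a(p-y)\to 0$), and $\psi(0)=0$, $\psi(p)=1$. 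Consequently $\psi(y)=\Pbb_y(\tau_p<\tau_0)$, where $\tau_q:=\inf\{n\geq 0:S_n=q\}$.

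It then remains to compute the escape probability by conditioning on the first step: with $\tau_0^+:=\inf\{n\geq 1:S_n=0\}$, and using $\psi(0)=0$, $\psi(p)=1$,
\begin{equation*}
\Pbb_0\bigl(\tau_p<\tau_0^+\bigr)=\sum_{z\in\Z^2}p_1(z)\,\psi(z)=\mathcal{P}\psi(0).
\end{equation*}
Since $\mathcal{P}\psi-\psi=\tfrac{1}{g(p)}\bigl(\mathbf{1}_{\{0\}}-\mathbf{1}_{\{p\}}\bigr)$, evaluating at $0$ gives $\mathcal{P}\psi(0)=\psi(0)+\tfrac{1}{g(p)}=\tfrac{1}{g(p)}$. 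As $\mu(N_p>0)=\Pbb_0(\tau_p<\tau_0^+)$ for $p\neq 0$, this yields $\alpha(p)=\mu(N_p>0)^{-1}=g(p)$.

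The routine parts — convergence of the series defining $a$, the harmonicity identities, the first-step decomposition — are straightforward; the main obstacle is the asymptotic behaviour of the potential kernel of a recurrent aperiodic walk on $\Z^2$, namely $a(x)\to\infty$ together with the asymptotic translation invariance $a(x+y)-a(x)\to 0$. These are exactly what make $\psi$ bounded (hence the unique harmonic measure) and keep $g(p)$ positive, and this is where Spitzer's harmonic analysis on $\Z^2$ enters; without a second-moment hypothesis one must invoke the full strength of the results of~\cite{Spitzer:1976}. An alternative route, more in keeping with the rest of the paper, starts from Theorem~\ref{thm:ConvergenceNp}, whose equivalences become exact identities for random walks because $N_p\mid\{N_p>0\}$ is then exactly geometric with mean $\alpha(p)$; but it ultimately reduces to the same identification of $1+\tfrac12\,\sigma_{GK}^2(f_{p,\{0\}},A,\mu,\widetilde{T}_{\{0\}})$ with $g(p)$.
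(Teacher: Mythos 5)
Your argument is correct, but it is not the route this paper takes: it is essentially Spitzer's own potential-theoretic proof of \cite[Chap.~III.11, P5]{Spitzer:1976} (construct $\psi = g(p)^{-1}\bigl(a(-\cdot)-a(p-\cdot)+a(p)\bigr)$, bounded and harmonic off $\{0,p\}$ with boundary values $0$ and $1$, identify it with the hitting probability by optional stopping, and read the escape probability off the one-step defect $\mathcal{P}\psi-\psi = g(p)^{-1}(\mathbf{1}_{\{0\}}-\mathbf{1}_{\{p\}})$). All the analytic weight sits in the properties of the potential kernel $a$ — convergence of the defining series, strict positivity off $0$, and the asymptotic translation invariance $a(x+y)-a(x)\to 0$ — which, for a recurrent aperiodic walk on $\Z^2$ with no moment assumptions, require exactly the Fourier analysis of \cite[Chap.~III]{Spitzer:1976}. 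The paper's point is to avoid that: its proof (Sub-subsection~\ref{subsubsec:InductionInvariance}, via Theorem~\ref{thm:ConvergenceNp} and Corollary~\ref{cor:AlphaG}) matches the scaling constants of two distributional limit theorems for $S_n^{\widetilde{T}} f_p$ — Theorem~\ref{thm:gene}, whose variance is $2g(p)-2$, and the induced-system theorem of \cite{Thomine:2015}, whose variance is $\sigma_{GK}^2(f_{p,\{0\}}, A, \mu, \widetilde{T}_{\{0\}})$ — to obtain $\sigma_{GK}^2(f_{p,\{0\}}, A, \mu, \widetilde{T}_{\{0\}}) = 2g(p)-2$, and then uses the exactly geometric distribution of $N_p\mid\{N_p>0\}$ to promote the asymptotic equivalences of Theorem~\ref{thm:ConvergenceNp} to the identity $\alpha(p)=1+\tfrac12\Ebb_\mu[f_{p,\{0\}}^2]=g(p)$. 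Your approach buys self-containedness at the level of the random walk; the paper's buys applicability when harmonic analysis of the walk is unavailable, namely for $\Z^d$-extensions of Gibbs-Markov maps, where it still yields $\alpha(p)\sim g(p)$. You correctly flag this alternative in your last sentence. Two small points: boundedness of $\psi$ uses only $a(-y)-a(p-y)\to 0$, not $a\to\infty$; and the harmonicity identities need $p_n(y)\to 0$ to kill the boundary term in the telescoping sum, which holds here but deserves a word.
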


We are able to generalize this proposition to a larger class of dynamical systems. 
To our knowledge, our proof of Proposition~\ref{prop:Spitzer} is new even for random walks.
We leverage Theorem~\ref{thm:gene} and \cite[Theorem~6.8]{Thomine:2015}. Whenever the hypotheses 
of theses theorems coincide, their conclusions must be the same. Hence, the scaling factors before 
the $MLGM$ distribution must be the same, that is,
\begin{equation}\label{eq:InvarianceGKRestreinte}
\sigma_{GK}^2 (f, \widetilde{A}, \tilde{\mu}, \widetilde{T})
= \sigma_{GK}^2 (f_{\{0\}}, A, \mu, \widetilde{T}_{\{0\}}).
\end{equation}
If we apply this observation to $f = f_p$, we get:

\begin{corollary}\label{cor:AlphaG}

Let $(\widetilde{A}, \tilde{\mu}, \widetilde{T})$ be an aperiodic Markov $\Z^d$-extension 
of a Gibbs-Markov map $(A,\pi,\lambda,\mu,T)$ with step function $F$. Assume that the extension is 
ergodic, conservative, and either of the following hypotheses:
\begin{itemize}
\item $d=1$ and $F$ is in the domain of attraction of an $\alpha$-stable distribution, with $\alpha \in (1,2]$.
\item $d=1$ and $\int_A e^{iuF}\dd \mu = e^{-\vartheta|u|[1-i\zeta\sgn(u)]L(|u|^{-1})} + o\left(|u| L(|u|^{-1})\right)$ at $0$, 
for some real numbers $\vartheta>0$ and $\zeta\in\R$ and some function $L$ with slow variation. 
\item $d=2$ and $F$ is in the domain of attraction of a non-degenerate Gaussian random variable.
\end{itemize}

Then, as $p\rightarrow +\infty$,
\begin{equation*}
\alpha(p) 
\sim g(p).
\end{equation*}
\end{corollary}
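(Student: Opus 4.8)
The plan is to combine the two ingredients already assembled in the excerpt: the generalized central limit theorem in its two incarnations, and the renewal-theoretic computation of $g(p)$. First I would specialize Theorem~\ref{thm:gene} to the observable $f=f_p$, so that $f(x,q)=\beta(q)$ with $\beta=\mathbf{1}_{\{p\}}-\mathbf{1}_{\{0\}}$; the hypotheses on $\beta$ are trivially satisfied since $\beta$ has finite support and sums to zero, so formula~\eqref{AAA1} applies and gives $\sigma_{GK}^2(f_p,\widetilde A,\tilde\mu,\widetilde T)=2g(p)-2$ after unwinding $\sum_{a,b}\beta(a)\beta(b)\mu(S_k=a-b)$, exactly as stated just after Proposition~\ref{prop:renouvellement}. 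This requires checking that the spectral hypothesis~\ref{hyp:HHH} holds in each of the three listed regimes: for a Gibbs-Markov map, $F$ in the domain of attraction of an $\alpha$-stable law ($d=1$, $\alpha\in(1,2]$), the characteristic-function condition in the second bullet (again $d=1$, $\alpha=1$), and $F$ in the domain of attraction of a non-degenerate Gaussian ($d=2$, $\alpha=2$) — these are precisely the cases covered by the spectral local limit theory of \cite{AaronsonDenker:2001} for Gibbs-Markov maps, so Hypothesis~\ref{hyp:HHH} is met and $\alpha$ lands in $[d,2]$ as required. Note also the second bullet corresponds to the $d=\alpha=1$ slowly-varying case, which is why Proposition~\ref{prop:renouvellement} needs the function $I$ from~\eqref{eq:FormuleI}.

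Next I would invoke the induced counterpart \cite[Theorem~6.8]{Thomine:2015}, which under the same hypotheses gives a strong distributional convergence of $S_n^{\widetilde T_{\{0\}}} f_{\{0\}}/\Afrak_n'$ to a $MLGM$ law with scaling constant $\sigma_{GK}(f_{\{0\}},A,\mu,\widetilde T_{\{0\}})$. Both theorems produce, after the same normalization, the same $MLGM(1-d/\alpha)$ limit multiplied by their respective Green-Kubo constants; since a $MLGM(\gamma)$ distribution determines its scale (its variance is $\Gamma(1+\gamma)$ times the square of the scale), the two constants must coincide, which is the identity~\eqref{eq:InvarianceGKRestreinte}. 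Applying this with $f=f_p$ and using Theorem~\ref{thm:ConvergenceNp}, which identifies $\sigma_{GK}^2(f_{p,\{0\}},A,\mu,\widetilde T_{\{0\}})/2$ as asymptotically equal to $\alpha(p)$, and combining with $\sigma_{GK}^2(f_p,\widetilde A,\tilde\mu,\widetilde T)=2g(p)-2$, I obtain
\begin{equation*}
\alpha(p)\sim \frac{\sigma_{GK}^2(f_{p,\{0\}},A,\mu,\widetilde T_{\{0\}})}{2}=\frac{\sigma_{GK}^2(f_p,\widetilde A,\tilde\mu,\widetilde T)}{2}=g(p)-1\sim g(p),
\end{equation*}
the last step using $g(p)\to\infty$, which follows from Proposition~\ref{prop:renouvellement} in all three regimes.

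The bookkeeping obstacle I expect is reconciling the normalizing sequences: Theorem~\ref{thm:gene} uses $\sqrt{\Phi(0)}\Afrak_n$ with $\Afrak_n=\sqrt{\sum_{k=1}^n\mathfrak a_k^{-d}}$, and one must verify that \cite[Theorem~6.8]{Thomine:2015} uses a normalization that is \emph{asymptotically equivalent} to this one when the induced system $(A,\mu,\widetilde T_{\{0\}})$ is examined along the subsequence of return times — otherwise the comparison of scaling constants would pick up a spurious multiplicative factor. The excerpt flags this already: it remarks that $\sqrt{\Phi(0)}\Afrak_n$ is equivalent to $\sqrt{\sum_{k=0}^{n-1}\mu(S_k=0)}$ via the spectral local limit theorem, and $\sum_{k=0}^{n-1}\mu(S_k=0)$ is exactly the expected number of returns to $A\times\{0\}$ by time $n$, i.e.\ the quantity that governs the Darling–Kac normalization of the induced map — so both theorems are normalized by (the square root of) the return sequence of the same induced system. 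Making this identification precise, and confirming that the hypotheses of the two theorems genuinely overlap on the whole range of the corollary (in particular that the Gibbs-Markov assumption of \cite[Theorem~6.8]{Thomine:2015} is compatible with Hypothesis~\ref{hyp:HHH}), is the only delicate point; the rest is substitution.
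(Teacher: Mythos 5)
Your proposal is correct and follows essentially the same route as the paper: the authors likewise verify Hypothesis~\ref{hyp:HHH} for Gibbs-Markov maps (their Proposition~\ref{prop:GMHHH}, built on \cite{AaronsonDenker:2001}), apply Theorem~\ref{thm:gene} and \cite[Theorem~6.8]{Thomine:2015} to $f_p$ to equate the two Green--Kubo constants, and conclude from $\sigma_{GK}^2(f_p,\widetilde A,\tilde\mu,\widetilde T)=2g(p)-2$, Theorem~\ref{thm:ConvergenceNp}, and $g(p)\to\infty$. The normalization matching you flag is resolved in the paper exactly as you suggest, via the local limit theorem equivalence $\sqrt{\Phi(0)}\,\Afrak_n\sim\sqrt{\sum_{k=0}^{n-1}\mu(S_k=0)}$.
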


\begin{remark}[$1$-stable laws]

The description of the distributions in the basin of attraction of a $1$-stable law is 
notoriously difficult~\cite{AaronsonDenker:1998}. As in Hypothesis~\ref{hyp:HHH}, we choose to make a spectral assumption. 
It does not capture all such distributions, but includes e.g.\ symmetric distributions. 
We believe that this assumption can be significantly weakened if needed.
\end{remark}

\subsubsection{An improved distributional limit theorem}
\label{subsubsec:TheoremeLimiteAmeliore}

Proposition~\ref{prop:renouvellement} provides a first-order estimate of $\alpha (p)$, 
depending on the tails of $F$. We can use this estimate to run an (improved version of) 
an argument by Cs\'aki, Cs\"org\H{o}, F\"oldes and R\'ev\'esz~\cite[Lemma~3.1]{CsakiCsorgoFoldesRevesz:1992}. 
we get more explicit integrability conditions than in~\cite[Theorem~6.8]{Thomine:2015} 
for observables of $\Z^d$-extensions, which yields a new distributional limit theorem. 
Note that aperiodicity is not required for this result.

\begin{theorem}
\label{thm:GM}

Let $(\widetilde{A}, \tilde{\mu}, \widetilde{T})$ be a Markov $\Z^d$-extension 
of a Gibbs-Markov map $(A,\pi,\lambda,\mu,T)$ with step function $F$. Assume that the extension is 
ergodic, conservative, and either of the following hypotheses:
\begin{itemize}
\item $d=1$ and $F$ is in the domain of attraction of an $\alpha$-stable distribution, with $\alpha \in (1,2]$.
\item $d=1$ and $\int_A e^{iuF}\dd \mu = e^{-\vartheta|u|[1-i\zeta\sgn(u)]L(|u|^{-1})} + o\left(|u| L(|u|^{-1})\right)$ at $0$, 
for some real numbers $\vartheta>0$ and $\zeta\in\R$ and some function $L$ with slow variation. 
\item $d=2$ and $F$ is in the domain of attraction of a non-degenerate Gaussian random variable.
\end{itemize}

\smallskip

Let $f:\widetilde{A} \rightarrow \R$ be such that:
\begin{itemize}
\item the family of function $(f (\cdot, p))_{p \in \Z^d}$ is uniformly locally $\eta$-H\"older for some $\eta > 0$;
\item $\int_{\widetilde{A}} (1+|p|)^{\frac{\alpha-d}{2}+\varepsilon} \norm{f(\cdot,p)}{\Lbb^q (A,\mu)} \dd \tilde{\mu}(x,p) <+\infty$ for some $\varepsilon >0$ and $q>2$; 
\item $\int_{\widetilde{A}} f \dd \tilde{\mu}=0$.
\end{itemize}
Then:
\begin{equation}\label{eq:ConvergenceDistributionExcursions}
\frac{S_n^{\widetilde{T}} f}{\sqrt{\sum_{k=0}^{n-1} \mu (S_k=0)}}
\Rightarrow \sigma_{GK} (f_{\{0\}}, A, \mu, \widetilde{T}_{\{0\}}) \Ycal,
\end{equation}
where $\Ycal$ is a standard $MLGM(1-\frac{d}{\alpha})$ random variable, the convergence is strong in distribution, and:
\begin{equation*}
\sigma_{GK}^2 (f_{\{0\}}, A, \mu, \widetilde{T}_{\{0\}})
:= \lim_{n \to + \infty} \int_A f_{\{0\}}^2 \dd\mu + 2 \sum_{k=1}^n \int_A f_{\{0\}} \cdot f_{\{0\}} \circ \widetilde{T}_{\{0\}}^k \dd \mu,
\end{equation*}
where the limit is taken in the Ces\`aro sense.
\end{theorem}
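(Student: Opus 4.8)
The plan is to deduce Theorem~\ref{thm:GM} from the already-proven Theorem~\ref{thm:gene} together with \cite[Theorem~6.8]{Thomine:2015}, by cutting the ergodic sum along excursions from $A \times \{0\}$ and controlling the error terms with the potential kernel estimates of Proposition~\ref{prop:renouvellement} (equivalently, Corollary~\ref{cor:AlphaG}). Concretely, I would first reduce to the case where $f$ depends only on the second coordinate. Write $f(x,p) = \beta(p) + g(x,p)$ where $\beta(p) := \int_A f(\cdot,p)\dd\mu$ and $g(x,p)$ has zero integral over each fiber $A \times \{p\}$. The integrability hypothesis $\int_{\widetilde A}(1+|p|)^{\frac{\alpha-d}{2}+\varepsilon}\|f(\cdot,p)\|_{\Lbb^q}\dd\tilde\mu$ controls $\beta$ exactly as required by Theorem~\ref{thm:gene}: indeed $|\beta(p)| \leq \|f(\cdot,p)\|_{\Lbb^1} \leq \|f(\cdot,p)\|_{\Lbb^q}$, so $\sum_p |p|^{\frac{\alpha-d}{2}+\varepsilon}|\beta(p)| < +\infty$ and $\sum_p \beta(p) = \int_{\widetilde A} f\dd\tilde\mu = 0$; hence Theorem~\ref{thm:gene} applies to $f_\beta(x,p) := \beta(p)$.

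For the fiberwise-centered part $g$, the strategy is to show that its contribution to the Birkhoff sum, after the normalization $\sqrt{\sum_{k=0}^{n-1}\mu(S_k=0)}$, converges to zero strongly in distribution, and then invoke the induction-invariance identity \eqref{eq:InvarianceGKRestreinte} to identify the limiting constant. The natural tool here is the inducing argument of \cite[Lemma~3.1]{CsakiCsorgoFoldesRevesz:1992}, improved as the introduction advertises: decompose $S_n^{\widetilde T} g = \sum_{j=0}^{N_n-1} g_{\{0\}}\circ\widetilde T_{\{0\}}^j + (\text{boundary term})$ where $N_n$ is the number of returns to $A \times \{0\}$ before time $n$, with $N_n \asymp \sum_{k=0}^{n-1}\mu(S_k=0)$ by Hopf's ergodic theorem. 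Since $(A,\mu,\widetilde T_{\{0\}})$ is a probability-preserving Gibbs-Markov(-like) system, if $g_{\{0\}} \in \Lbb^2(A,\mu)$ and has zero mean, the induced partial sums are $O(\sqrt{N_n})$ in probability, hence $O(\sqrt{\Afrak_n^2}) = o(\Afrak_n^2)^{1/2}\cdot(\text{something})$... more precisely, because $g_{\{0\}}$ has zero mean on $A$, the induced sum over $N_n$ terms is $O_{\Pbb}(\sqrt{N_n \log N_n})$ or better, which is $o(\Afrak_n \cdot \sqrt{N_n/\Afrak_n^2})$; the point is that the $MLGM$ normalization $\Afrak_n \sim \sqrt{N_n}$ is exactly the \emph{right} order for the $\beta$-part (which does not have zero induced mean — its induced version $f_{\beta,\{0\}}$ carries the Green-Kubo variance) but is \emph{too large} for the genuinely centered induced observable $g_{\{0\}}$, forcing its normalized contribution to $0$. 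The key estimate that makes $g_{\{0\}} \in \Lbb^2$ is precisely where Proposition~\ref{prop:renouvellement}/Corollary~\ref{cor:AlphaG} enters: one bounds $\|g_{\{0\}}\|_{\Lbb^2}^2$ by a sum over sites $p$ of $\|g(\cdot,p)\|_{\Lbb^q}^2$ weighted by $\alpha(p) \asymp g(p) \asymp |p|^{\alpha-d}/L(|p|)$ (for $d=1$, $\alpha>1$; logarithmic weights when $\alpha=d$), and the hypothesis $\int_{\widetilde A}(1+|p|)^{\frac{\alpha-d}{2}+\varepsilon}\|f(\cdot,p)\|_{\Lbb^q}\dd\tilde\mu < \infty$ combined with Hölder (using $q>2$) makes this sum finite. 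Hölder continuity of the fibers is used, as in \cite{CsakiCsorgoFoldesRevesz:1992}, to control the excursion sums $g_{\{0\}}$ via the Gibbs-Markov structure (distortion/Lasota-Yorke estimates) rather than mere measurability.

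Having established that the $g$-contribution is negligible, the limit in distribution of $S_n^{\widetilde T} f / \sqrt{\sum_{k=0}^{n-1}\mu(S_k=0)}$ equals that of $S_n^{\widetilde T} f_\beta / \sqrt{\Phi(0)}\Afrak_n$ up to the normalization equivalence $\sqrt{\Phi(0)}\Afrak_n \sim \sqrt{\sum_{k=0}^{n-1}\mu(S_k=0)}$ noted after Theorem~\ref{thm:gene}; by Theorem~\ref{thm:gene} this is $\sigma_{GK}(f_\beta, \widetilde A, \tilde\mu, \widetilde T)\,\Ycal$ with $\Ycal$ a standard $MLGM(1-\tfrac{d}{\alpha})$ variable. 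It remains to check $\sigma_{GK}^2(f_\beta,\widetilde A,\tilde\mu,\widetilde T) = \sigma_{GK}^2(f_{\{0\}}, A,\mu,\widetilde T_{\{0\}})$ in the Cesàro sense. One argument: apply \cite[Theorem~6.8]{Thomine:2015} to $f$ itself (its hypotheses are weaker, requiring only $\Lbb^q$ fiberwise decay of the stated polynomial order, which we have), obtaining convergence to $\sigma_{GK}(f_{\{0\}},A,\mu,\widetilde T_{\{0\}})\Ycal$ with the \emph{same} normalization and the \emph{same} limit law; since both limits must agree and $\Ycal$ is non-degenerate, the two Green-Kubo constants coincide and in particular the Cesàro limit defining $\sigma_{GK}^2(f_{\{0\}},\cdot)$ exists and equals the convergent sum for $f_\beta$. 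Alternatively, one verifies directly that $f_{\{0\}}$ and $f_{\beta,\{0\}}$ differ by $g_{\{0\}}$, which has zero mean and for which the cross terms $\sum_k \int g_{\{0\}}\cdot f_{\beta,\{0\}}\circ\widetilde T_{\{0\}}^k$ and $\sum_k \int g_{\{0\}}\cdot g_{\{0\}}\circ\widetilde T_{\{0\}}^k$ sum to zero in the Cesàro sense by the decorrelation estimates of the Gibbs-Markov map — this is consistent with, and in fact the dynamical shadow of, the negligibility established in the previous paragraph.

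The main obstacle is the second paragraph: making rigorous that the fiberwise-centered part $g$ contributes negligibly. Two technical points need care. First, one must handle the boundary excursion (the last, incomplete excursion up to time $n$, plus possibly the truncation that $S_n^{\widetilde T} g$ may involve visits to sites $p$ at times that straddle returns to $0$); here exponential tightness of $\alpha(p)^{-1}N_p\mid\{N_p>0\}$ from Theorem~\ref{thm:ConvergenceNp} is the natural control on how long excursions linger near any fixed site. Second, and more delicate, is bounding $\|g_{\{0\}}\|_{\Lbb^2}$: the excursion sum $g_{\{0\}}(x) = \sum_{k<\varphi_{\{0\}}(x)} g(\widetilde T^k(x,0))$ ranges over all sites visited, and one must distribute the $\Lbb^2$ norm over sites using the occupation-time bounds $\Ebb_\mu[N_p] \asymp \alpha(p) \asymp g(p)$ from Proposition~\ref{prop:renouvellement} together with $\Ebb_\mu[N_p^2] \asymp \alpha(p)^2$ from Theorem~\ref{thm:ConvergenceNp}; a clean way is Minkowski's inequality $\|g_{\{0\}}\|_{\Lbb^2} \leq \sum_p \| N_p \cdot g(\cdot \text{ evaluated at site } p)\|_{\Lbb^2}$ followed by Hölder in $q$ versus $2q/(q-2)$ to separate the visit-count from the observable value, which is exactly where the exponent $\tfrac{\alpha-d}{2}$ and the requirement $q>2$ are consumed. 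The $\alpha=d$ cases demand the logarithmic refinements of $g(p)$ and hence slightly more bookkeeping, but no new idea.
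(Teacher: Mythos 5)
The central step of your argument --- that the fiberwise-centered part $g(x,p) := f(x,p) - \beta(p)$ contributes negligibly after the normalization $\sqrt{\sum_{k=0}^{n-1}\mu(S_k=0)} \sim \sqrt{\Phi(0)}\,\Afrak_n$ --- is false, and the reasoning you give for it rests on a misreading of the scales. Both $f_{\beta,\{0\}}$ and $g_{\{0\}}$ have zero mean on $(A,\mu)$ (by Kac's formula, since both $f_\beta$ and $g$ have zero integral on $\widetilde{A}$), and both therefore fluctuate at the \emph{same} scale $\sqrt{N_n}\asymp \Afrak_n$ under the induced map: $\sqrt{N_n}$ is precisely the CLT scale for $N_n$ iterations of the probability-preserving system $(A,\mu,\widetilde{T}_{\{0\}})$, and that is exactly the normalization of the theorem. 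There is no ``too large'' normalization killing the $g$-part; your claim that $f_{\beta,\{0\}}$ ``does not have zero induced mean'' is incorrect, and $O_{\Pbb}(\sqrt{N_n\log N_n})$ is larger than $\Afrak_n$, not smaller. A concrete counterexample to negligibility: take $f$ supported on $A\times\{0\}$ with $\int_A f(\cdot,0)\dd\mu=0$ and $f(\cdot,0)$ not a coboundary. Then $\beta\equiv 0$, so your decomposition gives $f_\beta=0$ and $g=f$, yet the theorem's limit $\sigma_{GK}(f_{\{0\}},A,\mu,\widetilde{T}_{\{0\}})\,\Ycal$ is nondegenerate. More generally the limit constant depends on the full $f$, not on the fiber averages $\beta(p)$, so no argument can reduce the theorem to Theorem~\ref{thm:gene} applied to $f_\beta$ alone. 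Your ``alternative'' identification of the constant via decorrelation is wrong for the same reason: decay of correlations makes individual terms $\int g_{\{0\}}\cdot g_{\{0\}}\circ\widetilde{T}_{\{0\}}^k\dd\mu$ small, but their sum is the Green--Kubo variance of $g_{\{0\}}$, which vanishes only for coboundaries.

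The route the paper actually takes is the one you mention in passing and then discard: Theorem~\ref{thm:GM} \emph{is} \cite[Theorem~6.8]{Thomine:2015} applied directly to $f$, and the entire content of the proof is verifying its hypothesis $|f|_{\{0\}}\in\Lbb^q(A,\mu)$ for some $q>2$ --- a hypothesis you assert ``we have'' without justification, when in fact checking it is the hard part. The correct logical chain is: apply both Theorem~\ref{thm:gene} and \cite[Theorem~6.8]{Thomine:2015} to \emph{finitely supported} $\beta$ (in particular $\beta=\mathbf{1}_p-\mathbf{1}_0$), identify the limits to get $\alpha(p)\sim g(p)$ (Corollary~\ref{cor:AlphaG}), invoke Proposition~\ref{prop:renouvellement} and Potter's bound to get $\alpha(p)=O((1+|p|)^{\alpha-d+\delta})$, and then feed this into the estimate $\norm{(f\mathbf{1}_p)_{\{0\}}}{\Lbb^q}\leq C\alpha(p)^{1-1/q}\norm{f(\cdot,p)}{\Lbb^q}$ (Lemma~\ref{lem:sommabi}, which uses the occupation-time control of Theorem~\ref{thm:ConvergenceNp}) and sum over $p$ with Minkowski; the exponent arithmetic $(\alpha-d+\delta)(1-\tfrac1q)\leq\tfrac{\alpha-d}{2}+\varepsilon$ for $q$ close to $2$ is where $q>2$ and the exponent $\tfrac{\alpha-d}{2}+\varepsilon$ are consumed. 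Your final paragraph contains essentially this computation, so you have the right ingredients; but they must be the main argument (establishing integrability of $|f|_{\{0\}}$ so that the excursion limit theorem applies to $f$ itself), not a patch on a decomposition that cannot work. You also omit the reduction of the non-aperiodic case, which the paper handles by quotienting by the lattice of essential values.
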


\begin{remark}[Optimal exponent in the summability assumption]

We consider the case when $d=1$ and $\alpha=2$. In~\cite{CsakiCsorgoFoldesRevesz:1992} and some 
subsequent works by the same authors, the condition required for $f$ is:
\begin{equation}
\sum_{p \in G} |p|^{1+\varepsilon} |\beta(p)|
< + \infty.
\end{equation}
The reason is that the authors used Jensen's inequality in their proof~\cite[Lemma~2.1]{CsakiCsorgoFoldesRevesz:1992}, 
which is in this context less efficient than Minkowski's inequality, which we used in the proof of Lemma~\ref{lem:sommabi}. 
This small modification can be implemented in their proof, which improves by a factor $2$ some requirements in their works, 
e.g.\ \cite[Theorem~1]{CsakiCsorgoFoldesRevesz:1992} and \cite[Example~3.3]{CsakiFoldes:1998}.
\end{remark}

Finally, the hypotheses of Theorem~\ref{thm:gene} and of Theorem~\ref{thm:GM} 
have a greater overlap than those of Theorem~\ref{thm:gene} and \cite[Theorem~6.8]{Thomine:2015}, 
so we can improve the observation in Equation~\eqref{eq:InvarianceGKRestreinte}:

\begin{corollary}[Induction Invariance of the Green-Kubo formula]\label{cor:InvarianceInduction}

Let $(A, \pi, \lambda, \mu, T)$ be an ergodic Gibbs-Markov map. Assume that the step function $F : \ A \to \Z^d$ 
is $\sigma (\pi)$-measurable, integrable, aperiodic, and that $\int_A F \dd \mu=0$. We also assume that the distribution of $F$ 
with respect to $\mu$ is in the domain of attraction of an $\alpha$-stable distribution, 
and that the Markov extension $(\widetilde{A},\tilde{\mu},\widetilde{T})$ is conservative and ergodic.

\smallskip

Let $\beta: \Z^d \rightarrow \R$ be such that:
\begin{itemize}
\item $\sum_{p\in\Z^d}|p|^{\frac{\alpha-d}{2}+\varepsilon}|\beta(p)|<+\infty$ for some $\varepsilon >0$;
\item $\sum_{p\in\Z^d} \beta(p)=0$.
\end{itemize}
Let $f(x,p):=\beta(p)$. Then:
\begin{equation}
\sigma_{GK} (f, \widetilde{A}, \tilde{\mu}, \widetilde{T})
= \sigma_{GK} (f_{\{0\}}, A, \mu, \widetilde{T}_{\{0\}}).
\end{equation}
\end{corollary}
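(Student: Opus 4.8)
The plan is to obtain Corollary~\ref{cor:InvarianceInduction} purely by comparing the conclusions of two distributional limit theorems that, under the stated hypotheses, apply simultaneously to the same observable. First I would check that the hypotheses on $(A,\pi,\lambda,\mu,T)$, $F$, and $\beta$ listed here are strong enough to invoke both Theorem~\ref{thm:gene} and Theorem~\ref{thm:GM} for $f(x,p) := \beta(p)$. On the one hand, a Gibbs-Markov map with $\sigma(\pi)$-measurable aperiodic integrable step function $F$ whose distribution lies in the domain of attraction of an $\alpha$-stable law satisfies Hypothesis~\ref{hyp:HHH} (this is exactly the spectral picture guaranteed by the Nagaev-Guivarc'h perturbation theory for Gibbs-Markov maps, see the discussion around Theorem~\ref{thm:gene}), so Theorem~\ref{thm:gene} applies and gives
\begin{equation*}
\frac{S_n^{\widetilde{T}} f}{\sqrt{\Phi(0)}\,\Afrak_n}
\Rightarrow \sigma_{GK}(f,\widetilde{A},\tilde{\mu},\widetilde{T})\,\Ycal .
\end{equation*}
On the other hand, since $\beta$ is bounded with finite support-type weighted sum and constant on fibers, the observable $f(x,p)=\beta(p)$ trivially satisfies the uniform local H\"older condition and the weighted $\Lbb^q$-summability condition of Theorem~\ref{thm:GM} (the $\norm{f(\cdot,p)}{\Lbb^q(A,\mu)}$ is just $|\beta(p)|$), so Theorem~\ref{thm:GM} also applies and gives
\begin{equation*}
\frac{S_n^{\widetilde{T}} f}{\sqrt{\sum_{k=0}^{n-1}\mu(S_k=0)}}
\Rightarrow \sigma_{GK}(f_{\{0\}},A,\mu,\widetilde{T}_{\{0\}})\,\Ycal .
\end{equation*}

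Next I would reconcile the two normalizations. As recorded in the remarks following Theorem~\ref{thm:gene}, under Hypothesis~\ref{hyp:HHH} the local limit theorem $\Pbb(S_n=0)\sim\Phi(0)\mathfrak{a}_n^{-d}$ holds, so $\sqrt{\sum_{k=0}^{n-1}\mu(S_k=0)}\sim\sqrt{\Phi(0)}\,\Afrak_n$. Hence the left-hand sides of the two displayed convergences differ only by an asymptotically-one factor, and both are driven by the \emph{same} $MLGM(1-\frac{d}{\alpha})$ variable $\Ycal$ on the right (the index matches because both theorems read it off the index $\alpha$ of the stable law attracting $S_n/\mathfrak{a}_n$). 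Since strong convergence in distribution with a fixed nondegenerate limit pins down the scaling constant uniquely — if $c_n X_n\Rightarrow aZ$ and $c_n X_n\Rightarrow bZ$ strongly with $Z$ nondegenerate and $a,b\ge 0$, then $a=b$ — we conclude $\sigma_{GK}(f,\widetilde{A},\tilde{\mu},\widetilde{T}) = \sigma_{GK}(f_{\{0\}},A,\mu,\widetilde{T}_{\{0\}})$, which is the claim. (Here one should note the Green-Kubo quantities are nonnegative by the convergence statements themselves, so no sign ambiguity arises; and one uses that $MLGM(\gamma)$ is nondegenerate, being a genuine mixture with a normal factor.)

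The main obstacle is the verification that the two theorems genuinely have overlapping hypotheses for this $f$ — in other words, that the Gibbs-Markov + domain-of-attraction package in the corollary implies both Hypothesis~\ref{hyp:HHH} (needed for Theorem~\ref{thm:gene}) and the conservativity/ergodicity conditions already assumed, without any hidden mismatch in the admissible range of $\alpha$ or in the aperiodicity requirement. For $d=2$ one must make sure the Gaussian domain-of-attraction case sits inside the $\alpha=2$, $d=2$ regime of Theorem~\ref{thm:gene} with $\alpha\in[d,2]$, i.e. $\alpha=d=2$; for $d=1$ one covers $\alpha\in(1,2]$ directly and the borderline $\alpha=1$ symmetric/spectral case via the explicit characteristic-function hypothesis, which is precisely designed to fit Hypothesis~\ref{hyp:HHH}. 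Once this bookkeeping is done, the proof is a two-line comparison argument; no further analytic estimates are required, since everything hard has been front-loaded into Theorems~\ref{thm:gene} and~\ref{thm:GM}. Finally, I would remark that this is exactly the refinement promised in the text: it upgrades Equation~\eqref{eq:InvarianceGKRestreinte} (obtained by comparing Theorem~\ref{thm:gene} with \cite[Theorem~6.8]{Thomine:2015}) by using Theorem~\ref{thm:GM} in place of the older result, whose hypotheses overlapped those of Theorem~\ref{thm:gene} on a strictly smaller set of observables.
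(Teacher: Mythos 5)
Your proposal is correct and follows exactly the paper's intended argument: the corollary is obtained by applying Theorem~\ref{thm:gene} and Theorem~\ref{thm:GM} to the same observable $f(x,p)=\beta(p)$, noting that the normalizations $\sqrt{\Phi(0)}\,\Afrak_n$ and $\sqrt{\sum_{k=0}^{n-1}\mu(S_k=0)}$ are equivalent by the local limit theorem, and identifying the scaling constants in front of the common nondegenerate $MLGM(1-\tfrac{d}{\alpha})$ limit (this is precisely the improvement of Equation~\eqref{eq:InvarianceGKRestreinte} announced in the text, and the strategy described in Appendix~\ref{sec:GreenKubo}). Your bookkeeping of the hypotheses — Proposition~\ref{prop:GMHHH} for Hypothesis~\ref{hyp:HHH}, the trivial verification of the H\"older and weighted summability conditions since $\norm{f(\cdot,p)}{\Lbb^q(A,\mu)}=|\beta(p)|$ — matches what the paper relies on.
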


See Appendix~\ref{sec:GreenKubo} for a discussion of this corollary.

\subsection{Applications}
\label{subsec:Applications}

To finish this introduction, we present some applications of our results to 
more concrete dynamical systems : the geodesic flow on abelian covers in negative 
curvature, and Lorentz gases (i.e.\ periodic planar billiards). 
The proofs can be found in Section~\ref{sec:Applications}.

\subsubsection{Periodic planar billiard systems}
\label{subsubsec:IntroBillard}

Lorentz gases -- that is, periodic or quasi-periodic convex billiards -- 
are classical dynamical systems, whose initial motivation comes from the modelization 
of a gas of electrons in a metal. The electron is then seen as bouncing on the atoms 
of the metal, which act as scatterers.

\smallskip

In the plane and with a finite horizon, Lorentz gases exhibit classical diffusion, and the trajectory of a particle 
behaves much like a random walk in the Euclidean space. For instance, the trajectories 
are chaotic~\cite{Sinai:1970}, satisfy a central limit theorem~\cite{BunimovichSinai:1981, BunimovichChernovSinai:1991}, 
a local limite theorem~\cite{SzaszVarju:2004}, 
an almost sure invariance principle~\cite{Gouezel:2010} (i.e.\ the renormalized trajectories converge 
in a strong sense to the trajectories of a Brownian motion), etc. We refer the reader 
to~\cite{ChernovMarkarian:2006} for more informations of billiards. 
While the infinite horizon case is also well-known~\cite{SzaszVarju:2007,DolgopyatSzaszVarju:2008}, 
it presents many non-trivial difficulties, so we shall restrict ourselves to finite horizon planar billiards.

\smallskip

Choose a $\Z^2$-periodic locally finite configuration of obstacles 
$(p+O_i: \ i\in\Ical, \ p \in \Z^2)$, where $\Ical$ is a finite set. 
We assume that the obstacles $O_i+p$ are convex open sets, with pairwise disjoint closures (so that there is no cusp), 
that their boundaries are $\Ccal^3$ and have non-vanishing curvature. We assume moreover that the horizon is finite: 
every line in $\R^2$ meets at least one obstacle. The billiard domain is the complement in $\R^2$
of the union of the obstacles $Q:=\R^2\setminus\bigcup_{i\in\Ical,\ p\in \Z^2} (p+O_i)$.

\begin{figure}[h!]
\vspace{1cm}
\centering
\includegraphics[scale=1.7, trim= 10mm 20mm 10mm 20mm, clip]{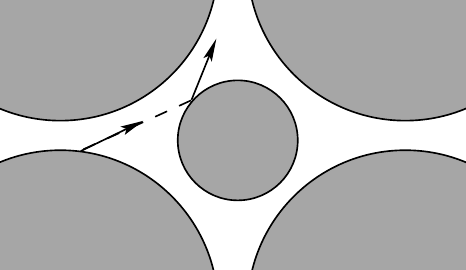}
\caption{A Sinai billiard with finite horizon.}
\end{figure}

We consider a point particle moving at unit speed in the billiard domain $Q$, 
bouncing on obstacles with the classical Descartes reflexion law: the incident angle equals the reflected angle 
and going straight on between two collisions. This is the billiard flow, whose configuration space 
is (up to a set of zero measure) $Q \times \Sbb_1$. Now, consider this model at collision times; 
the configuration space is then given by $\Omega := \partial Q \times [-\pi/2, \pi/2]$. The space 
$\Omega$ is endowed with the Liouville measure $\tilde{\nu}$, which has density $\cos (\phi)$ in $(x, \phi)$ 
with respect to the Lebesgue measure (see the picture), and is invariant under the collision map.

\begin{figure}[h!]
\vspace{1cm}
\centering
\includegraphics[scale=1.6, trim= 10mm 25mm 35mm 25mm, clip]{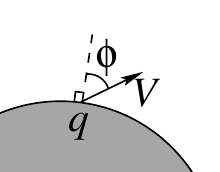}
\caption{A single collision.}
\end{figure}

For every $p\in \Z^2$, we call \textit{cell} any set $\Ccal_p:=\bigcup_{i\in\Ical} (p+\partial{O_i})$
and attribute to this cell a value $\beta(p)$ given by a function $\beta: \Z^2 \rightarrow \R$. 
We assume that the particle wins the value $\beta(p)$ associated to $\Ccal_p$ each time it touches it.
We are interested in the behaviour, as $n\rightarrow +\infty$, of the total amount $\Ycal_n$ won by
the particle after the $n$-th reflection.

\begin{figure}[h!]
\vspace{1cm}
\centering
\includegraphics[scale=0.5, trim= 10mm 40mm 10mm 50mm, clip]{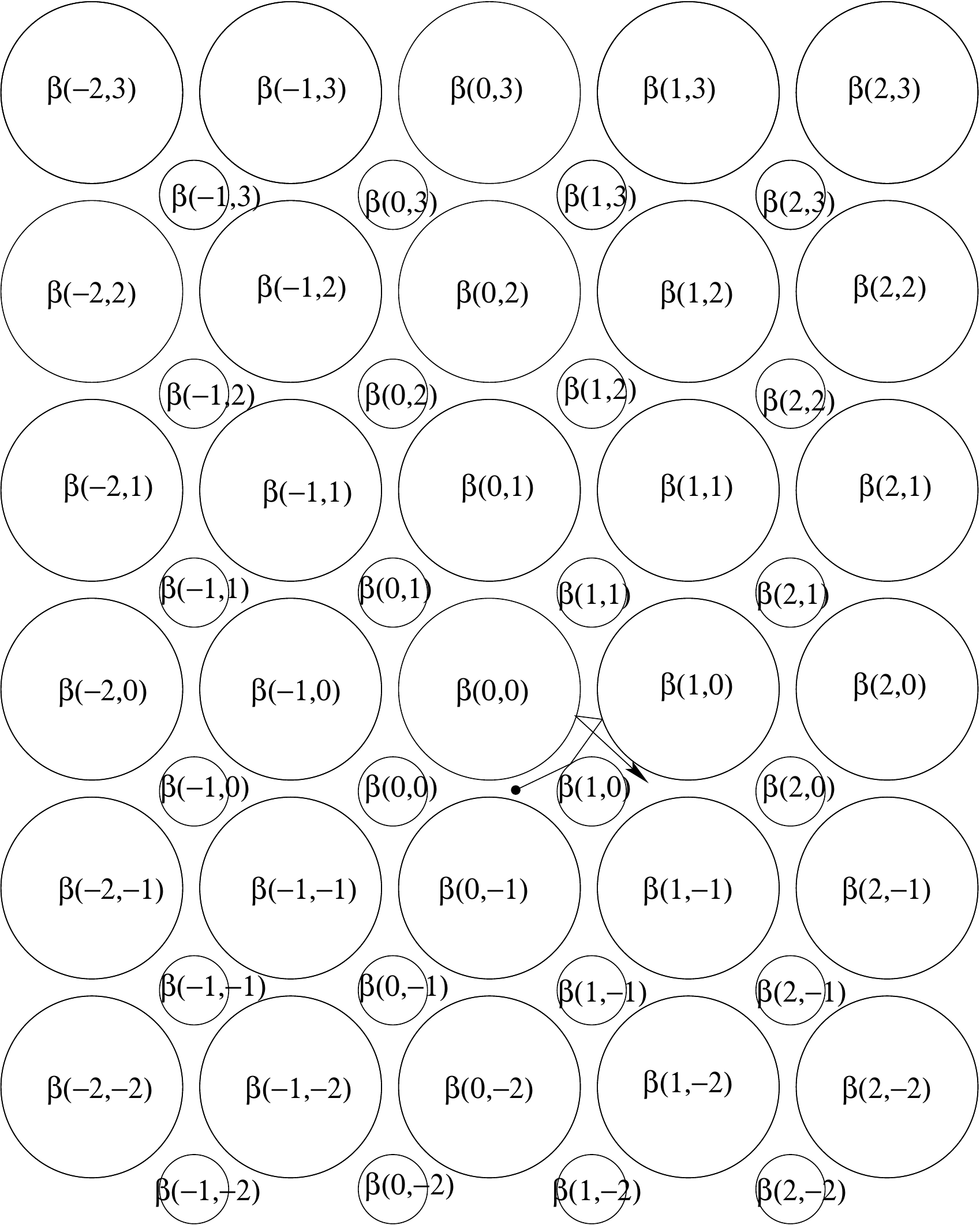}
\caption{A periodic billiard table, and the observable $\beta$.}
\end{figure}

We write $S_n(x)$ for the index in $\Z^2$ of the cell touched at the $n$-th reflection time by a particle 
starting from $x\in \Omega$. Recall that $(S_n/\sqrt{n})_n$ converges strongly in distribution 
(with respect to the Lebesgue measure on $\Omega$) to a centered gaussian random variable with 
positive definite covariance matrix $\Sigma$ \cite{BunimovichSinai:1981,BunimovichChernovSinai:1991,Young:1998}.

\smallskip

If $\beta$ is summable and $\sum_{p \in \Z^2} \beta(p) \neq 0$, then $\Ycal_n/ \log (n)$ 
converges strongly in distribution to $\left( \sum_{p \in \Z^2} \beta(p) \neq 0 \right) \Ecal$, 
where $\Ecal$ has a non-degenerate exponential distribution. This follows e.g.\ from 
\cite[Corollary~3.7.3]{Aaronson:1997} and Young's construction~\cite{Young:1998}, and is also 
done in~\cite{DolgopyatSzaszVarju:2008}. In another direction, if $(\beta(p))_{p \in \Z^2}$ 
is a sequence of independent identically distributed random variables independent of 
the billiard, the asymptotic behavior of $(\Ycal_n)$ is markedly different~\cite{Pene:2009a}.

\smallskip

We present two applications of Theorem~\ref{thm:gene}, the first for (hidden) $\Z$-extensions, 
and the second for $\Z^2$-extensions.

\begin{corollary}\label{coro:bill1}

With the above notations, assume that:
\begin{itemize}
\item $\beta (a,b) = \tilde{\beta} (a)$ for some function $\tilde{\beta}$;
\item there exists $\varepsilon >0$ such that $\sum_{p \in \Z} |p|^{\frac{1}{2}+\varepsilon} |\tilde{\beta}|(p) < + \infty$;
\item $\sum_{p \in \Z} \tilde{\beta}(p) =0$.
\end{itemize}

\smallskip

Then:
\begin{equation*}
\lim_{n \to + \infty} \frac{1}{n^{\frac{1}{4}}} \Ycal_n 
= \sigma(f) \Ycal,
\end{equation*}
where the convergence is strong in distribution on $(\Omega, \Leb)$, the random variable 
$\Ycal$ follows standard $MLGM(1/2)$ distribution, and:
\begin{equation*}
\sigma (f)^2 = \sqrt{\frac{2}{\pi \Sigma_{1,1}}} \sum_{k\in\Z} \sum_{a,b\in\Z^2} \beta(a)\beta(b) \tilde{\nu} (S_k=a-b|\Ccal_0).
\end{equation*}
In addition, $\sigma(f) = 0$ if and only if $f$ is a coboundary.
\end{corollary}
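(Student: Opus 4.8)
The plan is to derive Corollary~\ref{coro:bill1} from Theorem~\ref{thm:gene} by exhibiting the collision map of the finite-horizon planar billiard as a $\Z^2$-extension of a probability-preserving system whose transfer operator satisfies Hypothesis~\ref{hyp:HHH}, and then specializing to observables $\beta$ that factor through the first coordinate (so that the system is effectively a $\Z$-extension with $\alpha = 2$, $d = 1$). First I would recall Young's tower construction for the collision map: the billiard map $(\Omega, \tilde\nu, \mathcal{T})$ modulo the $\Z^2$-action descends to a probability-preserving map $(A, \mu, T)$ on the fundamental domain, with step function $F = S_1 : A \to \Z^2$ counting the cell displacement at one collision. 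The work of \cite{Young:1998, SzaszVarju:2004} gives a spectral gap and the quasi-compactness, local limit theorem and perturbed transfer operator estimates needed to verify Hypothesis~\ref{hyp:HHH} on a suitable Banach space $\mathcal{B}$, with $F$ in the domain of attraction of a Gaussian, i.e.\ $\alpha = 2$; the normalizing sequence is $\mathfrak{a}_n \sim c\sqrt{n}$, so $\Afrak_n \sim c'\sqrt{n/\sqrt{n}} = c' n^{1/4}$, which accounts for the claimed exponent $1/4$.

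Next I would check that the hypotheses on $\tilde\beta$ match those of Theorem~\ref{thm:gene}. Setting $\beta(a,b) := \tilde\beta(a)$ and $f(x,p) := \beta(p)$, the condition $\sum_{p\in\Z}|p|^{1/2+\varepsilon}|\tilde\beta|(p) < +\infty$ is exactly the summability hypothesis $\sum_{p}|p|^{(\alpha-d)/2+\varepsilon}|\beta(p)| < +\infty$ once one notes that, because $\beta$ depends only on the first coordinate, the relevant walk is the one-dimensional projection $S_n^{(1)}$ of $S_n$, for which $d=1$; and $\sum_p\beta(p) = 0$ follows from $\sum_p\tilde\beta(p)=0$. (One should be slightly careful here: $f$ formally lives on $A\times\Z^2$, but since it is constant on the fibers $\{a\}\times\Z$, the two-dimensional local limit theorem collapses to the one-dimensional one, and $\Phi(0)$ becomes the value at $0$ of the density of the limiting one-dimensional Gaussian, whose variance is $\Sigma_{1,1}$. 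This is why $\sqrt{2/(\pi\Sigma_{1,1})}$ appears: it is $\sqrt{2}\,\Phi(0) = \sqrt{2}/\sqrt{2\pi\Sigma_{1,1}}$ after the normalization is unwound.) Then Theorem~\ref{thm:gene} yields $S_n^{\widetilde T}f/(\sqrt{\Phi(0)}\,\Afrak_n) \Rightarrow \sigma_{GK}(f)\mathcal{Y}$ with $\mathcal{Y}\sim MLGM(1/2)$, and absorbing $\sqrt{\Phi(0)}$ and the constant in $\Afrak_n \sim c' n^{1/4}$ into $\sigma(f)$ gives $n^{-1/4}\mathcal{Y}_n \to \sigma(f)\mathcal{Y}$ strongly in distribution on $(\Omega,\Leb)$; here one uses that $\Leb$ and $\tilde\nu$ are mutually absolutely continuous (density $\cos\phi$, bounded above and below away from the grazing set, on which one can reduce), so strong convergence in distribution transfers between the two measures. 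The formula for $\sigma(f)^2$ is then just the identity~\eqref{AAA1} rewritten with $\mu(S_k^{(1)} = a-b)$ replaced by the conditional billiard probability $\tilde\nu(S_k = a-b\,|\,\mathcal{C}_0)$, times the constant $\sqrt{2/(\pi\Sigma_{1,1})}$ coming from $\sqrt{\Phi(0)}$ and the explicit $\Afrak_n$.

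Finally, for the coboundary criterion I would invoke the general fact (see the Appendix discussion of the Green--Kubo formula) that $\sigma_{GK}^2(f,\widetilde A,\tilde\mu,\widetilde T) = 0$ if and only if $f = g - g\circ\widetilde T$ for some $g \in L^2_{loc}$; concretely, $\sigma_{GK}^2$ is the $L^2$-limit of $n^{-1}\|S_n^{\widetilde T}f\|^2$ restricted suitably, and it vanishes precisely when the Birkhoff sums stay bounded in the appropriate sense, which is the coboundary condition. Since $\sigma(f)$ is a fixed nonzero multiple of $\sigma_{GK}(f)$, the equivalence $\sigma(f) = 0 \iff f$ is a coboundary follows.

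The main obstacle I expect is the verification of Hypothesis~\ref{hyp:HHH} for the billiard: one must assemble from the billiard literature (Young towers, the Szász--Varjú local limit theorem, Gouëzel's work) a Banach space on which the transfer operator of the quotient system is quasi-compact with the required control on the family of twisted operators $u\mapsto L_u$, including the second-order expansion of the leading eigenvalue that yields the Gaussian limit and the precise constant $\Sigma_{1,1}$. The dynamical-systems input is standard but technically heavy; everything after it is bookkeeping — matching exponents, unwinding normalizations, and transferring between $\tilde\nu$ and $\Leb$.
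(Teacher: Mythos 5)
Your proposal is correct and follows essentially the same route as the paper: use Young's tower so that both the billiard map and a quotient system $(A,\mu,T)$ satisfying Hypothesis~\ref{hyp:HHH} (via the spectral theory of Sz\'asz--Varj\'u et al.) are factors of a common extension, reduce to the first coordinate $F_1$ of the step function so that $d=1$, $\alpha=2$, $\Afrak_n \sim c\, n^{1/4}$, and apply Theorem~\ref{thm:gene}. One small correction to your bookkeeping: the prefactor is $2\Phi(0)$, not $\sqrt{2}\,\Phi(0)$, since $\Phi(0)\Afrak_n^2 \sim \bigl(\sum_{k\leq n} k^{-1/2}\bigr)/\sqrt{2\pi\Sigma_{1,1}} \sim 2\sqrt{n}/\sqrt{2\pi\Sigma_{1,1}} = \sqrt{2/(\pi\Sigma_{1,1})}\,\sqrt{n}$; your $\sqrt{2}\,\Phi(0)$ equals $1/\sqrt{\pi\Sigma_{1,1}}$ and would not reproduce the stated constant.
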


\begin{corollary}\label{coro:bill2}

With the above notations, assume that:
\begin{itemize}
\item there exists $\varepsilon >0$ such that $\sum_{p \in \Z^2} |p|^\varepsilon |\beta|(p) < + \infty$;
\item $\sum_{p \in \Z^2} \beta(p) =0$.
\end{itemize}

\smallskip

Then:
\begin{equation*}
\lim_{n \to + \infty} \frac{1}{\sqrt{\log (n)}} \Ycal_n 
= \sigma(f) \Ycal,
\end{equation*}
where the convergence is strong in distribution on $(\Omega, \Leb)$, the random variable 
$\Ycal$ follows a Laplace distribution of parameter $1/\sqrt{2}$, and:
\begin{equation*}
\sigma (f)^2 = \frac{1}{2\pi \sqrt{\det (\Sigma)}} \sum_{k\in\Z} \sum_{a,b\in\Z^2} \beta(a)\beta(b) \tilde{\nu} (S_k=a-b|\Ccal_0).
\end{equation*}
In addition, $\sigma(f) = 0$ if and only if $f$ is a coboundary.
\end{corollary}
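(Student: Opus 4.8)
The plan is to realize the planar finite-horizon Lorentz gas at collision times as a $\Z^2$-extension of a suitable probability-preserving system and then invoke Theorem~\ref{thm:gene} with $\alpha = d = 2$. The first step is the standard reduction to a Gibbs--Markov or Young-tower base. Taking the Young tower construction for the Sinai billiard~\cite{Young:1998}, one obtains a probability-preserving dynamical system $(A, \mu, T)$ — a Gibbs--Markov map (or a quotient of a Young tower admitting the spectral description we need) — together with a measurable projection $\Omega \to A$ intertwining the collision map with $T$, so that the cell displacement $S_n$ becomes $S_n^T F$ for a step function $F : A \to \Z^2$ which is $\sigma(\pi)$-measurable, integrable, centered, and aperiodic. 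The collision map on $(\Omega, \tilde\nu)$ is then isomorphic (up to the natural identification of $\tilde\nu$ with $\tilde\mu = \sum_p \mu \otimes \delta_p$, using the $\Z^2$-periodicity of the Liouville measure) to the $\Z^2$-extension $(\widetilde A, \tilde\mu, \widetilde T)$. One must check that this extension is conservative and ergodic; conservativity follows from recurrence of the planar diffusive walk, and ergodicity is known for the finite-horizon periodic Lorentz gas. One must also verify Hypothesis~\ref{hyp:HHH} in the Gaussian regime: the transfer operator of $(A, \mu, T)$ acts quasi-compactly on an appropriate Banach space $\Bcal$, with the required perturbative spectral expansion of the twisted operators $P_t$ near $t = 0$; this is precisely the input behind the local limit theorem of~\cite{SzaszVarju:2004}, so we may cite it. Here $\alpha = 2$, $d = 2$, $\mathfrak{a}_n \sim c\sqrt{n}$ (equivalently $\mathfrak{a}_n^{-d} \sim c' n^{-1}$), hence $\Afrak_n = \sqrt{\sum_{k\le n} \mathfrak{a}_k^{-d}} \sim c''\sqrt{\log n}$, and the limiting $MLGM(1 - d/\alpha) = MLGM(0)$ is a Laplace distribution of parameter $1/\sqrt 2$, matching the claimed statement.

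Next I would translate the hypotheses on $\beta$. Setting $f(x,p) := \beta(p)$, the assumption $\sum_{p\in\Z^2}|p|^\varepsilon|\beta(p)| < +\infty$ gives $\sum_p |p|^{\frac{\alpha-d}{2}+\varepsilon}|\beta(p)| = \sum_p|p|^{\varepsilon}|\beta(p)| < +\infty$ since $(\alpha - d)/2 = 0$, and $\sum_p \beta(p) = 0$ is exactly the integral-zero condition $\int_{\widetilde A} f\dd\tilde\mu = 0$. Thus Theorem~\ref{thm:gene} applies verbatim and yields
\begin{equation*}
\frac{S_n^{\widetilde T} f}{\sqrt{\Phi(0)}\,\Afrak_n} \Rightarrow \sigma_{GK}(f, \widetilde A, \tilde\mu, \widetilde T)\,\Ycal,
\end{equation*}
strongly in distribution, with $\Ycal$ a standard $MLGM(0)$, i.e.\ Laplace$(1/\sqrt 2)$, and $\Phi$ the continuous density of the limiting bivariate Gaussian $Y \sim \Ncal(0, \Sigma)$, so $\Phi(0) = (2\pi\sqrt{\det\Sigma})^{-1}$. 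It remains to identify $\Ycal_n$ with $S_n^{\widetilde T} f$ (the total amount won after $n$ reflections is $\sum_{k=0}^{n-1}\beta(S_k) = S_n^{\widetilde T} f$, up to a harmless shift by one term, which does not affect the $\sqrt{\log n}$-normalized limit) and to match the normalizations: $\sqrt{\Phi(0)}\,\Afrak_n$ is asymptotic to $\sqrt{\sum_{k=0}^{n-1}\mu(S_k=0)}$, which by the local limit theorem grows like $\sqrt{\sum_k \Phi(0)\mathfrak{a}_k^{-d}} \sim \sqrt{\Phi(0)}\sqrt{\log n}\cdot(\text{const})$; absorbing constants, one reads off the $1/\sqrt{\log n}$ normalization. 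Finally I would compute $\sigma(f)^2$ from~\eqref{AAA1}: $\sigma_{GK}^2(f,\widetilde A,\tilde\mu,\widetilde T) = \sum_a\beta(a)^2 + 2\sum_{k\ge 1}\sum_{a,b}\beta(a)\beta(b)\mu(S_k = a-b)$, which after dividing by $\Phi(0) = (2\pi\sqrt{\det\Sigma})^{-1}$ and symmetrizing the sum over $k \in \Z$ (using $\mu(S_k = a-b)$ versus $\tilde\nu(S_k = a-b \mid \Ccal_0)$, the latter being the conditional law of the displacement given the particle is in cell $0$, which coincides with $\mu(S_k = a - b)$ under the identification above) produces exactly the stated formula
\begin{equation*}
\sigma(f)^2 = \frac{1}{2\pi\sqrt{\det\Sigma}}\sum_{k\in\Z}\sum_{a,b\in\Z^2}\beta(a)\beta(b)\,\tilde\nu(S_k = a-b \mid \Ccal_0).
\end{equation*}

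The last assertion — that $\sigma(f) = 0$ iff $f$ is a coboundary — follows from the Green--Kubo discussion in the Appendix: $\sigma_{GK}^2(f,\widetilde A,\tilde\mu,\widetilde T)$ vanishes precisely when $f = g - g\circ\widetilde T$ for some measurable $g$ (in an appropriate $\Lbb^2$-type sense), the standard degeneracy characterization for Green--Kubo variances of sufficiently mixing systems; I would cite the Appendix for this equivalence and note that the spectral setup of Hypothesis~\ref{hyp:HHH} makes the relevant variance a genuine (non-degenerate unless coboundary) quadratic form. The main obstacle in this corollary is not any of the limit-theoretic machinery — which is black-boxed by Theorem~\ref{thm:gene} — but the verification that the finite-horizon planar Lorentz gas genuinely fits the abstract framework: namely, exhibiting the Gibbs--Markov / Young-tower model with a $\sigma(\pi)$-measurable centered aperiodic step function $F$ and checking Hypothesis~\ref{hyp:HHH} on its transfer operator with the correct $\alpha = 2$ Gaussian spectral expansion. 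This is where the references~\cite{Young:1998, SzaszVarju:2004, BunimovichChernovSinai:1991} do the heavy lifting, and the honest content of the proof is assembling them into the precise hypotheses of Theorem~\ref{thm:gene}; everything downstream is bookkeeping with normalizing constants.
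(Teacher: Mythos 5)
Your proposal is correct and follows essentially the same route as the paper: pass to the Young-tower model, verify Hypothesis~\ref{hyp:HHH} via the spectral theory underlying the local limit theorem of Sz\'asz--Varj\'u, apply Theorem~\ref{thm:gene} with $\alpha=d=2$, and identify $\Phi(0)=(2\pi\sqrt{\det\Sigma})^{-1}$ together with the symmetrization of the sum over $k\in\Z$. Two small caveats: since the collision map is invertible there is no factor map $\Omega\to A$ onto the expanding quotient --- the paper instead takes a common extension $(\widehat{A},\hat{\mu},\widehat{T})$ of which both $(A,\mu,T)$ and $(M,\nu,T_0)$ are factors, with $F\circ\hat{\pi}=H\circ\pi$ and $\hat{\beta}\circ\hat{\pi}=\beta\circ\pi$ --- and the coboundary degeneracy criterion you attribute to the Appendix is not actually stated there, so it would need the standard quasi-compactness argument spelled out.
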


\subsubsection{Geodesic flow on abelian covers}
\label{subsubsec:IntroFlotGeodesique}

The geodesic flow on a connected, compact manifold with negative sectional curvature is a well-known 
example of a hyperbolic dynamical system. The geodesic flow on abelian covers of such manifolds 
provides a class of dynamical systems which preserve a $\sigma$-finite measure, for instance the Liouville measure. 
They are also more tractable than billiards, as they do not have singularities. These geodesic flows 
have been studied by extensively, for instance to count periodic orbits on the basis manifold of given length 
in a given homology class~\cite{KatsudaSunada:1990, Sharp:1993, PollicottSharp:1994}. 
There are extensions to Anosov flows~\cite{BabillotLedrappier:1998} as well as to surfaces with cusps 
\cite{AaronsonDenker:1999}. Finally, let us mention that the geodesic flow on periodic manifolds is also used to 
study the horocycle flow on the same manifolds \cite{BabillotLedrappier:1996, LedrappierSarig:2006, LedrappierSarig:2007, LedrappierSarig:2008}.

\smallskip

Limit theorems for observables with integral zero have already 
been obtained in this context~\cite{Thomine:2015}, but the improvement we get with Theorem~\ref{thm:GM} 
translates into a limit theorem which is valid for a wider class of observables. Instead 
of having compact support, the observables need only to decay polynomially fast at infinity.

\smallskip

Let $M$ be a compact, connected manifold with a Riemannian metric of 
negative sectional curvature. Let $\varpi:N \to M$ be a connected $\Z^d$-cover of $M$. 
Given a Gibbs measure $\mu_M$ on $T^1 M$, we endow $T^1 N$ with a $\sigma$-finite 
measure $\mu_N$ by lifting $\mu_M$ locally. 
We refer the reader to~\cite[Chapter~11.6]{PaulinPollicottSchapira:2015} 
for more details about Gibbs measures in this context.

\smallskip

Let $(g_t)_{t \in \R}$ be the geodesic flow on $T^1 N$. 
In Subsection~\ref{subsec:ApplicationFlotGeodesique}, we shall prove the following proposition, 
which is a generalization of~\cite[Proposition~6.12]{Thomine:2015}:

\begin{proposition}
\label{prop:FlotGeodesique}

Let $\mu_N$ be the lift of a Gibbs measure $\mu_M$ corresponding to a H\"older potential. 
Assume that the extension $(N, (g_t), \mu_N)$ is ergodic and recurrent. 
Fix $x_0 \in T^1 N$. Let $f$ be a real-valued H\"older function on $T^1 N$. Assume that:
\begin{itemize}
\item there exists $\varepsilon >0$ such that $\int_{T^1 N} d(x_0,x)^{1-\frac{d}{2}+\varepsilon} |f|(x) \dd \mu_N (x) < + \infty$;
\item $\int_{T^1 N} f \dd \mu_N = 0$.
\end{itemize}

\smallskip

If $d=1$, there exists $\sigma(f) \geq 0$ such that:
\begin{equation*}
\lim_{t \to + \infty} \frac{1}{t^{\frac{1}{4}}} \int_0^t f \circ g_s (x,v) \dd s 
= \sigma(f) \Ycal_{1/2},
\end{equation*}
where the convergence is strong in distribution on $(T^1 N, \mu_N)$, and 
$\Ycal_{1/2}$ follows a standard $MLGM (1/2)$ distribution.

\smallskip

If $d=2$, there exists $\sigma(f) \geq 0$ such that:
\begin{equation*}
\lim_{t \to + \infty} \frac{1}{\sqrt{\log (t)}} \int_0^t f \circ g_s (x,v) \dd s 
= \sigma(f) \Ycal_0,
\end{equation*}
where the convergence is strong in distribution on $(T^1 N, \mu_N)$, and 
$\Ycal_0$ follows a standard $MLGM (0)$ distribution.

\smallskip

In both cases, $\sigma(f) = 0$ if and only if $f$ is a measurable coboundary.
\end{proposition}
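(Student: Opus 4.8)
\textbf{Proof strategy for Proposition~\ref{prop:FlotGeodesique}.}
The plan is to reduce the continuous-time statement to the discrete-time Theorem~\ref{thm:GM} applied to a well-chosen Gibbs-Markov $\Z^d$-extension, then transfer the distributional convergence back to the flow. First I would fix a Markov section $\Sigma_0 \subset T^1 M$ for the geodesic flow $(g_t^M)$ on the base manifold, as constructed by Bowen and Ratner; this yields a Poincar\'e return map $T$ on $\Sigma_0$ with return time $r : \Sigma_0 \to \R_+^*$ bounded away from $0$ and $\infty$, and $(\Sigma_0, \mu, T)$ is a Gibbs-Markov map with respect to the induced Gibbs measure (after passing to the natural extension / quotienting along stable manifolds, which does not affect the ergodic sums up to a H\"older coboundary). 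Lifting to the cover $N$, the deck-transformation label of the excursion between consecutive hits of the lifted section defines a $\sigma(\pi)$-measurable step function $F : \Sigma_0 \to \Z^d$ with $\int F \dd \mu = 0$, and the extension $(\widetilde{\Sigma}_0, \tilde\mu, \widetilde T)$ is precisely the symbolic model of $(T^1 N, (g_t), \mu_N)$; ergodicity and recurrence of the latter transfer to conservativity and ergodicity of the former. The tail behaviour of $F$ is governed by that of the homological displacement, which (by the central limit theorem / stable limit theorem for the geodesic flow in the chosen regularity) places $F$ in the domain of attraction of an $\alpha$-stable law with $\alpha = 2$; this is exactly the third bullet of the hypotheses of Theorem~\ref{thm:GM} when $d=2$, and a special case of the first bullet when $d=1$.

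Next I would encode the continuous-time ergodic sum as a discrete-time one. Writing $\Phi(x) := \int_0^{r(x)} f \circ g_s(x) \dd s$ for $x \in \Sigma_0$ (lifted appropriately to $T^1 N$), a standard argument shows that $\int_0^t f \circ g_s(x,v)\dd s$ and $\sum_{k=0}^{n(t)-1} \Phi \circ \widetilde T^k$ differ by $O(\|f\|_\infty \cdot \|r\|_\infty)$, where $n(t)$ is the number of reflections before time $t$; since $n(t) \sim t / \int r \dd\mu$ by the ergodic theorem and the normalisations $t^{1/4}$, $\sqrt{\log t}$ grow to infinity, this bounded error is negligible and one may replace $t$ by $n(t)$ freely, up to the slowly varying factor coming from $\sum_{k=0}^{n-1}\mu(S_k = 0)$, which under the local limit theorem for the geodesic flow is asymptotic to a constant multiple of $\sqrt n$ when $d=1$ and $\sqrt{\log n}$ when $d=2$. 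The decay hypothesis $\int_{T^1 N} d(x_0,x)^{1 - d/2 + \varepsilon} |f| \dd\mu_N < +\infty$ must be translated into the summability condition $\int_{\widetilde\Sigma_0} (1+|p|)^{(\alpha-d)/2 + \varepsilon} \|\Phi(\cdot,p)\|_{\Lbb^q} \dd\tilde\mu < +\infty$ required by Theorem~\ref{thm:GM}: here $|p| \asymp d(x_0, x)$ on the fibre over $p$ up to an additive constant (distance in the cover is comparable to the word length in $\Z^d$), $\Phi$ inherits H\"older regularity fibrewise from $f$, and the $\Lbb^q$ norm for $q > 2$ is controlled by $\|f\|_\infty$ on the bounded flow box times $\|r\|_\infty$, so the condition with $\alpha = 2$ reads $\sum_p (1+|p|)^{1 - d/2 + \varepsilon}\|\Phi(\cdot,p)\|_{\Lbb^q} < \infty$, which follows from the hypothesis on $f$. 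The mean-zero condition $\int f \dd\mu_N = 0$ gives $\int_{\widetilde\Sigma_0} \Phi \dd\tilde\mu = 0$ since the flow measure disintegrates over the section with the return-time weighting.

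Then I would invoke Theorem~\ref{thm:GM} directly: it yields strong convergence in distribution of $\sum_{k=0}^{n-1} \Phi\circ\widetilde T^k$, suitably normalised, to $\sigma_{GK}(\Phi_{\{0\}}, \Sigma_0, \mu, \widetilde T_{\{0\}})\,\Ycal$ with $\Ycal \sim MLGM(1 - d/\alpha) = MLGM(1/2)$ for $d=1$ and $MLGM(0)$ for $d=2$. Pulling this back along the identification $n = n(t)$ and absorbing constants into $\sigma(f) := c_d\,\sigma_{GK}(\Phi_{\{0\}}, \ldots)$ gives the two displayed limits. For the final ``coboundary'' dichotomy: $\sigma(f) = 0$ iff $\sigma_{GK}(\Phi_{\{0\}}) = 0$ iff $\Phi_{\{0\}}$ is an $\Lbb^2$-coboundary for the induced map $\widetilde T_{\{0\}}$ (this is the standard characterisation of vanishing Green-Kubo variance for Gibbs-Markov maps), and one checks that this is equivalent to $f$ being a measurable (time-)coboundary for the flow $(g_t)$ on $T^1 N$ — unwinding the induction: a coboundary upstairs for the tower integrates to a coboundary for the base extension, and conversely a flow coboundary $f = u \circ g_t - u$ integrates along excursions to exhibit $\Phi_{\{0\}}$ as a coboundary.

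\textbf{Main obstacle.} The delicate step is the Markov coding: producing a Gibbs-Markov section for the geodesic flow on $M$ whose lift to $N$ has a $\sigma(\pi)$-measurable step function in the correct domain of attraction, and simultaneously arranging that the fibrewise geometry ($|p| \asymp d(x_0,\cdot)$) and regularity ($\Phi$ uniformly locally H\"older in the base variable, with controlled $\Lbb^q$ norms) are compatible with the hypotheses of Theorem~\ref{thm:GM}. The symbolic dynamics is classical (Bowen, Ratner, and for the Gibbs-Markov property Aaronson-Denker and the references in the excerpt), but verifying that the displacement function lands in the stable domain of attraction — i.e., that the homological drift satisfies the required spectral/tail condition, which for H\"older potentials on a compact negatively curved manifold is the Gaussian case $\alpha = 2$ — and checking the distance comparison on the cover uniformly, is where the real work lies; everything downstream is bookkeeping and an appeal to Theorem~\ref{thm:GM}.
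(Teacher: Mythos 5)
Your overall strategy --- Bowen coding, $\Z^d$-extension of a Gibbs--Markov section, translation of the distance-weighted integrability of $f$ into a $(1+|p|)^{1-\frac{d}{2}+\varepsilon}$-weighted summability of fibrewise norms --- matches the skeleton of the paper's argument, and you correctly identify that the Gaussian domain of attraction ($\alpha=2$) is what turns the abstract exponent $\frac{\alpha-d}{2}+\varepsilon$ into the stated $1-\frac{d}{2}+\varepsilon$. The paper's route differs in one structural respect: it does not apply the discrete-time Theorem~\ref{thm:GM} and then undo the suspension by hand. It invokes the continuous-time limit theorem for suspension flows over $\Z^d$-extensions from~\cite{Thomine:2015} (exactly as in the proof of its Proposition~6.12), so that the only genuinely new work is (a) reducing to an observable depending on the future via Lemma~\ref{lem:InversibleCobords}, and (b) verifying the integrability condition $\sup_{0\le t\le\varphi_{A\times\{0\}}}\left|\int_0^t f_+\circ g_s\dd s\right|\in\Lbb^{2+\delta}(A\times\{0\})$ by means of Lemma~\ref{lem:sommabi} together with the growth estimate $\alpha(p)=O((1+|p|)^{\alpha-d+\delta})$ supplied by Theorem~\ref{thm:ConvergenceNp}, Corollary~\ref{cor:AlphaG} and Proposition~\ref{prop:renouvellement}.

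The gap in your version is the step you dismiss as bookkeeping: ``pulling back along the identification $n=n(t)$.'' The number of returns $n(t)$ before time $t$ is a random time whose deterministic a priori localization is only $[t/\sup r,\,t/\inf r]$, an interval of length $\Theta(t)$; to replace it by $t/\int r\dd\mu$ one needs a CLT-scale control of $n(t)$ together with a maximal inequality showing that the increments of $n\mapsto\sum_{k<n}\Phi\circ\widetilde{T}^k$ over windows of length $O(\sqrt{t})$ are $o(t^{1/4})$ (resp.\ $o(\sqrt{\log t})$). Since $n(t)$ is correlated with the Birkhoff sums of $\Phi$, this is precisely the asymptotic-independence problem that the machinery of~\cite{Thomine:2014, Thomine:2015} is built to solve; it does not follow from Theorem~\ref{thm:GM} alone. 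Two smaller points. First, Theorem~\ref{thm:GM} requires the observable to be uniformly locally H\"older for the one-sided Gibbs--Markov factor, whereas your $\Phi$ depends on the full two-sided point; the reduction is Lemma~\ref{lem:InversibleCobords}, a $\Z^d$-extension version of Sinai's lemma, which you gesture at but which must actually be stated and proved in this setting (the bound on the transfer function has to be uniform over the fibres). Second, the quantity that must lie in $\Lbb^{2+\delta}$ is the running supremum $\sup_{0\le t\le\varphi}\left|\int_0^t f\circ g_s\dd s\right|$ over an excursion, not merely the excursion integral $\Phi_{\{0\}}$; the paper dominates it by the excursion sum of the fibrewise $\Lbb^\infty$ norms $\overline{f}_{\widetilde{A}}$ before applying Lemma~\ref{lem:sommabi}, whereas your control of $\norm{\Phi(\cdot,p)}{\Lbb^q}$ alone would not quite suffice.
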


\begin{remark}[Recurrent extensions of Gibbs measures]

Given a H\"older potential $F : T^1 M \to \C$, let 
$\hat{F} (x,v) := F(x,-v)$. We say that the potential is \textit{reversible} 
if $F$ and $\hat{F}$ are cohomologuous, that is, if there exists a 
H\"older function $u$ such that $\int_0^t f \circ g_s \dd s = u \circ g_t -u$ 
for all $t$. In this case, we also say that $F-\hat{F}$ is a H\"older coboundary. 
We say that a Gibbs measure is \textit{reversible} if it associated to 
a reversible potential.

\smallskip

For instance, both the Liouville measure and the maximal entropy measure are 
reversible, because the associated potentials (constants for the maximal entropy measure, 
and the log-Jacobian of the flow restricted to the unstable direction for the Liouville 
measure) are reversible. 

\smallskip

If $\mu_M$ is a reversible Gibbs measure and $d \in \{1, 2\}$, then the geodesic flow on $(T^1 N, \mu_N)$ 
is both ergodic and recurrent (see~\cite{Rees:1981} for the constant curvature case, although the proof 
works as well in variable curvature).
\end{remark}

The only difference between Proposition~\ref{prop:FlotGeodesique} and~\cite[Proposition~6.12]{Thomine:2015} 
is that the assumption that $f$ has compact support is relaxed to 
$\int_{T^1 N} d(x_0,x)^{1-\frac{d}{2}+\varepsilon} |f|(x) \dd \mu_N (x) < + \infty$ for some $\varepsilon >0$.

\smallskip

Note that our work gives us more information on this system; for instance, 
Theorem~\ref{thm:ConvergenceNp} can be adapted to yield an asymptotic equivalence of the probability that, 
starting from some nice Poincar\'e section $A_1$, the geodesic flow reaches a faraway Poincar\'e 
section $A_2$ before returning to $A_1$. However, the geometric interpretation 
of these sections is less evident than for others systems, such as billiards.

\section{Theorem~\ref{thm:gene}: assumptions and proof}
\label{sec:operators}

This Section is mostly devoted to the proof of Theorem~\ref{thm:gene}. 
It is organized as follows. The spectral hypotheses are presented in Subsection~\ref{subsec:HypothesesSpectrales}. 
The following three subsections contain respectively a sketch of the proof of Theorem~\ref{thm:gene}, 
the full proof of the theorem, and a proof of the more technical estimates we use. Finally, 
in Subsection~\ref{subsec:renewal} we prove Proposition~\ref{prop:renouvellement}.

\subsection{General spectral assumptions}
\label{subsec:HypothesesSpectrales}

Let $P$ be the transfer operator associated to $h \mapsto h \circ T$, that is,
\begin{equation*}
\int_A P f \cdot g \dd \mu 
= \int_A f \cdot g \circ T \dd \mu 
\quad \forall f \in \Lbb^1 (A,\mu), \ \forall g \in \Lbb^\infty (A,\mu).
\end{equation*}
We consider the family $(P_u)_u$ of operators defined by $P_u: h \mapsto P (e^{i \langle u, F \rangle} h)$, 
where $\langle \cdot, \cdot \rangle$ stands for the usual scalar product in $\R^d$. Note that:
\begin{equation}\label{Puk}
P_u^k (h)
= P^k \left(e^{i \langle u, S_k \rangle} h \right).
\end{equation}

We make the following assumptions. Thanks to perturbation theorems (see namely \cite{Nagaev:1957,Nagaev:1961,GuivarchHardy:1988,KellerLiverani:1999,HennionHerve:2001} 
for the general method, and~\cite{AaronsonDenker:2001} for an application to Gibbs-Markov maps), 
they hold for a wide variety of hyperbolic dynamical systems.

\begin{hypothesis}[Spectral hypotheses]\label{hyp:HHH}
The stochastic process $(S_n)_n$ is recurrent. There exists an integer $M\geq 1$ and a $\mu$-essential partition of $A$
in $M$ measurable subsets $(A_j)_{j \in \Z / M\Z}$ such that $T(A_j)=A_{j+1}$
for all $j\in\Z/M\Z$ ($M=1$ if $T$ is mixing).

\smallskip

There exists a complex Banach space $(\Bcal,\norm{\cdot}{\Bcal})$ of functions defined on $A$, on which 
$P$ acts continuously, and such that:

\begin{itemize}
\item $\Bcal \hookrightarrow \Lbb^1 (A, \mu)$.
\item $\mathbf{1} \in \Bcal$ and for every $j$, the multiplication by $\mathbf{1}_{A_j}$ belongs to $\Lcal(\Bcal,\Bcal)$, 
where $\left(\Lcal(\Bcal,\Bcal),\norm{\cdot}{\Lcal(\Bcal,\Bcal)}\right)$ stands for the Banach space of linear continuous endomorphisms of $\Bcal$. 
\item There exist a neighbourhood $U$ of $0$ in $\Tbb^d$, two constants $C>0$ and $r \in(0,1)$, two
continuous functions $\lambda_\cdot:U \to \C$ and $\Pi_\cdot:U \to \Lcal(\Bcal,\Bcal)$ 
such that
\begin{equation}\label{eq:decomp}
P_u =\lambda_u\Pi_u+R_u,
\end{equation}
with:
\begin{align}
\Pi_u R_u     & = R_u \Pi_u = 0,							     \label{eq:PiuRu} \\
\Pi_u^{M+1} & = \Pi_{u}, 								     \label{eq:PiM+1} \\
\lambda_0     & = 1,       								     \label{eq:lambda0} \\
\Pi_0         & = M \sum_{j\in \Z/M\Z} \Ebb_\mu [\mathbf{1}_{A_j}\cdot]\mathbf{1}_{A_{j+1}}, \label{eq:Pi0} \\
\sup_{u\in U} \norm{R_u^k}{\Lcal(\Bcal,\Bcal)} & \leq C r^k,				     \label{eq:majoRu} \\
\sup_{u\in[-\pi,\pi]^d \setminus U} \norm{P_u^k}{\Lcal(\Bcal,\Bcal)} & \leq C r^k. 	     \label{eq:aperiodicity}
\end{align}
\item If $d=1$, there exists $\alpha \in [1,2]$ such that, for all $u \in U$,
\begin{equation*}
\lambda_u
= e^{-\psi(u)L(|u|^{-1})} + o\left(|u|^\alpha L(|u|^{-1})\right),
\end{equation*}
as $u$ goes to $0$, where $\psi(u)=\vartheta|u|^\alpha[1-i\zeta\sgn(u)]$ for some 
real numbers $\vartheta>0$ and $\zeta\in\R$ such that $|\zeta| \leq \tan\frac{\pi\alpha}2$ if $\alpha>1$. 
We set $\Sigma:=1$.
\item If $d=2$, there exists an invertible positive symmetric matrix
$\Sigma$ such that, for all $u \in U$,
\begin{equation*}
\lambda_u
= e^{-\psi(\sqrt{\Sigma} u)L(|\sqrt{\Sigma} u|^{-1})} + o\left(|u|^2 L(|u|^{-1})\right),
\end{equation*}
as $u$ goes to $0$, where $\psi(u):=\frac{|u|^2}{2}$ and $L$ is slowly varying at infinity. We set $\vartheta:=1/2$.
\end{itemize}
\end{hypothesis}

Hypothesis~\ref{hyp:HHH} implies the ergodicity of $T$ and the mixing of $(T^M)_{|A_j}$ for all $j \in \Z/M\Z$ 
as soon as $\Bcal$ is dense in $\Lbb^1 (A, \mu)$. If the system is not mixing, then it is expected that the transfer 
operators has multiple eigenvalues of modulus $1$. The following proposition asserts that, in this case, the standard 
spectral techniques yield a decomposition as in Equation~\eqref{eq:decomp}.

\begin{proposition}\label{prop:CSHHH}
Assume the begining of Hypothesis~\ref{hyp:HHH} and its first two items, 
and that $(A_0,\mu(\cdot|A_0), (T^M)_{|A_0})$ is mixing. Assume in addition that 
there exist a neighbourhood $U$ of $0$ in $\Tbb^d$, two constants $C>0$ and $r \in(0,1)$ and continuous functions
$\tilde\lambda_\cdot,\lambda_{0,\cdot},\ldots,\lambda_{K-1,\cdot}: U \to \C$ and $\tilde{\Pi}_\cdot, \Pi_{0,\cdot},\ldots,\Pi_{K-1,\cdot}, \tilde{R}_\cdot, R_\cdot:U \to \Lcal(\Bcal,\Bcal)$ 
such that:
\begin{align*}
P_u & = \sum_{j\in\Z/K\Z}\lambda_{j,u} \Pi_{j,u}+R_u, \\
\Pi_{j,u} R_u & = R_u P_{j,u} = 0, \\
\Pi_{j,u} \Pi_{j',u} & = \delta_{j,j'}\Pi_{j,u}, \\
|\lambda_{j,0}| & = 1 \\
\sup_{u\in U} \norm{R_u^k}{\Lcal(\Bcal,\Bcal)} & \leq C r^k,
\end{align*}
and $\mathbf{1}_{A_0} P_u^M (\mathbf{1}_{A_0}) = \tilde{\lambda}_u \tilde{\Pi}_u + \tilde{R}_u$, with:
\begin{align*}
\tilde{\lambda}_0 & = 1, \\
\tilde{\Pi}_u^2 & = \tilde{\Pi}_u, \\
\tilde{\Pi}_0 & = \mu(\cdot | A_0) \mathbf{1}_{A_0}, \\
\tilde{\Pi}_u \tilde{R}_u & = \tilde{R}_u \tilde{\Pi}_u = 0, \\
\norm{\tilde R_u^k}{\Lcal(\Bcal,\Bcal)} & \leq C r^k.
\end{align*}

Then $P_u =\lambda_u\Pi_u+R_u$, and the equations \eqref{eq:PiuRu}, \eqref{eq:PiM+1}, \eqref{eq:lambda0}, \eqref{eq:Pi0}, \eqref{eq:majoRu} 
are satisfied. If moreover $u \mapsto P_u$ is continuous on $\Tbb^d$ and $P_u$ admits no eigenvalue of modulus $1$ 
for $u\neq 0$, then Equation~\eqref{eq:aperiodicity} is also satisfied, up to increase of $C>0$ and $r\in(0,1)$.
\end{proposition}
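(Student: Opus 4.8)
The plan is to reconstruct the spectral data for $P_u$ on the whole space from the spectral data for the induced operator $\mathbf{1}_{A_0} P_u^M(\mathbf{1}_{A_0})$ acting on $\Bcal \cap \LL^1(A_0)$, together with the crude block decomposition of $P_u$ itself. The starting point is the observation that, by the first two items of Hypothesis~\ref{hyp:HHH}, the partition $(A_j)_{j \in \Z/M\Z}$ is cyclically permuted by $T$, so $P^M$ preserves each $\LL^1(A_j)$, and likewise $P_u^M$ preserves each $\mathbf{1}_{A_j}\Bcal$ up to the weights $e^{i\langle u, S_M\rangle}$. First I would use the given decomposition $P_u = \sum_{j \in \Z/K\Z} \lambda_{j,u}\Pi_{j,u} + R_u$ to isolate the part of the spectrum near the unit circle: the operators $\Pi_{j,u}$ are the spectral projectors for the (simple, at $u=0$) eigenvalues $\lambda_{j,u}$ with $|\lambda_{j,0}|=1$. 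The key point is that raising to the $M$-th power and sandwiching between $\mathbf{1}_{A_0}$ must agree with the second decomposition, so the product $\prod$ (or rather the relevant sum) of the $\lambda_{j,u}^M$-eigendata has to reproduce $\tilde\lambda_u \tilde\Pi_u + \tilde R_u$; comparing the two gives that exactly one of the peripheral eigenvalues, say $\lambda_{0,u}$, satisfies $\lambda_{0,u}^M = \tilde\lambda_u$ up to a root of unity, and the others are obtained from it by multiplication by the $M$-th roots of unity (this is the standard ``the peripheral spectrum of a mixing induced map lifts to a cyclic peripheral spectrum'' phenomenon). Setting $\lambda_u := \lambda_{0,u}$ and $\Pi_u := \sum_{j} \Pi_{j,u}$ (summing over the indices whose eigenvalues are $M$-th roots of $\tilde\lambda_u$ times the leading root of unity), one packages all the peripheral data into a single rank-$M$ (at $u=0$) projector.

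Next I would verify the listed identities one at a time. Equation~\eqref{eq:lambda0} is immediate from $\tilde\lambda_0 = 1$ and the normalization of the root of unity. Equation~\eqref{eq:PiuRu} follows from $\Pi_{j,u} R_u = R_u \Pi_{j,u} = 0$ by summing over $j$. For the idempotency-type relation~\eqref{eq:PiM+1}, one uses $\Pi_{j,u}\Pi_{j',u} = \delta_{j,j'}\Pi_{j,u}$ together with the cyclic structure: conjugating $\Pi_u$ by $P_u$ cyclically permutes the summands $\Pi_{j,u}$, so $\Pi_u$ behaves like the projector onto a cyclically invariant rank-$M$ space, which forces $\Pi_u^{M+1} = \Pi_u$ once one checks the combinatorics of how $P_u^M$ acts on the peripheral eigenspace via the multiplication-by-$\mathbf{1}_{A_j}$ operators. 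Equation~\eqref{eq:Pi0} at $u=0$ is pinned down by $\tilde\Pi_0 = \mu(\cdot\,|\,A_0)\mathbf{1}_{A_0}$: one knows $\Pi_0$ has rank $M$, is $T$-equivariant in the sense that $P\Pi_0 = \Pi_0 P = \Pi_0$ on the peripheral space, and restricts on $A_0$ to $\tilde\Pi_0$; the only such operator is the averaged projector $M\sum_j \Ebb_\mu[\mathbf{1}_{A_j}\cdot]\mathbf{1}_{A_{j+1}}$, which one checks by testing against $\mathbf{1}_{A_i}$. Equation~\eqref{eq:majoRu} is inherited directly from the hypothesis $\sup_{u} \norm{R_u^k}{\Lcal(\Bcal,\Bcal)} \leq Cr^k$ (the $R_u$ is literally the same operator). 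Continuity of $u \mapsto \lambda_u$ and $u \mapsto \Pi_u$ on $U$ comes from continuity of the $\lambda_{j,u}$, $\Pi_{j,u}$ plus the fact that the partition of the peripheral spectrum into ``$M$-th roots of $\tilde\lambda_u$'' is locally constant near $0$ (the eigenvalues stay distinct and close to distinct roots of unity).

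Finally, for Equation~\eqref{eq:aperiodicity}: away from $0$, if $u \mapsto P_u$ is continuous on $\Tbb^d$ and $P_u$ has no eigenvalue of modulus $1$ for $u \neq 0$, then for each $u \in [-\pi,\pi]^d \setminus U$ the spectral radius of $P_u$ on $\Bcal$ is strictly less than $1$; by a standard compactness/upper-semicontinuity argument for the spectral radius (or directly from quasi-compactness: the essential spectral radius is $<1$ uniformly, and the finitely many eigenvalues of large modulus depend continuously on $u$ and avoid the unit circle on the compact set $[-\pi,\pi]^d \setminus U$), one gets a uniform bound $\operatorname{sp}(P_u) \leq \rho < 1$, hence $\norm{P_u^k}{\Lcal(\Bcal,\Bcal)} \leq C' \rho'^k$ for suitable $C' \geq C$ and $\rho' \in (\rho,1)$, which can be absorbed into the constants $C, r$. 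The main obstacle I anticipate is the bookkeeping in the second paragraph: correctly matching the peripheral eigenvalues of $P_u$ with those of its ``$M$-th root'' $\mathbf{1}_{A_0}P_u^M(\mathbf{1}_{A_0})$, and in particular showing that there is exactly one peripheral family (so that $\lambda_u$ is well-defined and $\Pi_u$ has the right rank), requires carefully exploiting the mixing of $(T^M)_{|A_0}$ to rule out extra peripheral eigenvalues and the cyclic $T(A_j) = A_{j+1}$ structure to organize the ones that are there; the identities~\eqref{eq:PiM+1} and~\eqref{eq:Pi0} are then essentially forced, but making that rigorous is where the real work lies.
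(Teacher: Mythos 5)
There is a genuine gap in the central construction: you set $\Pi_u := \sum_j \Pi_{j,u}$ (a plain sum), but with that choice the claimed decomposition $P_u = \lambda_u \Pi_u + R_u$ fails whenever $M>1$. Indeed the peripheral part of $P_u$ is $\sum_j \lambda_{j,u}\Pi_{j,u}$, and since the $\lambda_{j,u}$ are \emph{distinct} (they differ pairwise by nontrivial $M$-th roots of unity, as your own matching argument $\lambda_{j,u}^M = \tilde\lambda_u$ shows), this is not $\lambda_{0,u}\sum_j \Pi_{j,u}$. The correct object, after normalizing $\lambda_{j,u} = \lambda_u \xi^j$ with $\xi = e^{2i\pi/M}$ and $\lambda_0 = 1$, is the \emph{weighted} sum
\begin{equation*}
\Pi_u := \sum_{j \in \Z/M\Z} \xi^j \Pi_{j,u},
\end{equation*}
so that $\lambda_u \Pi_u = \sum_j \lambda_{j,u}\Pi_{j,u}$ exactly. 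This also explains why the hypothesis to be verified is $\Pi_u^{M+1} = \Pi_u$ rather than idempotency: the weighted sum is \emph{not} a projector (one has $\Pi_u^2 = \sum_j \xi^{2j}\Pi_{j,u} \neq \Pi_u$ in general), and \eqref{eq:PiM+1} holds simply because $\xi^{j(M+1)} = \xi^j$, not because of any "cyclically invariant rank-$M$ projector" structure. Your description of $\Pi_u$ as a rank-$M$ projector is therefore misleading, and your argument for \eqref{eq:PiM+1} proves the wrong statement about the wrong operator.

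The same error propagates to \eqref{eq:Pi0}. Using mixing of $(T^M)_{|A_0}$ one computes $\Pi_{j,0} = \sum_k \xi^{-jk}\mathbf{1}_{A_k}\,\Ebb_\mu\bigl[\sum_\ell \xi^{j\ell}\mathbf{1}_{A_\ell}\,\cdot\bigr]$ (the eigenfunctions of $P$ for $\xi^j$ are proportional to $\sum_k \xi^{-jk}\mathbf{1}_{A_k}$). The plain sum $\sum_j \Pi_{j,0}$ then collapses to $M\sum_k \mathbf{1}_{A_k}\Ebb_\mu[\mathbf{1}_{A_k}\,\cdot]$ -- no shift of index -- which is \emph{not} the operator $\sum_k \Ebb_\mu[\mathbf{1}_{A_k}\,\cdot]\,\mathbf{1}_{A_{k+1}}$ required by \eqref{eq:Pi0} (and is inconsistent with $P$ mapping functions supported on $A_k$ to functions supported on $A_{k+1}$). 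Only the weighted sum $\sum_j \xi^j\Pi_{j,0}$ produces the shift $k \mapsto k+1$. The rest of your outline (identifying $K=M$ and $\lambda_{j,u}^M = \tilde\lambda_u$ by comparing with the induced operator after shrinking $U$ so the peripheral eigenvalues dominate $Cr^M$; inheriting \eqref{eq:PiuRu} and \eqref{eq:majoRu}; the compactness argument for \eqref{eq:aperiodicity}, where the paper instead cites Aaronson--Denker) is sound and close to the paper's proof, but the definition of $\Pi_u$ must be corrected for the proposition's conclusion to be the one actually stated.
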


\begin{proof}
Up to taking a smaller $U$, we assume that $|\lambda_{j,\cdot}|>C^{1/M} r$ and $|\tilde{\lambda}_\cdot|>Cr^M$.
Then $\tilde{\lambda}_u=\lambda_{j,u}^M$ for every $j \in \Z/M\Z$, and $\tilde{\Pi}_u=\sum_{j \in \Z/M\Z} \mathbf{1}_{A_0} \Pi_{j,u} (\mathbf{1}_{A_0} \cdot)$.
Hence we can take $K=M$ and, up to a permutation of indices, we assume 
that $\lambda_{j,u}=\lambda_u \xi^j$ with $\xi:=e^{2i\pi /M}$ and $\lambda_0=1$
($P\mathbf{1}=\mathbf{1}$ ensures that $1$ is an eigenvalue of $P_0$, and this convention yields Equation~\eqref{eq:lambda0}). 
Hence $P_u=\lambda_u\Pi_u+R_u$, with:
\begin{align*}
\lambda_u & :=\lambda_{0,u}, \\
\Pi_u & :=\sum_{j\in\Z/M\Z} \xi^j \Pi_{j,u}.
\end{align*}
Note that $\Pi_u R_u=R_u\Pi_u=0$ and that $\Pi_u^{M+1}= \sum_{j\in\Z/M\Z}\xi^{j(M+1)} \Pi_{j,u}=\Pi_u$, 
which proves equations~\eqref{eq:PiuRu} and~\eqref{eq:PiM+1}.
In the general case, it remains to prove \eqref{eq:Pi0}.

\smallskip

Let $f$ be an eigenvector for the eigenvalue $\xi^j$ of $P$. 
For all $k \in \Z/M\Z$, 
\begin{equation}
\label{eq:FonctionProprePeriodique}
P(\mathbf{1}_{A_k}f) 
= \xi^j \mathbf{1}_{A_{k+1}}f,
\end{equation}
so that $P^M(\mathbf{1}_{A_k} f)= \mathbf{1}_{A_k} f$. Since 
$T^M$ is mixing, $f$ must be constant on each $A_k$; using Equation~\eqref{eq:FonctionProprePeriodique}, 
we get that $f$ is proportional to $\sum_{k\in \Z/M\Z}\xi^{-jk}\mathbf{1}_{A_k}$. 
We conclude that:
\begin{equation*}
\Pi_{j,0} 
= \sum_{k\in \Z/M\Z}\xi^{-jk} \mathbf{1}_{A_k} \Ebb_\mu \left[\sum_{\ell \in \Z/M\Z} \xi^{j\ell} \mathbf{1}_{A_\ell}  \cdot\right],
\end{equation*}
and from there that $\Pi_0 = M \sum_{j\in\Z/M\Z}\xi^j \Pi_{j,0} =
\sum_{k \in \Z/M\Z} \mathbf{1}_{A_{k+1}} \Ebb_\mu[ \mathbf{1}_{A_k} \cdot]$.

\smallskip

Finally, Equation~\eqref{eq:aperiodicity} comes from~\cite{AaronsonDenker:2001}.
\end{proof}

For every $n$, we set 
\begin{equation}
\mathfrak{a}_n:= \inf\{x>0\, :\, n |x|^{-\alpha} L(x) \geq 1\},
\end{equation}
so that $nL(\mathfrak{a}_n)\sim \mathfrak{a}_n^\alpha$. The sequence $(\mathfrak{a}_n)$ is then regularly varying of index $\frac{1}{\alpha}$. 
Under Hypothesis~\ref{hyp:HHH}, $\Ebb_\mu [e^{i\langle t,S_n\rangle/\mathfrak{a}_n}] \sim (\lambda_{t/\mathfrak{a}_n})^n \sim e^{-\psi(\sqrt{\Sigma}t)}$ for every $t\in\R^d$. 
Thus, the sequence $(S_n/\mathfrak{a}_n)_n$ converges in distribution to an $\alpha$-stable 
random variable with characteristic function $e^{-\psi(\sqrt{\Sigma}\cdot)}$.

\subsection{Strategy of the proof}
\label{subsec:StrategieThmGene}

Given the length of the proof and the technicality of some of its parts, 
we give here a brief outline of how the method of moments can be applied 
to our problem.

\smallskip

The proof consists in showing the convergence for every $m$
of the following quantity:
\begin{align*}
\Ebb_\mu \left[ \left(\frac{\Zcal_n}{\Afrak_n}\right)^n \right]
& = \Ebb_\mu \left[\left(\frac{\sum_{k=1}^n \sum_{a\in\Z^d} \beta(a) \mathbf{1}_{\{S_k=a\}}}{\Afrak_n}\right)^m\right] \\
& = \Afrak_n^{-m} \sum_{k_1,\ldots,k_m=1}^n \beta(a_1) \cdots \beta(a_m) \mu (S_{k_1}=a_1,\ldots,S_{k_q}=a_q).
\end{align*}
Hence we have to deal with quantities of the following form:
\begin{equation*}
\sum_{1 \leq k_1 < \cdots <k_q \leq n} \sum_{a_1,\ldots, a_q \in \Z^d} \beta(a_1)^{N_1} \cdots \beta(a_q)^{N_q} \mu (S_{k_1}=a_1,\ldots,S_{k_q}=a_q), 
\end{equation*}
where $N_1+\ldots+N_q=m$. Let us write $A_{n;q;N_1,\ldots,N_q}$ for this quantity, which behaves roughly as:
\begin{equation*}
\sum_{1 \leq k_1 < \cdots<k_q \leq n} \sum_{a_1, \ldots, a_q \in \Z^d} \beta(a_1)^{N_1} \cdots \beta(a_q)^{N_q} \mu(S_{k_1}=a_1) \mu (S_{k_2-k_1}=a_2-a_1) \cdots \mu (S_{k_q-k_{q-1}}=a_q-a_{q-1}).
\end{equation*}
This equation would actually be exact if $(S_n)_n$ were a random walk. Then, put $k_0 := 0$ and $\ell_i := k_i-k_i-1$, so that:
\begin{equation*}
A_{n;q;N_1,\ldots,N_q}
\sim \sum_{\ell_1+\ldots+\ell_q \leq n} \sum_{a_1,\ldots,a_q \in \Z^d} \prod_{i=1}^q \left(\beta(a_i)^{N_i}  \mu(S_{\ell_i}=a_i-a_{i-1})\right).
\end{equation*}

\smallskip

We prove that
\begin{equation*}
A_{n;q;N_1,\ldots,N_q}
= O(\Afrak_n^m)
\end{equation*}
and even that
\begin{equation*}
A_{n;q;N_1,\ldots,N_q}
= o(\Afrak_n^m)
\end{equation*}
except if $(N_1,\ldots,N_q)$ is made of $2$s and of pairs of consecutive $1$s and of nothing else, 
which implies that $m$ is even. In particular, for all odd $m$,
\begin{equation*}
\Ebb_\mu [\Zcal_n^m] 
= o(\Afrak_n^m).
\end{equation*}
This is the content of Lemma~\ref{lem:normeB}, which is by far the most technical part 
of our proof. This is also the point where we use the fact that $\beta$ has zero sum; otherwise, 
we would get $A_{n;q;N_1,\ldots,N_q} = \Theta (\Afrak_n^{2m})$ for $(N_1, \ldots, N_q)=(1, \dots, 1)$.

\smallskip

If $(N_1,\ldots,N_q)$ is made of $2$s and disjoint pairs of consecutive $1$, 
then it contains $(m-q)$ times the value $2$ and $(q-m/2)$ pairs $(N_i,N_{i+1})=(1,1)$. 
Then, we shall prove that:
\begin{align*}
A_{n;q;N_1,\ldots,N_q}
& \sim \sum_{\ell_1+\ldots+\ell_{m/2}\leq n} \prod_{i=1}^{m-q} \left(\sum_{a \in \Z^d} \beta(a)^2 \mu(S_{\ell_i}=a) \right) \\
& \hspace{2em} \times \prod_{i=m-q+1}^{m/2} \left(\sum_{\ell \geq 1} \sum_{a,b \in \Z^d} \beta(a) \beta(b) \mu (S_\ell=b-a) \mu (S_{\ell_i}=a)\right)\\
& \sim \sum_{\ell_1+\ldots+\ell_{m/2} \leq n} \prod_{i=1}^{m-q} \left(\sum_{a \in \Z^d} \beta(a)^2 c \mathfrak{a}_{\ell_i}^{-d}\right) \prod_{i=m-q+1}^{m/2} \left(\sum_{\ell\geq 1} \sum_{a,b \in \Z^d} \beta(a)\beta(b) \mu(S_\ell=b-a) c \mathfrak{a}_{\ell_i}^{-d}\right)\\
& \sim c^{m/2} \left(\sum_{a\in\Z^d} \beta(a)^2 \right)^{m-q} \left(\sum_{\ell\ge 1} \sum_{a,b\in\Z^d} \beta(a)\beta(b) \mu(S_\ell=b-a)\right)^{q-m/2} \sum_{\ell_1+\ldots+\ell_{m/2}\leq n} \prod_{i=1}^{m/2}\mathfrak{a}_{\ell_i}^{-d} \\
& \sim K_{m,q} \left(\sum_{\ell=1}^n \mathfrak{a}_\ell^{-d}\right)^{m/2}
 = K_{m,q} \Afrak_n^m,
\end{align*}
where the constants $c$ and $K_{m,q}$ are explicit and yield the $MLGM$ random variables.

\subsection{Proof of Theorem~\ref{thm:gene}}
\label{subsec:PreuveGene}

In this section we prove Theorem~\ref{thm:gene}. To prove the strong convergence in distribution, 
it is actually sufficient to prove the convergence in distribution with 
respect to some absolutely continuous probability measure \cite[Theorem~1]{Zweimuller:2007}. At first, we prove the 
convergence of $(S_n^{\widetilde{T}} f /\Afrak_n)_n$ under the probability measure $\mu_0 := \mu \otimes \delta_0$, 
i.e.\ the convergence of $(\Zcal_n (\beta)/\Afrak_n)_n$ under the probability measure $\mu$, where:
\begin{equation*}
\Zcal_n (\beta)
= \sum_{k=0}^{n-1} \beta (S_n^T F).
\end{equation*}

\smallskip

We use the method of moments. Let $m \geq 0$ be an integer, which is fixed for the remainder of this proof. Then, for all $n$:
\begin{align*}
\Ebb_\mu \left[ \Zcal_n(\beta)^m \right] 
& = \Ebb_\mu \left[ \left(\sum_{k=1}^n \beta (S_k) \right)^m \right] \\
& = \sum_{k_1,\ldots,k_m = 1}^n \sum_{d_1,\ldots,d_m \in \Z^d} \Ebb_\mu \left[ \prod_{s=1}^m \beta (d_s) \mathbf{1}_{\{S_{k_s}=d_s\}} \right].
\end{align*}
We delete the terms which are null, and regroup those which are equal. Let us consider one of the terms 
$\prod_{s=1}^m \beta (d_s)\mathbf{1}_{\{S_{k_s}=d_s\}}  $. We may assume that $d_s = d_{s'}$ as soon as $k_s = k_{s'}$; 
otherwise, $\mathbf{1}_{\{S_{k_s}=d_s\}} \mathbf{1}_{\{S_{k_{s'}}=d_{s'}\}} = 0$ and the whole product is zero.
Let $q:=\#\{k_1,\ldots, k_m\}$. Then $\{k_1,\ldots, k_m\}=\{n_1,\ldots,n_q\}$ with $1 \leq n_1 < \ldots < n_q \leq n$. 
We set $N_j:=\#\{i=1,\ldots,m\ :\ k_i=n_j\}$ for the multiplicity of $n_j$ in $(k_1,\ldots,k_m)$, and $a_j=d_i$ if $k_i=n_j$.
We write $\mathbf{a}:=(a_1,\ldots,a_q)$,  $\mathbf{N}:=(N_1,\ldots,N_q)$ and $\mathbf{n}:=(n_1,\ldots,n_q)$, and set, by convention, 
$n_0 := 0$ and $a_0 := 0$. Observe that
\begin{equation*}
\prod_{s=1}^m \beta (d_s) \mathbf{1}_{\{S_{k_s}=d_s\}}  
=\prod_{j=1}^q \beta(a_j)^{N_j}\mathbf{1}_{\{S_{n_j}=a_j\}}
\end{equation*}
and that the number of $m$-uplets $(k_1,\ldots,k_m)$ giving the same pair $(\mathbf{n},\mathbf{N})$ 
is equal to the number $c_{\mathbf{N}}$ of maps $\phi: \ \{1,\ldots,m\} \to \{1,\ldots,q\}$ 
such that $| \phi^{-1} (\{j\}) | = N_j$ for all $j\in\{1,\ldots,q\}$. Hence:
\begin{equation*}
\Ebb_\mu \left[ \Zcal_n (\beta)^m \right] 
= \sum_{q=1}^m \sum_{\substack{N_j \geq 1 \\ N_1+\ldots+N_q=m}} c_{\mathbf{N}} \sum_{1 \leq n_1 < \ldots < n_q \leq n} \sum_{\mathbf{a} \in (\Z^d)^q} \Ebb_\mu \left[ \prod_{j=1}^q \left( \beta (a_j)^{N_j} \mathbf{1}_{\{S_{n_j}=a_j\}} \right) \right].
\end{equation*}
For all $n \geq 1$, for all $1 \leq q \leq m$ and for all $\mathbf{N}=(N_j)_{1 \leq j \leq q}$ such that $N_j \geq 1$ and $\sum_{j=1}^q N_j = m$, we define:
\begin{align*}
A_{n;q;\mathbf{N}} 
& := \sum_{1 \leq n_1< \ldots <n_q \leq n} \sum_{\mathbf{a} \in (\Z^d)^q} \Ebb_\mu \left[ \prod_{j=1}^q \left( \beta (a_j)^{N_j} \mathbf{1}_{\{S_{n_j}=a_j\}} \right) \right] \\
& = \sum_{1 \leq n_1< \ldots <n_q \leq n} \sum_{\mathbf{a} \in (\Z^d)^q} \Ebb_\mu \left[ \prod_{j=1}^q \left(\beta (a_j)^{N_j} \mathbf{1}_{\{S_{n_j}-S_{n_{j-1}}=a_j-a_{j-1}\}} \right) \right],
\end{align*}
so that:
\begin{equation}\label{momentAn}
\Ebb_\mu \left[ \Zcal_n (\beta)^m \right] 
= \sum_{q=1}^m \sum_{\substack{N_j \geq 1 \\ N_1+\cdots+N_q=m}} c_{\mathbf{N}} A_{n;q,\mathbf{N}}.
\end{equation}

Instead of working with a sequence of times $(n_j)$ and positions $(a_j)$, it shall be more convenient to work with time 
increments and position increments. Let $1 \leq n_1 < \ldots < n_q \leq n$. We can describe this sequence with integers 
$(\ell_1, \dots,\ell_q)$ by taking $\ell_1 = n_1$ and $\ell_j = n_j - n_{j-1}$ for all $2 \leq j \leq q$. Let $E_{q,n}$ be the set defined by:
\begin{equation*}
E_{q,n}
= \left\{ \boldsymbol{\ell}=({\ell}_1,\ldots,\ell_q) \in \{1, \ldots ,n\}^q\ :\ \sum_{j=1}^q {\ell}_j \leq n \right\}.
\end{equation*}
Then summing over all $\mathbf{n}=(n_1,\ldots,n_q)$ such that $1 \leq n_1 < \ldots <n_q \leq n$ is the same as summing over all $\boldsymbol{\ell}$ in $E_{q,n}$, whence:
\begin{equation}
\label{eq:AExpressionEll}
A_{n;q;\mathbf{N}} 
= \sum_{\mathbf{a} \in (\Z^d)^q} \left[ \left( \prod_{j=1}^q \beta (a_j)^{N_j} \right) \sum_{\boldsymbol{\ell} \in E_{q,n}} \Ebb_\mu \left[ \prod_{j=1}^q \mathbf{1}_{\{S_{\ell_j} = a_j-a_{j-1}\}} \circ T^{n_{j-1}} \right] \right].
\end{equation}

A single coefficient $A_{n;q;\mathbf{N}}$ is the contribution to the $m$th moment of $\Zcal_n (\beta)$ 
by paths of length $q$ and with weights $(N_j)$. Our goal is to find a sub-family 
of such weighted paths which is manageable enough so that we can estimate the behaviour of the 
$A_{n;q;\mathbf{N}}$, and large enough so that it makes for almost all $\Ebb_\mu \left[ \Zcal_n (\beta)^m \right]$ 
as $n$ goes to infinity. However, in order to benefit from the fact that $\sum_{a\in\Z^d}\beta(a) = 0$, 
we use transfer operators, and a decomposition which leverages this equality to make some further 
simplifications\footnote{If $\beta$ has non-zero integral, different terms dominate, and the moments 
grow faster. It is thus essential to cancel out these ``first order terms''.}.

\smallskip

For all $\ell \in \N$ and $a \in \Z^d$, we define an operator $Q_{\ell,a}$ acting on $\Bcal$ by:
\begin{equation*}
Q_{\ell,a} (h)
:=P^\ell \left(\mathbf{1}_{\{S_\ell=a\}}\, h\right)
=\frac{1}{(2 \pi)^d} \int_{[-\pi,\pi]^d} e^{-i \langle u, a \rangle} P_u^\ell (h) \dd u \, .
\end{equation*}
 where we used \eqref{Puk}
to establish the second formula. 
For $1 \leq q' \leq q$, we write:
\begin{equation*}
D_{q'} 
:= \prod_{j=1}^{q'} \left(\mathbf{1}_{\{ S_{\ell_j}=a_j-a_{j-1} \}} \circ T^{n_{j-1}} \right).
\end{equation*}
Recall that $P^k (g \circ T^k \cdot h) = g P^k (h)$. 
Hence, by induction,
\begin{align*}
P^{n_q} \left( D_q \right)
& = P^{n_q} \left( \mathbf{1}_{\{ S_{\ell_q}=a_q-a_{q-1} \}} \circ T^{n_{q-1}} \cdot D_{q-1} \right) \\
& = P^{n_q-n_{q-1}}(\mathbf{1}_{\{S_{\ell_q}=a_q-a_{q-1}\}} P^{n_{q-1}}(D_{q-1}))\\
& = Q_{\ell_q, a_q - a_{q-1}} \left( P^{n_{q-1}} \left( D_{q-1} \right) \right) \\
& = \ldots \\
& = Q_{\ell_q, a_q-a_{q-1}} \cdots Q_{\ell_1, a_1-a_0} (\mathbf{1}).
\end{align*}
Plugging this identity into Equation~\eqref{eq:AExpressionEll} yields: 
\begin{equation}\label{eq:AExpressionQ}
A_{n;q;\mathbf{N}} 
= \sum_{\mathbf{a} \in(\Z^d)^q} \left[ \left( \prod_{j=1}^q \beta (a_j)^{N_j} \right) \sum_{\boldsymbol{\ell} \in E_{q,n}} \Ebb_\mu \left[ Q_{\ell_q,a_q-a_{q-1}} \cdots Q_{\ell_1,a_1-a_0}(\mathbf{1}) \right] \right].
\end{equation}

We further split the operators $Q_{\ell,a}$. Let us write:
\begin{equation}\label{devQ}
Q_{\ell,a} = Q_{\ell,a}^{(0)} + Q_{\ell,a}^{(1)},
\end{equation}
with:
\begin{align*}
Q_{\ell,a}^{(0)} & := \Phi(0)\frac{\Pi_0^\ell}{\mathfrak{a}_\ell^d} \\
Q_{\ell,a}^{(1)} & = \varepsilon_{\ell,a} + \frac{\Phi(a/\mathfrak{a}_\ell)-\Phi(0)}{\mathfrak{a}_\ell^d} 
\Pi_0^\ell \quad \mbox{with}\ \norm{\varepsilon_{\ell,a}}{}=o(\mathfrak{a}_\ell^{-d}),
\end{align*}
which we know is possible thanks to Lemma~\ref{lem:0}. 

\smallskip

We introduce these operators $Q_{\ell,a}^{(0)}$ and $Q_{\ell,a}^{(1)}$ 
into~\eqref{eq:AExpressionQ}, creating new data we need to track: 
the index of the operator we use at each point in the weighted path. 
Fix $n$, $q$ and $\mathbf{N}$. Given $\boldsymbol{\varepsilon} = (\varepsilon_1, \ldots, \varepsilon_q) \in \{0,1\}^q$ and $s \in \Z^d$, write:
\begin{align*}
B_{s,\boldsymbol{\ell}, \mathbf{N}}^{\boldsymbol{\varepsilon}}
& := \sum_{\substack{a_0,\ldots,a_q \in \Z^d \\ a_0=s}} \left( \prod_{i=1}^q \beta (a_i)^{N_i} \right) Q_{\ell_q,a_q-a_{q-1}}^{(\varepsilon_q)} \cdots Q_{\ell_1,a_1-a_0}^{(\varepsilon_1)}, \\
b_{s, \boldsymbol{\ell}, \mathbf{N}}^{\boldsymbol{\varepsilon}} (\cdot)
& := \sum_{\mathbf{a} \in (\Z^d)^q} \left[ \left( \prod_{i=1}^q \beta (a_i)^{N_i} \right) \Ebb_\mu \left[ Q_{\ell_q,a_q-a_{q-1}}^{(\varepsilon_q)} \ldots Q_{\ell_1,a_1-s}^{(\varepsilon_1)} (\cdot) \right] \right]
= \Ebb_\mu \left[B_{s,\boldsymbol{\ell},\mathbf{N}}^{\boldsymbol{\varepsilon}} (\cdot)\right], \\
A_{n;q;\mathbf{N}}^{\boldsymbol{\varepsilon}} 
& := \sum_{\boldsymbol{\ell} \in E_{q,n}} b_{0,\boldsymbol{\ell}, \mathbf{N}}^{\boldsymbol{\varepsilon}}(\mathbf{1}),
\end{align*}
so that:
\begin{equation*}
A_{n;q;\mathbf{N}} 
= \sum_{\boldsymbol{\varepsilon}\in\{0,1\}^q} A_{n;q;\mathbf{N}}^{\boldsymbol{\varepsilon}}
= \sum_{\boldsymbol{\varepsilon}\in\{0,1\}^q} \sum_{\boldsymbol{\ell} \in E_{q,n}} b_{0, \boldsymbol{\ell}, \mathbf{N}}^{\boldsymbol{\varepsilon}}(\mathbf{1}).
\end{equation*}

\smallskip

Now, the main question is: for which data $(q,\mathbf{N},\boldsymbol{\varepsilon})$ do the coefficients $A_{n;q;\mathbf{N}}^{\boldsymbol{\varepsilon}}$, 
seen as functions of $n$, grow the fastest? One would want to use the larger operator $Q_{\ell, a}^{(0)}$ whenever possible, 
and to use the lowest possible weights whenever possible (because lower weights means larger value of $q$, so a faster combinatorial growth). 
\textit{A priori}, the best possible choice would be $\boldsymbol{\varepsilon} = (0, \ldots, 0)$ and $\mathbf{N} = (1, \ldots, 1)$. 
That is indeed true for observables $\beta$ with non-zero integral. However, in our case, the fact that $\sum_{a \in \Z^d} \beta (a) = 0$ 
induces a cancellation, which makes the corresponding coefficient vanish. This can be seen with the following elementary properties.

\begin{properties}\label{rmk:proprietesbn}

Consider a single linear form $b_{s, \boldsymbol{\ell}, \mathbf{N}}^{\boldsymbol{\varepsilon}}$. 
For all $1 \leq i \leq q$, the terms on the right side of $Q_{\ell_i,a_i-a_{i-1}}^{(\varepsilon_i)}$ depend only 
on $a_1,\ldots,a_{i-1}$, and the terms on its left side only depend on $a_i,\ldots,a_q$. Hence:
\begin{enumerate}[(I)]
\item Since $Q_{\ell,a}^{(0)}$ does not depend on $a$, the value of $b_{s, (\ell_0,\boldsymbol{\ell}),(N_0,\mathbf{N})}^{(0,\boldsymbol\varepsilon)}$ 
does not depend on $s$. Without loss of generality, we shall choose $s$ to be $0$ when $\varepsilon_1=0$.

\smallskip

\item $b_{s, (\ell), (1)}^{(0)} (\cdot) = \Phi(0) \mathfrak{a}_\ell^{-d} \sum_{a \in \Z^d} \beta(a) \Ebb_\mu[\cdot] = 0$ and 
$b_{s, (\ell), (N)}^{(0)} (\cdot) = \Phi(0) \mathfrak{a}_\ell^{-d} \sum_{a \in \Z^d} \beta(a)^N \Ebb_\mu[\cdot]$ for all $\ell$, $N \geq 1$.

\smallskip

\item $b_{s, (\boldsymbol{\ell},\ell_0,\boldsymbol{\ell}'),(\mathbf{N},N_0,\mathbf{N}')}^{(\boldsymbol{\varepsilon},0,\boldsymbol{\varepsilon}')} 
= \sum_{j \in \Z/M\Z} \Ebb_\mu [\mathbf{1}_{A_j} B_{s, \boldsymbol{\ell},\mathbf{N}}^{\boldsymbol\varepsilon}(\cdot)] \Ebb_\mu[B_{0, (\ell_0,\boldsymbol{\ell}'),(N_0,\mathbf{N}')}^{(0,\boldsymbol\varepsilon')}(\mathbf{1}_{A_{j+\ell_0}})]$,
i.e.
\begin{equation*}
b_{s, (\boldsymbol{\ell},\ell_0,\boldsymbol{\ell}'),(\mathbf{N},N_0,\mathbf{N}')}^{(\boldsymbol{\varepsilon},0,\boldsymbol{\varepsilon}')} (\cdot)
= \sum_{j\in \Z/M\Z} b_{s, \boldsymbol{\ell},\mathbf{N}}^{\boldsymbol\varepsilon}(\mathbf{1}_{A_j}\cdot) b_{0, (\ell_0,\boldsymbol{\ell}'),(N_0,\mathbf{N}')}^{(0,\boldsymbol\varepsilon')}(\mathbf{1}_{A_j+|\boldsymbol\ell|_1}),
\end{equation*}
since $Q^{(\varepsilon_i)}_{\ell_i,a_i-a_{i-1}}(\mathbf{1}_{A_j}\cdot) = \mathbf{1}_{A_{j+\ell_i}}Q^{(\varepsilon_i)}_{\ell_i,a_i-a_{i-1}}(\cdot)$.

\smallskip

\item In particular, $b_{s, (\boldsymbol{\ell},\ell_0),(\mathbf{N},1)}^{(\boldsymbol{\varepsilon},0)}= 0$, and:
\begin{equation*}
b_{s, (\boldsymbol{\ell},\ell_0,\ell_0',\boldsymbol{\ell}'),(\mathbf{N},1,N_0',\mathbf{N}')}^{(\boldsymbol{\varepsilon},0,0,\boldsymbol{\varepsilon}')} 
= \sum_{j \in \Z/M\Z} b_{s, \boldsymbol{\ell},\mathbf{N}}^{\boldsymbol\varepsilon}(\mathbf{1}_{A_j} \cdot) 
b_{0, (\ell_0),(1)}^{(0)}(\mathbf{1}_{A_j+|\boldsymbol{\ell}|_1}) b_{0, (\ell_0',\boldsymbol{\ell}'),(N'_0,\mathbf{N}')}^{(0,\boldsymbol\varepsilon')} (\mathbf{1}_{A_j+|\boldsymbol{\ell}|_1+\ell_0})
= 0.
\end{equation*}

\smallskip

\item \begin{equation*}
b_{s, (\ell_1, \ldots,\ell_q),(N_1,N_2, \ldots,N_q)}^{(0,1,\ldots,1)} (\mathbf{1}_{A_j})
= \Phi(0) \mathfrak{a}_{\ell_1}^{-d} \sum_{a_1 \in \Z^d} \beta(a_1)^{N_1} b_{a_1, (\ell_2, \ldots,\ell_q),(N_2, \ldots,N_q)}^{(1,\ldots,1)} (\mathbf{1}_{A_{j+\ell_1}}).
\end{equation*}

\smallskip

\item Applying Point~$(V)$ and the fact that $\sum_{a,b\in\Z^d} \beta(a) \beta(b) \Ebb_\mu \left[Q_{\ell',a-b}^{(0)} (\mathbf{1})\right] = 0$, 
we get:
\begin{align*}
b_{s, (\ell,\ell'),(1,1)}^{(0,1)}(\mathbf{1}_{A_j})
& = \Phi(0) \mathfrak{a}_\ell^{-d} \sum_{a,b\in\Z^d} \beta(a) \beta(b) \Ebb_\mu \left[Q_{\ell',a-b}(\mathbf{1}_{A_{j+\ell}})\right] \\
& = \Phi(0) \mathfrak{a}_\ell^{-d} \sum_{a,b\in\Z^d} \beta(a) \beta(b) \mu(A_{j+\ell} ; \ S_{\ell'}=a-b).
\end{align*}
\end{enumerate}
\end{properties}

Given a sequence $\boldsymbol\varepsilon \in \{0, 1\}^q$, we can iterate Point~$(III)$ above to cut $b_{s, \boldsymbol{\ell},\mathbf{N}}^{\boldsymbol\varepsilon}$ 
into smaller pieces, for which $0$ may only appear at the beginning of the associated sequences of indices, and then use Point~$(V)$ to 
transform the heading $\varepsilon_i = 0$. Write $m_1<m_2<\cdots<m_K$ for the indices $i\in\{1,\ldots,q\}$ such that $\varepsilon_i=0$. 
We use the conventions that $m_{K+1}:=q+1$ and $\varepsilon_{q+1} := 0$, that 
$b_{s,\boldsymbol{\ell}, \mathbf{N}}^{\boldsymbol{\varepsilon}} \equiv 1$ if $q=0$, 
and that an empty product is also equal to $1$.
Then:

\begin{align*}
b_{s,\boldsymbol{\ell}, \mathbf{N}}^{\boldsymbol{\varepsilon}}(\mathbf{1})
& = \sum_{j \in \Z/M\Z} b_{s,(\ell_1,\ldots,\ell_{m_1-1}), (N_1,\ldots,N_{m_1-1})}^{(1,\ldots,1)}(\mathbf{1}_{A_j}) \prod_{i=1}^K b_{0,(\ell_{m_i}, \ldots,\ell_{m_{i+1}-1}),(N_{m_i}, \ldots,N_{m_{i+1}-1})}^{(0,1, \ldots ,1)}(\mathbf{1}_{A_{j+\ell_1+\ldots+\ell_{m_i}}}) \\
& = (M \Phi(0))^K \sum_{j \in \Z/M\Z} b_{s,(\ell_1,\ldots,\ell_{m_1-1}), (N_1,\ldots,N_{m_1-1})}^{(1,\ldots,1)}(\mathbf{1}_{A_j}) \\
& \hspace{2em} \times \prod_{i=1}^K \mathfrak{a}_{\ell_{m_i}}^{-d} \sum_{a \in \Z^d} \beta(a)^{N_{m_i}} b_{a, (\ell_{m_i+1}, \ldots,\ell_{m_{i+1}-1}),(N_{m_i+1}, \ldots,N_{m_{i+1}-1})}^{(1, \ldots ,1)}(\mathbf{1}_{A_{j+\ell_1+\ldots+\ell_{m_i}}}) .
\end{align*}
We sum over $\boldsymbol{\ell} \in E_{q,n}$, and get:
\begin{align}
\left| A_{n,q, \mathbf{N}}^{\boldsymbol{\varepsilon}} \right|
& \leq \sum_{\boldsymbol{\ell} \in \{1, \ldots, n\}^q} \left| b_{0,\boldsymbol{\ell}, \mathbf{N}}^{\boldsymbol{\varepsilon}}(\mathbf{1}) \right| \nonumber\\
& \leq (M\Phi(0))^K \left( \sum_{\substack{(\ell_1,\ldots,\ell_{m_1-1}) \\ \in \{1, \ldots, n\}^{m_1-1}}}\sup_{j \in \Z/M\Z} \left| b_{0,(\ell_1,\ldots,\ell_{m_1-1}), (N_1,\ldots,N_{m_1-1})}^{(1,\ldots,1)}(\mathbf{1}_{A_j}) \right| \right) \label{majoA1}\\
& \hspace{1em} \times \prod_{i=1}^K \left( \Afrak_n^2 \sum_{\substack{(\ell_{m_i+1}, \ldots,\ell_{m_{i+1}-1}) \\ \in \{1, \ldots, n\}^{m_{i+1}-m_i-1}}} \sup_{j \in \Z/M\Z} \left| \sum_{a \in \Z^d} \beta(a)^{N_{m_i}} b_{a, (\ell_{m_i+1}, \ldots,\ell_{m_{i+1}-1}),(N_{m_i+1}, \ldots,N_{m_{i+1}-1})}^{(1, \ldots ,1)}(\mathbf{1}_{A_j}) \right|\right). \label{majoA2}
\end{align}

Fix $\omega\in(0,1]$ such that $\frac{\alpha-d}{2} < \omega < \frac{\alpha-d}{2}+\varepsilon$, 
and $\eta \in (0,\omega]$ such that $\omega + \eta \leq \frac{\alpha-d}{2}+\varepsilon$.
The control of~\eqref{majoA1} and of~\eqref{majoA2} shall be done with the following technical lemma, the proof of which 
is postponed until Subsection~\ref{subsec:technical}.

\begin{lemma}
\label{lem:normeB}
Under the assumptions of Theorem~\ref{thm:gene} and with the previous notations,
for every $q\geq 1$ and $\mathbf{N}=(N_1,\ldots,N_q) \in \N_+^q$,
for every $j \in\Z/M\Z$,
\begin{align}
\sup_{a \in \Z^d} \frac{1}{1+|a|^\eta} \sum_{\boldsymbol{\ell} \in \{1,\ldots,n\}^q}  \left| b_{a,\boldsymbol \ell,\mathbf{N}}^{(1,\ldots,1)} (\mathbf{1}_{A_j})\right| & = o \left(\Afrak_n^{N_1+\ldots+N_q}\right), \label{eq:normeB1} \\
\sum_{\ell=1}^n \left| \sum_{a \in \Z^d} \beta(a) b_{a,(\ell),(N)}^{(1)} (\mathbf{1}_{A_j})\right| & = \left\{ \begin{array}{lll} O \left(1\right) & \text{ if } & q=1, \ N=1 \\ o \left(\Afrak_n\right) & \text{ if } & q=1, \ N\geq 2 \end{array}\right. , \label{eq:normeB3} \\
\sum_{\boldsymbol{\ell} \in \{1,\ldots,n\}^q} \left| \sum_{a \in \Z^d}\beta(a) b_{a,\boldsymbol \ell,\mathbf{N}}^{(1,\ldots,1)}(\mathbf{1}_{A_j}) \right| & = o \left(\Afrak_n^{N_1+\ldots+N_q-1}\right) \text{ if } q\geq 2. \label{eq:normeB4}
\end{align}
\end{lemma}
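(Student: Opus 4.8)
The plan is to prove the three bounds \eqref{eq:normeB1}, \eqref{eq:normeB3}, \eqref{eq:normeB4} simultaneously by induction on the chain length $q$, peeling off the innermost operator at each step. Expanding $Q^{(1)}_{\ell_1,a_1-a}=\frac{\Phi((a_1-a)/\mathfrak a_{\ell_1})-\Phi(0)}{\mathfrak a_{\ell_1}^d}\Pi_0^{\ell_1}+\varepsilon_{\ell_1,a_1-a}$ (Lemma~\ref{lem:0}) and using $\Pi_0^{\ell_1}\mathbf 1_{A_j}=\mathbf 1_{A_{j+\ell_1}}$ together with $Q^{(1)}_{\ell_i,\cdot}(\mathbf 1_{A_k}h)=\mathbf 1_{A_{k+\ell_i}}Q^{(1)}_{\ell_i,\cdot}(h)$, the scalar coming from the principal part factors past the remaining operators and one obtains the recursion
\[
b^{(1,\ldots,1)}_{a,(\ell_1,\ldots,\ell_q),(N_1,\ldots,N_q)}(\mathbf 1_{A_j})
=\sum_{a_1\in\Z^d}\beta(a_1)^{N_1}\Big(\tfrac{\Phi((a_1-a)/\mathfrak a_{\ell_1})-\Phi(0)}{\mathfrak a_{\ell_1}^d}\, b^{(1,\ldots,1)}_{a_1,(\ell_2,\ldots,\ell_q),(N_2,\ldots,N_q)}(\mathbf 1_{A_{j+\ell_1}})
+\Ebb_\mu\big[B^{(1,\ldots,1)}_{a_1,(\ell_2,\ldots,\ell_q),(N_2,\ldots,N_q)}\big(\varepsilon_{\ell_1,a_1-a}(\mathbf 1_{A_j})\big)\big]\Big).
\]
Before iterating it I would collect: (i) the density $\Phi$ of the nondegenerate $\alpha$-stable limit is bounded and $C^\infty$ with bounded derivatives, so $|\Phi(x)-\Phi(y)|\le C_\rho|x-y|^\rho$ and $|\Phi(x+y)-\Phi(x)-\Phi(y)+\Phi(0)|\le C_\rho|x|^\rho|y|^\rho$ for every $\rho\in(0,1]$; (ii) Lemma~\ref{lem:0} gives $\sup_a\norm{\varepsilon_{\ell,a}}{\Bcal}=o(\mathfrak a_\ell^{-d})$ and, since $a\mapsto Q_{\ell,a}$ is a finite Fourier integral, $\norm{\varepsilon_{\ell,a}-\varepsilon_{\ell,a'}}{\Bcal}\le C|a-a'|\,\mathfrak a_\ell^{-d-1}$, whence by interpolation $\norm{\varepsilon_{\ell,a}-\varepsilon_{\ell,a'}}{\Bcal}=o(\mathfrak a_\ell^{-d-\rho})|a-a'|^\rho$ for $\rho\in(0,1]$; (iii) $\sup_\ell\norm{\Pi_0^\ell}{\Lcal(\Bcal,\Bcal)}<\infty$, $\Pi_0^\ell\Pi_0^{\ell'}=\Pi_0^{\ell+\ell'}$, $\Ebb_\mu\circ\Pi_0^\ell=\Ebb_\mu$, $\norm{Q^{(1)}_{\ell,a}}{\Bcal}=O(\mathfrak a_\ell^{-d})$ uniformly in $a$; (iv) $\sum_{\ell=1}^n\mathfrak a_\ell^{-d}=\Afrak_n^2\to\infty$ and, since $(\mathfrak a_\ell)$ is regularly varying of index $1/\alpha$, $\sum_{\ell=1}^n\mathfrak a_\ell^{-d-\rho}=o(\Afrak_n)$ and $\sum_{\ell=1}^n\mathfrak a_\ell^{-d-2\rho}=O(1)$ for every $\rho>\tfrac{\alpha-d}2$, while $\sum_{\ell=1}^n o(\mathfrak a_\ell^{-d})=o(\Afrak_n^2)$; (v) $\sum_{p}|\beta(p)|^N(1+|p|)^{\omega+\eta}<\infty$ for all $N\ge1$, because $\tfrac{\alpha-d}2<\omega$, $\eta\le\omega$ and $\omega+\eta\le\tfrac{\alpha-d}2+\varepsilon$.

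The crux is how the innermost slot is handled. If $N_1=1$ one uses $\sum_p\beta(p)=0$ to centre the difference, $\sum_{a_1}\beta(a_1)(\Phi((a_1-a)/\mathfrak a_{\ell_1})-\Phi(0))=\sum_{a_1}\beta(a_1)(\Phi((a_1-a)/\mathfrak a_{\ell_1})-\Phi(-a/\mathfrak a_{\ell_1}))$, which by (i) is bounded by $C\mathfrak a_{\ell_1}^{-\omega}\sum_p|\beta(p)||p|^\omega$ \emph{uniformly in $a$}, and likewise $\big\|\sum_{a_1}\beta(a_1)(\varepsilon_{\ell_1,a_1-a}-\varepsilon_{\ell_1,-a})\big\|_{\Bcal}=o(\mathfrak a_{\ell_1}^{-d-\omega})$ by (ii); thus the innermost slot contributes $o(\mathfrak a_{\ell_1}^{-d-\omega})$, and summing over $\ell_1\le n$ with (iv) yields $o(\Afrak_n)$. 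If $N_1\ge2$ there is no exact cancellation, but the crude uniform bound $\norm{Q^{(1)}_{\ell_1,\cdot}}{\Bcal}=O(\mathfrak a_{\ell_1}^{-d})$ already suffices: summing over $\ell_1\le n$ gives $O(\Afrak_n^2)$, and multiplying by the inductive estimate $o(\Afrak_n^{N_2+\dots+N_q})$ for the shorter form (the weight $(1+|a_1|^\eta)$ there being absorbed into $\sum_p|\beta(p)|^{N_1}(1+|p|)^\eta<\infty$) one gets $o(\Afrak_n^{2+N_2+\dots+N_q})$, which since $\Afrak_n\to\infty$ and $N_1\ge2$ is $\le o(\Afrak_n^{N_1+\dots+N_q})$ (the strict $o$ coming from the tail when $N_1=2$). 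This explains the exponent $N_1+\dots+N_q$ in the statement: a weight-$\ge2$ position is allotted more than enough powers of $\Afrak_n$ to be dealt with trivially, and only the weight-$1$ positions must earn their $\mathfrak a_\ell^{-\omega}$ gain. The error term in the recursion is treated the same way (crudely $o(\mathfrak a_{\ell_1}^{-d})$, upgraded to $o(\mathfrak a_{\ell_1}^{-d-\omega})$ when $N_1=1$), and the incoming $a$, which enters only the innermost operator, is controlled uniformly, so the factor $(1+|a|^\eta)$ is simply carried along; the constraints $\eta\le\omega$, $\omega+\eta\le\tfrac{\alpha-d}2+\varepsilon$ are exactly what keeps every $\beta$-moment used above finite. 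For \eqref{eq:normeB3}--\eqref{eq:normeB4} one inserts the extra sum $\sum_a\beta(a)$, which puts a \emph{second} cancellation on the innermost operator: by the second-difference estimate of (i) (equivalently, $\widehat{\beta\ast\check\beta}=|\widehat\beta|^2$ vanishes at $0$ to twice the order of $\widehat\beta$) this upgrades its contribution to $O(\mathfrak a_{\ell_1}^{-d-2\omega})$, whence $\sum_{\ell\le n}\mathfrak a_\ell^{-d-2\omega}=O(1)$ in the case $q=1$, $N=1$ of \eqref{eq:normeB3} and one extra power of $\Afrak_n^{-1}$ in \eqref{eq:normeB4}; the $N\ge2$ case of \eqref{eq:normeB3} uses only one cancellation and gives $O(\mathfrak a_\ell^{-d-\omega})=o(\Afrak_n)$.

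I expect the main obstacle to be twofold. First, one must recognise that the bare estimate $\norm{\varepsilon_{\ell,a}}{\Bcal}=o(\mathfrak a_\ell^{-d})$ is \emph{not} summable against $\Afrak_n$ — $\sum_{\ell\le n}o(\mathfrak a_\ell^{-d})$ is only $o(\Afrak_n^2)$ — so that each error piece must be made to share the $\mathfrak a_\ell^{-\omega}$ gain, which forces the position-modulus of continuity in (ii) and the discipline of always peeling so as to expose a \emph{difference} of $\Phi$-values (or of $\varepsilon$'s) rather than a bare value; this is the point where the argument could most easily go wrong. Second, the recursion branches into $O(2^q)$ principal/error terms, in each of which one must juggle the exponent relations $\tfrac{\alpha-d}2<\omega$, $\eta\le\omega$, $\omega+\eta\le\tfrac{\alpha-d}2+\varepsilon$ together with the repeated appeals to $\sum_p\beta(p)=0$; keeping all of this consistent is the bulk of the work, in line with the authors' own description of Lemma~\ref{lem:normeB} as by far the most technical point of the proof.
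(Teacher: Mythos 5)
Your skeleton is the right one and largely matches the paper's: reduce to controlling operator chains, use the decomposition $Q^{(1)}_{\ell,a}$ from Lemma~\ref{lem:0}, spend the cancellation $\sum_p\beta(p)=0$ at weight-$1$ slots and crude $O(\mathfrak{a}_\ell^{-d})$ bounds at weight-$\geq 2$ slots, and your inventory (i)--(v) as well as the double-cancellation mechanism behind \eqref{eq:normeB3}--\eqref{eq:normeB4} are exactly the estimates the paper uses. But there is a genuine gap at the crux, the weight-$1$ slot. You centre $\sum_{a_1}\beta(a_1)\bigl(\Phi((a_1-a)/\mathfrak{a}_{\ell_1})-\Phi(0)\bigr)$ to $\sum_{a_1}\beta(a_1)\bigl(\Phi((a_1-a)/\mathfrak{a}_{\ell_1})-\Phi(-a/\mathfrak{a}_{\ell_1})\bigr)$ as if the sum over $a_1$ acted on the $\Phi$-factor alone. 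It does not: in your own recursion the summand is $\beta(a_1)\bigl(\Phi(\cdot)-\Phi(0)\bigr)\,b^{(1,\ldots,1)}_{a_1,(\ell_2,\ldots,\ell_q),(N_2,\ldots,N_q)}(\cdot)$, and the truncated chain has $a_1$ as its base point, so the sum over $a_1$ does not factor; replacing $\Phi(0)$ by $\Phi(-a/\mathfrak{a}_{\ell_1})$ costs the non-zero term $\bigl(\Phi(0)-\Phi(-a/\mathfrak{a}_{\ell_1})\bigr)\sum_{a_1}\beta(a_1)b_{a_1,\ldots}$, and the remaining term couples the $|a_1|^\omega$ gain to the $a_1$-dependence of the chain. (The same objection applies to your treatment of the $\varepsilon_{\ell_1,a_1-a}$ piece.) If you instead fall back on your stated inductive hypothesis --- only the norm bound $o\bigl((1+|a_1|^\eta)\Afrak_n^{N_2+\cdots+N_q}\bigr)$ --- the slot contributes $O(\mathfrak{a}_{\ell_1}^{-d})$, the $\ell_1$-sum gives $O(\Afrak_n^2)$, and you are one full power of $\Afrak_n$ over budget.

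Closing the induction therefore requires a second inductive quantity measuring how the truncated chain varies with its base point, with an explicit $|a|^\omega$ gain: this is the paper's $v_{q,n}(a):=\sum_{\boldsymbol{\ell}}\norm{D_{a,\boldsymbol{\ell}}-D_{0,\boldsymbol{\ell}}}{}$, which is $O(|a|^\omega)$ for $q=1$ and $o(|a|^\omega\Afrak_n^{q-1})$ for $q\geq 2$, satisfies its own first-order recursion seeded by \eqref{differenceQ}, and is paired with $\sum_p|\beta(p)|\,|p|^\omega<+\infty$; the bound \eqref{eq:normeB4} then follows from writing $\sum_a\beta(a)b_a=\sum_a\beta(a)(b_a-b_0)$. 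The coupled $(u_{q,n},v_{q,n})$ recursion --- together with the analogous pair $(\tilde u_{q,n},\tilde v_{q,n})$ for the two-sided pieces $C$ used to reduce general weights to all-ones weights --- is the actual engine of the paper's proof and is entirely absent from your proposal; the estimate \eqref{eq:normeB4} summed over all $\boldsymbol{\ell}$ is not a usable substitute, because it carries no quantitative dependence on the base point and feeding it back into the recursion produces an $|a|^\omega$ weight incompatible with the target $(1+|a|^\eta)$ (recall $\eta\leq\omega$ and $2\omega>\alpha-d$). Two smaller repairs would also be needed: the induction must run on operator norms $\norm{D_{a,\boldsymbol{\ell}}}{}$ rather than on the scalars $b_{a,\boldsymbol{\ell}}(\mathbf{1}_{A_j})$, since $\varepsilon_{\ell_1,a_1-a}$ feeds an arbitrary element of $\Bcal$ into the truncated chain; and the ``uniform'' bound $\norm{Q^{(1)}_{\ell,a}}{}=O(\mathfrak{a}_\ell^{-d})$ you invoke at weight-$\geq 2$ slots is really $o\bigl((1+|a|^\eta)\mathfrak{a}_\ell^{-d}\bigr)$, whose extra factor must be absorbed via $(1+|a_1-a_0|^\eta)\leq(1+|a_1|^\eta)(1+|a_0|^\eta)$ and $2\eta\leq\omega+\eta\leq\frac{\alpha-d}{2}+\varepsilon$.
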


We consider the following condition on the sequences $\boldsymbol{\varepsilon}$ and $\mathbf{N}$:
\begin{equation}\label{dominant}
\begin{array}{l}
m_1=1, \\
  \forall i\in\{1,\ldots,K\},\  \left\{ 
    \begin{array}{lll}
      N_{m_i} \in \{1,2\}& & \\ 
      N_{m_i}=1 & \Rightarrow & m_{i+1}=m_i+2,\ N_{1+m_i}=1 \\ 
      N_{m_i}=2 & \Rightarrow & m_{i+1}=m_i+1 
    \end{array} \right..\\ 
\end{array}
\end{equation}
Note that this condition imply that $m=2K$.

\begin{corollary}

Use the assumptions of Theorem~\ref{thm:gene} and the previous notations. 
Let $m\geq 1$, $q\geq 1$ and $N_1,\ldots,N_q \in\N_+$ be such that $N_1+\ldots+N_q=m$. 
If Condition~\eqref{dominant} holds, then 
\begin{equation*}
\left|A_{n,q, \mathbf{N}}^{\boldsymbol{\varepsilon}}\right|
= O(\Afrak_n^m) ;
\end{equation*}
otherwise:
\begin{equation*}
\left|A_{n,q, \mathbf{N}}^{\boldsymbol{\varepsilon}}\right|
= o(\Afrak_n^m).
\end{equation*}
In particular,
\begin{equation}\label{momentsimpairs}
\left\{
\begin{array}{llll}
\Ebb_\mu \left[\Zcal_n (\beta)^m \right] & = & O \left( \Afrak_n^m \right) & \forall m \in 2\N \\
\Ebb_\mu \left[\Zcal_n (\beta)^m \right] & = & o \left( \Afrak_n^m \right) & \forall m \in 2\N+1
\end{array}
\right..
\end{equation}
\end{corollary}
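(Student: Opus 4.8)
The plan is to feed the decomposition of $A_{n;q;\mathbf N}^{\boldsymbol\varepsilon}$ from \eqref{majoA1}--\eqref{majoA2} into Lemma~\ref{lem:normeB} and bookkeep the powers of $\Afrak_n$. Recall that the cutting procedure splits $b_{s,\boldsymbol\ell,\mathbf N}^{\boldsymbol\varepsilon}(\mathbf 1)$ into one ``head'' block of pure $1$'s of length $m_1-1$, indexed by $j\in\Z/M\Z$, and $K$ ``tail'' blocks, each beginning with an index $\varepsilon=0$ (which via Point~$(V)$ contributes a factor $\Phi(0)\mathfrak a_{\ell_{m_i}}^{-d}$ together with a sum $\sum_{a}\beta(a)^{N_{m_i}}$ against a pure-$1$ linear form of length $m_{i+1}-m_i-1$). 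Summing $\mathfrak a_{\ell_{m_i}}^{-d}$ over $\ell_{m_i}\in\{1,\ldots,n\}$ produces exactly one factor $\Afrak_n^2$ per tail block, which is the origin of the $\Afrak_n^2$ prefactors in \eqref{majoA2}.

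First I would handle the case where Condition~\eqref{dominant} fails. There are three sub-cases. (a) If $m_1\geq 2$, the head block is a pure-$1$ linear form of length $m_1-1\geq 1$ with total weight $N_1+\cdots+N_{m_1-1}$; by \eqref{eq:normeB1} (with $a=0$, so the factor $1+|a|^\eta$ is harmless) its $\ell$-sum is $o(\Afrak_n^{N_1+\cdots+N_{m_1-1}})$, while each tail block of length $m_{i+1}-m_i-1$ with the extra $\sum_a\beta(a)^{N_{m_i}}$ and the $\Afrak_n^2$ is $O(\Afrak_n^{2+(N_{m_i+1}+\cdots+N_{m_{i+1}-1})})$ — unless that block is empty. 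If every tail block were ``perfectly matched'' so the product were $\Theta(\Afrak_n^m)$, we would need $m_1=1$: hence $m_1\geq 2$ forces a strict loss, giving $o(\Afrak_n^m)$. (b) If some $N_{m_i}\geq 3$, or $N_{m_i}=2$ but $m_{i+1}>m_i+1$ (a non-empty pure-$1$ tail after a weight-$2$ head index): the tail block is $\Afrak_n^2\cdot\sum_a\beta(a)^{N_{m_i}}b_{a,\ldots,(1,\ldots,1)}(\mathbf 1_{A_j})$, and by \eqref{eq:normeB1} the inner sum is $o(\Afrak_n^{N_{m_i+1}+\cdots})$, uniformly enough thanks to the $1+|a|^\eta$ control combined with $\sum_a|\beta(a)|\,|a|^\eta<\infty$ (which follows from the first bullet of Theorem~\ref{thm:gene} and our choice $\eta\leq\omega$, $\omega+\eta\leq\frac{\alpha-d}2+\varepsilon$), so again we drop below $\Afrak_n^m$; the remaining blocks are each $O(\Afrak_n^{2+\cdots})$ or $O(\Afrak_n^{N_1+\cdots})$ as above, and since $m=2K$ is the budget, the total is $o(\Afrak_n^m)$. (c) If $N_{m_i}=1$ but the block is \emph{not} of the form $(1,1)$ — either it is a lone $(1)$, which by \eqref{eq:normeB3} contributes only $O(\Afrak_n^{2}\cdot 1)=O(\Afrak_n^2)$ instead of the expected $O(\Afrak_n^{3})$ (a loss of $\Afrak_n$), or it has length $\geq 3$, in which case \eqref{eq:normeB4} gives $o(\Afrak_n^{N_{m_i}+N_{m_i+1}+\cdots-1})$, again a strict loss. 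Counting total weights, in each situation $|A_{n;q;\mathbf N}^{\boldsymbol\varepsilon}|=o(\Afrak_n^m)$.

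Next, under Condition~\eqref{dominant}: the head is empty ($m_1=1$, factor $1$), and each of the $K$ tail blocks is either a $(0)$ with head weight $2$ (then the block after the $0$ is empty, contributing $\Afrak_n^2\sum_a\beta(a)^2\,\Ebb_\mu[\mathbf 1_{A_j}]=O(\Afrak_n^2)$), or a $(0,1)$ with $N_{m_i}=N_{m_i+1}=1$ (then by Point~$(VI)$ it equals $\Phi(0)\mathfrak a_\ell^{-d}\sum_{a,b}\beta(a)\beta(b)\mu(A_{j+\ell};S_{\ell'}=a-b)$, and summing the $\mathfrak a_\ell^{-d}$ over $\ell\leq n$ and bounding $\sum_{\ell'\geq1}\sum_{a,b}|\beta(a)\beta(b)|\mu(S_{\ell'}=a-b)$ — finite by the argument behind \eqref{AAA1} — gives $O(\Afrak_n^2)$). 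Since $m=2K$, the product of $K$ factors each $O(\Afrak_n^2)$ is $O(\Afrak_n^m)$. This is exactly the claim. Finally, \eqref{momentsimpairs} follows: Condition~\eqref{dominant} forces $m=2K$ to be even, so for odd $m$ every triple $(q,\mathbf N,\boldsymbol\varepsilon)$ falls into the ``otherwise'' case and $\Ebb_\mu[\Zcal_n(\beta)^m]=\sum_{q}\sum_{\mathbf N}c_{\mathbf N}\sum_{\boldsymbol\varepsilon}A_{n;q;\mathbf N}^{\boldsymbol\varepsilon}=o(\Afrak_n^m)$, while for even $m$ the finite sum of $O(\Afrak_n^m)$ terms is $O(\Afrak_n^m)$.

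The main obstacle is making the case analysis in the ``otherwise'' branch exhaustive and uniform: one must check that \emph{every} violation of \eqref{dominant} — a nonempty head, an overweight or misplaced $N_{m_i}$, a stray lone $1$, an overlong pure-$1$ run — produces a strict loss of at least one factor $\Afrak_n$ (or $\Afrak_n^{\text{something}}$) somewhere in the product, with the surviving factors still only $O$ of the expected power; this requires carefully pairing Lemma~\ref{lem:normeB}'s three estimates with the right block and tracking that the $\Afrak_n^2$ prefactors and the weights $N_i$ add up correctly to the budget $m$. The uniformity in $a\in\Z^d$ (needed because the tail inner sums carry a free index $a$ that is then summed against $\beta$) is handled by the $1+|a|^\eta$ normalization in \eqref{eq:normeB1} together with the summability hypothesis on $\beta$, but one must confirm $\eta$ can be chosen so that $\sum_a|\beta(a)|(1+|a|^\eta)<\infty$ — which is guaranteed by the choice of $\omega,\eta$ fixed just before Lemma~\ref{lem:normeB}.
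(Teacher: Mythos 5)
Your overall strategy is the paper's: feed the decomposition \eqref{majoA1}--\eqref{majoA2} into Lemma~\ref{lem:normeB} and account for powers of $\Afrak_n$ block by block, each tail block carrying a budget $\Afrak_n^{N_{m_i}+\cdots+N_{m_{i+1}-1}}$ and the head a budget $\Afrak_n^{N_1+\cdots+N_{m_1-1}}$. The paper organizes the tail-block estimates by the pair $(N_0,q)=(N_{m_i},\,m_{i+1}-m_i-1)$ and verifies that \emph{every} pair yields $O$ of its budget, with $o$ except for $(2,0)$ and for $(1,1)$ with $N_{m_i+1}=1$; the corollary then follows at once. Your case analysis aims at the same conclusion but contains concrete bookkeeping errors in the ``otherwise'' branch.

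First, in sub-case (a) you bound a generic tail block by $O(\Afrak_n^{2+(N_{m_i+1}+\cdots+N_{m_{i+1}-1})})$. This exceeds the block's budget $\Afrak_n^{N_{m_i}+N_{m_i+1}+\cdots}$ by one power of $\Afrak_n$ whenever $N_{m_i}=1$, so multiplying it by the head's $o(\Afrak_n^{N_1+\cdots+N_{m_1-1}})$ does not give $o(\Afrak_n^m)$: you must already invoke \eqref{eq:normeB3} or \eqref{eq:normeB4} for weight-one-headed blocks in this sub-case, not only in (c). Second, sub-case (c) misassigns the estimates. A block reduced to the single index $m_i$ with $N_{m_i}=1$ contributes $\Afrak_n^2\left|\sum_a\beta(a)\right|=0$ (Points (II) and (IV) of Properties~\ref{rmk:proprietesbn}), not ``$O(\Afrak_n^2)$ against an expected $O(\Afrak_n^3)$''; if it really contributed $O(\Afrak_n^2)$, its budget being only $\Afrak_n^{1}$, even the $O(\Afrak_n^m)$ claim would fail. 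The violation for which \eqref{eq:normeB3} is actually the right tool --- $N_{m_i}=1$, $m_{i+1}=m_i+2$, $N_{m_i+1}\ge 2$, giving $\Afrak_n^2\cdot o(\Afrak_n)=o(\Afrak_n^3)=o(\Afrak_n^{1+N_{m_i+1}})$ --- is absent from your list. Once these points are repaired the argument closes exactly as in the paper, including the parity observation that Condition~\eqref{dominant} forces $m=2K$, which yields \eqref{momentsimpairs}.
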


\begin{proof}

Due to Equation~\eqref{eq:normeB1}, the term~\eqref{majoA1} is an
$o(\Afrak_n^{N_1+\ldots+N_{m_1-1}})$ if $m_1\neq 1$, and 
an $O(1)=O(\Afrak_n^{N_1+\ldots+N_{m_1-1}})$ if $m_1=1$.

\smallskip

Let us now estimate the term~\eqref{majoA2}. Due to Equation~\eqref{eq:normeB1}, 
since $\sum_{a\in\Z^d} |a|^\eta |\beta(a)| < +\infty$, for all $N_0\geq 2$ and $q \geq 1$, 
for all $N_1, \ldots, N_q\geq 1$,
\begin{equation*}
\Afrak_n^2 \sum_{\boldsymbol\ell\in\{1,\ldots,n\}^q}\sup_{j \in \Z/M\Z} \left|\sum_{a\in\Z^d}\beta(a)^{N_0} b_{a,\mathbf{\ell},(N_1,\ldots,N_q)}^{(1, \ldots ,1)}(\mathbf{1}_{A_j}) \right|
= o\left(\Afrak_n^{N_0+N_1+\ldots+N_q}\right).
\end{equation*}
Due to Equation~\eqref{eq:normeB3}, this estimate holds when $N_0\geq 3$ and $q=0$;
due to Equation~\eqref{eq:normeB4}, this estimate holds when $N_0=1$ and $q \geq 2$. 

\smallskip

The two remaining cases are $N_0=2$, $q=0$ and $N_0=q=1$. When $N_0=2$ and $q=0$, we 
have a upper bound in $O(\Afrak_n^2)=O \left(\Afrak_n^{N_0+\ldots+N_q} \right)$. When $q=N_0=N_1=1$, 
the same upper bound is given by Equation~\eqref{eq:normeB3}. If $q=N_0=1$ and $N_1 \geq 2$, 
then Equation~\eqref{eq:normeB3} yields a upper bound in 
$o(\Afrak_n^3) = o\left(\Afrak_n^{N_0+\ldots+N_q}\right)$.

\smallskip

Hence, the term~\eqref{majoA2} is in $O\left(\Afrak_n^{2K}\right)
= O\left(\Afrak_n^{N_{m_1}+\ldots+N_q}\right)$ if, for every
$i\in\{1,\ldots,K\}$, we are in one of two cases:
\begin{itemize}
\item $N_{m_i}=1$, $m_{i+1}=m_i+2$ and $N_{1+m_i}=1$;
\item $N_{m_i}= 2$, $m_{i+1}=m_i+1$.
\end{itemize}
Otherwise, \eqref{majoA2} is in $o\left(\Afrak_n^{N_{m_1}+\ldots+N_q}\right)$.
In particular,
\begin{equation*}
\left|A_{n,q, \mathbf{N}}^{\boldsymbol{\varepsilon}}\right|
= O(\Afrak_n^m).
\end{equation*}
Furthermore, if Condition~\eqref{dominant} is not satisfied, either~\eqref{majoA1} 
is an $o(\Afrak_n^{N_1+\ldots+N_{m_1-1}})$ or one of the terms in~\eqref{majoA2} 
is an $o \left(\Afrak_n^{N_{m_1}+\ldots+N_{m_{i+1}-1}} \right)$, 
so $\left|A_{n,q, \mathbf{N}}^{\boldsymbol{\varepsilon}}\right| = o(\Afrak_n^m)$. 
This is the case, in particular, if $m$ is odd.
\end{proof}

Condition~\eqref{dominant} can be rewritten:
\begin{itemize}
\item $\max_i N_i \leq 2$; 
\item $\varepsilon_i=0$ as soon as $N_i=2$; 
\item there exists $\Jcal \subset\{1,\ldots,q\}$ such that $\{i\, :\, N_i=1\}=\bigsqcup_{j\in \Jcal}\{j,j+1\}$; 
\item $\varepsilon_j=0$ and $\varepsilon_{j+1}=1$ for all $j\in\Jcal$.
\end{itemize}

\smallskip

Assume now that $m \geq 0$ is even. Let us write $\Gcal(q)$ for the set of 
$\mathbf{N}=(N_1,\ldots,N_q) \in\{1,2\}^q$ such that $N_1+\ldots+N_q=m$ and 
$\{i\in\{1,\ldots,q\}\, :\, N_i=1\}$ is the disjoint union of pairs of the form $\{j,j+1\}$. 
Given $\mathbf{N} \in \Gcal (q)$, there exists a unique $\boldsymbol{\varepsilon}(\mathbf{N}) \in \{0,1\}^q$
such that $(\boldsymbol{\varepsilon} (\mathbf{N}),\mathbf{N})$ satisfies Condition~\eqref{dominant}. 
Note that $q = |\{i \, : \, \varepsilon_i =0, \ N_i=2\}|+2|\{i \, : \, \varepsilon_i =0, \ N_i=1\}|$ and 
$m/2 = |\{i \, : \, \varepsilon_i =0, \ N_i=2\}|+|\{i \, : \, \varepsilon_i =0, \ N_i=1\}|$, so that $|\{i \, : \, \varepsilon_i =0, \ N_i=1\}| = q-m/2$ and $|\{i \, : \,  N_i=2\}| = m-q$.
Then:
\begin{equation*}
b_{0;\boldsymbol{\ell},\mathbf{N}}^{\boldsymbol{\varepsilon} (\mathbf{N})}(\mathbf{1})
= \sum_{j \in \Z/M\Z} \left(\prod_{i\, :\, N_i=2} b_{(\ell_i),(2)}^{(0)}
  (\mathbf{1}_{A_{j+\ell_1+\ldots+\ell_i}}) \right)
   \left(\prod_{i\, :\, N_i=1,\varepsilon_i=0} b_{(\ell_i,\ell_{i+1}),(1,1)}^{(0,1)}(\mathbf{1}_{A_{j+\ell_1+\ldots+\ell_i}})\right).
\end{equation*}
Let $\tilde{E}_{q,n}$ be the set of $q$-uplets of integers $(\ell_1,\ldots,\ell_q)\in\{1,\ldots,n\}^q$
such that $M \sum_{i=1}^q\lceil \ell_i/M\rceil\leq n$. Using Points~$(II)$ and~$(VI)$ in Properties~\ref{rmk:proprietesbn}, 
we get:
\begin{align}\label{FORMULEA}
A_{n;q;\mathbf{N}}^{\boldsymbol\varepsilon (\mathbf{N})}
& = (M\Phi(0))^{\frac{m}{2}} \sum_{j \in \Z/M\Z} \sum_{\boldsymbol{\ell}\in E_{q,n}} \left( \prod_{i:N_i = 2} \frac{\sum_{a\in\Z^d} \beta(a)^2}{M \mathfrak{a}_{\ell_i}^d} \right) \nonumber \\
& \hspace{2em} \times \left( \prod_{i:N_i=1,\varepsilon_i=0} \frac{\sum_{a,b\in\Z^d} \beta(a)\beta(b) \mu(A_{j+\ell_1+\ldots+\ell_i};\ S_{\ell_{i+1}}=a-b)}{\mathfrak{a}_{\ell_i}^d}\right) \nonumber \\
& = o(\Afrak_n^m) + \Phi(0)^{\frac{m}{2}} \sum_{j \in \Z/M\Z} \sum_{\boldsymbol{\ell}\in \tilde E_{q,n}} \left( \prod_{i:N_i = 2} \frac{\sum_{a\in\Z^d}\beta(a)^2}{\mathfrak{a}_{\ell_i}^d} \right) \nonumber \\
& \hspace{2em} \times \left( \prod_{i:N_i=1,\varepsilon_i=0}\frac{\sum_{a,b\in\Z^d} \beta(a)\beta(b) \sum_{k=1}^M \mu(A_{j+\ell_1+\ldots+\lfloor\ell_i/M\rfloor M+k};\ S_{\ell_{i+1}}=a-b)}{\mathfrak{a}_{\lceil\ell_j/M\rceil M}^d}\right) \nonumber \\
& = o(\Afrak_n^m) + \Phi(0)^{\frac{m}{2}} \sum_{\boldsymbol{\ell}\in \tilde{E}_{q,n}} \left( \prod_{i:N_i = 2} \frac{\sum_{a\in\Z^d}\beta(a)^2}{\mathfrak{a}_{\ell_i}^d} \right) \left( \prod_{i:N_i=1,\varepsilon_i=0}\frac{\sum_{a,b\in\Z^d} \beta(a)\beta(b) \mu(S_{\ell_{i+1}}=a-b)}{\mathfrak{a}_{\lceil\ell_i/M\rceil M}^d}\right) \nonumber \\
& = o(\Afrak_n^m) + \Phi(0)^{\frac{m}{2}} \left( \sum_{a\in\Z^d}\beta(a)^2 \right)^{m-q} \sum_{\ell_1,\ldots,\ell_{q-m/2} \geq 1} \left[ \left( \prod_{i=1}^{q-m/2} \sum_{a,b\in\Z^d} \beta(a)\beta(b) \mu(S_{\ell_i}=a-b) \right) \right. \nonumber \\
& \hspace{2em} \left. \times \left( \sum_{\boldsymbol{\ell}' \in E_{m/2,n-\sum_{i=1}^{q-m/2} \ell_i}} \prod_{i=1}^{m/2} \frac{1}{\mathfrak{a}_{\ell'_i}^d} \right) \right]. \nonumber
\end{align}
The sequence $(\Afrak_n)$ has regular variation. Due to Lemma~\ref{lem:Integrale}, for all $\ell_1,\ldots,\ell_{q-m/2} \geq 1$,
\begin{equation*}
\sum_{\mathbf{\ell}' \in E_{m/2,n-\sum_{i=1}^{q-m/2} \ell_i}} \prod_{j=1}^{m/2} \mathfrak{a}_{\ell'_j}^{-d} 
\sim \Afrak_n^m \frac{\Gamma \left(1+\frac{\alpha-d}{\alpha} \right)^{\frac{m}{2}}}{\Gamma \left(1+\frac{m}{2} \frac{\alpha-d}{\alpha} \right)} \text{ as } n \rightarrow +\infty.
\end{equation*}
Hence, by the dominated convergence theorem, 
\begin{equation*}
A_{n;q;\mathbf{N}}^{\boldsymbol\varepsilon (\mathbf{N})} 
\sim \Afrak_n^m \Phi(0)^{\frac{m}{2}} \left( \sum_{a\in\Z^d}\beta(a)^2 \right)^{m-q} 
\left( \sum_{\ell \geq 1} \sum_{a,b\in\Z^d} \beta(a)\beta(b) \mu(S_\ell=a-b) \right)^{q-\frac{m}{2}}
\frac{\Gamma \left(1+\frac{ \alpha-d}{\alpha} \right)^{\frac{m}{2}}}{\Gamma \left(1+\frac{m}{2} \frac{\alpha-d}{\alpha} \right)}.
\end{equation*}
If $\mathbf{N} \notin \Gcal (q)$, or $\mathbf{N} \in \Gcal (q)$ but $\boldsymbol\varepsilon \neq \boldsymbol\varepsilon (\mathbf{N})$, 
we have already seen that $A_{n;q;\mathbf{N}}^{\boldsymbol\varepsilon (\mathbf{N})} \ll \Afrak_n^m$. Therefore,
by Equation~\eqref{momentAn},
\begin{equation*}
\Ebb_\mu \left[ \Zcal_n (\beta)^m \right]
\sim \sum_{q=1}^m \sum_{\mathbf{N} \in \Gcal(q)} c_{\mathbf{N}} A_{n;q;\mathbf{N}}^{\boldsymbol\varepsilon (\mathbf{N})}.
\end{equation*}
For fixed $q$, the value of $c_{\mathbf{N}}$ does not depend on $\mathbf{N}$, as the multiset of weights is the same. 
There are $2^{-(m-q)} m!$ maps from $\{1, \ldots, m\}$ to $\{1, \ldots, q\}$ such that $1, \ldots, m-q$ each have $2$ preimages, 
and $m-q+1, \ldots, q$ each have $1$ preimage. Thus,
\begin{equation*}
\forall m\in 2\Z,\quad \Ebb_\mu \left[ \Zcal_n (\beta)^m \right]
\sim m! \sum_{q=1}^m 2^{-(m-q)} \sum_{\mathbf{N} \in \Gcal(q)} A_{n;q;\mathbf{N}}^{\boldsymbol\varepsilon (\mathbf{N})}.
\end{equation*}
For fixed $q$, there are $\binom{m/2}{q-m/2}$ sequences $\mathbf{N} \in \Gcal (q)$: each such sequence is the concatenation 
of $m/2$ blocs of two different kinds, with $r := q-m/2$ blocs of one kind. Thus, for even $m$,
\begin{align*}
\Ebb_\mu \left[ \Zcal_n (\beta)^m \right]
& \sim \Afrak_n^m m! \Phi(0)^{\frac{m}{2}} \frac{\Gamma \left(1+\frac{\alpha-d}{\alpha} \right)^{\frac{m}{2}}}{\Gamma \left(1+\frac{m}{2} \frac{\alpha-d}{\alpha} \right)} \\
& \hspace{2em} \times \sum_{r=0}^{\frac{m}{2}} \binom{m/2}{r} \left( \frac{\sum_{a\in\Z^d}\beta(a)^2}{2} \right)^{\frac{m}{2}-r} \left( \sum_{\ell \geq 1} \sum_{a,b\in\Z^d} \beta(a)\beta(b) \mu(S_\ell=a-b) \right)^r \\
& \sim \Afrak_n^m m! \frac{\Gamma \left(1+\frac{\alpha-d}{\alpha} \right)^{\frac{m}{2}}}{\Gamma \left(1+\frac{m}{2} \frac{\alpha-d}{\alpha} \right)}  \left[\frac{\Phi(0)}{2} \left( \sum_{a\in\Z^d}\beta(a)^2 + 2 \sum_{\ell \geq 1} \sum_{a,b\in\Z^d} \beta(a)\beta(b) \mu(S_\ell=a-b) \right) \right]^{\frac{m}{2}}\\
& = \Afrak_n^m \frac{m!\Gamma \left(1+\frac{\alpha-d}{\alpha} \right)^{\frac{m}{2}}}{2^\frac{m}{2} \Gamma \left(1+\frac{m}{2} \frac{\alpha-d}{\alpha} \right)}  \Phi(0)^{\frac{m}{2}} \sigma_{GK} (\beta, \widetilde{A}, \tilde{\mu}, \widetilde{T})^m
\end{align*}
Let $\Ycal$ be a random variable with a standard $MLGM(1-\alpha/d)$ distribution. Its distribution function is 
even, so all its odd moments are $0$. Let $Y$ have a standard Mittag-Leffler distribution of parameter 
$1-\alpha/d$ and $N$ be a standard Gaussian random variable. Then the even moments of $\Ycal$ are:
\begin{equation*}
\Ebb[\Ycal^m] 
= \Ebb [Y^{\frac{m}{2}}] \Ebb [N^m] 
= \frac{(m/2)! \Gamma \left(1+\frac{\alpha-d}{\alpha} \right)^{\frac{m}{2}}}{\Gamma \left(1+\frac{m}{2} \frac{\alpha-d}{\alpha} \right)} \frac{m!}{2^{\frac{m}{2}} (m/2)!} 
= \frac{m! \Gamma \left(1+\frac{\alpha-d}{\alpha} \right)^{\frac{m}{2}}}{2^{\frac{m}{2}} \Gamma \left(1+\frac{m}{2} \frac{\alpha-d}{\alpha} \right)},
\end{equation*}
so that, for even $m$:
\begin{equation*}
\Ebb_\mu \left[ \Zcal_n (\beta)^m \right]
\sim \Afrak_n^m \Ebb \left[\left(\sqrt{\Phi(0)} \sigma_{GK} (\beta, \widetilde{A}, \tilde{\mu}, \widetilde{T})\Ycal \right)^m \right].
\end{equation*}
We already know that $\Ebb_\mu \left[ \Zcal_n (\beta)^m \right] \ll \Afrak_n^m$ for odd $m$.
Hence, all the moments of $(\Zcal_n (\beta)/\Afrak_n)_n$ converge to the moments of $\sqrt{\Phi(0)} \sigma_{GK} (\beta, \widetilde{A}, \tilde{\mu}, \widetilde{T}) \Ycal$. Since:
\begin{equation*}
\sum_{m \geq 0} \left[\frac{\Gamma \left(1+ \frac{m}{2}\frac{\alpha-d}{\alpha} \right)}{m!  \Gamma \left(1+ \frac{\alpha-d}{\alpha} \right)^{\frac{m}{2}}} \right]^{\frac{1}{2m}} 
= +\infty,
\end{equation*}
Carleman's criterion is satisfied~\cite[Chap.~XV.4]{Feller:1966}, so $(\Zcal_n (\beta)/\Afrak_n)_n$ converges in distribution to $\sqrt{\Phi(0)} \sigma_{GK} (\beta, \widetilde{A}, \tilde{\mu}, \widetilde{T}) \Ycal$, 
when $A \times \Z$ is endowed with the probability measure $\mu \times \delta_0$.

\smallskip

Finally, remark that:
\begin{equation*}
\left| \frac{\Zcal_n (\beta)}{\Afrak_n} \circ \widetilde{T} - \frac{\Zcal_n (\beta)}{\Afrak_n} \right| 
\leq \frac{2 \norm{\beta}{\infty}}{\Afrak_n} 
\rightarrow_{n \to + \infty} 0,
\end{equation*}
so by~\cite[Theorem~1]{Zweimuller:2007}, the sequence $(\Zcal_n (\beta)/\Afrak_n)_n$ converges strongly in distribution to
$\sqrt{\Phi(0)} \sigma_{GK} (\beta, \widetilde{A}, \tilde{\mu}, \widetilde{T}) \Ycal$.

\subsection{Technical lemmas}
\label{subsec:technical}

In the previous section, we used three technical lemmas, whose proofs would have been to long 
to include into our main line of reasoning. Their statements and proofs follow.

\smallskip

We begin with 
Lemma~\ref{lem:0}, which we used to control each part of the decomposition 
$Q_{\ell, a} = Q_{\ell, a}^{(0)}+Q_{\ell, a}^{(1)}$. Recall that $\Phi$ is the 
continuous version of the density function of the stable distribution with characteristic function $e^{-\psi(\sqrt{\Sigma}\cdot)}$.
Since $\mu (S_\ell=a) = \Ebb_\mu \left[Q_{\ell,a}(\mathbf{1})\right]$ 
for $a\in \Z^d$, the following lemma can be understood as a strong form of the the local 
limit theorem for $(S_\ell)_{\ell \geq 1}$.

\begin{lemma}\label{lem:0}

We assume that the Hypotheses~\ref{hyp:HHH} hold.

\smallskip

Let $a \in \Z^d$. For every positive integer $\ell$,
\begin{equation*}
Q_{\ell,a} (h) 
= \frac{\Phi\left(\frac{a}{\mathfrak{a}_\ell}\right)}{\mathfrak{a}_\ell^d} \Pi_0^\ell (h) + \varepsilon_{\ell,a} (h),
\end{equation*}
with $\sup_{a \in \Z^d} \norm{\varepsilon_{\ell,a} }{\Bcal \to \Bcal} = o \left(\mathfrak{a}_\ell^{-d} \right)$. 

\smallskip

Moreover, for every $\omega \in (0,1]$,
\begin{equation}\label{differenceQ}
\sup_{\substack{a,p \in \Z^d \\ p \neq 0}} |p|^{-\omega} \norm{ Q_{\ell,a}-Q_{\ell,a-p} }{}
= O \left(\mathfrak{a}_k^{-(d+\omega)} \right),
\end{equation}
and:
\begin{equation}\label{differenceQ4}
\norm{ Q_{\ell,a-p}-Q_{\ell,a}-Q_{\ell,-p}+Q_{\ell,0} }{}
= O \left((|a|\, |p|)^\omega \mathfrak{a}_\ell^{-(d+2\omega)}\right).
\end{equation}
\end{lemma}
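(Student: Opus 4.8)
# Proof plan for Lemma~\ref{lem:0}

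\textbf{Starting point: the Fourier inversion formula and the spectral decomposition.}
The plan is to start from the integral representation
$Q_{\ell,a}(h) = (2\pi)^{-d} \int_{[-\pi,\pi]^d} e^{-i\langle u,a\rangle} P_u^\ell(h)\dd u$
and substitute the spectral decomposition $P_u = \lambda_u \Pi_u + R_u$ from Hypothesis~\ref{hyp:HHH},
using $P_u^\ell = \lambda_u^\ell \Pi_u^{\lceil \ell\rceil\text{-type correction}} + R_u^\ell$ together with the orthogonality relations $\Pi_u R_u = R_u \Pi_u = 0$ and $\Pi_u^{M+1} = \Pi_u$. The contribution of $R_u^\ell$ over $U$, and the contribution of all of $P_u^\ell$ over $[-\pi,\pi]^d\setminus U$, are both $O(r^\ell)$ in operator norm by Equations~\eqref{eq:majoRu} and~\eqref{eq:aperiodicity}, which is negligible compared to any power of $\mathfrak{a}_\ell^{-1}$ since $\mathfrak{a}_\ell$ grows polynomially. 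So everything reduces to understanding $(2\pi)^{-d}\int_U e^{-i\langle u,a\rangle}\lambda_u^\ell \Pi_u(h)\, \mathrm{(up\ to\ periodicity\ bookkeeping)}\dd u$.

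\textbf{Main term: change of variables and local limit analysis.}
Next I would perform the scaling $u = v/\mathfrak{a}_\ell$, turning the integral into
$\mathfrak{a}_\ell^{-d}(2\pi)^{-d}\int_{\mathfrak{a}_\ell U} e^{-i\langle v, a/\mathfrak{a}_\ell\rangle}\lambda_{v/\mathfrak{a}_\ell}^\ell \Pi_{v/\mathfrak{a}_\ell}(h)\dd v$.
Using the asymptotics $\lambda_u = e^{-\psi(\sqrt\Sigma u)L(|u|^{-1})}(1+o(\ldots))$ and the defining relation $nL(\mathfrak{a}_n)\sim\mathfrak{a}_n^\alpha$, one gets $\lambda_{v/\mathfrak{a}_\ell}^\ell \to e^{-\psi(\sqrt\Sigma v)}$ pointwise, with a domination by an integrable function (a Gaussian-type or stable-type tail bound on $|\lambda_u|^\ell$ uniform near $0$, standard in this circle of ideas — see~\cite{AaronsonDenker:2001}). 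Since $\Pi_u\to\Pi_0$ continuously and $\Pi_0$ is independent of the later variable, dominated convergence gives
$\int_{\R^d} e^{-i\langle v,a/\mathfrak{a}_\ell\rangle} e^{-\psi(\sqrt\Sigma v)}\dd v \cdot \Pi_0^\ell(h) = (2\pi)^d \Phi(a/\mathfrak{a}_\ell)\Pi_0^\ell(h)$ up to the $M$-periodicity structure, where $\Phi$ is the density of the $\alpha$-stable law with characteristic function $e^{-\psi(\sqrt\Sigma\cdot)}$. The error $\varepsilon_{\ell,a}$ collects: the $R_u$ and off-$U$ pieces ($O(r^\ell)$), the difference between $\lambda_u^\ell$ and $e^{-\psi(\sqrt\Sigma v)}$ after rescaling, and the difference between $\Pi_u$ and $\Pi_0$; all of these are $o(1)$ after the $\mathfrak{a}_\ell^{-d}$ is factored out, uniformly in $a$ since $|e^{-i\langle v,a/\mathfrak{a}_\ell\rangle}|=1$. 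This last uniformity is exactly why $\sup_{a}\norm{\varepsilon_{\ell,a}}{} = o(\mathfrak{a}_\ell^{-d})$ rather than merely pointwise in $a$.

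\textbf{The Hölder-type estimates \eqref{differenceQ} and \eqref{differenceQ4}.}
For \eqref{differenceQ}, write $Q_{\ell,a} - Q_{\ell,a-p} = (2\pi)^{-d}\int_{[-\pi,\pi]^d}(e^{-i\langle u,a\rangle} - e^{-i\langle u,a-p\rangle})P_u^\ell\dd u = (2\pi)^{-d}\int e^{-i\langle u,a-p\rangle}(e^{-i\langle u,p\rangle}-1)P_u^\ell\dd u$. Using $|e^{-i\langle u,p\rangle}-1|\le 2^{1-\omega}|\langle u,p\rangle|^\omega \le C|u|^\omega|p|^\omega$ for any $\omega\in(0,1]$ (interpolating between the trivial bound $2$ and the Lipschitz bound), and then the same rescaling $u = v/\mathfrak{a}_\ell$, the extra factor $|u|^\omega = |v|^\omega\mathfrak{a}_\ell^{-\omega}$ produces the gain $\mathfrak{a}_\ell^{-\omega}$; the remaining integral $\int |v|^\omega |\lambda_{v/\mathfrak{a}_\ell}|^\ell\norm{\Pi_{v/\mathfrak{a}_\ell}}{}\dd v$ stays bounded by the same domination as before (plus the $O(r^\ell)$ off-$U$ and remainder contributions, absorbed since $\mathfrak{a}_\ell$ is polynomial). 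For \eqref{differenceQ4}, the kernel factor becomes $e^{-i\langle u,a-p\rangle} - e^{-i\langle u,a\rangle} - e^{i\langle u,p\rangle} + 1 = (e^{-i\langle u,a\rangle}-1)(e^{i\langle u,p\rangle}-1)$, which is bounded by $C(|u||a|)^\omega(|u||p|)^\omega = C(|a||p|)^\omega|u|^{2\omega}$; rescaling gives the factor $\mathfrak{a}_\ell^{-2\omega}$ and the claimed bound $O((|a||p|)^\omega\mathfrak{a}_\ell^{-(d+2\omega)})$. I expect the main obstacle to be the uniform domination of $|\lambda_{v/\mathfrak{a}_\ell}|^\ell$ by an integrable function on $\mathfrak{a}_\ell U$ that moreover survives multiplication by $|v|^\omega$ or $|v|^{2\omega}$ — this requires a quantitative lower bound on $\mathrm{Re}\,\psi$ near $0$ (i.e. $|\lambda_u|\le e^{-c|u|^\alpha L(|u|^{-1})}$ for small $u$) and a Potter-type bound to control the slowly varying $L$; it is routine in the Aaronson–Denker framework but is where all the real work sits.
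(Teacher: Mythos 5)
Your plan is correct and follows essentially the same route as the paper: Fourier inversion plus the spectral decomposition, discarding the $R_u^\ell$ and off-$U$ pieces as $O(r^\ell)$, rescaling $u = v/\mathfrak{a}_\ell$ with dominated convergence (controlled by $|\lambda_u|\le e^{-c_0|u|^\alpha L(|u|^{-1})}$ and a Potter-type bound on $L$) to identify the main term $\mathfrak{a}_\ell^{-d}\Phi(a/\mathfrak{a}_\ell)\Pi_0^\ell$ uniformly in $a$, and then the same kernel factorizations $e^{-i\langle u,a-p\rangle}(e^{-i\langle u,p\rangle}-1)$ and $(e^{-i\langle u,a\rangle}-1)(e^{i\langle u,p\rangle}-1)$ with the interpolated bound $\min(2,|u|\,|p|)\le 2^{1-\omega}|p|^\omega|u|^\omega$ for the two H\"older estimates. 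The obstacle you flag at the end (uniform integrable domination of $|v|^{\beta}|\lambda_{v/\mathfrak{a}_\ell}|^\ell$ on $\mathfrak{a}_\ell U$) is exactly the point the paper resolves via Karamata/Potter, so nothing is missing.
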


\begin{proof}[Proof of Lemma \ref{lem:0}]
Recall that $Q_{\ell,a} (h) = \frac 1{(2\pi)^d} \int_{\Tbb^d} e^{-i \langle u, a \rangle} P_u^\ell (h) \dd u$.
From Hypothesis~\ref{hyp:HHH}, and up to taking a smaller neighborhood $U$, 
there exist constants $C_0$, $c_0 > 0$ such that $\norm{P_u}{L (\Bcal)} \leq C_0$ and: 
\begin{equation*}
\max \left\{ |\lambda_u|, \left|e^{-\ell \psi(\sqrt{\Sigma} u) L(|\sqrt{\Sigma} u|^{-1})} \right| \right\}
\leq e^{-c_0 |u|^\alpha L(|u|^{-1})},
\end{equation*}
for all $u \in U$.

\smallskip

Let $\varepsilon \in (0,\alpha)$. Since $L$ is slowly varying at infinity and $\Sigma$ is invertible, 
by Karamata~\cite{Karamata:1933} (or Potter's bound~\cite[Theorem~1.5.6]{BinghamGoldieTeugels:1987}), there exists $\ell_0 \geq 0$ 
such that, for every $\ell \geq \ell_0$ and $v\in U$,
\begin{equation*}
\frac{2}{|v|^\varepsilon} 
\leq \left|\frac{L(\mathfrak{a}_\ell/|v|)}{L(\mathfrak{a}_\ell)}\right| 
\leq \frac{|v|^\varepsilon}{2}.
\end{equation*}
Since $nL(\mathfrak{a}_n) \sim \mathfrak{a}_n^\alpha$, up to choosing a larger $\ell_0$, 
for every $\ell \geq \ell_0$ and $v\in U$,
\begin{equation}
\label{eq:MAJOVARIATIONLENTE}
|v|^{\alpha-\varepsilon}
\leq \ell\frac{|v|^\alpha}{\mathfrak{a}_\ell^\alpha} \ L \left(\frac{\mathfrak{a}_\ell}{|v|}\right)
\leq |v|^{\alpha+\varepsilon}.
\end{equation}

\smallskip

We begin with the first point of the lemma. Let $a \in \Z^d$ and $\ell \geq \ell_0$ be an integer. By Hypothesis~\ref{hyp:HHH}, 
\begin{equation}\label{eqnumero0}
Q_{\ell,a}
= \frac{1}{(2\pi)^d}\int_{\Tbb^d} e^{-i \langle u, a \rangle} P_u^\ell \dd u 
= \frac{1}{(2\pi)^d}\int_U e^{-i\langle u, a \rangle} \lambda_u^\ell \Pi_u^\ell \dd u +O(r^\ell),
\end{equation}
and, for every $u \in U$, 
\begin{align}
\norm{\lambda_u^\ell \Pi_u^\ell -e^{-\ell\psi(\sqrt{\Sigma} u) L(|\sqrt{\Sigma} u|^{-1})} \Pi_0^\ell}{} 
& \leq |\lambda_u|^\ell \norm{\Pi_u^\ell-\Pi_0^\ell}{\Lcal (\Bcal,\Bcal)} + \left| \lambda_u^\ell- e^{-\ell\psi(\sqrt{\Sigma} u) L(|\sqrt{\Sigma} u|^{-1})} \right| \norm{\Pi_0^\ell}{\Lcal(\Bcal,\Bcal)} \nonumber \\
& \leq C \left(1+\ell|u|^\alpha L(|u|^{-1})\right) e^{-c_0 \ell|u|^\alpha L(|u|^{-1})}\xi(u) \norm{h}{\Bcal}, \label{eqnumero1}
\end{align}
where $\xi$ is bounded and $\lim_{u\rightarrow 0} \xi(u) = 0$, due to the asymptotic expansion of $u \mapsto \lambda_u$, 
to the continuity of $u \mapsto \Pi_u$ at $0$ and since $\Pi_u^\ell= \Pi_u^{\{\ell/M\}M}$ (see Equation~\eqref{eq:PiM+1}).
Hence:
\begin{align}
\left\| \frac{1}{(2\pi)^d}\int_U e^{-i\langle u, a \rangle} \lambda_u^\ell \Pi_u^\ell \dd u  \right. & 
\left. - \frac{1}{(2\pi)^d}\int_U e^{-i\langle u, a \rangle} e^{-\ell\psi(\sqrt{\Sigma} u) L(|\sqrt{\Sigma} u|^{-1})} \Pi_0^\ell \dd u \right\| \nonumber \\
& \leq C \int_U \left(1+\ell|u|^\alpha L(|u|^{-1})\right) e^{-c_0 \ell|u|^\alpha L(|u|^{-1})} \xi(u) \dd u \nonumber\\
& \leq C \mathfrak{a}_\ell^{-d} \int_{\mathfrak{a}_\ell U} \left(1+\ell\frac{|v|^\alpha}{\mathfrak{a}_\ell^\alpha} L \left(\frac{\mathfrak{a}_\ell}{|v|}\right)\right) e^{-c_0 \ell \frac{|v|^\alpha}{\mathfrak{a}_\ell^\alpha} L\left(\frac{\mathfrak{a}_\ell}{|v|}\right)} \xi \left(\frac{v}{\mathfrak{a}_\ell}\right) \dd v \nonumber\\
& \leq C \mathfrak{a}_\ell^{-d}\int_{\mathfrak{a}_\ell U} \left(1+|v|^{\alpha+\varepsilon}\right) e^{-c_0 |v|^{\alpha-\varepsilon}} \xi \left(\frac{v}{\mathfrak{a}_\ell}\right) \dd v \nonumber\\
& = o(\mathfrak{a}_\ell^{-d}), \label{eqnumero5}
\end{align}
due to \eqref{eq:MAJOVARIATIONLENTE} and to the Lebesgue dominated convergence theorem. Finally, 
\begin{align}
\left| \frac{1}{(2\pi)^d} \int_U \right. & \left. e^{-i\langle u, a \rangle} e^{-\ell\psi(\sqrt{\Sigma} u) L(|\sqrt{\Sigma} u|^{-1})} \dd u - \frac {1}{\mathfrak{a}_\ell^d}\Phi\left(\frac{a}{\mathfrak{a}_\ell}\right) \right| \nonumber\\
& = \left| \frac{1}{(2\pi)^d \mathfrak{a}_\ell^d} \int_{\mathfrak{a}_\ell U} e^{-i\frac{\langle v, a \rangle}{\mathfrak{a}_\ell}} e^{-\ell \psi\left(\frac{v}{\mathfrak{a}_\ell}\right) L\left(\frac{\mathfrak{a}_\ell}{|v|}\right)}- \frac {1}{(2\pi)^d\mathfrak{a}_\ell^d}\int_{\R^d} e^{-i\frac{\langle v, a \rangle}{\mathfrak{a}_\ell}} e^{-\psi(v)} \dd v \right| \nonumber\\
& = \left| \frac{1}{(2\pi)^d \mathfrak{a}_\ell^d} \int_{\mathfrak{a}_\ell U} e^{-i\frac{\langle v, a \rangle}{\mathfrak{a}_\ell}} \left( e^{-\ell \psi\left(\frac{v}{\mathfrak{a}_\ell}\right) L\left(\frac{\mathfrak{a}_\ell}{|v|}\right)}-e^{-\psi(v)}\right) \dd v\right| + o \left(\mathfrak{a}_\ell^{-d} \right)\nonumber\\
& \leq \frac{1}{(2\pi)^d \mathfrak{a}_\ell^d} \int_{\mathfrak{a}_\ell U} \left|e^{-\ell \frac{\psi(v)}{\mathfrak{a}_\ell^\alpha}  L\left(\frac{\mathfrak{a}_\ell}{|v|}\right)}-e^{-\psi(v)}\right| \dd v + o \left(\mathfrak{a}_\ell^{-d}\right)\nonumber\\
& = o \left(\mathfrak{a}_\ell^{-d} \right) \label{eqnumero6},
\end{align}
using again the Lebesgue dominated convergence theorem (with~\eqref{eq:MAJOVARIATIONLENTE} 
for the necessary upper bound). Note that the majorations we used are independent of $a$, whence:
\begin{equation*}
\sup_{a \in \Z^d} \left| \frac{1}{(2\pi)^d} \int_U e^{-i \langle u, a \rangle} e^{-\ell\psi(\sqrt{\Sigma} u) L(|\sqrt{\Sigma} u|^{-1})} \dd u - \frac {1}{\mathfrak{a}_\ell^d}\Phi\left(\frac{a}{\mathfrak{a}_\ell}\right) \right| 
= o \left(\mathfrak{a}_\ell^{-d} \right).
\end{equation*}
This ends the proof of the first point.

\smallskip

Let $\beta > -1$. Let $F : \Tbb^d \to \C$ be a measurable function, with $|F(u)| \leq K |u|^\beta$ for all $u \in U$. 
Then, for all large enough $\ell$, 
\begin{align}
\norm{\frac{1}{(2\pi)^d} \int_{\Tbb^d} F(u) P_u^\ell \dd u}{} 
& \leq  \norm{\frac{1}{(2\pi)^d} \int_U F(u) \lambda_u^\ell \Pi_u^\ell \dd u}{} + \norm{F}{\Lbb^1} O(r^\ell) \nonumber \\
& \leq \frac{KC_0}{(2\pi)^d} \int_U |u|^\beta e^{-c_0 \ell |u|^\alpha L(|u|^{-1})} \dd u + K O(r^\ell) \nonumber \\
& \leq \frac{KC_0}{(2\pi)^d \mathfrak{a}_\ell^{d+\beta}} \int_{\mathfrak{a}_\ell U} |v|^\beta e^{-c_0 \ell \frac{|v|^\alpha}{\mathfrak{a}_\ell^\alpha} L\left(\frac{\mathfrak{a}_\ell}{|v|}\right)} \dd u +K O(r^\ell) \nonumber \\
& \leq \frac{KC_0}{(2\pi)^d \mathfrak{a}_\ell^{d+\beta}} \int_\R |v|^\beta e^{-c_0 |v|^{\alpha-\varepsilon}} \dd u +K O(r^\ell) \nonumber \\
& = K. O\left(\mathfrak{a}_\ell^{-(d+\beta)}\right), \label{eq:BorneGeneraleQF}
\end{align}
where the $O\left(\mathfrak{a}_\ell^{-(d+\beta)}\right)$ depends on $\beta$ but not on $K$.

\smallskip

With $|F(u)| = \left|e^{-i\langle u, a \rangle}-e^{-i \langle u, a-p\rangle}\right| \leq \min(2,|u|\,|p|) \leq 2^{1-\omega} |p|^\omega |u|^\omega$, 
Equation~\eqref{eq:BorneGeneraleQF} yields:
\begin{equation*}
\sup_{a \in \Z^d} \norm{Q_{\ell,a}-Q_{\ell,a-p}}{} 
= O\left(|p|^\omega \mathfrak{a}_\ell^{-(d+\omega)}\right), 
\end{equation*}
which is Equation~\eqref{differenceQ}.

\smallskip

With $|F(u)| = \left|e^{-i\langle u, a\rangle}-1 \right| \left|e^{i\langle u, p\rangle}-1\right| \leq \min(2,|u|\,|p|)\cdot \min(2,|u|\,|a|) \leq 4^{1-\omega} |a|^\omega |p|^\omega |u|^{2\omega}$, 
Equation~\eqref{eq:BorneGeneraleQF} yields:
\begin{equation*}
\norm{Q_{\ell,a-p}-Q_{\ell,a}-Q_{\ell,-p}+Q_{\ell,0}}{}
= O \left( |a|^\omega |p|^\omega \mathfrak{a}_\ell^{-(d+2\omega)}\right),
\end{equation*}
which is Equation~\eqref{differenceQ4}.
\end{proof}

We now give a proof of Lemma~\ref{lem:normeB}, which was stated in the previous section. 
This lemma allowed us to control various sums involving the coefficients $b_{a,\boldsymbol \ell,\mathbf{N}}^{(1,\ldots,1)}$, 
depending on $\mathbf{N}$, and was central in the proof of the main theorem. For the convenience of the reader, 
what we have to prove is reformulated at the begining of the proof.

\begin{proof}[Proof of Lemma~\ref{lem:normeB}]
Let us introduce the following operators on $\Bcal$:
\begin{equation*}
C_{b, a,(\ell_1,\ldots,\ell_q),(N_1,\ldots,N_{q-1})}
:= \sum_{\substack{a_0,\ldots,a_q \in \Z^d \\ a_0=a,\, a_q=b}} \beta(a_{q-1})^{N_{q-1}} Q_{\ell_q,a_q-a_{q-1}}^{(1)} \cdots \beta(a_1)^{N_1} Q_{\ell_2,a_2-a_1}^{(1)} Q_{\ell_1,a_1-a_0}^{(1)},
\end{equation*}
and
\begin{equation*}
D_{a,(\ell_1,\ldots,\ell_q),(N_1,\ldots,N_q)}
:= \sum_{\substack{a_0,\ldots,a_q \in \Z^d \\ a_0=a}} \beta(a_q)^{N_q} Q_{\ell_q,a_q-a_{q-1}}^{(1)} \cdots \beta(a_1)^{N_1} Q_{\ell_1,a_1-a_0}^{(1)}.
\end{equation*}
Note that: 
\begin{equation*}
b_{a;\boldsymbol{\ell},\mathbf{N}}^{(1,\ldots,1)} (\cdot)
= \Ebb_\mu [D_{a,\boldsymbol{\ell},\mathbf{N}}(\cdot)]
\quad\mbox{and}\quad 
\sum_{a\in\Z^d} \beta(a) b_{a,\boldsymbol \ell,\mathbf{N}}^{(1,\ldots,1)} (\cdot)
= \sum_{a\in\Z^d}\beta(a)\Ebb_\mu \left[D_{a,\boldsymbol{\ell},\mathbf{N}}(\cdot) \right]\, .
\end{equation*}
Hence, it is sufficient to prove that:
\begin{align}
\sup_{a\in\Z^d} (1+|a|^\eta)^{-1} \sum_{\boldsymbol{\ell} \in \{1, \ldots, n\}^q} {\norm{D_{a,\boldsymbol{\ell},\mathbf{N}}}{}} & = o \left(\Afrak_n^{N_1+\ldots+N_q}\right), \label{formuleBn} \\
\sum_{\ell = 1}^n \norm{ \sum_{a \in \Z^d} \beta(a)  D_{a,(\ell),(N)}  }{}
 & = \left\{ \begin{array}{lll} O \left(1\right) & \text{ if } & q=1, \ N=1 \\ o \left(\Afrak_n\right) & \text{ if } & q=1, \ N\geq 2 \end{array}\right. , \label{formulediffBnq=1} \\
\sum_{\boldsymbol{\ell} \in \{1, \ldots, n\}^q} \norm{ \sum_{a \in \Z^d} \beta(a)  D_{a,\boldsymbol{\ell},\mathbf{N}}}{}
 & = o \left(\Afrak_n^{N_1+\ldots+N_q-1}\right) \text{ if } q \geq 2. \label{formulediffBn}
\end{align}

\medskip
\begin{itemize}
\item \textbf{Restriction of the problem.}
We first observe that we can restrict our study to the case where all the $N_j$'s are equal to 1. 
The price to pay will be that we will have to consider both $D_{a,\boldsymbol{\ell},(1,\ldots ,1)}$ and $C_{b,a,\boldsymbol{\ell},(1,\ldots,1)}$. 
Equation~\eqref{formulediffBnq=1} shall be proved separately with the next step (Case $q=1$). 

\smallskip

We shall prove the estimates~\eqref{formuleBn} and~\eqref{formulediffBn} in the particular case where
$(N_1,\ldots,N_q)=(1,.,1)$ (or equivalently $N_1+\ldots+N_q=q$), that is:
\begin{align}
\sup_{a\in\Z^d} (1+|a|^\eta)^{-1} \sum_{\boldsymbol{\ell} \in \{1, \ldots, n\}^q} \norm{D_{a,\boldsymbol{\ell},(1,\ldots,1)}}{} & = o \left(\Afrak_n^q\right), \label{formuleBnMajo} \\ 
\sum_{\boldsymbol{\ell} \in \{1, \ldots, n\}^q} \norm{ \sum_{a \in \Z^d} \beta(a)  D_{a,\boldsymbol{\ell},(1,\ldots,1)} }{}
 & = o \left(\Afrak_n^{q-1}\right) \text{ if } q \geq 2, \label{formulediffBn1}
\end{align}
together with the following estimates:
\begin{equation}
\label{eq:FormuleCn}
\sup_{a \in \Z^d} (1+|a|^\eta)^{-1} \sum_{\boldsymbol{\ell} \in \{1, \ldots, n\}^q} \sum_{b \in \Z^d} |\beta(b)| \norm{ C_{b,a,\boldsymbol{\ell},(1,\ldots,1)}}{} 
= o \left(\Afrak_n^{q+1}\right), 
\end{equation}
and:
\begin{equation}
\label{eq:FormuleCnDiff}
\sum_{\boldsymbol{\ell} \in \{1, \ldots, n\}^q} \sum_{b \in \Z^d} |\beta(b)|  \norm{ \sum_{a \in \Z^d} \beta(a) C_{b,a,\boldsymbol{\ell},(1,\ldots,1)} }{}
= o(\Afrak_n^q).
\end{equation}

Assume these estimates to be proved. If $(N_1,\ldots,N_q) \neq (1,\ldots,1)$, let $j$ be the largest index such that $N_j \neq 1$. Then:
\begin{equation}
\label{eq:EQ1}
\norm{ D_{a,\boldsymbol{\ell},(N_1,\ldots,N_q)} }{} 
\leq \sum_{a_j \in \Z^d} |\beta(a_j)|^{N_j} \norm{ D_{a_j, (\ell_{j+1},\ldots,\ell_q),(1,\ldots,1)}}{} \, \norm{ C_{a_j,a,(\ell_1,\ldots,\ell_{j}),(N_1,\ldots,N_{j-1})} }{},
\end{equation}
and
\begin{equation}
\label{eq:EQ2}
\norm{ C_{b,a,\boldsymbol \ell,(N_1,\ldots,N_{q-1})} }{}
\leq \sum_{a_j \in \Z^d} |\beta(a_j)|^{N_j} \norm{ C_{b,a_j, (\ell_{j+1},\ldots,\ell_q),(1,\ldots,1)}}{} \, \norm{ C_{a_j,a,(\ell_1,\ldots,\ell_{j}),(N_1,\ldots,N_{j-1})} }{}.
\end{equation}
Let us iterate this decomposition. Given $(N_1, \ldots, N_{q-1}) \neq (1,\ldots,1)$, 
let $\Jcal := \{1 \leq j < q : \, N_j \geq 2\} = \{j_1, \ldots, j_J\}$, with $j_1 < \cdots < j_J$ 
and $J= |\Jcal|$. We also use the convention $j_0=0$. Iterating Equations~\eqref{eq:EQ1} and~\eqref{eq:EQ2} then yields:
\begin{align}
\sup_{a_0\in\Z^d} & (1+|a_0|^\eta)^{-1} \sum_{\boldsymbol{\ell} \in \{1, \ldots, n\}^q} \norm{ D_{a_0,\boldsymbol \ell,(N_1,\ldots,N_{q-1})} }{} \nonumber \\
& \leq \sum_{a_J \in \Z^d} \sum_{\ell_{1+j_J},\ldots,\ell_q=1}^n |\beta (a_J)|^{{N_{j_J}}} \norm{ D_{a_{j_J},(\ell_{1+j_J},\ldots,\ell_q),(1,\ldots,1)} }{} \nonumber \\
& \hspace{2em} \times \prod_{k=2}^J \left( \sum_{a_{j_{k-1}} \in \Z^d} \sum_{\ell_{1+j_{k-1}},\ldots,\ell_{j_k}=1}^n \sum_{a_{j_k} \in \Z^d} |\beta(a_{j_k})|^{{N_{j_k}}} |\beta(a_{j_{k-1}})|^{{N_{j_{k-1}}}} \norm{ C_{a_{j_k},a_{j_{k-1}},(\ell_{1+j_{k-1}},\ldots,\ell_{j_k}),(1,\ldots,1)} }{}\right) \nonumber \\
& \hspace{2em} \times \left( \sup_{a_0\in\Z^d} \sum_{\ell_1,\ldots,\ell_{j_1}=1}^n   (1+|a_0|^\eta)^{-1} \sum_{a_{j_1} \in \Z^d} |\beta(a_{j_1})|^{{N_{j_1}}} \norm{ C_{a_{j_1},a_0,(\ell_1,\ldots,\ell_{j_k}),(1,\ldots,1)} }{}\right). \label{DECOMP1}
\end{align}
Recall that, since $\eta < \frac{\alpha-d}{2}+\varepsilon $ and $\beta$ is bounded, $|\beta (a)|^x = O (|\beta (a)|) = O ((1+|a|^\eta)^{-1})$ for all $x \geq 1$. 
Using~\eqref{formuleBnMajo} on the first term and~\eqref{eq:FormuleCn} on the others, we get~\eqref{formuleBn}:
\begin{align*}
\sup_{a_0\in\Z^d} (1+|a_0|^\eta)^{-1} & \sum_{\boldsymbol{\ell} \in \{1, \ldots, n\}^q} \norm{ D_{a_0,\boldsymbol \ell,(N_1,\ldots,N_{q-1})} }{} \\
& = o \left(\Afrak_n^{q-j_J}\right) \prod_{k=1}^J o \left(\Afrak_n^{j_k-j_{k-1}+1}\right)
= o \left(\Afrak_n^{q+J}\right)
= o \left(\Afrak_n^{N_1+\ldots+N_q}\right).
\end{align*}
We use the same decomposition to get~\eqref{formulediffBn}. The only difference is that the last term in the decomposition becomes:
\begin{equation*}
\sum_{\ell_1,\ldots,\ell_{j_1}=1}^n \sum_{a_{j_1} \in \Z^d} |\beta(a_{j_1})|^{{N_{j_1}}} \norm{ \sum_{a_0\in\Z^d} \beta(a_0) C_{a_{j_1},a_{0},(\ell_1,\ldots,\ell_{j_1}),(1,\ldots,1)}
}{},
\end{equation*}
which by~\eqref{eq:FormuleCnDiff} is an $o \left( \Afrak_n^{j_1} \right)$. The exponent in the estimate is improved by $1$, which is what we wanted.

\medskip
\item \textbf{First estimates.}
We first provide some general inequalities. From Lemma~\ref{lem:0} and the definition of $Q_{\ell, a}^{(1)}$, 
\begin{equation*}
\norm{ Q_{\ell,a}^{(1)} }{} 
= o \left( \mathfrak{a}_\ell^{-d} \right)+O \left( \frac{\Phi (\mathfrak{a}_\ell^{-d} a) -\Phi(0)}{\mathfrak{a}_\ell^d} \right).
\end{equation*}
Since $\Phi$ is proportional to the Fourier transform of $e^{-\psi(\sqrt{\Sigma}\cdot)}$, it is $\eta$-H\"older for all $\eta \in (0,1]$, 
whence:
\begin{equation}
\label{Q0Q1}
\norm{ Q_{\ell,a}^{(1)} }{} 
= o \left( \mathfrak{a}_\ell^{-d} \right) + O\left(|a|^\eta\mathfrak{a}_\ell^{-d-\eta} \right) 
= o \left((1+|a|^\eta) \mathfrak{a}_\ell^{-d} \right). 
\end{equation}

\smallskip

Due to \eqref{differenceQ},
\begin{equation}
\label{Q1-Q1}
\sup_{b \neq 0} |b|^{-\omega} \norm{ Q_{\ell,b-a}^{(1)}-Q_{\ell,-a}^{(1)} }{} 
= \sup_{b \neq 0} |b|^{-\omega} \norm{ Q_{\ell,b-a}-Q_{\ell,-a}}{} 
= O \left(\mathfrak{a}_\ell^{-(d+\omega)}\right). 
\end{equation}
In particular, since $\sum_{b \in \Z^d} |b|^\omega |\beta(b)| < +\infty$,
\begin{equation}
\label{doublesomme1}
\sup_{a \in \Z^d} \norm{ \sum_b \beta(b) Q_{\ell,b-a}^{(1)}}{}
= \sup_{a \in \Z^d} \norm{ \sum_b \beta(b) (Q_{\ell,b-a}^{(1)}-Q_{\ell,-a}^{(1)})}{}
= O \left(\mathfrak{a}_\ell^{-(d+\omega)} \right).
\end{equation}

\smallskip

Due to \eqref{differenceQ4}, and since $\sum_{a \in \Z^d} \beta(a)=0$,
\begin{align}
\sup_{a \neq 0} (|a|\,|b|)^{-\omega} \norm{ \sum_{b \in \Z^d} \beta(b) (Q_{\ell,b-a}^{(1)}-Q_{\ell,b}^{(1)}) }{}
& = \sup_{a\neq 0} (|a|\,|b|)^{-\omega} \norm{ \sum_{b \in \Z^d} \beta(b) (Q_{\ell,b-a}-Q_{\ell,b})}{} \nonumber \\
& = \sup_{a \neq 0} (|a|\,|b|)^{-\omega} \norm{ \sum_{b \in \Z^d} \beta(b) (Q_{\ell,b-a}-Q_{\ell,b}-Q_{\ell,-a}+Q_{\ell,0})}{} \nonumber \\
& = O \left(\mathfrak{a}_\ell^{-(d+2\omega)} \right). \label{2Q1-Q1-Q1}
\end{align}
In particular, using again the fact that $\sum_{b \in \Z^d} |b|^\omega |\beta(b)| < +\infty$,
\begin{align}
\norm{ \sum_{a,b \in \Z^d} \beta(a)\beta(b) Q_{\ell,b-a}^{(1)} }{} 
& = \norm{ \sum_{a,b \in \Z^d} \beta(a)\beta(b)(Q_{\ell,b-a}-Q_{\ell,b}) }{} \nonumber\\
& = O \left(\mathfrak{a}_\ell^{-(d+2\omega)} \right). \label{doublesomme}
\end{align}

\smallskip

We will also repeatedly use the two following facts:
\begin{equation}
\label{negligeable}
\sum_{\ell\geq 1}{\mathfrak{a}_\ell^{-(d+2\omega)}} <+\infty \quad\mbox{and}\quad
\sum_{\ell\geq 1}{\mathfrak{a}_\ell^{-(d+\omega)}}
= o(\Afrak_n),
\end{equation}
since $(\mathfrak{a}_\ell)_{\ell \geq 0}$ is $1/\alpha$-regular and $\omega > (\alpha-d)/2 \geq 0$.

\medskip
\item \textbf{Case $q=1$.}
We prove separately the case $q=1$, which either involves different inequalities, or shall provide the base case for a recursion. 
We have to prove four estimates, which shall be in order: \eqref{formulediffBnq=1}, \eqref{formuleBnMajo}, \eqref{eq:FormuleCn} and~\eqref{eq:FormuleCnDiff}.

\smallskip

We begin with~\eqref{formulediffBnq=1}. Due to~\eqref{doublesomme}, if $N = 1$,
\begin{align*}
\sum_{\ell = 1}^n \norm{ \sum_{a \in \Z^d} \beta(a)  D_{a,(\ell),(1)} }{} 
& = \sum_{\ell = 1}^n \norm{ \sum_{a,b \in \Z^d} \beta(a) \beta(b)
 Q^{(1)}_{\ell,b-a} }{} \\
& = \sum_{\ell = 1}^n  O \left( \mathfrak{a}_\ell^{-(d+2\omega)} \right) 
= O (1).
\end{align*}
If $N \geq 2$, we use~\eqref{doublesomme1} instead:
\begin{align*}
\sum_{\ell = 1}^n \norm{ \sum_{a \in \Z^d} \beta(a)  D_{a,(\ell),(N)} }{} 
& \leq \sum_{\ell = 1}^n \sum_{b \in \Z^d} |\beta(b)|^N \norm{ \sum_{a \in \Z^d} \beta (a) Q^{(1)}_{\ell,b-a} }{} \\
& = \left( \sum_{b \in \Z^d} |\beta(b)|^N \right) \sum_{\ell = 1}^n O \left(\mathfrak{a}_\ell^{-(d+\omega)} \right) 
= o (\Afrak_n).
\end{align*}

Now, consider~\eqref{formuleBnMajo} for $q=1$. Using~\eqref{doublesomme1} and~\eqref{negligeable},
\begin{align}
\sup_{a\in\Z^d} (1+|a|^\eta)^{-1} \sum_{\ell=1}^n \norm{D_{a,(\ell),(1)}}{}
& \leq \sum_{\ell=1}^n \sup_{a\in\Z^d} \norm{\sum_{b \in \Z^d} \beta(b) Q_{\ell,b-a}^{(1)}}{} \nonumber \\
& = \sum_{\ell = 1}^n O \left(\mathfrak{a}_\ell^{-(d+\omega)} \right) 
= o (\Afrak_n). \label{eq:BorneU1}
\end{align}

\smallskip

Next, we prove~\eqref{eq:FormuleCn} for $q=1$. Note that $C_{b,a,(\ell),\emptyset} = Q_{\ell, b-a}^{(1)}$, 
and that $(1+|b-a|^\eta) \leq (1+|a|^\eta)(1+|b|^\eta)$ since $\eta \leq 1$. Hence, by~\eqref{Q0Q1},
\begin{align*}
\sum_{\ell=1}^n \sum_{b\in\Z^d} |\beta(b)| \norm{ C_{b,a,(\ell),\emptyset}}{} 
& = \sum_{\ell=1}^n \sum_{b\in\Z^d} |\beta(b)| o \left((1+|b-a|^\eta) \mathfrak{a}_\ell^{-d} \right) \\
& = \left( \sum_{b\in\Z^d} |\beta(b)| (1+|b|^\eta)\right) (1+|a|^\eta) \sum_{\ell=1}^n o \left(\mathfrak{a}_\ell^{-d} \right) \\
& = o \left((1+|a|^\eta)\Afrak_n^2 \right).
\end{align*}

\smallskip

Finally, we deal with~\eqref{eq:FormuleCnDiff} for $q=1$. Due to~\eqref{doublesomme1} and~\eqref{negligeable},
\begin{equation*}
\sum_{\ell=1}^n \sum_{b \in \Z^d} |\beta(b)|  \norm{ \sum_{a \in \Z^d} \beta(a) C_{b,a,\ell,\emptyset} }{} 
= \left( \sum_{b \in \Z^d} |\beta(b)| \right) \sum_{\ell=1}^n O \left(\mathfrak{a}_\ell^{-(d+\omega)} \right) 
= o \left( \Afrak_n \right).
\end{equation*}

\medskip
\item \textbf{Case $q \geq 2$.}
It remains to check four estimates, which shall be in order: \eqref{formuleBnMajo}, 
\eqref{formulediffBn1}, \eqref{eq:FormuleCn} and~\eqref{eq:FormuleCnDiff}, for $q \geq 2$.
To simplify the notations, we omit $(1,\ldots,1)$ in indices, and use the convention 
$D_{a, \ell, \emptyset}=1$ for all $a$ and $\ell$. 

\smallskip

We shall prove~\eqref{formuleBnMajo} and~\eqref{formulediffBn1} with recursive bounds involving the functions:
\begin{equation*}
u_{q,n}(a) := \sum_{\ell_1,\ldots,\ell_q=1}^n \norm{D_{a,(\ell_1,\ldots,\ell_q)}}{} \quad\mbox{and}\quad
v_{q,n}(a) := \sum_{\ell_1,\ldots,\ell_q=1}^n \norm{D_{a,(\ell_1,\ldots,\ell_q)}-D_{0,(\ell_1,\ldots,\ell_q)}}{}.
\end{equation*}
Note that~\eqref{formuleBnMajo} is equivalent to the statement that $u_{q,n} (a) = o((1+|a|^\eta)\Afrak_n^q)$, 
while~\eqref{formulediffBn1} is implied by the bound $v_{q,n}(a) = o(|a|^\omega \Afrak_n^{q-1})$ for $q \geq 2$ 
(since $\sum_{a\in\Z^d}\beta(a)=0$). We shall express $u_{q,n}$ and $v_{q,n}$ in terms of $u_{q-1,n}$, 
$v_{q-1,n}$, $u_{q-2,n}$ and $v_{q-2,n}$. 

\smallskip

We start with the sequence $(u_{q,n})$. For all $q \geq 2$,
\begin{align}
D_{a_0,(\ell_1,\ldots,\ell_q)} 
& = \sum_{a_1, a_2} \beta(a_1) \beta(a_2) D_{a_2,(\ell_3,\ldots,\ell_q)} Q_{\ell_2,a_2-a_1}^{(1)} Q_{\ell_1,a_1-a_0}^{(1)} \nonumber \\
& = \sum_{a_1,a_2} \beta(a_2) \left[D_{0,(\ell_3,\ldots,\ell_q)}+D_{a_2,(\ell_3,\ldots,\ell_q)}-D_{0,(\ell_3,\ldots,\ell_q)}\right] \nonumber \\
& \hspace{2em} \times \left(\beta(a_1) Q_{\ell_2,a_2-a_1}^{(1)} \left[Q_{\ell_1,-a_0}^{(1)}+Q_{\ell_1,a_1-a_0}^{(1)}-Q_{\ell_1,0-a_0}^{(1)}\right]\right) \nonumber \\
& = D_{0,(\ell_3,\ldots,\ell_q)}\left[ \left(\sum_{a_1,a_2}\beta(a_1)\beta(a_2) Q_{\ell_2,a_2-a_1}^{(1)}\right) Q_{\ell_1,-a_0}^{(1)}\right. \label{decompD} \\
& \hspace{2em} \left.+\sum_{a_1,a_2} \beta (a_1) \beta(a_2) \left(Q_{\ell_2,a_2-a_1}^{(1)}-Q_{\ell_2,-a_1}^{(1)}\right)\left(Q_{\ell_1,a_1-a_0}^{(1)}-Q_{\ell_1,0-a_0}^{(1)}\right)\right] \nonumber \\
& \hspace{2em} +\sum_{a_2}\beta(a_2) \left(D_{a_2,(\ell_3,\ldots,\ell_q)}-D_{0,(\ell_3,\ldots,\ell_q)}\right) \nonumber \\
& \hspace{2em} \times \left[ \left(\sum_{a_1}\beta(a_1)Q_{\ell_2,a_2-a_1}^{(1)}\right)Q_{\ell_1,-a_0}^{(1)}+\sum_{a_1}\beta(a_1)Q_{\ell_2,a_2-a_1}^{(1)}\left(Q_{\ell_1,a_1-a_0}^{(1)}-Q_{\ell_1,0-a_0}^{(1)}\right)\right] \nonumber
\end{align}
since $\sum_{a_2} \beta(a_2) Q^{(1)}_{\ell_2,-a_1}=0$. Note that $\sum_p|p|^{\eta+\omega}|\beta(p)|<+\infty$ and 
$(1+|a_2-a_1|^\eta)|a_1|^\omega \leq 2(1+|a_1|^{\eta+\omega})(1+|a_2|^\eta)$.
Therefore, using in addition \eqref{Q0Q1}, \eqref{Q1-Q1}, \eqref{2Q1-Q1-Q1} and \eqref{doublesomme}, 
we get that, for all $q \geq 2$,
\begin{multline*}
\norm{D_{a_0,(\ell_1,\ldots,\ell_q)}}{}
= \norm{ D_{0,(\ell_3,\ldots,\ell_q)}}{} O \left( (1+|a_0|^\eta) \mathfrak{a}_{\ell_2}^{-(d+2\omega)} o(\mathfrak{a}_{\ell_1}^{-d})+(\mathfrak{a}_{\ell_1}\mathfrak{a}_{\ell_2})^{-(d+\omega)}\right) \\
+ \sum_{a_2}|\beta(a_2)| \norm{ D_{a_2,(\ell_3,\ldots,\ell_q)}-D_{0,(\ell_3,\ldots,\ell_q)} }{}
\, O \left((1+|a_0|^\eta) \mathfrak{a}_{\ell_2}^{-(d+\omega)} o(\mathfrak{a}_{\ell_1}^{-d})+ (1+|a_2|^\eta) o(\mathfrak{a}_{\ell_2}^{-d}) \mathfrak{a}_{\ell_1}^{-(d+\omega)} \right), \label{MAJOB1}
\end{multline*}
uniformly in $a_0$. If $q=2$, this simplifies to:
\begin{equation*}
\norm{ D_{a_0,(\ell_1,\ell_2)}}{}
= O \left( (1+|a_0|^\eta) \mathfrak{a}_{\ell_2}^{-(d+2\omega)} o(\mathfrak{a}_{\ell_1}^{-d})+(\mathfrak{a}_{\ell_1}\mathfrak{a}_{\ell_2})^{-(d+\omega)}\right).
\end{equation*}
These estimates, combined with~\eqref{negligeable}, yield for all $q \geq 3$:
\begin{equation}
\label{recu}
u_{q,n} (a) 
= O \left( (1+|a|^\eta) \left( u_{q-2,n}(0) o(\Afrak_n^2) + \sum_{a_2 \in \Z^d}|\beta(a_2)| (1+|a_2|^\eta) v_{q-2,n}(a_2) o(\Afrak_n^3) \right) \right),
\end{equation}
and, for $q=2$, 
\begin{equation}
\label{u2}
u_{2,n}(a)
= o\left((1+|a|^\eta) \Afrak_n^2 \right).
\end{equation}

\smallskip

Now, let us consider the sequence $(v_{q,n})$. For all $q \geq 2$,
\begin{align}
D_{a_0,(\ell_1,\ldots,\ell_q)}-D_{0,(\ell_1,\ldots,\ell_q)} 
& = \sum_{a_1}\beta(a_1)D_{a_1,(\ell_2,\ldots,\ell_q)}(Q_{\ell_1,a_1-a_0}^{(1)}-Q_{\ell_1,a_1}^{(1)})\nonumber\\
& = D_{0,(\ell_2,\ldots,\ell_q)}\sum_{a_1}\beta(a_1)(Q_{\ell_1,a_1-a_0}^{(1)}-Q_{\ell_1,a_1}^{(1)})\nonumber\\
& \hspace{2em} + \sum_{a_1}\beta(a_1) \left(D_{a_1,(\ell_2,\ldots,\ell_q)}-D_{0,(\ell_2,\ldots,\ell_q)} \right) \left(Q_{\ell_1,a_1-a_0}^{(1)}-Q_{\ell_1,a_1}^{(1)} \right). \label{decompD-D}
\end{align}
From~\eqref{2Q1-Q1-Q1} and~\eqref{Q1-Q1}, we get that, for all $q \geq 2$,
\begin{align*}
\norm{ D_{a_0,(\ell_1,\ldots,\ell_q)}-D_{0,(\ell_1,\ldots,\ell_q)} }{}
& = \norm{ D_{0,(\ell_2,\ldots,\ell_q)} }{} O \left( |a_0|^\omega \mathfrak{a}_{\ell_1}^{-(d+2\omega)} \right) \\
& \hspace{2em} + \sum_{a_1 \in \Z^d} |\beta(a_1)| \norm{ D_{a_1,(\ell_2,\ldots,\ell_q)}-D_{0,(\ell_2,\ldots,\ell_q)} }{} O \left( |a_0|^\omega \mathfrak{a}_{\ell_1}^{-(d+\omega)} \right),
\end{align*}
so that, using~\eqref{negligeable}.
\begin{equation}
\label{recv}
v_{q,n}(a) 
= O\left( |a|^\omega \left( u_{q-1,n}(0) + \sum_{a_1 \in \Z^d}|\beta(a_1)| v_{q-1,n} (a_1)  o(\Afrak_n) \right) \right).
\end{equation}
From~\eqref{2Q1-Q1-Q1} and~\eqref{negligeable}, we also obtain:
\begin{equation}
\label{v1}
v_{1,n} (a) = O \left( |a|^\omega \right).
\end{equation}

\smallskip

Equation~\eqref{formuleBnMajo} can be reformulated as $u_{q,n} (a)=o((1+|a|^\eta)\Afrak_n^q)$ for $q \geq 1$, 
while Equation~\eqref{formulediffBn1} is a straightforward consequence of the fact that 
$\sum_{a \in \Z^d} |\beta(a)|v_{q,n}(a) = o(\Afrak_n^{q-1})$ for $q \geq 1$
(since $\sum_{a\in\Z^d}\beta(a)=0$). We prove these two identities recursively, and more precisely that:
\begin{equation*}
u_{q,n}(a) 
= o\left((1+|a|^\eta)\Afrak_n^q\right) 
\mbox{ and } \sup_{a\neq 0} |a|^{-\omega} v_{q,n}(a)
= \left\{\begin{array}{lll} O(1) & \text{if} & q=1 \\ o(\Afrak_n^{q-1}) & \text{if} & q \geq 2 \end{array}
\right..
\end{equation*}
This follows from \eqref{recu} and \eqref{recv} by an induction of degree $2$ for $u_{q,n}$ and of degree $1$ for $v_{q,n}$. 
The initialization is given by~\eqref{eq:BorneU1}, \eqref{u2} and~\eqref{v1} (for respectively $u_{1,n}$, $u_{2,n}$ and $v_{1,n}$).

\smallskip

It remains to prove Equations~\eqref{eq:FormuleCn} and~\eqref{eq:FormuleCnDiff}.
Note that~\eqref{decompD} and~\eqref{decompD-D} hold true if we replace $D_{\ldots}$ by $C_{a_q,\ldots}$. 
Hence \eqref{recu} and \eqref{recv} also hold if we replace $u_{q,n}$ and $v_{q,n}$ by, respectively, 
$\tilde{u}_{q,n}$ and $\tilde{v}_{q,n}$, which are given by:
\begin{align*}
\tilde{u}_{q,n} (a)
& := \sum_{\ell_1,\ldots,\ell_q=1}^n \sum_{a_q\in\Z^d}|\beta(a_q)|\norm{ C_{a_q,a,(\ell_1,\ldots,\ell_q)}}{}, \\
\tilde{v}_{q,n} (a)
& :=\sum_{\ell_1,\ldots,\ell_q=1}^n \sum_{a_q\in\Z^d}|\beta(a_q)|\norm{ C_{a_q,a,(\ell_1,\ldots,\ell_q)}-C_{a_q,0,(\ell_1,\ldots,\ell_q)}}{}.
\end{align*}
Note that~\eqref{eq:FormuleCn} is equivalent to the statement that $\tilde{u}_{q,n} (a) = o((1+|a|^\eta)\Afrak_n^{q+1})$, 
while~\eqref{eq:FormuleCnDiff} is implied by the bound $\tilde{v}_{q,n}(a) = o(|a|^\omega \Afrak_n^q)$ for $q \geq 2$.

\smallskip

The first terms are the following. For $\tilde{u}_{1,n} (a)$, we get:
\begin{align*}
\tilde{u}_{1,n} (a)
& = \sum_{\ell=1}^n \sum_{b \in \Z^d} |\beta(b)| \norm{Q_{\ell,b-a}^{(1)}}{} \\
& = \sum_{\ell=1}^n \sum_{b \in \Z^d} |\beta(b)| (1+|b|^\eta) o \left( (1+|a|^\eta)\mathfrak{a}_\ell^{-d} \right) \\
& = o \left((1+|a|^\eta) \Afrak_n^2 \right).
\end{align*}
For $\tilde{u}_{2,n} (a)$, we get:
\begin{align*}
\tilde u_{2,n} (a) 
& = \sum_{\ell_1,\ell_2=1}^n \sum_{b\in\Z^d} |\beta(b)| \norm{\sum_{a_1\in\Z^d}\beta(a_1) Q^{(1)}_{\ell_2,b-a_1}Q^{(1)}_{\ell_1,a_1-a}}{} \\
& \leq \sum_{\ell_1,\ell_2=1}^n \sum_{b\in\Z^d} |\beta(b)|\norm{Q^{(1)}_{\ell_2,b} \sum_{a_1\in\Z^d} \beta(a_1) Q^{(1)}_{\ell_1,a_1-a}}{}+\norm{\sum_{a_1\in\Z^d}\beta(a_1) \left(Q^{(1)}_{\ell_2,b-a_1}-Q^{(1)}_{\ell_2,b}\right) Q^{(1)}_{\ell_1,a_1-a}}{} \\
& = \sum_{\ell_1,\ell_2=1}^n \sum_{b\in\Z^d} |\beta(b)| \left[ o\left((1+|b|^\eta)\mathfrak{a}_{\ell_1}^{-(d+\omega)} \mathfrak{a}_{\ell_2}^{-d} \right) + \sum_{a_1\in\Z^d}|\beta(a_1)| o\left((1+|a|^\eta)(1+|a_1|^{\omega+\eta})\mathfrak{a}_{\ell_1}^{-d}\mathfrak{a}_{\ell_2}^{-(d+\omega)} \right) \right]\\
& = o\left((1+|a|^\eta)\Afrak_n^3\right),
\end{align*}
where we used~\eqref{Q1-Q1} and~\eqref{2Q1-Q1-Q1} for the first part, \eqref{Q1-Q1} and~\eqref{doublesomme1} for the second part, 
and~\eqref{negligeable} to finish. Finally, for $\tilde{v}_{1,n} (a)$, we get:
\begin{equation*}
\tilde{v}_{1,n} (a)
= \sum_{\ell=1}^n \sum_{b\in\Z^d} |\beta(b)| \norm{ Q^{(1)}_{\ell_1,b-a}-Q^{(1)}_{\ell_1,b}}{}
= o \left(|a|^\omega \Afrak_n \right),
\end{equation*}
due to~\eqref{Q1-Q1} and~\eqref{negligeable}.

\smallskip

By induction, we obtain
\begin{equation*}
\tilde{u}_{q,n} (a)
= o \left((1+|a|^\eta) \Afrak_n^{q+1} \right) 
\mbox{ and } \sup_{a\neq 0} |a|^{-\omega} \tilde{v}_{q,n} (a)
= o(\Afrak_n^q),
\end{equation*}
which ends the proof of Lemma \ref{lem:normeB}.
\end{itemize}
\end{proof}

The third and last lemma of this sub-section gives a simple formula for the asymptotic growth of 
the quantity $\sum_{\boldsymbol{\ell} \in E_{q,n}} \prod_{j=1}^q \mathfrak{a}_{k_j}^{-d}$.

\begin{lemma}
\label{lem:Integrale}

Let $1 \leq d \leq \alpha \leq 2$ be and integer and a real number respectively. Recall 
that, for every $q \geq 1$,
\begin{equation*}
E_{q,n} 
= \left\{ \boldsymbol{\ell} \in \{1, \ldots ,n\}^q\ :\ \sum_{j=1}^q {\ell}_j \leq n \right\}.
\end{equation*}
Let $(\mathfrak{a}_\ell)_{\ell \geq 0}$ be a sequence of positive real numbers with 
regular variation of index $1/\alpha$, and $\Afrak_n := \sqrt{\sum_{\ell=1}^n \mathfrak{a}_\ell^{-d}}$. 
Assume that $\lim_{n \to + \infty} \Afrak_n = + \infty$.

\smallskip

For every $q \geq 1$,
\begin{equation*}
\sum_{\boldsymbol{\ell} \in E_{q,n}} \prod_{j=1}^q \mathfrak{a}_{k_j}^{-d} 
\sim \Afrak_n^{2p} \frac{\Gamma \left(1+\frac{\alpha-d}{\alpha} \right)^q}{\Gamma \left( 1+q\frac{\alpha-d}{\alpha} \right)}.
\end{equation*}
\end{lemma}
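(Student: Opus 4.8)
The plan is to reduce the discrete sum to an integral and then recognize the integral as (a renormalized version of) the Dirichlet/Beta-type integral whose value produces the ratio of Gamma functions. First I would set $R(n):=\Afrak_n^2=\sum_{\ell=1}^n\mathfrak{a}_\ell^{-d}$ and observe that, since $(\mathfrak{a}_\ell)$ is regularly varying of index $1/\alpha$, the sequence $(\mathfrak{a}_\ell^{-d})$ is regularly varying of index $-d/\alpha>-1$ (here $\alpha\ge d$ is used, and the case $\alpha=d$ is covered by the hypothesis $\Afrak_n\to+\infty$ which forces $\sum\mathfrak{a}_\ell^{-d}$ to diverge, i.e.\ $R$ is slowly varying and tends to infinity). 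By Karamata's theorem \cite{Karamata:1933} (or \cite[Theorem~1.5.6]{BinghamGoldieTeugels:1987}), $R$ is regularly varying of index $1-d/\alpha=(\alpha-d)/\alpha=:\rho\in[0,1)$, and $R(\lfloor tn\rfloor)/R(n)\to t^{\rho}$ uniformly on compact subsets of $(0,1]$ as $n\to+\infty$; I will use this repeatedly.

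Next I would prove the statement by induction on $q$, following the slicing $\sum_{\boldsymbol\ell\in E_{q,n}}\prod_j\mathfrak{a}_{\ell_j}^{-d}=\sum_{\ell_q=1}^{n}\mathfrak{a}_{\ell_q}^{-d}\sum_{\boldsymbol\ell'\in E_{q-1,\,n-\ell_q}}\prod_{j=1}^{q-1}\mathfrak{a}_{\ell_j}^{-d}$. For $q=1$ the claim is just $\sum_{\ell=1}^n\mathfrak{a}_\ell^{-d}=\Afrak_n^2=R(n)$, matching $\Afrak_n^2\,\Gamma(1+\rho)/\Gamma(1+\rho)$. For the inductive step, assume $\sum_{\boldsymbol\ell'\in E_{q-1,m}}\prod\mathfrak{a}_{\ell_j}^{-d}\sim R(m)^{q-1}\,\Gamma(1+\rho)^{q-1}/\Gamma(1+(q-1)\rho)$ as $m\to\infty$. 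Substituting and writing $\ell_q=\lfloor tn\rfloor$, the outer sum behaves like a Riemann sum for $R(n)^{q}\,\dfrac{\Gamma(1+\rho)^{q-1}}{\Gamma(1+(q-1)\rho)}\int_0^1 t^{\rho-1}(1-t)^{(q-1)\rho}\,dt$ (using $\mathfrak{a}_{\lfloor tn\rfloor}^{-d}\approx \rho\,t^{\rho-1}R(n)/n$ in the sense of the derivative of the regularly varying function $m\mapsto R(m)$, i.e.\ $R(\lfloor tn\rfloor)-R(\lfloor t'n\rfloor)\sim R(n)(t^\rho-t'^\rho)$). The Beta integral evaluates to $B(\rho,1+(q-1)\rho)=\dfrac{\Gamma(\rho)\,\Gamma(1+(q-1)\rho)}{\Gamma(1+q\rho)}$, and since $\rho\,\Gamma(\rho)=\Gamma(1+\rho)$ the constants telescope to give $R(n)^{q}\,\Gamma(1+\rho)^{q}/\Gamma(1+q\rho)=\Afrak_n^{2q}\,\Gamma(1+\rho)^q/\Gamma(1+q\rho)$, which is the asserted equivalent (with $p=q$ in the statement's notation).

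The main obstacle is making the passage from the discrete convolution sum to the Beta integral uniform and rigorous, in particular handling the slowly varying factor $L$ hidden inside $\mathfrak{a}_\ell$ near the endpoints $t\to 0$ and $t\to 1$. Near $t=0$ one must check that the contribution of small $\ell_q$ is negligible compared to $R(n)^q$ (this is where $\rho\ge 0$, i.e.\ $\alpha\ge d$, and the integrability of $t^{\rho-1}$ on $(0,1]$ is used — when $\rho=0$ the factor $t^{\rho-1}=t^{-1}$ is not integrable, but then $R$ is slowly varying and a direct argument using $\sum_{\ell=1}^n\mathfrak{a}_\ell^{-d}\sim R(n)$ with $R(n)\to\infty$ shows the $q$-fold sum is $\sim R(n)^q/q!=R(n)^q\Gamma(1)^q/\Gamma(1+q\cdot 0)$, consistently). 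Near $t=1$, the inductive hypothesis is only an asymptotic as $m=n-\ell_q\to\infty$, so one splits the range into $\ell_q\le (1-\delta)n$ (apply the induction and uniform convergence of $R(\lfloor tn\rfloor)/R(n)$) and $\ell_q>(1-\delta)n$ (bound crudely: the tail sum is $O(R(n))\cdot O(R(\delta n)^{q-1})$, which is $o(R(n)^q)$ after letting $\delta\to0$ using $R$ regularly varying of index $\rho<1$... and for $\rho$ close to the problematic values one invokes Potter bounds $R(\delta n)\le 2\delta^{\rho-\varepsilon}R(n)$ to get an explicit $o(1)$ factor). A clean way to package all of this uniformly is to replace sums by integrals at the outset via monotone comparison where $\mathfrak{a}_\ell^{-d}$ is eventually monotone (true up to slow variation, again by Potter's bounds) and then apply the standard fact that $\int_0^1 x^{\rho-1}\,dx\cdot(\text{Beta integrals})$ compose as above; I would carry out the endpoint estimates with Potter's bounds and dominated convergence, which is routine but is the only genuinely technical point.
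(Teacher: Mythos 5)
Your inductive route is sound and genuinely different from the paper's. The paper does not induct on $q$: for $d<\alpha$ (which, since $d$ is an integer in $[1,\alpha]$ and $\alpha\le 2$, only occurs for $d=1$) it rewrites the full $q$-fold sum at once as $(n\mathfrak{a}_n^{-1})^q$ times a Riemann sum over the rescaled simplex, passes to the limit by dominated convergence (with Potter's bound supplying the domination), and then evaluates the resulting Dirichlet integral $\int_{\Delta_q}\prod_j u_j^{-1/\alpha}\,\dd u$ in closed form via a Laplace-transform/Fubini trick, finishing with Karamata's theorem $n\mathfrak{a}_n^{-1}\sim(1-\alpha^{-1})\Afrak_n^2$. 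Your one-dimensional slicing replaces the $q$-dimensional Dirichlet integral by a telescoping product of Beta integrals, $\rho B(\rho,1+(q-1)\rho)=\Gamma(1+\rho)\Gamma(1+(q-1)\rho)/\Gamma(1+q\rho)$, which is an equivalent computation; the price is that you must control the endpoint $t\to1$ where the inductive hypothesis degenerates, whereas the paper's global dominated-convergence argument handles all coordinates symmetrically. Your endpoint estimates (Potter bounds, splitting at $(1-\delta)n$, using $R(\delta n)^{q-1}\approx\delta^{(q-1)\rho}R(n)^{q-1}$) are the right tools and do close the gap for $\rho>0$.

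There is, however, a concrete error in your treatment of the boundary case $\rho=0$ (i.e.\ $d=\alpha$). You assert that the $q$-fold sum is $\sim R(n)^q/q!$ and then equate this with $R(n)^q\,\Gamma(1)^q/\Gamma(1)$; these two quantities differ by the factor $q!$, and it is the second one (namely $R(n)^q=\Afrak_n^{2q}$, with no $1/q!$) that is correct and that the lemma asserts. The factor $1/q!$ would appear at the opposite endpoint $\rho=1$, where the summand's mass is spread uniformly and the simplex carries a fraction $1/q!$ of the box; when $\rho=0$ the partial sums $R$ are slowly varying, so the simplex constraint costs nothing: by pigeonhole, every $\boldsymbol{\ell}$ in the box but outside $E_{q,n}$ has some $\ell_i\ge n/q$, and the contribution of such terms is at most $q\bigl(R(n)-R(\lceil n/q\rceil-1)\bigr)R(n)^{q-1}=o(R(n)^q)$ by slow variation of $R$, whence $\sum_{E_{q,n}}\prod_j\mathfrak{a}_{\ell_j}^{-d}\sim\sum_{\{1,\ldots,n\}^q}\prod_j\mathfrak{a}_{\ell_j}^{-d}=R(n)^q$. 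This is exactly the paper's argument for $d=\alpha$; with that correction your plan covers all cases.
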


\begin{proof}

We deal separately with the cases $d = \alpha$ (where $(\Afrak_n)$ has slow variation) and $d < \alpha$.
\begin{itemize}
\item \textbf{Case $d = \alpha$.}
If $d = \alpha\in\{1,2\}$, then $\mathfrak{a}_\ell^d$ is $1$-regularly varying, so $\Afrak_n$ has slow variation. 
By the pigeonhole principle, for all $\boldsymbol{\ell} \in \{1,\ldots,n\}^p\setminus E_{p,n}$,  
there is always one $\ell_i$ such that $\ell_i \geq \lceil n/p\rceil$. Hence:
\begin{equation*}
\left| \sum_{\boldsymbol{\ell}\in E_{p,n}} \prod_{j=1}^p \mathfrak{a}_{\ell_j}^{-d} - \sum_{\boldsymbol{\ell} \in \{1, \ldots, n\}^p} \prod_{j=1}^p \mathfrak{a}_{\ell_j}^{-d} \right|
\leq p \left( \sum_{\ell_1=\lceil n/p\rceil}^n  \mathfrak{a}_{\ell_1}^{-d} \right) \left( \sum_{\ell_2, \ldots, \ell_p = 1}^n \prod_{j=2}^p \mathfrak{a}_{\ell_j}^{-d} \right)
= O \left( \sum_{k=1}^n \Afrak_n^{2(p-1)} \right),
\end{equation*}
and thus $\sum_{\boldsymbol{\ell} \in E_{p,n}} \prod_{j=1}^p \mathfrak{a}_{\ell_j}^{-d} \sim \Afrak_n^{2p}$.

\medskip
\item \textbf{Case $d < \alpha$.}
If $d=1<\alpha$, 
\begin{equation*}
\sum_{\boldsymbol{\ell}\in E_{p,n}} \prod_{j=1}^p \left(\frac{\mathfrak{a}_{\ell_j}}{\mathfrak{a}_n}\right)^{-1} 
= \int_{\{\lceil nu_1\rceil+...+\lceil nu_p\rceil\le n\}} \prod_{j=1}^p\left(\frac{\mathfrak{a}_{\lceil nu_j\rceil}}{\mathfrak{a}_n}\right)^{-1}\, du_1 \cdots du_p.
\end{equation*}
The sequence $(\mathfrak{a}_n)_n$ is $1/\alpha$-regular; by the dominated convergence theorem 
(the domination coming e.g.\ from~\cite[Theorem~1.5.6]{BinghamGoldieTeugels:1987}), 
\begin{equation*}
\lim_{n \to +\infty} \frac{1}{n^p} \sum_{\boldsymbol{\ell}\in E_{p,n}} \prod_{j=1}^p \left(\frac{\mathfrak{a}_{\ell_j}}{\mathfrak{a}_n}\right)^{-1} 
= \int_{\Delta_p} \prod_{j=1}^p {u_j}^{-\frac{1}{\alpha}} \dd u_1\cdots \dd u_p,
\end{equation*}
where $\Delta_p =\{(u_1,...,u_p)\in(0,1)^p \, :\, \sum_{j=1}^p u_j \leq 1\}$. Finally, $n\mathfrak{a}_n^{-1} \sim (1-\alpha^{-1}) \Afrak_n^2$ 
by Karamata's theorem~\cite{Karamata:1933}, \cite[Proposition~1.5.8]{BinghamGoldieTeugels:1987}, so that, as $n$ goes to $+\infty$:
\begin{align}
\sum_{\boldsymbol{\ell}\in E_{p,n}} \prod_{j=1}^p\mathfrak{a}_{\ell_j}^{-1}
& = (n\mathfrak{a}_n^{-1})^p\left(n^{-p} \sum_{\boldsymbol{\ell}\in E_{p,n}} \prod_{j=1}^p \left(\frac{\mathfrak{a}_{\ell_j}}{\mathfrak{a}_n}\right)^{-1} \right) \nonumber\\
& \sim \Afrak_n^{2p} \left(\frac{\alpha-1}{\alpha} \right)^p \int_{\Delta_p} \prod_{j=1}^p {u_j}^{-\frac{1}{\alpha}} \dd u_1\cdots \dd u_p. \label{controledim1}
\end{align}
All that remains is to estimate this later integral. Note that, for all $t \geq 0$,
\begin{equation*}
\int_{t \Delta_p} \prod_{j=1}^p {u_j}^{-\frac{1}{\alpha}} \dd u_1 \cdots \dd u_p 
= t^{p\frac{\alpha-1}{\alpha}} \int_{\Delta_p} \prod_{j=1}^p {u_j}^{-\frac{1}{\alpha}} \dd u_1 \cdots \dd u_p.
\end{equation*}
Hence, using Fubini-Tonnelli's theorem,
\begin{align*}
\int_{\Delta_p} \prod_{j=1}^p {u_j}^{-\frac{1}{\alpha}} \dd u_1 \cdots \dd u_p 
& = \frac{1}{\Gamma \left( 1+ p\frac{\alpha-1}{\alpha} \right)} \int_0^{+ \infty} t^{\frac{\alpha-1}{\alpha}p} e^{-t} \int_{\R_+^p} \prod_{j=1}^p {u_j}^{-\frac{1}{\alpha}} \mathbf{1}_{\left\{ \sum_{j=1}^p u_j \leq 1 \right\}} \dd u_1 \cdots \dd u_p \dd t \\
& = \frac{1}{\Gamma \left( 1+ p\frac{\alpha-1}{\alpha} \right)} \int_{\R_+^p} \prod_{j=1}^p {u_j}^{-\frac{1}{\alpha}} \int_0^{+ \infty} e^{-t} \mathbf{1}_{\left\{ \sum_{j=1}^p u_j \leq t \right\}} \dd t \dd u_1 \cdots \dd u_p \\
& = \frac{1}{\Gamma \left( 1+ p\frac{\alpha-1}{\alpha} \right)} \left( \int_0^{+ \infty} u^{-\frac{1}{\alpha}} e^{-u} \dd u \right)^p \\
& = \frac{\Gamma \left( 1- \frac{1}{\alpha} \right)^p}{\Gamma \left( 1+ p\frac{\alpha-1}{\alpha} \right)}.
\end{align*}
Finally, using the identity $\Gamma (z+1) = z \Gamma (z)$, 
\begin{equation*}
\sum_{\boldsymbol{\ell}\in E_{p,n}} \prod_{j=1}^p\mathfrak{a}_{\ell_j}^{-1} 
\sim \Afrak_n^{2p} \frac{\Gamma \left( 1+\frac{\alpha-1}{\alpha} \right)^p}{\Gamma \left( 1+ p\frac{\alpha-1}{\alpha} \right)} \qedhere
\end{equation*}
\end{itemize}
\end{proof}

\subsection{Renewal properties}
\label{subsec:renewal}

The goal of this Subsection is to prove Proposition~\ref{prop:renouvellement}. 
We assume without loss of generality that the function $L$ appearing in Hypothesis~\ref{hyp:HHH} 
is continuous on $(x_0^{-1},+\infty)$ for some $x_0>0$, and that $u\mapsto u L(u^{-1})$ is increasing 
on this set~\cite[Theorem~1.5.3]{BinghamGoldieTeugels:1987}. When $\alpha=d$, we set for all $x\in(0,x_0)$: 
\begin{equation}\label{eq:FormuleI}
I(x) 
:= \int_x^{x_0} \frac{1}{t L(t^{-1})} \dd t.
\end{equation}

We compute the asymptotics of $g(p)$ according to the method in \cite[Chapter~III.12, P3]{Spitzer:1976}, 
which yields Proposition~\ref{prop:renouvellement}. Before starting the proof, though, 
we use the Fourier transform to represent $g$ in an integral form.

\begin{lemma}\label{L000}

For all $u \in \Tbb^d$, let $\Psi(u) := \sum_{n\geq 0} \Ebb_\mu [e^{i \langle u, S_n \rangle}]$. Under 
Hypothesis~\ref{hyp:HHH}, the function $\Psi$ is continuous on $\Tbb^d \setminus\{0\}$, and, 
for every $p \in \Z^d$, 
\begin{equation}\label{eq:FormuleG}
g (p) 
= \frac{2}{(2\pi)^d} \int_{\Tbb^d} (1-\cos(\langle u, p \rangle)) \Psi(u) \dd u.
\end{equation}
In addition, for all small enough neighborhoods $U$ of $0$,
\begin{equation}\label{eq:ApproximationGIntegrale}
\sup_{p \in \Z^d} \left|g(p) - \frac{2}{(2\pi)^d} \Re \int_U \frac{1-\cos(\langle u, p\rangle)}{1-\lambda_u^M} \sum_{k=0}^{M-1}\lambda_u^k \Ebb_\mu [\Pi_u^k(\mathbf{1})] \dd u\right|
<+\infty.
\end{equation}
\end{lemma}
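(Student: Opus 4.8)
The plan is to establish the Fourier representation of $g$ by a summation-by-parts / truncation argument, and then to localize the integral near $0$ using the spectral decomposition of $P_u$. First I would note that, by the local limit theorem (a consequence of Hypothesis~\ref{hyp:HHH}, via Lemma~\ref{lem:0}), $\mu(S_n = a) = \Ebb_\mu[Q_{n,a}(\mathbf{1})] = \Phi(0)\mathfrak{a}_n^{-d} + o(\mathfrak{a}_n^{-d})$ uniformly in $a$, and that, under the assumptions of Proposition~\ref{prop:renouvellement}, the quantity
\begin{equation*}
g(p) = \sum_{n \geq 0}\bigl(2\mu(S_n=0) - \mu(S_n=p) - \mu(S_n=-p)\bigr)
\end{equation*}
converges absolutely; this was recorded after the statement of Theorem~\ref{thm:gene} (it is the content of $\sigma_{GK}^2(f_p,\ldots) = 2g(p)-2$ being well-defined). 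Write $2\mu(S_n=0) - \mu(S_n=p) - \mu(S_n=-p) = \Ebb_\mu[(2 - e^{i\langle\cdot,p\rangle} - e^{-i\langle\cdot,p\rangle})\text{-Fourier coefficient of }S_n]$; concretely, for each fixed $N$,
\begin{equation*}
\sum_{n=0}^{N}\bigl(2\mu(S_n=0)-\mu(S_n=p)-\mu(S_n=-p)\bigr) = \frac{2}{(2\pi)^d}\int_{\Tbb^d}(1-\cos\langle u,p\rangle)\sum_{n=0}^N \Ebb_\mu[e^{i\langle u,S_n\rangle}]\dd u,
\end{equation*}
using $\mu(S_n=a) = (2\pi)^{-d}\int_{\Tbb^d} e^{-i\langle u,a\rangle}\Ebb_\mu[e^{i\langle u,S_n\rangle}]\dd u$ and that $2-e^{i\langle u,p\rangle}-e^{-i\langle u,p\rangle} = 2(1-\cos\langle u,p\rangle)$.

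The next step is to pass to the limit $N\to\infty$ inside the integral, which requires showing that the partial sums $\Psi_N(u) := \sum_{n=0}^N \Ebb_\mu[e^{i\langle u,S_n\rangle}]$ converge to a function $\Psi$ that is integrable against $(1-\cos\langle u,p\rangle)$ on $\Tbb^d$, and that dominated (or Abel-summation) convergence applies. Here is where Hypothesis~\ref{hyp:HHH} enters: on the complement of a neighbourhood $U$ of $0$, $\|P_u^n\|_{\Lcal(\Bcal,\Bcal)} \leq Cr^n$ by~\eqref{eq:aperiodicity}, so $\sum_n \Ebb_\mu[e^{i\langle u,S_n\rangle}]$ converges uniformly there and $\Psi$ is continuous and bounded on $\Tbb^d\setminus U$. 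On $U$, using~\eqref{eq:decomp} and~\eqref{eq:PiuRu}, $\Ebb_\mu[e^{i\langle u,S_n\rangle}] = \Ebb_\mu[P_u^n(\mathbf{1})] = \lambda_u^n\Ebb_\mu[\Pi_u^n(\mathbf{1})] + \Ebb_\mu[R_u^n(\mathbf{1})]$; the $R_u$ part is again geometrically summable by~\eqref{eq:majoRu}, while for the $\Pi_u$ part I would use~\eqref{eq:PiM+1} ($\Pi_u^{M+1}=\Pi_u$, so $\Pi_u^n$ is eventually $M$-periodic in $n$, equal to $\Pi_u^{\{n/M\}M + M}$) to sum the geometric-type series: $\sum_{n\geq 0}\lambda_u^n\Ebb_\mu[\Pi_u^n(\mathbf{1})]$ telescopes, modulo a bounded correction, into $\tfrac{1}{1-\lambda_u^M}\sum_{k=0}^{M-1}\lambda_u^k\Ebb_\mu[\Pi_u^k(\mathbf{1})]$. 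Since $|\lambda_u| < 1$ for $u\in U\setminus\{0\}$ (recurrence forces $\lambda_0=1$ to be the only unit modulus value there, and the expansion $\lambda_u = e^{-\psi(u)L(|u|^{-1})}+\cdots$ shows $1-\lambda_u^M \asymp |u|^\alpha L(|u|^{-1})$ is nonzero), $\Psi$ is continuous on $\Tbb^d\setminus\{0\}$. This yields~\eqref{eq:FormuleG} by dominated convergence once I check that $(1-\cos\langle u,p\rangle)|\Psi(u)|$ is integrable near $0$: there $1-\cos\langle u,p\rangle = O(|p|^2|u|^2)$ and $|\Psi(u)| = O(|u|^{-\alpha}L(|u|^{-1})^{-1})$ with $\alpha\leq 2$, so the product is $O(|p|^2|u|^{2-\alpha}L(|u|^{-1})^{-1})$, which is integrable in dimension $d\leq\alpha$ (the borderline $d=\alpha=2$ needs the extra $|u|^{2-\alpha}=1$ offset by... actually one checks $\int_{|u|<\epsilon}|u|^{2-\alpha-d}L(|u|^{-1})^{-1}\dd u<\infty$ holds since $2-d\geq 0 > -d$, i.e. the exponent $2-\alpha-d > -d$). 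Finally~\eqref{eq:ApproximationGIntegrale} follows by writing $g(p) - \tfrac{2}{(2\pi)^d}\Re\int_U \tfrac{1-\cos\langle u,p\rangle}{1-\lambda_u^M}\sum_{k=0}^{M-1}\lambda_u^k\Ebb_\mu[\Pi_u^k(\mathbf{1})]\dd u$ as the sum of (i) the integral of $(1-\cos\langle u,p\rangle)\Psi(u)$ over $\Tbb^d\setminus U$, which is bounded by $2\cdot 2\cdot\|\Psi\|_{L^\infty(\Tbb^d\setminus U)}\cdot|\Tbb^d| < \infty$ uniformly in $p$, plus (ii) the integral over $U$ of $(1-\cos\langle u,p\rangle)$ times the bounded correction term coming from the $R_u$ contribution and the finitely many lower-order terms in the telescoping, all of which are $O(1)$-bounded kernels on the bounded set $U$, hence contribute a finite amount uniformly in $p$ since $|1-\cos\langle u,p\rangle| \leq 2$.

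The main obstacle I expect is the bookkeeping in the telescoping of $\sum_n \lambda_u^n \Ebb_\mu[\Pi_u^n(\mathbf{1})]$: because $\Pi_u$ is only eventually periodic under powers rather than genuinely idempotent (equation~\eqref{eq:PiM+1} gives $\Pi_u^{M+1}=\Pi_u$, not $\Pi_u^2=\Pi_u$), one must carefully split $n$ into residue classes mod $M$ and track the finitely many initial terms $n = 0,\ldots,M-1$ separately, which generate exactly the $O(1)$ error in~\eqref{eq:ApproximationGIntegrale}. A secondary technical point is justifying the interchange of $\sum_{n=0}^N$ and $\int_{\Tbb^d}$ for each finite $N$ (trivial, finite sum) and then the limit $N\to\infty$; here I would invoke dominated convergence using the uniform-in-$N$ bound $|\Psi_N(u)| \leq C(|u|^{-\alpha}L(|u|^{-1})^{-1} + 1)$ on $U$ (from $|\sum_{n=0}^N\lambda_u^n\cdots| \leq \tfrac{C}{1-|\lambda_u|} \leq \tfrac{C'}{|u|^\alpha L(|u|^{-1})}$) together with the integrability established above, so that the convergence $\Psi_N\to\Psi$ is dominated. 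The continuity of $\Psi$ on $\Tbb^d\setminus\{0\}$ follows from uniform convergence of the defining series on compact subsets of $\Tbb^d\setminus\{0\}$, which in turn follows from the continuity of $u\mapsto\lambda_u$, $u\mapsto\Pi_u$, $u\mapsto R_u$ on $U$ and the geometric bounds~\eqref{eq:majoRu},~\eqref{eq:aperiodicity}.
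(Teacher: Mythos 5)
Your proposal is correct and follows essentially the same route as the paper: Fourier inversion of $2\mu(S_n=0)-\mu(S_n=p)-\mu(S_n=-p)$, splitting $\Tbb^d$ into $U$ and $U^c$, geometric summation of the $R_u$ and off-$U$ contributions via~\eqref{eq:majoRu} and~\eqref{eq:aperiodicity}, the periodicity relation~\eqref{eq:PiM+1} to resum $\sum_n\lambda_u^n\Pi_u^n$ into $(1-\lambda_u^M)^{-1}\sum_{k=0}^{M-1}\lambda_u^k\Pi_u^k$, and the bound $|1-\cos\langle u,p\rangle|/(1-|\lambda_u|)=O\bigl(|p|^2|u|^{2-\alpha}/L(|u|^{-1})\bigr)$ for integrability near $0$. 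The only blemish is the garbled exponent in your integrability check (the relevant condition is $2-\alpha\geq 0>-d$ for $\int_{B(0,\epsilon)}|u|^{2-\alpha}\dd u$ in dimension $d$, not ``$2-\alpha-d>-d$''), which does not affect the conclusion.
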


\begin{proof}

Using the Fourier transform, we know that:
\begin{equation*}
g(p) 
= 2\mu (S_n=0)-\mu (S_n=p)-\mu(S_n=-p) 
= \frac{2}{(2\pi)^d} \int_{\Tbb^d} (1-\cos (\langle u, p \rangle)) \Ebb_\mu [e^{i \langle u, S_n \rangle}] \dd u 
\end{equation*}
Thanks to the Lebesgue dominated convergence theorem, it is then enough to prove that:
\begin{equation*}
\sum_{n \geq 0}\int_{\Tbb^d} |1-\cos (\langle u, p \rangle)|\, \left|\Ebb_\mu [e^{i \langle u, S_n\rangle}] \right| \dd u
< +\infty.
\end{equation*}
Note that $\Ebb_\mu [e^{i \langle u, S_n\rangle}] = \Ebb_\mu [P_u^n \mathbf{1}]$. 
Hence, for any small enough neighborhood $U$ of $0$, 
\begin{equation*}
\sup_{u \in U^c} \sum_{n\geq 0} |\Ebb_\mu [e^{i \langle u, S_n\rangle}]|
\leq \sum_{n\geq 0} C r^n \norm{\mathbf{1}}{\Bcal} 
= O(1),
\end{equation*}
which proves the continuity of $\Psi$ on $\Tbb^d \setminus\{0\}$, as it is the uniform limit of 
a sequence of continuous functions. In addition, for every $u \in U$,
\begin{equation*}
\sum_{n\geq 0}| \Ebb_\mu [e^{i \langle u, S_n\rangle}]|
= \sum_{n\geq 0} \left(|\lambda_u|^n \left| \Ebb_\mu [ \Pi_u^n(\mathbf{1})] \right| + C r^n \norm{\mathbf{1}}{\Bcal} \right)
\leq \frac{C'}{1-|\lambda_u|} + O(1).
\end{equation*}
Finally,
\begin{equation*}
\frac{|1-\cos (\langle u, p \rangle)|}{1-|\lambda_u|}
\leq C'' \frac{|p|^2 |u|^{2-\alpha}}{L(|\sqrt{\Sigma} u|^{-1})},
\end{equation*}
since $1-|\lambda_u| \sim \vartheta |\sqrt{\Sigma} u|^\alpha L(|\sqrt{\Sigma} u|^{-1})$
as $u$ goes to $0$, and $|1-\cos (\langle u, p \rangle)| \leq |u|^2 |p|^2$.
Since $\alpha \in [1,2]$ and that $L$ is slowly varying, this yields Equation~\eqref{eq:FormuleG}.
Moreover, due to \eqref{eq:PiM+1}, 
\begin{equation*}
\sum_{n\geq 0} \lambda_u^n\Pi_u^n
= \sum_{n\ge 0} \lambda_u^{Mn}\sum_{k=0}^{M-1}\lambda_u^k\Pi_u^k
= \frac{1}{1-\lambda_u^M}\sum_{k=0}^{M-1}\lambda_u^k\Pi_u^k.
\end{equation*}

\smallskip

As can be seen in this proof, the error terms which come from integrating over $U$ (instead of $\Tbb^d$) 
and using $\Pi_u$ instead of $P_u$ are uniformly bounded in $p$, so that:
\begin{equation*}
\sup_{p \in \Z^d} \left|g(p) - \frac{2}{(2\pi)^d} \int_U \frac{1-\cos(\langle u, p\rangle)}{1-\lambda_u^M}\sum_{k=0}^{M-1} \lambda_u^k\Ebb_\mu [\Pi_u^k(\mathbf{1})] \dd u\right|
<+\infty.
\end{equation*}
This equation stays true \textit{a fortiori} if we take its real part, which yields Equation~\eqref{eq:ApproximationGIntegrale}.
\end{proof}

Now, let us begin the proof of Proposition~\ref{prop:renouvellement} in earnest.

\begin{proof}[Proof of Proposition~\ref{prop:renouvellement}]

We use the same conventions as in the proof of Lemma~\ref{L000}. 
For all small enough $\delta >0$, put $U (\delta) := \sqrt{\Sigma}^{-1} B(0,\delta)$.
By Equation~\eqref{eq:ApproximationGIntegrale}, for any small enough neighborhood $U$ of $0$,
\begin{equation}\label{EQgp1}
\sup_{p \in \Z^d} \left|g(p) - \frac{2}{(2\pi)^d} \Re \int_U \frac{1-\cos(\langle u, p\rangle)}{1-\lambda_u^M},\sum_{k=0}^{M-1} \lambda_u^k\Ebb_\mu [\Pi_u^k(\mathbf{1})] \dd u\right|
<+\infty.
\end{equation}
Fix $\varepsilon \in (0,1)$. Under Hypothesis~\ref{hyp:HHH}, for all small enough $\delta >0$, for all $u \in U(\delta)$,
\begin{equation*}
\left| \lambda_u-1+\psi(\sqrt{\Sigma} u) L(|\sqrt{\Sigma} u|^{-1}) \right|
\leq \varepsilon |\psi (\sqrt{\Sigma} u)| L(|\sqrt{\Sigma} u|^{-1}),
\end{equation*}
and $\max_{0 \leq k \leq M-1}\norm{\Pi_u^k-\Pi_0^k}{L(\Bcal)} \leq \varepsilon$.
Note also that $\sum_{k=0}^{M-1}\lambda_u^k=(1-\lambda _u^M)/(1-\lambda_u)$. Then:
\begin{equation*}
\left| \frac{1}{1-\lambda_u} - \frac{1}{\psi(\sqrt{\Sigma} u) L(|\sqrt{\Sigma} u|^{-1})} \right| 
\leq \frac{\varepsilon}{|1-\lambda_u|} 
\leq \frac{\varepsilon}{1-\varepsilon} \frac{1}{|\psi(\sqrt{\Sigma} u)|  L(|\sqrt{\Sigma} u|^{-1})},
\end{equation*}
and 
\begin{align*}
\left| \int_{U(\delta)} \right. 
& \left. \frac{1-\cos(\langle u, p\rangle)}{1-\lambda_u^M}\sum_{k=0}^{M-1}\lambda_u^k \Ebb [\Pi_u^k(\mathbf{1})] \dd u - \int_{U(\delta)} \frac{1-\cos(\langle u, p\rangle)}{\psi(\sqrt{\Sigma} u) L(|\sqrt{\Sigma} u|^{-1})} \dd u \right| \\
& \leq \varepsilon \left(\frac{(1+\varepsilon) \norm{\mathbf{1}}{\Bcal}}{1-\varepsilon}+1\right) \int_{U(\delta)} \frac{1-\cos(\langle u, p\rangle)}{|\psi(\sqrt{\Sigma} u)|  L(|\sqrt{\Sigma} u|^{-1})} \dd u\\
& \leq 2\varepsilon \left(\frac{(1+\varepsilon) \norm{\mathbf{1}}{\Bcal}}{1-\varepsilon}+1\right) \sqrt{1+\zeta^2} H_\delta(p),
\end{align*}
where:
\begin{equation*}
H_\delta (p) 
:= \frac{2}{(2\pi)^d} \int_{U(\delta)} \Re \left( \frac{1-\cos(\langle u, p\rangle)}{\psi(\sqrt{\Sigma} u) L(|\sqrt{\Sigma} u|^{-1})} \right) \dd u.
\end{equation*}

Hence,
\begin{equation}\label{eq:DifferenceThetaH}
\lim_{\varepsilon\to 0} \sup_{p\in\Z^d} H_\delta(p)^{-1} \left| \int_{U(\delta)} \frac{1-\cos(\langle u, p\rangle)}{1-\lambda_u^M}\sum_{k=0}^{M-1} \lambda_u^k\Ebb [\Pi_u^k(\mathbf{1})] \dd u - H_\delta(p) \right|
=0.
\end{equation}

Assume that there exists a function $h : \Z^d \to \R$ such that, for all $\delta >0$ small enough, $H_\delta (p) \sim h(p)$ as $p$ goes to infinity. 
If in addition $\lim_\infty h = +\infty$, then Equations~\eqref{EQgp1} and~\eqref{eq:DifferenceThetaH} imply that $g (p) \sim h(p)$.

\smallskip

Now, let us simplify those integrals. First, note that: 
\begin{equation*}
\Re \left( \frac{1}{\psi (\sqrt{\Sigma} u)} \right)
= \frac{1}{\vartheta (1+\zeta^2) |\sqrt{\Sigma} u|^\alpha}.
\end{equation*}
Let $e_1 := (1, 0, \ldots, 0)$. Then:
\begin{align*}
H_\delta (p) 
& = \frac{2}{(2\pi)^d \vartheta (1+\zeta^2)} \int_{\sqrt{\Sigma}^{-1} B(0,\delta)} \frac{1-\cos(\langle u, p\rangle)}{|\sqrt{\Sigma} u|^\alpha L(|\sqrt{\Sigma} u|^{-1})} \dd u \\
& = \frac{2}{(2\pi)^d \vartheta (1+\zeta^2) \sqrt{\det (\Sigma)}} \int_{B(0,\delta)} \frac{1-\cos(\langle v, \sqrt{\Sigma}^{-1} p\rangle)}{|v|^\alpha L(|v|^{-1})} \dd v \\
& = \frac{2}{(2\pi)^d \vartheta (1+\zeta^2) \sqrt{\det (\Sigma)}} \int_{B(0,\delta)} \frac{1-\cos(|\sqrt{\Sigma}^{-1} p| \langle v, e_1\rangle)}{|v|^\alpha L(|v|^{-1})} \dd v \\
& = \frac{2 |\sqrt{\Sigma}^{-1}p|^{\alpha-1}}{(2\pi)^d \vartheta (1+\zeta^2) \sqrt{\det (\Sigma)}} \int_{B(0,|\sqrt{\Sigma}^{-1}p|\delta)} \frac{1-\cos(\langle w, e_1\rangle)}{|w|^\alpha L(|\sqrt{\Sigma}^{-1}p| \, |w|^{-1})} \dd w.
\end{align*}
We shall now distinguish between three sub-cases: $d=1$ and $\alpha \in (1,2]$, then $d=\alpha=1$ 
(in the basin of Cauchy distributions), and finally $d=\alpha=2$.

\begin{itemize}
\item \textbf{Case $d=1$, $\alpha \in (1,2]$.} In this case, most of the mass in the integral 
representation of $g(p)$ is present in a small neighborhood of $0$, of size roughly $1/|p|$.

\smallskip

Let $\eta \in (0,\alpha-1)$. By Potter's bound~\cite[Theorem~1.5.6]{BinghamGoldieTeugels:1987}, 
if $\delta$ is small enough, there exists a constant $C$ such that, for all $p \in \Z$ with a large enough absolute value, 
for all $|w|<|p|\delta$, 
\begin{equation*}
C^{-1} \min \{|w|^\eta, |w|^{-\eta}\}
\leq \left| \frac{L(|p| \, |w|^{-1})}{L(|p|)} \right| 
\leq C \max \{|w|^\eta, |w|^{-\eta}\}.
\end{equation*}
For $w \in [-1,1]$, we get:
\begin{equation*}
L(p)\frac{1-\cos(w)}{|w|^\alpha L(|p| \, |w|^{-1})} 
\leq \frac{C|w|^{2-\alpha-\eta}}{2},
\end{equation*}
while for $1<|w|<|\sqrt{\Sigma}^{-1}p|\delta$:
\begin{equation*}
L(p)\frac{1-\cos(w)}{|w|^\alpha L(|p| \, |w|^{-1})} 
\leq \frac{2C}{|w|^{\alpha-\eta}}.
\end{equation*}
In addition, $L(p)(1-\cos(w))|w|^{-\alpha} / L(|p| \, |w|^{-1})$ converges pointwise, as $p$ goes to infinity, 
to $(1-\cos(w))|w|^{-\alpha}$. By the Lebesgue dominated convergence theorem:
\begin{equation*}
H_\delta (p) 
\sim_{p \to \infty} \frac{2|p|^{\alpha-1}}{\pi \vartheta (1+\zeta^2) L(|p|)} \int_0^{+ \infty} \frac{1-\cos(w)}{w^\alpha} \dd w.
\end{equation*}
Since $\lim_{p \to \infty} H_\delta (p) = + \infty$ and the right hand-side does not depend on $\delta$, 
by~\eqref{eq:DifferenceThetaH},
\begin{equation*}
g (p) 
\sim_{p \to \infty} \frac{2|p|^{\alpha-1}}{\pi \vartheta (1+\zeta^2) L(|p|)} \int_0^{+ \infty} \frac{1-\cos(w)}{w^\alpha} \dd w.
\end{equation*}
Finally, using an integration by parts and \cite[Chap.~XVII.4, (4.11)]{Feller:1966}, we get:
\begin{equation*}
\int_0^{+ \infty} \frac{1-\cos(w)}{w^\alpha} \dd w 
= \frac{\pi}{2 \Gamma (\alpha) \sin \left( \frac{(\alpha-1) \pi}{2} \right)}.
\end{equation*}

\medskip
\item \textbf{Case $d=\alpha=1$.} First, by the same computations and the monotone convergence theorem:
\begin{equation*}
\sum_{n \geq 0} \mu (S_n = 0)
= \frac{1}{2 \pi} \int_{\Tbb} \Psi (u) \dd u 
= \frac{1}{2 \pi \vartheta (1+\zeta^2)} \int_U \frac{1}{|u| L(|u|^{-1})} \dd u (1+o(1))+O(1),
\end{equation*}
where $U$ is any small neighborhood of $0$ in $\Tbb$. But Halmos' recurrence theorem \cite{Halmos:1956} 
and the conservativity of $(\widetilde{A}, \tilde{\mu}, \widetilde{T})$ implies that the left hand-side 
is infinite, so the right hand-side is also infinite, and $\lim_0 I = + \infty$.

\smallskip

Let us go back to the study of $g$. If $\alpha = d = 1$, a neighborhood of size $1/|p|$ of the origin makes for a negligible 
part of the mass of $g(p)$. We must look at a larger scale, where the oscillations makes the cosine ultimately vanish 
(much as with Riemann-Lebesgue's lemma).

\smallskip

Let $R >0$. using again Potter's bound (in the same way as in the previous case), we get that:
\begin{equation*}
\sup_{p \in \Z} L(|p|) \int_{B(0,R)} \frac{1-\cos(w)}{|w| L(|p| \, |w|^{-1})} \dd w 
<+\infty,
\end{equation*}
whence:
\begin{align*}
\frac{\pi \vartheta (1+\zeta^2)}{2} H_\delta (p) 
& = \int_{\frac{R}{|p|}}^\delta \frac{1-\cos(|p| v)}{|v| L(|v|^{-1})} \dd v + O(L(|p|)^{-1}) \\
& = I(R/|p|)-I(\delta)- \int_{\frac{R}{|p|}}^\delta \frac{\cos(|p| v)}{|v| L(|v|^{-1})} \dd v + O(L(|p|)^{-1}).
\end{align*}
By~\cite[Theorem~1.5.9a]{BinghamGoldieTeugels:1987}, $I(R/|p|) \gg L(|p|)^{-1}$. Set $F(v) := 1/(v L(v^{-1}))$, 
which is monotonous on a neighborhood of $0$. Remark that, by~\cite[Theorem~1.5.9a]{BinghamGoldieTeugels:1987} again, 
$|p|^{-1} F(\delta) \ll |p|^{-1} F(R/|p|) \ll I(R/|p|)$ as $p$ goes to infinity. Then, using the Riemann-Stieltjes 
version of the integration by parts~\cite[Theorem~7.6]{Apostol:1974}:
\begin{align}
\left| \int_{\frac{R}{|p|}}^\delta \frac{\cos(|p| v)}{|v| L(|v|^{-1})} \dd v \right|
& = \left| \frac{1}{|p|} \left[ \sin(|p| v) F(v) \right]_{\frac{R}{|p|}}^\delta - \frac{1}{|p|} \int_{\frac{R}{|p|}}^\delta \sin(|p|v) \dd F(v) \right| \nonumber \\
& \leq \frac{2}{|p|} \left( F(R/|p|)+F(\delta) \right). \label{eq:BorneRiemannLebesgue}
\end{align}
Hence, $H_\delta (p) \sim \frac{2}{\pi \vartheta (1+\zeta^2)} I(R/|p|) \sim \frac{2}{\pi \vartheta} I(|p|^{-1})$ as $p$ goes to infinity. 
Since $\lim_{p \to \infty} H_\delta (p) = + \infty$ and $I(|p|^{-1})$ does not depend on $\delta$, 
using the remark following~\eqref{eq:DifferenceThetaH}, we get the claim of the proposition.

\medskip
\item \textbf{Case $d=\alpha=2$.} The method is much the same as for $d=\alpha=1$, but the 
oscillations happen along one axis in the plane. Hence, there is cancellation in almost all directions, 
but not uniformly. Using again Potter's bound, we get that:
\begin{align*}
\frac{\pi \sqrt{\det (\Sigma)}}{2} H_\delta (p) 
& = \frac{1}{2\pi} \int_{B(0,\delta) \setminus B(0,\frac{R}{|\sqrt{\Sigma}^{-1} p|})} \frac{1-\cos(|\sqrt{\Sigma}^{-1} p| \langle v, e_1\rangle)}{|v|^2 L(|v|^{-1})} \dd v + O(L(|p|)^{-1}) \\
& = I(R/|\sqrt{\Sigma}^{-1} p|)-I(\delta)-\frac{1}{2\pi} \int_0^{2\pi} \int_{\frac{R}{|\sqrt{\Sigma}^{-1} p|}}^\delta \frac{\cos(|\sqrt{\Sigma}^{-1}p| r \cos(t))}{r L(r^{-1})} \dd r \dd t + O(L(|p|)^{-1}).
\end{align*}
Fix $\eta > 0$. On $\{|\cos (t)|>\eta \}$, as in~\eqref{eq:BorneRiemannLebesgue},
\begin{align*}
\left| \int_0^{2\pi} \right. & \left.\int_{\frac{R}{|\sqrt{\Sigma}^{-1} p|}}^\delta \frac{\cos(|\sqrt{\Sigma}^{-1}p| v \cos(t))}{v L(v^{-1})} \dd v \dd t \right| \\
& \hspace{4em} \leq \frac{2 \Leb(|\cos (t)|>\eta)}{\eta |\sqrt{\Sigma}^{-1}p|} \left( F(R/|\sqrt{\Sigma}^{-1}p|)+F(\delta) \right) + \Leb(|\cos (t)|<\eta) I(|\sqrt{\Sigma}^{-1}p|/R).
\end{align*}
Since this holds for all $\eta >0$, and since $F(\delta)/|\sqrt{\Sigma}^{-1}p| \ll F(R/|\sqrt{\Sigma}^{-1}p|)/|\sqrt{\Sigma}^{-1}p| \ll I(|\sqrt{\Sigma}^{-1}p|^{-1})$, 
we get that $H_\delta (p) \sim  \frac{2}{\pi \sqrt{\det (\Sigma)}} I(R/|\sqrt{\Sigma}^{-1}p|) \sim \frac{2}{\pi \sqrt{\det (\Sigma)}} I(|p|^{-1})$ as $p$ goes to infinity. Again, 
this is what we claimed in the proposition.
\end{itemize}
\end{proof}

\section{Theorems~\ref{thm:ConvergenceNp} and~\ref{thm:GM}: context and proofs}
\label{sec:proofGM}

Theorem~\ref{thm:gene} yields a limit theorem using only spectral methods. If the factor 
$(A, \mu, T)$ is Gibbs-Markov, then we also have the limit theorems from~\cite{Thomine:2013, Thomine:2014}. 
Comparing the expressions of the limits yields Corollary~\ref{cor:AlphaG}.

\smallskip

Using the structure of Gibbs-markov map, we can leverage Corollary~\ref{cor:AlphaG} 
to get an estimate of the probability that an excursion from $A \times \{0\}$ hits $A \times \{p\}$, 
with $p \in \Z^d$. This is the content of Theorem~\ref{thm:ConvergenceNp}. Finally, 
Theorem~\ref{thm:ConvergenceNp} allows us to improve the main theorems from~\cite{Thomine:2014}, 
yielding Theorem~\ref{thm:GM}. In turn, this improves  Corollary~\ref{cor:AlphaG}, 
yielding Corollary~\ref{cor:InvarianceInduction}.

\smallskip

We present our strategy in Subsection~\ref{subsec:Strategy}. In Subsection~\ref{sec:AnnexeGibbsMarkov}, 
we present Gibbs-Markov maps, and their main properties of interest. Subsections~\ref{subsec:Tightness} 
and~\ref{subsec:ConvergenceLoiPetitesBoules} deal with the tightness and convergence in distribution 
of the (renormalized) number of hits of $A \times \{p\}$ by an excursion, and from there the convergence 
in moments. Finally, Theorem~\ref{thm:ConvergenceNp} and Theorem~\ref{thm:GM} are proved in 
Subsections~\ref{subsec:ProbabiliteAtteinte} and~\ref{subsec:PreuveGM} respectively.

\subsection{Strategy: Working with excursions}
\label{subsec:Strategy}

Our end goal is Theorem~\ref{thm:GM}. Let us describe the strategy behing our proof.

\smallskip

The method used in~\cite{Thomine:2014} to get a distributional limit theorem for observables 
of a Markov $\Z^d$-extension was the following. 
To keep things simple, we ignore L\'evy stable distributions and stay in dimension $d=1$. 
Let $(\widetilde{A},\tilde{\mu},\widetilde{T})$ be an ergodic and conservative Markov $\Z$-extension 
of a Gibbs-Markov map $(A, \mu, T)$ with a square integrable step function $F$ with asymptotic variance 
$\sigma_{GK}^2 (F, A, \mu, T):= \Ebb_\mu[F^2]+2\sum_{k\geq 1}\Ebb_\mu[F \cdot F\circ T^k]$. 

\smallskip

As in Subsection~\ref{sec:ProbabiliteAtteinteExcursions}, let $\varphi_{\{0\}}$ be the first return time to 
$A \times \{0\}$, and $\widetilde{T}_{\{0\}}$ be the induced map on $A\times \{0\} \simeq A$. 
Recall that, for any measurable function $f : A \times \Z^d \to \R$ and 
almost every $x \in A$, we define 
\begin{equation*}
f_{\{0\}}(x)
:=\sum_{k=0}^{\varphi_{\{0\}}(x)-1}f\circ\widetilde{T}^k(x,0),
\end{equation*}
that is, $f_{\{0\}} (x)$ is the sum of $f$ along the excursion from $A \times \{0\}$ 
starting from $(x,0)$.

\smallskip

For every $n \geq 0$ and $x\in A$, let $\tau_n(x)$ be the number of visits 
of $(\widetilde{T}^k(x,0))_{k \geq 1}$ to $A \times \{0\}$ before time $n$. Then, for $x$ in $A$ :
\begin{equation*}
S_n^{\widetilde{T}} f(x,0)
= \sum_{k=0}^{n-1} f \circ \widetilde{T}^k (x,0) 
\simeq \sum_{k=0}^{\tau_n(x)-1} f_{\{0\}} \circ \widetilde{T}_{\{0\}}^k (x),
\end{equation*}
where, under reasonable assumptions on $f$ and on the extension, the error terms are negligible  
for large $n$. If $f$ is integrable and has zero integral, then so does $f_{\{0\}}$. If in addition 
$|f|_{\{0\}}$ belongs to $\Lbb^p$ for some $p > 2$ and if $f$ is regular enough, then $(\tau_n)$ and 
$(\sum_{k=0}^{N-1} f_{\{0\}} \circ \widetilde{T}_{\{0\}}^k)$ are asymptoticaly independent~\cite[Theorem~1.7]{Thomine:2014}, 
and we have a generalized central limit theorem~\cite[Corollary~6.9]{Thomine:2015}, 
which has the following form when $d=1$:
\begin{equation*}
\lim_{n \to + \infty} \frac{S_n^{\widetilde{T}} f}{n^{\frac{1}{4}}} 
= \left(\frac{2}{\pi \sigma_{GK}^2 (F, A, \mu, T)}\right)^{\frac{1}{4}} \sigma_{GK} (f_{\{0\}}, A, \mu, \widetilde{T}_{\{0\}}) L ,
\end{equation*}
where the convergence is strong in distribution and where $L$ is a parameter $1/\sqrt{2}$ centered Laplace 
random variable and where\footnote{Assuming $(A, \mu, \widetilde{T}_{\{0\}})$ is mixing, otherwise the formula differs slightly}:
\begin{equation}\label{eq:EquationVariance}
\sigma_{GK}^2 (f_{\{0\}}, A, \mu, \widetilde{T}_{\{0\}})
= \Ebb_\mu\left[ f_{\{0\}}^2 \right] + 2 \Ebb_\mu\left[ f_{\{0\}} \cdot f_{\{0\}} \circ \widetilde{T}_{\{0\}}^n \right].
\end{equation}
Similar limit theorems hold in dimension two or when the jumps are in the basin of attraction 
of a L\'evy stable distribution.

\smallskip

Due to \cite[Theorem 1.11]{Thomine:2014}, we already know that the limit theorem holds for observables $f$ which are H\"older 
and such that $|f|_{\{0\}} \in \Lbb^q$ for some $q > 2$. However, this condition is hard to check, 
and we would like to get a condition which may be stronger, but more manageable. Our idea is to 
leverage what we know about the observables $f_p:\widetilde{A} \rightarrow \{\pm 1\}$, which we recall 
are defined for $p\in\Z^d$ by $f_p(x,q):= (\mathbf{1}_{\{p\}}-\mathbf{1}_{\{0\}})(q)$.

\smallskip

Note that $f_{p,\{0\}} (x) = N_p (x) -1$, where $N_p (x)$ is the number of visits to 
$A \times \{p\}$ starting from $(x,0)$ before coming back to $A \times \{0\}$. In addition, 
for any observable $f$ and any $q \geq 1$,
\begin{equation}
\label{eq:DominationParTempsLocaux}
\norm{|f|_{\{0\}}}{\Lbb^q (A, \mu)} 
\leq \sum_{p \in \Z^d} \norm{f(\cdot, p)}{\Lbb^\infty (A, \mu)} \norm{N_p}{\Lbb^q (A, \mu)}.
\end{equation}
Hence, we are led to the study of $\norm{N_p}{\Lbb^q (A, \mu)}$
for $q>2$. Note that $\norm{N_p}{\Lbb^q (A, \mu)} = \norm{f_{p,\{0\}}}{\Lbb^q (A, \mu)}+O(1)$.

\smallskip

First, we will see that $\norm{N_p}{\Lbb^2 (A, \mu)} \sim \sigma_{GK} (f_{p,\{0\}}, A, \mu, \widetilde{T}_{\{0\}})$. 
Moreover, comparing the conclusions of Theorem~\ref{thm:gene} of the present paper with a previous result, 
we obtain that $\sigma_{GK}^2 (f_{p,\{0\}}, A, \mu, \widetilde{T}_{\{0\}}) = 2 (g(p)-1)$ for every $p$.
The control on higher moments ($q >2$) of $f_{p,\{0\}}$ helps us 
to extend Theorem~\ref{thm:gene} to a wider class of observables, thanks to the argument in \cite{CsakiCsorgoFoldesRevesz:1992}. 

\smallskip

Our main issue is then to control $\norm{N_p}{\Lbb^q (A, \mu)}$ for any $q > 2$ with 
the weaker norm $\norm{N_p}{\Lbb^2 (A, \mu)}$. For random walks, there is a simple argument, 
which we will replicate in the context of Gibbs-Markov maps. Recall that 
$\alpha (p)^{-1} := \mu (N_p > 0)$ is the probability to visit $A \times \{p\}$ 
before coming back to $A \times \{0\}$, when starting from $0$.

\smallskip

To identify the distribution of $N_p$, it is enough to consider the Markov chain corresponding 
to the times at which the random walk is in $\{0,p\}$, which is given by:

\bigskip

\begin{center}
\begin{tikzpicture}[scale=2.5,->,>=stealth',shorten >=1pt,auto,inner sep=2pt,semithick,bend angle=20]
\tikzstyle{every state}=[draw=none]

\node[state] (0) at (-1,0)      {$0$};
\node[state] (p) at (1,0)      {$p$};

\path (0) edge [in=150,out=210,loop] node[left] {$1-\alpha (p)^{-1}$} (0)
          edge [bend right,looseness=0.5] node[below] {$\alpha (p)^{-1}$} (p)
      (p) edge [in=330,out=30,loop] node[right] {$1-\alpha (-p)^{-1}$} (p)
          edge [bend right,looseness=0.5] node[above] {$\alpha (-p)^{-1}$} (0);
\end{tikzpicture}
\end{center}

\bigskip

Since the random walk spends as much time in $A \times \{0\}$ and $A \times \{p\}$, we get $\alpha (p)=\alpha (-p)$.
Hence, the random variable $N_p | \{N_p > 0\}$ has a geometric distribution of parameter 
$\alpha(p)^{-1}$. So $\norm{f_{p,\{0\}}}{\Lbb^q}$ is determined by $\norm{f_{p,\{0\}}}{\Lbb^2}$ for all $q$.

\smallskip

In the context of Markov $\Z^d$-extensions of Gibbs-Markov maps, we cannot expect to 
know the explicit distribution of $N_p$; however, the same results will hold asymptotically, 
which is enough for our purposes. The main idea is that $N_p-1 | \{N_p > 0\}$ is the hitting time of 
the event $\{\widetilde{T}_{\{0,p\}} \in A \times \{0\} \}$, which becames small as $p$ goes to infinity, 
so $\alpha (-p)^{-1} N_p | \{N_p > 0\}$ converges in distribution to an exponential 
random variable of parameter $1$. Exponential tightness gives the convergence of the moments of $N_p$, 
which is what we want.

\subsection{Recalls on Gibbs-Markov maps}
\label{sec:AnnexeGibbsMarkov}

Throughout this Section, $(A, \pi, \lambda, \mu, T)$ denotes a Gibbs-Markov map. 
These models provide a large enough family of dynamical systems, including many important examples, most 
notably inductions of Markov maps with respect to a stopping time. 
Together with the construction of Young towers~\cite{Young:1998}, Gibbs-Markov maps appear in a variety of subjects, 
including intermittent chaos~\cite{Sarig:2002,Gouezel:2008e,MelbourneTerhesiu:2012}, Anosov flows~\cite{Bowen:1973}, 
or hyperbolic billiards~\cite{Young:1998}. Their definition is flexible enough to allow $\Z^d$-extensions 
with large jumps~\cite{AaronsonDenker:2001}. Yet, Gibbs-Markov maps have a very strong structure which 
makes them tractable. We refer the reader to~\cite[Chapter~4]{Aaronson:1997} and~\cite[Chapitre~1]{Gouezel:2008e} 
for more general references on Gibbs-Markov maps, and to Subsection~\ref{sec:AnnexeGibbsMarkov} 
for some more specialized results. Let us recall their definition.

\begin{definition}[Measure-preserving Gibbs-Markov maps]
\label{def:ApplicationGibbsMarkov}
\ \\
Let $(A, d, \Bcal, \mu)$ be a probability, metric, bounded Polish space. 
Let $\pi$ be a partition of $A$ in subsets of positive measure (up to a null set). 
Let $T : \ A \to A$ be a $\mu$-preserving map, that is exact and Markov with respect to the 
partition $\pi$. Such a map is said to be Gibbs-Markov if it also satisfies:
\begin{itemize}
\item Big image property: $\inf_{a \in \pi} \mu (Ta) > 0$;
\item Expansivity: there exists $\lambda > 1$ such that $d (Tx, Ty) \geq \lambda d(x, y)$ 
for all $a \in \pi$ and $(x,y) \in a \times a$;
\item Bounded distortion: there exists $C > 0$ such that, for all $a \in \pi$, for almost every 
$(x,y) \in a \times a$: 
\end{itemize}
\begin{equation}
\left| \frac{\dd \mu}{\dd \mu \circ T} (x) - \frac{\dd \mu}{\dd \mu \circ T} (y) \right| 
\leq C d(Tx, Ty) \frac{\dd \mu}{\dd \mu \circ T} (x).
\end{equation}
\end{definition}

A measure-preserving Gibbs-Markov map is thus the data $(A, \pi, d, \mu, T)$ of five objects: a topological space, a
partition, a distance, a measure and a measure-preserving transformation. We will sometimes abuse notations, 
and say for instance that $(A, \mu, T)$ is a Gibbs-Markov map. 

\smallskip

Later on, we shall use liberally many fine properties of Gibbs-markov maps. We put them 
together in this subsection, which is divided in three parts:
\begin{itemize}
\item \textit{Fundamental definitions and facts}: what is a Gibbs-Markov map, and what are stopping times.
\item \textit{Good Banach spaces}: the Banach spaces we work with, and the properties of the transfer operator.
\item \textit{Extensions and induction}: what happens when we induce a Markov extension $(\widetilde{A}, \tilde{\mu}, \widetilde{A})$ 
on a nice set, and a distortion estimate.
\end{itemize}

\subsubsection{Fundamental definitions and facts} 

Let $(A, \pi, d, \mu, T)$ be a Gibbs-Markov map. For all $x$ and $y$ in $A$, we define the \textit{separation time} 
of $x$ and $y$ as:
\begin{equation*}
s(x,y) 
:= \inf \{n \geq 0: \ \forall \ a \in \pi, \ T^n x \notin a \text{ or } T^n y \notin a \}.
\end{equation*}
Let $\lambda$ be the expansion constant of the Gibbs-Markov map. Then $(A, \pi, \tau^{-s}, \mu, T)$ 
is Gibbs-Markov for all $\tau \in (1, \lambda]$. Without loss of generality, 
we assume that the distance $d$ belongs to this family of distances, and (if needed) 
we specify the parameter $\tau$ instead of the distance $d$. This simplifies greatly the 
induction processes.

\smallskip

For $n \geq 0$, a \textit{cylinder} of length $n$ is a non-trivial element of $\pi_n := \bigvee_{k=0}^{n-1} T^{-k} \pi$. 
It is given by a unique finite sequence $(a_k)_{0 \leq k < n}$ of elements of $\pi$ such that 
$T (a_k) \cap a_{k+1}$ is non-neglectable for all $0 \leq k < n-1$. Such a cylinder shall be 
denoted by $[a_0, \ldots, a_{n-1}]$.

\smallskip

With any Markov maps comes a natural filtration given by $\Fcal_n := \sigma(\pi_n)$ 
for all $n \geq 0$. From this filtration we define \textit{stopping times}.

\begin{definition}[Stopping time]
\label{def:TempsDArret}

Let $(A, \pi, d, \mu, T)$ be a Gibbs-Markov map. Let $\varphi: \ A \to \N \cup \{+ \infty\}$ be measurable. 
We say that $\varphi$ is a stopping time if $\{\varphi \leq n\} \in \Fcal_n$ 
for all $n \geq 0$.

\smallskip

If $\varphi$ is a stopping time which is almost surely positive and finite, the associated countable 
partition of $A$ is given by:
\begin{equation*}
\pi_\varphi 
:= \bigcup_{n \geq 1} \{ \overline{a} \in \pi_n: \ \mu (\overline{a}) > 0 \text{ and } \overline{a} \subset \{\varphi = n\} \},
\end{equation*}
and the associated transformation is:
\begin{equation*}
T_\varphi (x) 
:= T^{\varphi(x)} (x),
\end{equation*}
which is well-defined almost everywhere if $\varphi$ is finite almost everywhere.
\end{definition}

One of the great advantages of the class of Gibbs-Markov maps is that it is stable by induction, 
and that ergodic Gibbs-Markov maps admits some iterate which is mixing on ergodic components, 
as the following results assert.

\begin{lemma}\cite[Proposition~4.6.2]{Aaronson:1997}

Let $(A, \pi, \lambda, \mu, T)$ be a Gibbs-Markov map, and $\varphi$ be a stopping time for 
the associated filtration $(\Fcal_n)_{n \geq 0}$.

\smallskip

Assume that $\varphi$ is almost surely positive and finite, and that $T_\varphi$ preserves $\mu$. 
Then $(A, \pi_\varphi, \lambda, \mu, T_\varphi)$ is a mesure-preserving Gibbs-Markov map.
\end{lemma}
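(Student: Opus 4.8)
The plan is to verify, one by one, the defining properties of a Gibbs--Markov map listed in Definition~\ref{def:ApplicationGibbsMarkov} for the data $(A,\pi_\varphi,\lambda,\mu,T_\varphi)$, the ambient Polish metric probability space $(A,d,\Bcal,\mu)$ being literally unchanged. First I would record the structural facts. Since $\varphi$ is a stopping time, $\{\varphi=n\}\in\Fcal_n=\sigma(\pi_n)$ for every $n\ge 1$, so $\{\varphi=n\}$ is, mod $\mu$, a union of $n$-cylinders; as $\varphi$ is a.e.\ finite, $\pi_\varphi$ is a genuine countable partition of $A$ into sets of positive measure, and on each element $\overline a=[a_0,\dots,a_{n-1}]\subset\{\varphi=n\}$ the map $T_\varphi$ coincides with $T^n$. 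The Markov property of $T$, iterated, gives $T^n\overline a=T(a_{n-1})$ mod $\mu$, a union of elements of $\pi$, hence of $\pi_\varphi$ (which refines $\pi$, since $\overline a\subset a_0$); this is the Markov property of $T_\varphi$ relative to $\pi_\varphi$. The big image property follows from the same identity: $\mu(T_\varphi\overline a)=\mu(T(a_{n-1}))\ge\inf_{a\in\pi}\mu(Ta)>0$. Ergodicity of $T_\varphi$ is Kakutani's theorem~\cite{Kakutani:1943}, while exactness is inherited from that of $T$ (the induced map of an exact transformation along an a.e.\ finite stopping time is exact).

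Next I would treat expansivity. Using that, without loss of generality, $d=\tau^{-s}$ for some $\tau\in(1,\lambda]$, for $x,y$ in a common element $\overline a\in\pi_\varphi$ with $\varphi\equiv n$ on $\overline a$ one has $s(x,y)\ge n$, hence $s(T^jx,T^jy)=s(x,y)-j$ for $0\le j\le n$; in particular
\begin{equation*}
d(T_\varphi x,T_\varphi y)=d(T^nx,T^ny)=\tau^{\,n}d(x,y)\ge\tau\,d(x,y),
\end{equation*}
so $T_\varphi$ expands $d$ by a factor at least $\tau>1$ (the precise constant is immaterial for the conclusion).

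The main work, and the step I expect to be the real obstacle, is the bounded distortion estimate for the inverse Jacobian $J_\varphi:=\dd\mu/\dd(\mu\circ T_\varphi)$. The chain rule for Jacobians gives, on $\overline a\subset\{\varphi=n\}$, the identity $J_\varphi=\prod_{k=0}^{n-1}J\circ T^k$ with $J:=\dd\mu/\dd(\mu\circ T)$; passing to logarithms and applying the bounded distortion of $T$ termwise,
\begin{equation*}
\bigl|\log J_\varphi(x)-\log J_\varphi(y)\bigr|
\le\sum_{k=0}^{n-1}\bigl|\log J(T^kx)-\log J(T^ky)\bigr|
\le C\sum_{k=0}^{n-1}d(T^{k+1}x,T^{k+1}y).
\end{equation*}
Since $T^kx,T^ky$ lie in the common cylinder $[a_k,\dots,a_{n-1}]$, the contraction estimate above yields $d(T^{k+1}x,T^{k+1}y)=\tau^{k+1}d(x,y)$, and summing the geometric series bounds the right-hand side by $\frac{C\tau}{\tau-1}\,\tau^{n}d(x,y)=\frac{C\tau}{\tau-1}\,d(T_\varphi x,T_\varphi y)$. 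As $d$ is bounded, $|e^{u}-1|\le C'|u|$ on the relevant range of $u$, so
\begin{equation*}
\bigl|J_\varphi(x)-J_\varphi(y)\bigr|
=J_\varphi(x)\bigl|1-e^{\log J_\varphi(y)-\log J_\varphi(x)}\bigr|
\le C''\,d(T_\varphi x,T_\varphi y)\,J_\varphi(x),
\end{equation*}
which is exactly the bounded distortion inequality of Definition~\ref{def:ApplicationGibbsMarkov} for $T_\varphi$. Combining the verified properties finishes the proof; the only genuinely delicate point is this control of the distortion accumulated along an excursion, which succeeds precisely because consecutive iterates of two points of a $\pi_\varphi$-cylinder still lie in a long common $\pi$-cylinder, so that their pairwise distances decay geometrically fast down the orbit.
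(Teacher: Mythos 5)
Your proof is correct and follows exactly the standard verification that the paper does not reproduce but outsources to Aaronson's Proposition~4.6.2: the cylinder structure of $\pi_\varphi$, the big-image identity $T_\varphi\overline a=T(a_{n-1})$, the geometric decay of $d(T^kx,T^ky)$ along a common cylinder, and the summed logarithmic distortion estimate are precisely the ingredients of that proof. The only point you assert rather than derive is that the generating/exactness property passes to $(T_\varphi,\pi_\varphi)$; this in fact follows at once from your own observation that every element of $\bigvee_{k=0}^{n-1}T_\varphi^{-k}\pi_\varphi$ is a $T$-cylinder of length at least $n$ (since $\varphi\geq 1$ a.e.), so the refinements under $T_\varphi$ exhaust those under $T$.
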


\begin{proposition}\cite[Proposition~1.3.14]{Gouezel:2008e}
\label{prop:IterationMelangeante}

Let $(A, \pi, \lambda, \mu, T)$ be an ergodic Gibbs-Markov map. Then there exists an integer $M \geq 1$ and a $\sigma (\pi)$-measurable 
partition $(A_k)_{k \in \Z / M \Z}$ of $A$ such that:
\begin{itemize}
\item $T (A_k) = A_{k+1}$ for all $k \in \Z /M\Z$;
\item each $(A_k, \pi_M, \lambda, \mu (\cdot|A_k), T^M)$ is a mixing Gibbs-Markov map.
\end{itemize}
\end{proposition}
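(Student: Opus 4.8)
The result is classical---it is the Perron--Frobenius period decomposition of a quasi-compact transfer operator---and here is the argument I would follow. Write $P$ for the transfer operator of $(A,\mu,T)$ acting on the Banach space $\Bcal$ of bounded functions that are Lipschitz on each element of $\pi$ for the metric $\tau^{-s}$. The first step is to recall that, thanks to the big-image property, expansivity and bounded distortion, $P$ satisfies a Doeblin--Fortet (Lasota--Yorke) inequality and is quasi-compact on $\Bcal$ (see \cite[Ch.~4]{Aaronson:1997}, \cite[Ch.~1]{Gouezel:2008e}, or apply \cite{HennionHerve:2001}): its spectral radius is $1$ since $P\mathbf 1=\mathbf 1$ and $\norm{P^n}{\Lbb^1\to\Lbb^1}=1$, its essential spectral radius is $<1$, so the peripheral spectrum is a finite set of eigenvalues of finite multiplicity and $P=\sum_{\theta}\theta\,\Pi_\theta+R$ with $\Pi_\theta$ finite-rank projections, $\Pi_\theta\Pi_{\theta'}=\delta_{\theta\theta'}\Pi_\theta$, $\Pi_\theta R=R\Pi_\theta=0$, and $R$ of spectral radius $<1$. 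Ergodicity of $T$ amounts to $1$ being a simple peripheral eigenvalue with eigenfunction $\mathbf 1$ and $\Pi_1=\Ebb_\mu[\,\cdot\,]\mathbf 1$.

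Next I would identify the peripheral spectrum. If $Ph=\theta h$ with $|\theta|=1$ and $h\in\Bcal\setminus\{0\}$, then $|h|=|Ph|\le P|h|$ while $\int P|h|\,\de\mu=\int|h|\,\de\mu$, so $P|h|=|h|$ and ergodicity forces $|h|$ constant, normalised to $1$; writing $Ph(x)$ as a convex combination, over the inverse branches at $x$, of values of modulus $1$, the equality $|Ph(x)|=1$ forces all those values to coincide, which translates into the pointwise identity $h\circ T=\bar\theta h$. Using this, if $h_i\circ T=\bar\theta_i h_i$ for $i=1,2$ then $(h_1\overline{h_2})\circ T=\bar\theta_1\theta_2\,h_1\overline{h_2}$ and $P(h_1\overline{h_2})=\theta_1\bar\theta_2\,h_1\overline{h_2}$, with $h_1\overline{h_2}\in\Bcal\setminus\{0\}$; taking $h_1=\mathbf 1$ shows the set $\Lambda$ of peripheral eigenvalues is closed under conjugation, and the general case that it is closed under products. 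Hence $\Lambda$ is a finite subgroup of the circle, $\Lambda=\mathbf U_M:=\{e^{2i\pi j/M}:j\in\Z/M\Z\}$ for some $M\ge 1$, and the same computation (with $h'\overline h$ being $T$-invariant, hence constant) shows every peripheral eigenvalue is simple.

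Now fix $h\in\Bcal$ with $Ph=e^{2i\pi/M}h$ and $|h|\equiv 1$, so $h\circ T=e^{-2i\pi/M}h$. Then $h^M$ is $T$-invariant, hence constant by ergodicity, so after rescaling $h$ takes values in $\mathbf U_M$; the relation $h\circ T=e^{-2i\pi/M}h$ makes the essential range of $h$ invariant under $\mathbf U_M$, hence equal to $\mathbf U_M$, with each level set of measure $1/M$. Since $h$ takes finitely many values, $h\in\Bcal$, and two distinct $M$th roots of unity differ by at least $2\sin(\pi/M)$, the uniform bound $|h(x)-h(y)|\le\Lip(h)\,\tau^{-n}$ on cylinders of length $n$ forces $h$ to be constant on cylinders of some fixed length $n_0$; replacing $\pi$ by the (still Gibbs--Markov) partition $\pi_{n_0}$ we may assume $h$ is $\sigma(\pi)$-measurable. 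Set $A_k:=\{h=e^{-2i\pi k/M}\}$. Then $h\circ T=e^{-2i\pi/M}h$ gives $T(A_k)\subseteq A_{k+1}$, and equality of measures together with $T$-invariance of $\mu$ upgrades this to $T^{-1}(A_{k+1})=A_k$ mod $\mu$, that is $T(A_k)=A_{k+1}$; in particular $T^M(A_k)=A_k$ and $T^M|_{A_k}$ preserves $\mu(\cdot\mid A_k)$. That each $(A_k,\pi_M,\lambda,\mu(\cdot\mid A_k),T^M)$ is again Gibbs--Markov follows from the stability of the class under iteration and under restriction to a $\sigma(\pi)$-measurable union of periodic pieces, the big-image, expansivity and distortion constants being inherited.

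Finally, mixing of each piece is read off the spectral picture: from $P=\sum_{j\in\Z/M\Z}e^{2i\pi j/M}\Pi_j+R$ one gets $P^M=\sum_j\Pi_j+R^M$, so $P^M$ has $1$ as its only peripheral eigenvalue, with eigenprojection $\sum_j\Pi_j$ of rank $M$; restricting to the $P^M$-invariant subspace of functions supported on $A_k$ (invariant because $T^M(A_k)=A_k$) the rank drops to $1$, spanned by $\mathbf 1_{A_k}$, so the transfer operator of $(A_k,\mu(\cdot\mid A_k),T^M)$ is quasi-compact with $1$ as a simple leading eigenvalue and a spectral gap; this yields exponential decay of correlations, hence mixing. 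The delicate point is the passage from ``constant on cylinders of length $n_0$'' to genuine $\sigma(\pi)$-measurability of the eigenfunction, i.e.\ making the period visible in $\pi$ itself; this is harmless here since passing to $\pi_{n_0}$ preserves the Gibbs--Markov structure, but it is the one place where the statement as phrased needs some care, the other technical input being the quasi-compactness of $P$ on $\Bcal$ for an infinite partition, which rests on the big-image property.
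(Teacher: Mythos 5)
The paper does not actually prove this proposition: it is quoted from Gou\"ezel's thesis \cite[Proposition~1.3.14]{Gouezel:2008e} without proof, so there is no in-paper argument to compare against. Your spectral route is the standard one and is essentially sound: quasi-compactness of $P$ on $\Lip^\infty$ (via big images and Doeblin--Fortet), the equality case in $|Ph|\le P|h|$ giving $h\circ T=\bar\theta h$ for peripheral eigenfunctions, closure of the peripheral spectrum under products and conjugation making it a finite cyclic group of simple eigenvalues, the level sets of a generator eigenfunction as the cyclic classes, and mixing of $T^M$ on each class read off from $P^M=\sum_j\Pi_j+R^M$. All of these steps check out.

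The one genuine gap is the one you flag yourself: $\sigma(\pi)$-measurability of the $A_k$ with respect to the \emph{original} partition. Replacing $\pi$ by $\pi_{n_0}$ proves a different statement, and the original form is what this paper actually uses downstream (in Proposition~\ref{prop:GMHHH}, the fact that each $A_j$ is a union of elements of $\pi$ is invoked to see that multiplication by $\mathbf{1}_{A_j}$ has norm at most $1$ on $\Lip^\infty$). Fortunately the gap closes with a short downward induction from the constancy on cylinders of length $n_0$ that you already established. Suppose $h$ is (a.e.) constant on cylinders of length $n\ge 2$, and let $y,y'$ lie in the same cylinder of length $n-1$; call $b\in\pi$ the element containing them. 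Since $\mu(T^{-1}b)=\mu(b)>0$, some $a\in\pi$ satisfies $b\subseteq T(a)$ by the Markov property, and the inverse branch of $T|_a$ is defined on $b$. The points $x:=(T|_a)^{-1}(y)$ and $x':=(T|_a)^{-1}(y')$ satisfy $s(x,x')=1+s(y,y')\ge n$, so $h(x)=h(x')$, whence $h(y)=h(Tx)=e^{-2i\pi/M}h(x)=e^{-2i\pi/M}h(x')=h(y')$. Descending from $n_0$ to $1$ shows $h$ is constant on each element of $\pi$, so the $A_k$ are genuinely $\sigma(\pi)$-measurable. (The alternative, and presumably Gou\"ezel's, route is through the cyclic decomposition of the underlying irreducible countable-state Markov chain together with exactness of aperiodic irreducible Gibbs--Markov maps; there the $\sigma(\pi)$-measurability holds by construction, since the cyclic classes are defined directly on the states.)
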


\subsubsection{Good Banach spaces} 

Let $P: \Lbb^1(A, \mu)\rightarrow\Lbb^1(A, \mu)$
be the transfer operator associated with $T$. For any bounded measurable 
function $h : A \to \R$, let:
\begin{equation*}
P_h : \ \left\{
\begin{array}{lll}
\Lbb^1(A, \mu) & \to & \Lbb^1(A, \mu) \\
f & \mapsto & P (hf)
\end{array}
\right. .
\end{equation*}

For any $a \in \pi$ and any measurable function $f : A \to \R$, 
we define the Lipschitz exponent of $f$ on $a$ by:
\begin{equation*}
|f|_{\Lip(a,d)} 
:= \Einf \{C \geq 0: |f(x)-f(y)| \leq C d(x,y) \ \forall  (x,y) \in a^2 \}.
\end{equation*}

\begin{definition}
\label{def:LocalementGlobalementLipschitz}

Let us define the following two norms:
\begin{equation*}
\begin{array}{lll}
\norm{f}{\Lip^1 (A, \pi, d, \mu)} & := & \norm{f}{\Lbb^1 (A, \mu)} + \sum_{a \in \pi} \mu (a) |f|_{\Lip(a,d)} ;\\
\norm{f}{\Lip^\infty (A, \pi, d, \mu)} & := & \norm{f}{\Lbb^\infty (A, \mu)} + \sup_{a \in \pi} |f|_{\Lip(a,d)}.
\end{array}
\end{equation*}
The spaces $\Lip^1 (A, \pi, d, \mu)$ and $\Lip^\infty (A, \pi, d, \mu)$ are the spaces of measurable functions 
whose respective norms are finite. The space $\Lip^\infty$ is the space of all \textit{globally Lipschitz functions}, 
while $\Lip^1$ is the space of all \textit{locally Lipschitz functions}. 

\smallskip

A family of observables is uniformly globally (respectively, locally) Lipschitz if the $\Lip^\infty$ norm 
(respectively, the $\Lip^1$ norm) is bounded on this family.

\smallskip

Let $\theta \in (0,1]$. If we replace $d$ by $d^\theta$, we get spaces of globally or locally 
$\theta$-H\"older observables. Any result about Lipschitz observables can be generalized freely to $\theta$-H\"older 
observables.

\end{definition}

The transfer operator $P$ acts quasi-compactly on $\Lip^\infty$. If the transformation is mixing, then 
the transfer operator has a spectral gap, which implies an exponential decay of correlation for Lipschitz 
(and, by extension, H\"older) observables~\cite[Corollaire~1.1.21]{Gouezel:2008e}:

\begin{proposition}[Exponential decay of correlations]
\label{prop:DecroissanceExponentielle}

Let $(A, \pi, d, \mu, T)$ be a mixing Gibbs-Markov map. Then there exist constants $C, \kappa > 0$ such that, 
for all $n \geq 0$, for all $g \in \Lip^\infty (A)$,
\begin{equation*}
\norm{P^n g - \int_A g \dd \mu \cdot \mathbf{1}}{\Lip^\infty (A)} 
\leq C e^{- \kappa n}\norm{g}{\Lip^\infty (A)}.
\end{equation*}
\end{proposition}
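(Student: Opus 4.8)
The plan is to deduce the statement from the quasi-compactness of $P$ on $\Lip^\infty(A)$, that is, from a spectral gap. I would show that on $\Lip^\infty(A)$ the operator decomposes as $P = \Pi + R$, where $\Pi g := \int_A g \dd \mu \cdot \mathbf{1}$ is a rank-one projection, $\Pi R = R \Pi = 0$, and the spectral radius $\rho_1$ of $R$ on $\Lip^\infty(A)$ satisfies $\rho_1 < 1$. Since $P\Pi = \Pi$ and $R^n \Pi = 0$ for $n \geq 1$, one gets $P^n g = \Pi g + R^n g = \int_A g \dd\mu \cdot \mathbf{1} + R^n g$, hence $\norm{P^n g - \int_A g \dd\mu \cdot \mathbf{1}}{\Lip^\infty(A)} = \norm{R^n g}{\Lip^\infty(A)} \leq C e^{-\kappa n} \norm{g}{\Lip^\infty(A)}$ for any $\kappa \in (0, -\log\rho_1)$ and a suitable $C = C(\kappa)$ (the case $n=0$ being trivial).

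The first ingredient is a Doeblin--Fortet (Lasota--Yorke) inequality: there is a constant $C_0 > 0$ such that, for all $g \in \Lip^\infty(A)$ and all $n \geq 1$,
\begin{equation*}
\sup_{a \in \pi} |P^n g|_{\Lip(a,d)} \leq C_0 \lambda^{-n} \sup_{a \in \pi} |g|_{\Lip(a,d)} + C_0 \norm{g}{\Lbb^\infty(A, \mu)},
\end{equation*}
where $\lambda > 1$ is the expansion constant. The bound $\norm{P^n g}{\Lbb^\infty(A,\mu)} \leq \norm{g}{\Lbb^\infty(A,\mu)}$ is immediate since $P$ is positive and $P\mathbf{1} = \mathbf{1}$. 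For the Lipschitz part, one writes $Pg$ through the inverse branches of $T$ using the Jacobian $e^{-\phi} := \de\mu / \de(\mu\circ T)$: the expansivity axiom $d(Tx,Ty) \geq \lambda d(x,y)$ supplies the contraction factor $\lambda^{-1}$ on each atom, the bounded distortion axiom controls the oscillation of the weights $e^{-\phi}$ there, and the big-image property keeps those weights comparable to the masses $\mu(a)$ so that the series defining $Pg$ are uniformly summable; iterating over $n$ steps yields the displayed inequality, and the case $n=1$ shows in passing that $P$ is bounded on $\Lip^\infty(A)$.

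Next, with the symbolic metric $d = \tau^{-s}$ a family bounded in $\Lip^\infty(A)$ is equicontinuous on cylinders, so the unit ball of $\Lip^\infty(A)$ is relatively compact in $(\Lbb^\infty(A,\mu), \norm{\cdot}{\infty})$; together with the Lasota--Yorke inequality, the Hennion--Herv\'e theorem~\cite{HennionHerve:2001} shows that $P$ is quasi-compact on $\Lip^\infty(A)$, with essential spectral radius $\leq \lambda^{-1} < 1$ and spectral radius $1$, the latter attained at the eigenvalue of $P\mathbf{1} = \mathbf{1}$. It remains to check that the peripheral spectrum is exactly $\{1\}$ and that $1$ is simple. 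Simplicity follows from ergodicity of $T$: an $\Lbb^1$ fixed point of $P$ has $T$-invariant real and imaginary parts, hence is constant, and there is no Jordan block since $P$ is power-bounded on $\Lbb^\infty$. If $Pf = e^{i\vartheta} f$ with $f \in \Lip^\infty(A)$, then $|f| \leq P|f|$ together with $\int_A |f| \dd\mu = \int_A P|f| \dd\mu$ forces $P|f| = |f|$, so $|f|$ is constant; normalizing $|f| \equiv 1$ gives $\int_A f \cdot \overline{f\circ T^n} \dd\mu = \int_A P^n f \cdot \overline{f} \dd\mu = e^{in\vartheta}$, and mixing forces this sequence to converge, whence $\vartheta = 0$. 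Therefore $P = \Pi + R$ with $\Pi$ the rank-one projection onto $\C\mathbf{1}$; since $\int_A P^n g \dd\mu = \int_A g \dd\mu$ for all $n$ while $P^n g \to \Pi g$ for $g \in \ker\Pi$, one identifies $\ker\Pi = \{ g \in \Lip^\infty(A) : \int_A g \dd\mu = 0\}$, hence $\Pi g = \int_A g \dd\mu \cdot \mathbf{1}$, which closes the argument.

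The step I expect to be the main obstacle is the first ingredient, together with the accompanying compactness claim: deriving the Lasota--Yorke inequality honestly from the three Gibbs--Markov axioms and handling the countability of the partition $\pi$ via the big-image property so that the weights $e^{-\phi}$ stay uniformly summable. Everything downstream -- quasi-compactness via Hennion--Herv\'e, the peripheral-spectrum analysis, and the identification of $\Pi$ -- is routine; alternatively one may simply invoke the quasi-compactness of $P$ on $\Lip^\infty(A)$ from~\cite[Corollaire~1.1.21]{Gouezel:2008e} or~\cite[Chapter~4]{Aaronson:1997} and carry out only the last step.
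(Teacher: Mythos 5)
Your overall strategy --- a Doeblin--Fortet inequality, quasi-compactness via Hennion--Herv\'e, and reduction of the peripheral spectrum to the simple eigenvalue $1$ by ergodicity and mixing --- is the standard one, and it is essentially the proof of the result that the paper itself does not prove but merely cites as \cite[Corollaire~1.1.21]{Gouezel:2008e}. The spectral decomposition $P=\Pi+R$, the identification of $\Pi$, and the peripheral-spectrum analysis are all fine.

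There is, however, one step that fails as written: the claim that the unit ball of $\Lip^\infty (A)$ is relatively compact in $(\Lbb^\infty (A,\mu), \norm{\cdot}{\Lbb^\infty (A, \mu)})$. For a Gibbs--Markov map the partition $\pi$ is in general countably infinite (this is the entire point of the class in this paper, e.g.\ for induced maps and Young towers), and then the claim is false: the indicators $g_n=\mathbf{1}_{a_n}$ of distinct atoms are constant on every atom, so $\norm{g_n}{\Lip^\infty (A)}=1$, yet $\norm{g_n-g_m}{\Lbb^\infty (A,\mu)}=1$ for $n\neq m$, and no subsequence converges uniformly. Equicontinuity on each atom does not give precompactness in $\Lbb^\infty$ because there are infinitely many atoms (and, for the symbolic metric $\tau^{-s}$, a single atom need not even be totally bounded). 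Hence Hennion--Herv\'e cannot be applied with $\Lbb^\infty$ as the auxiliary space. The standard repair is to use $\Lbb^1(A,\mu)$ as the weak norm: the unit ball of $\Lip^\infty (A)$ \emph{is} conditionally compact in $\Lbb^1(A,\mu)$ (approximate $g$ by its conditional expectation on cylinders of length $N$, with $\Lbb^1$-error $O(\tau^{-N})$, then extract diagonally over the countably many cylinders using $\sum_{b}\mu(b)<\infty$), and the Lasota--Yorke inequality must correspondingly be strengthened so that its weak term involves $\norm{g}{\Lbb^1(A,\mu)}$ (or the $\Lip^1$ norm) rather than $\norm{g}{\Lbb^\infty(A,\mu)}$. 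This stronger inequality is exactly where the specifically Gibbs--Markov smoothing property enters, namely that $P$ maps $\Lip^1$ continuously into $\Lip^\infty$ (\cite[Lemme~1.1.13]{Gouezel:2008e}, quoted elsewhere in the paper); it follows from the big-image and bounded-distortion axioms. With that modification the remainder of your argument goes through unchanged.
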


In addition, $P$ maps continuously $\Lip^1$ into $\Lip^\infty$ \cite[Lemme~1.1.13]{Gouezel:2008e}. 
This feature (that $P$ maps a large space of integrable functions into a space of bounded functions) 
is specific to Gibbs-Markov maps.

\subsubsection{Extensions and induction} 

Let $(A, \pi, \lambda, \mu, T)$ be a measure-preserving Gibbs-Markov map. Let $G$ be a discrete countable Abelian 
group with counting measure $\nu$. Let $F : \ A \to G$ be $\sigma (\pi)$-measurable. If 
$(A \times G, \mu_{A \times G}, \widetilde{T})$ is conservative and ergodic, then 
for any non-empty subset $S \subset G$ and any $p \in G$, the function:
\begin{equation*}
\varphi_{p,S} : \ \left\{
\begin{array}{lll}
A \times S & \to & \N_+ \\
x & \mapsto & \inf \{n \geq 1 : \ \widetilde{T}^n (x,p) \in A \times S\}
\end{array}
\right.
\end{equation*}
is a stopping time which is almost surely positive and finite.

\smallskip

Let $S \subset G$ be non-empty and finite. Set:
\begin{itemize}
\item a partition $\pi_S := \{a \times \{p\}: \ p \in S, \ a \in \pi_{\varphi_{p,S}} \}$;
\item a measure $\mu_S := \nu(S)^{-1} \mu \otimes \nu_{|S}$;
\item a transformation:
\begin{equation*}
T_S : \ \left\{
\begin{array}{lll}
A \times S & \to & A \times S \\
(x,p) & \mapsto & \widetilde{T}^{\varphi_{p,S} (x)} (x)
\end{array}
\right. .
\end{equation*}
\end{itemize}

\begin{proposition}[Inductions of extensions of Gibbs-Markov maps are Gibbs-Markov]
\label{prop:ExtensionsGMGM}

Let $(\widetilde{A}, \tilde{\mu}, \widetilde{T})$ be an ergodic and conservative Markov extension 
of a Gibbs-markov map $(A, \pi, \lambda, \mu, T)$.

\smallskip

Then, for any non-empty finite subset $S \subset G$, the dynamical system $(A \times S, \pi_S, \lambda, \mu_S, T_S)$ 
is a measure-preserving ergodic Gibbs-Markov map.
\end{proposition}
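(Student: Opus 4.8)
The plan is to verify, one at a time, the defining axioms of a measure-preserving ergodic Gibbs-Markov map for $(A\times S,\pi_S,\lambda,\mu_S,T_S)$, in each case reducing the statement to the corresponding property of the iterates of $T$. First I would record the structure of $\pi_S$ and $T_S$. Since $F$ is $\sigma(\pi)$-measurable, $S_n^T F$ is $\Fcal_n$-measurable, so $\{\,p+S_n^T F\in S\,\}$ is an $\Fcal_n$-measurable condition; hence each $\varphi_{p,S}$ is, as already noted in Subsection~\ref{sec:AnnexeGibbsMarkov}, a stopping time which is a.s.\ positive and finite, and every element of $\pi_S$ has the form $a\times\{p\}$ with $a=[a_0,\ldots,a_{n-1}]$ a cylinder on which $\varphi_{p,S}\equiv n$. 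On such an element $T_S$ acts by $(x,p)\mapsto(T^nx,p')$ with $p':=p+S_n^T F|_a\in S$ constant, and by the Markov and big-image properties of $T$ the map $T^n$ sends $a$ bijectively onto $T a_{n-1}$, which is a non-trivial union of elements of $\pi$.

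Next I would handle the measured and combinatorial properties. Because $(\widetilde A,\tilde\mu,\widetilde T)$ is conservative and ergodic and $\tilde\mu(A\times S)=\nu(S)<\infty$, Kakutani's theorem~\cite{Kakutani:1943} applied to the first-return map of $\widetilde T$ to $A\times S$ (which is $T_S$) shows that $T_S$ preserves $\tilde\mu_{|A\times S}$, hence its normalization $\mu_S$, and that $(A\times S,\mu_S,T_S)$ is ergodic. For the Markov property, since $\pi_{\varphi_{p',S}}$ refines $\pi$ (each of its elements is a cylinder contained in one element of $\pi$), the union of $\pi$-elements $T^n a$ is also a union of elements of $\pi_{\varphi_{p',S}}$, so $T_S(a\times\{p\})=(T^na)\times\{p'\}$ is a union of elements of $\pi_S$; and $T_S$ is injective on $a\times\{p\}$ because $T^n$ is injective on $a$. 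The big-image property follows from $\mu_S(T_S(a\times\{p\}))=\nu(S)^{-1}\mu(Ta_{n-1})\geq\nu(S)^{-1}\inf_{b\in\pi}\mu(Tb)>0$, uniformly over $\pi_S$.

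It remains to put a metric on $A\times S$ and to check expansivity and bounded distortion. By the convention fixed earlier I may assume $d=\tau^{-s}$ on $A$ for some $\tau\in(1,\lambda]$; on $A\times S$ I take $d_S$ to equal $\tau^{-s_S}$ on pairs lying over the same point of $S$ and $\Diam(A)$ otherwise, where $s_S$ is the separation time of $(A\times S,\pi_S,T_S)$; this is a bounded metric making $A\times S$ a bounded Polish space. Expansivity with constant $\tau$ is immediate, since $s_S$ decreases by exactly $1$ under $T_S$ on pairs lying in a common element of $\pi_S$, so there $d_S(T_Sz,T_Sz')=\tau\,d_S(z,z')$. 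For bounded distortion, fix $a\times\{p\}\in\pi_S$ with $|a|=n$. As $\mu_S$ restricted to a fiber is proportional to $\mu$ and $T_S$ acts fiberwise as $T^n$ up to the finite relabeling of the $S$-coordinate, $\frac{\dd\mu_S}{\dd\mu_S\circ T_S}(x,p)=\frac{\dd\mu}{\dd\mu\circ T^n}(x)$. The single-step bounded distortion of $T$, together with the lower bound on $\frac{\dd\mu}{\dd\mu\circ T}$ coming from the big-image property, yields the classical uniform distortion bound for iterates: there is a constant $C'>0$ depending only on the Gibbs-Markov data of $(A,\pi,d,\mu,T)$ with
\begin{equation*}
\left|\frac{\dd\mu}{\dd\mu\circ T^n}(x)-\frac{\dd\mu}{\dd\mu\circ T^n}(y)\right|\leq C'\,d(T^nx,T^ny)\,\frac{\dd\mu}{\dd\mu\circ T^n}(x)
\end{equation*}
for all $n$ and all $x,y$ in a common cylinder of length $n$; this is obtained by writing $\log\frac{\dd\mu}{\dd\mu\circ T^n}=\sum_{k=0}^{n-1}\log\frac{\dd\mu}{\dd\mu\circ T}\circ T^k$, applying the one-step estimate term by term, and summing the geometric series using $d(T^{k+1}x,T^{k+1}y)=\tau^{k+1-n}d(T^nx,T^ny)$. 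Finally, since every return time is at least $1$ and $\pi_S$ refines $\pi$ fiberwise, agreement of two points under iterates of $T_S$ forces agreement under the corresponding iterates of $T$, whence $s_S(T_S(x,p),T_S(y,p))\leq s(T^nx,T^ny)$ and so $d(T^nx,T^ny)\leq d_S(T_S(x,p),T_S(y,p))$; substituting into the displayed inequality gives the bounded-distortion estimate for $T_S$ with the same constant $C'$. When $|S|=1$ this is precisely~\cite[Proposition~4.6.2]{Aaronson:1997} applied to the stopping time $\varphi_{p,S}$.

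The step I expect to be the main obstacle is the bounded-distortion verification: the return time $\varphi_{p,S}$ is unbounded, so one cannot argue directly, and the estimate must be extracted from the uniform-in-$n$ distortion control for the iterates $T^n$ together with the metric comparison $d(T^n\cdot,T^n\cdot)\leq d_S(T_S\cdot,T_S\cdot)$; keeping track of the finitely many fibers of $A\times S$ throughout adds only bookkeeping. The remaining points — the stopping-time and Markov combinatorics, Kac's formula for measure preservation and ergodicity, and expansivity — are routine.
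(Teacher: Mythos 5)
Your proposal is correct and follows exactly the route the paper intends: the paper's own proof is a one-line deferral to~\cite[Proposition~4.6.2]{Aaronson:1997} ("up to straightforward modifications"), and what you have written out — the stopping-time/cylinder combinatorics, Kac--Kakutani for measure preservation and ergodicity, the uniform-in-$n$ distortion bound for $T^n$ obtained by summing the geometric series $d(T^kx,T^ky)=\tau^{k-n}d(T^nx,T^ny)$, and the comparison $s_S(T_S\cdot,T_S\cdot)\leq s(T^n\cdot,T^n\cdot)$ — is precisely those modifications, with the fibers over $S$ adding only bookkeeping. No gaps.
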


\begin{proof}

Up to straightforward modifications, the proof is the same as in \cite[Proposition~4.6.2]{Aaronson:1997}.
\end{proof}

We can then define a transfer operator $P_S$ associated to any non-trivial and finite $S \subset G$.

\smallskip

For the remainder of the section, we assume that $(A, \pi, \lambda, \mu, T)$ is a measure-preserving and ergodic Gibbs-Markov map, 
$G$ a discrete countable Abelian group with counting measure $\nu$, and $F : \ A \to G$ a $\sigma (\pi)$-measurable 
function. We assume that $(\widetilde{A}, \tilde{\mu}, \widetilde{T})$ is conservative and ergodic.

\smallskip

In our proof, we will sometimes have to control the distortion of $T_\varphi : x \mapsto T^{\varphi (x)} (x)$ 
for various stopping times $\varphi$. This is done with the next lemma, which generalizes 
\cite[Lemme~1.1.13]{Gouezel:2008e}. We write $P^{(\varphi)}$ for the transfer operator associated with $T_\varphi$.

\begin{lemma}\label{lem:EncoreUneBorneUniforme}

Let $(A, \pi, \lambda, \mu, T)$ be a measure-preserving Gibbs-Markov map. Then there exists a constant 
$K > 0$ with the following property. Let $\varphi$ be a stopping time which is finite with positive probability 
as well as almost surely positive. Let $A \subset \{\varphi < + \infty\}$ be $\sigma(\pi_\varphi)$-measurable 
and non-trivial. Then:
\begin{equation*}
\norm{\frac{P^{(\varphi)} \mathbf{1}_A}{\mu (A)}}{\Lip^\infty (A, \pi, \lambda)} 
\leq K.
\end{equation*}
\end{lemma}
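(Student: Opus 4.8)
The plan is to reduce the statement to the known bounded-distortion estimate \cite[Lemme~1.1.13]{Gouezel:2008e} (which is exactly the special case $\varphi \equiv 1$, $A = A$), by unwinding the definition of $P^{(\varphi)}\mathbf{1}_A$ as a sum over cylinders compatible with the stopping time. First I would recall that, by the Markov property and the definition of a stopping time, the set $A$, being $\sigma(\pi_\varphi)$-measurable, is a disjoint union (mod $\mu$) of cylinders $\overline{a} = [a_0,\dots,a_{n-1}] \in \pi_n$ with $\overline{a} \subset \{\varphi = n\}$; on each such cylinder $T_\varphi$ agrees with $T^n$, and the associated inverse branch is the restriction of the inverse branch of $T^n$. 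Hence
\begin{equation*}
P^{(\varphi)} \mathbf{1}_A
= \sum_{\overline{a}} \frac{\dd \mu}{\dd \mu \circ T^{n(\overline a)}}\Big|_{\overline a} \circ (T^{n(\overline a)}|_{\overline a})^{-1} \cdot \mathbf{1}_{T^{n(\overline a)}(\overline a)},
\end{equation*}
where the sum is over the cylinders making up $A$ and $n(\overline a)$ is the length of $\overline a$.

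The next step is to estimate the $\Lip^\infty$ norm of each summand and of the sum. For a single cylinder $\overline a$ of length $n$, the big image property ensures $T^n(\overline a) = T(a_{n-1})$ is one of finitely many sets of measure bounded below, and bounded distortion (iterated $n$ times, using the expansivity $d(Tx,Ty)\ge \lambda d(x,y)$ to control the geometric-series error term, exactly as in the proof of \cite[Lemme~1.1.13]{Gouezel:2008e}) gives
\begin{equation*}
\sup_{\overline a} \frac{\dd \mu}{\dd \mu \circ T^{n}}\Big|_{\overline a} \asymp \mu(\overline a), \qquad \Big| \tfrac{\dd \mu}{\dd \mu \circ T^{n}} \Big|_{\overline a} \Big|_{\Lip} \le C' \mu(\overline a),
\end{equation*}
with constants depending only on the Gibbs-Markov data, not on $n$ or on $\overline a$. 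Summing over the cylinders constituting $A$, and using that the images $T^{n(\overline a)}(\overline a)$ range over a finite collection of base sets (so that the Lipschitz seminorm of the sum is controlled by the supremum over that finite collection of the sum of the contributions landing there, again as in the cited lemma), one gets $\norm{P^{(\varphi)}\mathbf{1}_A}{\Lip^\infty} \le K \sum_{\overline a}\mu(\overline a) = K\,\mu(A)$ with $K$ independent of $\varphi$ and $A$. Dividing by $\mu(A)$ gives the claim.

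The main obstacle is bookkeeping rather than anything deep: one must check that the distortion constant coming out of the $n$-fold iteration is genuinely uniform in $n$ — this is where the contracting property of $d$ along inverse branches enters, converting what would be a sum of $n$ distortion errors into a convergent geometric series with ratio $\lambda^{-1}<1$ — and that the big-image property survives iteration (it does, since $T^n(\overline a) = T(a_{n-1})$ depends only on the last symbol). Both points are precisely the content of the proof of \cite[Lemme~1.1.13]{Gouezel:2008e}; the only new input is the observation that replacing ``length-$1$ cylinders'' by ``$\pi_\varphi$-cylinders of arbitrary length'' changes nothing in the argument, because a $\pi_\varphi$-cylinder is itself a $\pi_n$-cylinder for the appropriate $n$ and $T_\varphi$ restricted to it is an iterate of $T$. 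So the proof is: decompose $A$ into $\pi_\varphi$-cylinders, apply the one-step estimate branch by branch, and re-sum exactly as in \cite{Gouezel:2008e}.
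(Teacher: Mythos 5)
Your proof is correct and follows essentially the same route as the paper's: decompose $A$ into the $\pi_n$-cylinders prescribed by the stopping time, apply the uniform-in-$n$ single-cylinder estimate $\norm{P^n \mathbf{1}_{\overline{a}}}{\Lip^\infty} \leq K \mu(\overline{a})$ of \cite[Lemme~1.1.13]{Gouezel:2008e} (which you re-derive via iterated distortion and the big image property, while the paper simply cites it), and re-sum using subadditivity of the $\Lip^\infty$ norm. One small inaccuracy: the partition $\pi$ may be countably infinite, so the images $T^{n(\overline{a})}(\overline{a}) = T(a_{n-1})$ need not range over a \emph{finite} collection of base sets; but only the uniform lower bound $\inf_{a \in \pi} \mu(Ta) > 0$ and the triangle inequality are actually needed, so nothing breaks.
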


\begin{proof}

Let $n \geq 1$, and let $\overline{a}$ be a cylinder of length $n$ for the Gibbs-Markov map $(A, \pi, \lambda, \mu, T)$. 
By a strengthening of the Distortion lemma, e.g.\ \cite[Lemme~1.1.13]{Gouezel:2008e}, there is a constant $K$ 
independent of $n$ and $\overline{a}$ such that:
\begin{equation*}
\norm{P^n (\mathbf{1}_{\overline{a}})}{\Lip^\infty (A, \pi, \lambda)} 
\leq K \mu (\overline{a}).
\end{equation*}
By additivity, this inequality remains true whenever $\overline{a}$ is $\sigma (\pi_n)$-measurable. 
For all $n \geq 1$, let $A_n := A \cap \{\varphi = n\}$. Then $(A_n)_{n \geq 1}$ is a partition 
of $A$. In addition, each $A_n$ is $\sigma (\pi_n)$-measurable, and 
$P^{(\varphi)} = P^n$ for functions supported by $A_n$, so that:
\begin{equation*}
\frac{P^{(\varphi)} \mathbf{1}_A}{\mu (A)}
= \mu (A_\varphi)^{-1} \sum_{n \geq 1} P^n \mathbf{1}_{A_n}.
\end{equation*}
By additivity again, the $\Lip^\infty$ norm of the right-hand side is at most:
\begin{equation*}
K \mu (A_\varphi)^{-1} \sum_{n \geq 1} \mu (A_n)
= K \mu (A)^{-1} \mu(A) 
= K. \qedhere
\end{equation*}
\end{proof}

\subsubsection{Fulfillment of the spectral hypotheses} 

The spectral hypotheses~\ref{hyp:HHH} are used in our main theorems, and Gibbs-Markov maps 
appear in a variety of applications. We provide here a simple sufficient criterion to ensure 
that the spectral hypotheses are satisfied for Gibbs-markov maps. The hypothesis of aperiodicity 
will be central.

\begin{definition}[Aperiodic extensions]\label{def:Aperiodicite}

Let $(A, \mu, T)$ be a dynamical system preserving a probability measure. Let $d \geq 0$ and $F : A \to \Z^d$ 
a measurable function. The corresponding extension $(\widetilde{A}, \tilde{\mu}, \widetilde{T})$ 
is said to be \textit{aperiodic} if the coboundary equation:
\begin{equation*}
F = k+u \circ T-u \quad \text{mod} \ \Lambda,
\end{equation*}
where $\Lambda$ is a proper sublattice of $\Z^d$, $k \in (\Z^d)_{/\Lambda}$, and $u : A \to (\Z^d)_{/\Lambda}$ 
is measurable.
\end{definition}

We prove the following:

\begin{proposition}\label{prop:GMHHH}

Let $(\widetilde{A}, \tilde{\mu}, \widetilde{T})$ be an aperiodic Markov $\Z^d$-extension 
of a Gibbs-Markov map $(A,\pi,\lambda,\mu,T)$ with step function $F$. Assume that the extension is 
ergodic, conservative, and either of the following hypotheses:
\begin{itemize}
\item $d=1$ and $F$ is in the domain of attraction of an $\alpha$-stable distribution, with $\alpha \in (1,2]$.
\item $d=1$ and $\int_A e^{iuF}\dd \mu = e^{-\vartheta|u|[1-i\zeta\sgn(u)]L(|u|^{-1})} + o\left(|u| L(|u|^{-1})\right)$ at $0$, 
for some real numbers $\vartheta>0$ and $\zeta\in\R$ and some function $L$ with slow variation. 
\item $d=2$ and $F$ is in the domain of attraction of a non-degenerate Gaussian random variable.
\end{itemize}

\smallskip

Then the Hypotheses~\ref{hyp:HHH} are satisfied with $\Bcal := \Lip^\infty$.
\end{proposition}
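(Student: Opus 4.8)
The plan is to verify the clauses of Hypothesis~\ref{hyp:HHH} one by one, taking $\Bcal := \Lip^\infty = \Lip^\infty(A,\pi,\lambda)$, and reducing everything to the Nagaev--Guivarc'h perturbation method together with the local limit theorem machinery of Aaronson and Denker~\cite{AaronsonDenker:2001}. The soft parts come first. Recurrence of $(S_n)_n$ is conservativity of $\widetilde{T}$, which is assumed. The integer $M \geq 1$ and the $\sigma(\pi)$-measurable partition $(A_j)_{j \in \Z/M\Z}$ with $T(A_j) = A_{j+1}$ and $(T^M)_{|A_j}$ mixing are provided by Proposition~\ref{prop:IterationMelangeante}. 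For the Banach space itself, $\Lip^\infty \hookrightarrow \Lbb^\infty(A,\mu) \hookrightarrow \Lbb^1(A,\mu)$ since $\mu$ is a probability measure; $\mathbf{1} \in \Lip^\infty$; and because each $A_j$ is a union of elements of $\pi$, multiplication by $\mathbf{1}_{A_j}$ preserves local Lipschitz regularity and hence is a bounded endomorphism of $\Lip^\infty$. That $P$ acts continuously on $\Lip^\infty$ is part of the Gibbs--Markov structure (Proposition~\ref{prop:DecroissanceExponentielle} and the discussion around it).

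Next, the spectral decomposition~\eqref{eq:decomp}. By Proposition~\ref{prop:CSHHH}, it suffices to exhibit, on the mixing component $(A_0,\mu(\cdot|A_0),(T^M)_{|A_0})$ and a neighbourhood of $0$: a decomposition $P_u = \sum_{j \in \Z/M\Z} \lambda_{j,u}\Pi_{j,u} + R_u$ with $\lambda_{j,\cdot},\Pi_{j,\cdot},R_\cdot$ continuous, $|\lambda_{j,0}| = 1$, $\Pi_{j,u}\Pi_{j',u} = \delta_{j,j'}\Pi_{j,u}$, $\Pi_{j,u}R_u = R_u\Pi_{j,u} = 0$ and $\sup_{u\in U}\norm{R_u^k}{} \le Cr^k$; the one-dimensional analogue for $\mathbf{1}_{A_0}P_u^M(\mathbf{1}_{A_0})$; and the estimate~\eqref{eq:aperiodicity}. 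The first two are exactly the output of the Nagaev--Guivarc'h method: $P = P_0$ is quasi-compact on $\Lip^\infty$ with a spectral gap on each mixing component (Proposition~\ref{prop:DecroissanceExponentielle}), and $u \mapsto P_u = P(e^{i\langle u, F\rangle}\,\cdot\,)$ is continuous from a neighbourhood of $0$ into $\mathcal{L}(\Lip^\infty)$ precisely because $F$ is $\sigma(\pi)$-measurable and $T$ is Gibbs--Markov, as in~\cite[\S3]{AaronsonDenker:2001}, so the Keller--Liverani perturbation theorem~\cite{KellerLiverani:1999} (see also~\cite{HennionHerve:2001}) yields the decomposition with continuous dependence of eigenvalue, eigenprojector and remainder. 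For~\eqref{eq:aperiodicity} the input is aperiodicity (Definition~\ref{def:Aperiodicite}): a nonzero $h \in \Lip^\infty$ with $P_u^n h = e^{in\theta}h$ would produce a solution of the coboundary equation $\langle u, F\rangle \equiv \theta + v\circ T - v \pmod{2\pi}$, hence by aperiodicity $P_u$ has spectral radius $<1$ for every $u \in [-\pi,\pi]^d \setminus \{0\}$; compactness of $[-\pi,\pi]^d \setminus U$ and upper semicontinuity of the spectral radius then give $\sup_{u \notin U}\norm{P_u^k}{} \le Cr^k$, which is the assertion recorded in~\cite{AaronsonDenker:2001}.

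It remains to identify $\Pi_0$ and the expansion of $\lambda_u$. The formula~\eqref{eq:Pi0} is delivered by the last part of Proposition~\ref{prop:CSHHH}, since $(T^M)_{|A_k}$ mixing forces the unit-modulus eigenfunctions of $P$ to be the locally constant ones $\sum_k \xi^{-jk}\mathbf{1}_{A_k}$. For $\lambda_u$, one uses that it is the leading eigenvalue of $P_u$, so $\Ebb_\mu[e^{i\langle u, S_n\rangle}] = \Ebb_\mu[P_u^n\mathbf{1}] = \lambda_u^n\,\Ebb_\mu[\Pi_u\mathbf{1}] + O(r^n)$ with $\Ebb_\mu[\Pi_u\mathbf{1}] \to 1$ as $u \to 0$; comparing this, for $n$ of order $|u|^{-\alpha}L(|u|^{-1})^{-1}$, with the hypothesis on $\int_A e^{i\langle u, F\rangle}\dd \mu = \Ebb_\mu[e^{i\langle u, S_1\rangle}]$ — which for $d=1$, $\alpha \in (1,2]$ is the classical description of the domain of attraction of an $\alpha$-stable law, is assumed outright in the Cauchy-type case, and for $d=2$ is the tail condition for the non-degenerate Gaussian basin — yields $\lambda_u = e^{-\psi(\sqrt{\Sigma}u)L(|\sqrt{\Sigma}u|^{-1})} + o(|u|^\alpha L(|u|^{-1}))$ with the prescribed $\psi$, $\Sigma$, $\vartheta$, $\zeta$, and $\zeta$ automatically satisfying $|\zeta| \le \tan\frac{\pi\alpha}{2}$ when $\alpha>1$. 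This eigenvalue expansion is exactly the one proved in~\cite{AaronsonDenker:2001} (compare also~\cite{Nagaev:1957, Nagaev:1961, GuivarchHardy:1988}), so I would invoke it rather than redo the computation. The main obstacle is precisely this last step: upgrading the obvious relative estimate $\lambda_u = \Ebb_\mu[e^{i\langle u, F\rangle}](1+o(1))$ to the sharp expansion with additive error $o(|u|^\alpha L(|u|^{-1}))$ requires fine control of $u \mapsto \Pi_u$ and of the remainder in the perturbation series — the technically delicate part of the Gibbs--Markov local limit theorem — but since this is carried out in~\cite{AaronsonDenker:2001}, the proposition reduces in practice to assembling that reference with Propositions~\ref{prop:IterationMelangeante},~\ref{prop:CSHHH} and~\ref{prop:DecroissanceExponentielle}.
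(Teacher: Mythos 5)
Your proposal is correct and follows essentially the same route as the paper: verify the soft clauses directly from the Gibbs--Markov structure, feed the Nagaev--Guivarc'h/Keller--Liverani decomposition into Proposition~\ref{prop:CSHHH}, use aperiodicity to rule out unit-modulus eigenvalues of $P_u$ for $u \neq 0$ (the paper delegates this to~\cite[Lemma~2.6]{Thomine:2015}), and quote~\cite{AaronsonDenker:2001} for the sharp eigenvalue expansion via $1-\lambda_u \sim 1-\int_A e^{i\langle u, F\rangle}\dd\mu$. The only detail the paper records that you omit is that for $\alpha \in (1,2]$ the centering $\int_A F \dd\mu = 0$ is forced by Birkhoff's theorem and conservativity, but this is a minor point.
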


\begin{proof}

The recurrence of the extension is among the hypotheses. Since the extension is ergodic, 
so is $(A, \mu, T)$. The existence of an integer $M \geq 1$ and a decomposition of $A$ 
into $M$ measurable subsets $(A_i)_{i \in \Z/M\Z}$ on which $T^M$ is mixing follows~\cite[Th\'eor\`eme~1.1.8]{Gouezel:2008e}.

\smallskip

We choose the Banach space $\Lip^\infty \subset \Lbb^\infty (A, \mu) \subset \Lbb^1 (A, \mu)$. 
Then $\mathbf{1} \in \Lip^\infty$, and $P$ cts continuously on $\Lip^\infty$. In addition, 
the subsets $A_i$ are $\sigma (\pi)$-measurable, so for all $f \in \Lip^\infty$:
\begin{equation*}
\norm{\mathbf{1}_{A_i}f}{\Lip^\infty (A, \pi, d, \mu)} 
= \norm{\mathbf{1}_{A_i}f}{\Lbb^\infty (A, \mu)} + \sup_{a \in \pi} |\mathbf{1}_{A_i}f|_{\Lip(a,d)} 
\leq \norm{f}{\Lip^\infty (A, \pi, d, \mu)},
\end{equation*}
so the multiplication by $\mathbf{1}_{A_i}$ acts continuously on $\Lip^\infty$.

\smallskip

We use Proposition~\ref{prop:CSHHH} to check the third item. The function $F$ is constant 
on elements of the partition $\pi$, so, with the notations of~\cite{Gouezel:2008e}  $D_\tau f (a) \equiv 0$. 
Hence, by~\cite[Corollaire~4.1.3]{Gouezel:2008e}, the application $u \mapsto P_u$, 
as a function with values in $\Lcal (\Lip^\infty, \Lip^\infty)$, is continuous in $0$. 
But multiplication by $e^{i \langle u, F \rangle}$ is continuous on $\Lip^\infty$, and 
$P_v (f) -P_u (f) = (P_{v-u}-P)(e^{i \langle u,F \rangle \}} f)$. Hence, $u\mapsto P_u$ 
is continuous for all $u$.

\smallskip

The action of $P$ on $\Lip^\infty$ is quasi-compact: the spectrum of $P$ is included 
in the closed unit ball, its intersection with the unit circle is exactly the set of $M$th 
roots of the unity, and the remainder of the spectrum lies in a ball of smaller radius. 
Hence, the eigendecomposition of $P$ is continuous for small parameters $u$. 
The hypotheses of Proposition~\ref{prop:CSHHH} follow, except for the last one 
(that $P_u$ has no eigenvalue of modulus one for $u\neq 0$).

\smallskip

Since the extension is assumed to be aperiodic, the spectral radius of $P_u$ 
acting on $\Lip^\infty$ is strictly smaller than $1$ for $u \neq 0$, by~\cite[Lemma~2.6]{Thomine:2015}. 
We point out that the later lemma uses the hypotheses that $T$ be mixing and $F$ 
integrable, but these assumptions are not used in its proof. We have checked all 
the assumptions of Proposition~\ref{prop:CSHHH}, and thus the third item 
of Hypotheses~\ref{hyp:HHH}.

\smallskip

The expansion of the main eigenvalue for Gibbs-Markov maps is done in~\cite{AaronsonDenker:2001} 
in the $1$-dimensional case. If $F \in \Lbb^2$, then it is an instance of the central limit theorem 
by spectral methods, as in~\cite{Nagaev:1957}; otherwise, the expansion ultimately satisfies:
\begin{equation}\label{eq:ExpansionValeurProprePrincipale}
1-\lambda_u 
\sim 1-\int_A e^{i\langle u, F\rangle}\dd \mu,
\end{equation}
and the formulas comes from~\cite{Feller:1966}. 

\smallskip

Note that, if $\alpha \in (1,2]$, Birkhoff's theorem and the conservativity of the extension 
imply that $F$ has no drift, which finishes this case. For $\alpha =1$, the expansion 
of $\int_A e^{iu F}\dd \mu$ is part of the hypothesis.
\end{proof}

\subsection{Tightness}
\label{subsec:Tightness}

In this subsection and the next, for any metric space $(E,d)$, any $x\in E$ and any $R>0$, we write 
$\overline{B}_E(x,R)$for the closed ball in $(E,d)$ of center $x$ and of radius $R$, and $S_E(x,R)$ 
for the corresponding sphere.

\smallskip

Recall that, for all $p \in G$, for all $x \in A$, we put $N_p (x) = |\{0 \leq k < \varphi_{\{0\}} (x) : \ \widetilde{T}^k (x,0) \in A \times \{p\}\}|$ 
and $N_{0,p} (x) = \inf \{n \geq 0: \ T_{\{0\}}^n (x) \in \{N_p >0\} \}$. 
The goal of this section is to obtain an upper bound for the tail distribution of $N_{0,p}$. 
This estimate will be used later to prove the tightness of $\alpha (p)^{-1} N_p | \{N_p > 0\}$.

\smallskip

Since $T$ is ergodic, we consider $M \in\N_+$ and
$(A_k)_{k \in \Z/M\Z}$ as in Proposition~\ref{prop:IterationMelangeante}. For all $k \in \Z/M\Z$ and $f \in \Lbb^1 (A, \mu)$, 
let $\Pi_k$ be the projection $f \mapsto \int_{A_k} f \dd \mu \cdot \mathbf{1}_{A_k}$.
For all $K>0$, we set:
\begin{equation*}
\Scal_K 
:= \left\{ h: \ A \to [0,1]\right\} \cap \overline{B}_{\Lip^\infty (A)} (\mathbf{0}, K).
\end{equation*}

\begin{proposition}\label{prop:Tension}

Let $(A, \pi, \lambda, \mu, T)$ be an ergodic Gibbs-Markov map. Then for all $K > 0$, 
there exist constants $C, \kappa > 0$ such that for all $h \in \Scal_K$, for all $n \geq 0$,
\begin{equation}\label{eq:Tension}
\norm{P_h^n (\mathbf{1})}{\Lbb^1 (A, \mu)} 
\leq C e^{- \kappa \norm{\mathbf{1}-h}{\Lbb^1 (A, \mu)} n}.
\end{equation}
\end{proposition}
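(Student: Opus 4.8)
The statement asserts a uniform exponential decay of $\norm{P_h^n(\mathbf{1})}{\Lbb^1}$ with a rate proportional to $\norm{\mathbf{1}-h}{\Lbb^1}$, uniformly over the family $\Scal_K$ of $[0,1]$-valued functions with $\Lip^\infty$-norm bounded by $K$. The plan is to reduce everything to the mixing case via the partition $(A_k)_{k\in\Z/M\Z}$ from Proposition~\ref{prop:IterationMelangeante}, then run a sub-multiplicativity argument on $\norm{P_h^n(\mathbf{1})}{\Lbb^1}$: each application of $P_h$ either contracts the $\Lbb^1$-mass by a definite factor (because multiplying by $h\leq 1$ and taking $P$ loses at least $\int(1-h)\,\de\mu$ of mass, up to controlling fluctuations), or, if $h$ is very close to $\mathbf{1}$, the decay is slow but so is the bound we are trying to prove. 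Concretely, I would first note $\norm{P_h^n(\mathbf{1})}{\Lbb^1}=\int_A P_h^n(\mathbf{1})\,\de\mu = \int_A h\cdot h\circ T\cdots h\circ T^{n-1}\,\de\mu$, since $P$ preserves integrals and $P_h^n(\mathbf{1})=P^n(h\cdot h\circ T\cdots h\circ T^{n-1})$; this is just the measure of the ``survival'' weight, so the proposition is really a statement that the product $\prod_{j=0}^{n-1}h\circ T^j$ has exponentially small integral.

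The key step is a one-block estimate: there exist $\delta\in(0,1)$ and an integer $m\geq 1$ (depending on $K$) such that for every $h\in\Scal_K$,
\begin{equation*}
\int_A \prod_{j=0}^{m-1} h\circ T^j \,\de\mu
\leq 1 - \delta\,\norm{\mathbf{1}-h}{\Lbb^1(A,\mu)}.
\end{equation*}
To get this, write $\int_A \prod_{j=0}^{m-1}h\circ T^j\,\de\mu = 1 - \int_A\bigl(1-\prod_{j=0}^{m-1}h\circ T^j\bigr)\,\de\mu$ and bound $1-\prod_{j=0}^{m-1}h\circ T^j \geq \max_j (1-h\circ T^j) \geq \frac1m\sum_j(1-h\circ T^j)$ pointwise (all factors are in $[0,1]$). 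Hence $\int_A\prod_{j=0}^{m-1}h\circ T^j\,\de\mu \leq 1 - \frac1m\sum_{j=0}^{m-1}\int_A(1-h)\circ T^j\,\de\mu = 1-\norm{\mathbf{1}-h}{\Lbb^1}$, which already gives the one-block estimate with $\delta=1$ and $m=1$ — so in fact the uniform $\Lip^\infty$ bound $K$ is only needed to pass from the one-block decay to geometric decay. The Lipschitz control enters when iterating: after $m$ steps one wants the ``residual density'' $P_h^m(\mathbf{1})/\norm{P_h^m(\mathbf{1})}{\Lbb^1}$ to again lie in a bounded set so the one-block estimate can be reapplied. Here I would use that $P$ maps $\Lip^1$ (or $\Lip^\infty$) boundedly into $\Lip^\infty$, together with the spectral gap from Proposition~\ref{prop:DecroissanceExponentielle} restricted to each mixing component $(A_k,\pi_M,\lambda,\mu(\cdot|A_k),T^M)$: multiplying by $h\in\Scal_K$ and applying $P^M$ keeps $\Lip^\infty$-norms uniformly bounded (in terms of $K$ and the fixed constants of the map), so one may normalize and iterate, losing a factor $(1-\delta\norm{\mathbf{1}-h}{\Lbb^1})$ every $M$ steps. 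Summing the geometric series and absorbing $\log(1-x)\asymp -x$ for small $x$ produces the desired bound $C e^{-\kappa\norm{\mathbf{1}-h}{\Lbb^1}n}$.

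The main obstacle I anticipate is handling $h$ for which $\norm{\mathbf{1}-h}{\Lbb^1}$ is small but positive: then one block of decay is negligible, and the argument must genuinely accumulate decay over $\sim 1/\norm{\mathbf{1}-h}{\Lbb^1}$ blocks, which requires the normalized iterates $P_h^{Mk}(\mathbf{1})/\norm{\cdot}{\Lbb^1}$ to stay in a fixed $\Lip^\infty$-ball uniformly in $k$ and in $h$ — i.e.\ no loss of regularity compounds. This is exactly where the big-image property, bounded distortion, and the $\Lip^1\to\Lip^\infty$ smoothing of $P$ for Gibbs-Markov maps are essential: they guarantee that $\norm{P^M(h g)}{\Lip^\infty}\leq C'(\norm{g}{\Lbb^1}+\text{lower-order})$ with $C'$ depending only on $K$ and the map, so the normalized densities form a relatively compact family on which the one-block estimate is uniform. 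Once that uniformity is in place, combining it with the elementary pointwise inequality above and Proposition~\ref{prop:DecroissanceExponentielle} closes the proof; the non-mixing case is reduced to the mixing one by decomposing $\mathbf{1}=\sum_k\mathbf{1}_{A_k}$ and tracking the cyclic permutation $T(A_k)=A_{k+1}$.
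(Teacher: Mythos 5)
Your reduction of $\norm{P_h^n(\mathbf{1})}{\Lbb^1(A,\mu)}$ to the survival integral $\int_A\prod_{j=0}^{n-1}h\circ T^j\dd\mu$ and your one-block bound $\int_A h\dd\mu=1-\norm{\mathbf{1}-h}{\Lbb^1(A,\mu)}$ are correct, and you have located the crux accurately: iterating the one-block bound requires uniform control of the evolved densities. But the control you propose is of the wrong kind, and this is a genuine gap. Writing $\int_A\prod_{j=0}^{(k+1)m-1}h\circ T^j\dd\mu=\int_A P_h^{km}(\mathbf{1})\cdot\prod_{j=0}^{m-1}h\circ T^j\dd\mu$, the next block contributes a factor $1-\delta\norm{\mathbf{1}-h}{\Lbb^1(A,\mu)}$ only if the normalized density $g_k:=P_h^{km}(\mathbf{1})/\norm{P_h^{km}(\mathbf{1})}{\Lbb^1(A,\mu)}$ satisfies a \emph{pointwise lower bound} $g_k\geq\delta$ on $\{h<1\}$, so that $\int_A g_k(1-h)\dd\mu\geq\delta\norm{\mathbf{1}-h}{\Lbb^1(A,\mu)}$. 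A uniform $\Lip^\infty$ upper bound --- relative compactness of the family $(g_k)$ --- does not give this: in the Gibbs-Markov metric, indicators of $\sigma(\pi)$-measurable sets are Lipschitz, so a $\Lip^\infty$-bounded probability density may vanish exactly where $\mathbf{1}-h$ is concentrated, killing the per-block decay. Moreover, no mechanism is offered for why the iterates of $P_h$ itself should stay pointwise bounded below uniformly in $h\in\Scal_K$ and $k$: the spectral gap of Proposition~\ref{prop:DecroissanceExponentielle} concerns $P$, each block of $P_h^m$ interleaves $m$ further multiplications by $h$ with the smoothing, and the Lipschitz constant of the block weight $\prod_{j=0}^{m-1}h\circ T^j$ grows like $\lambda^m K$, so the smoothing need not win.

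The paper closes precisely this gap with two moves absent from your outline. First, since $0\leq h\leq\mathbf{1}$, the operator $P_h$ is dominated by $P$ on non-negative functions, whence $P_h^{nm}(\mathbf{1})\leq\left(P^{m-1}P_h\right)^n(\mathbf{1})$ pointwise; it therefore suffices to bound the interleaved operator $P^{m-1}P_h$, in which each block contains only \emph{one} multiplication by $h$ followed by $m-1$ pure smoothing steps. Second, after reducing to $h\geq 1/2$ (replace $h$ by $(\mathbf{1}+h)/2$, halving $\kappa$), one chooses $m$ via the spectral gap so that the normalized densities stay in $\overline{B}_{\Lip^\infty (A)}(\mathbf{1},1/2)$ --- not merely a bounded set but a neighbourhood of $\mathbf{1}$, hence pointwise $\geq 1/2$ --- which is exactly the lower bound making the one-block estimate $\left|\int_A P_h(f)\dd\mu\right|\leq 1-\norm{\mathbf{1}-h}{\Lbb^1(A,\mu)}/2$ uniform along the iteration. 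If you replace ``uniformly bounded in $\Lip^\infty$'' by ``within $\Lip^\infty$-distance $1/2$ of $\mathbf{1}$'' and add the domination $P_h\leq P$ to pass from the interleaved operator back to $P_h^{nm}$, your outline becomes the paper's argument.
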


\begin{proof}
\begin{itemize}
\item First, let us assume that $(A, \mu, T)$ is mixing. We only need to prove the assertion for $K \geq 1$. 
Let $h \in \Scal_K$. 

\smallskip

If $h < 1/2$ somewhere, since $\Scal_K$ is convex and $\mathbf{1} \in \Scal_K$, the function $h' := (\mathbf{1}+h)/2$ 
also belongs to $\Scal_K$ and satisfies $h' \geq 1/2$. In addition, $P_h^n (\mathbf{1}) \leq P_{h'}^n (\mathbf{1})$ 
for all $n$, so any upper bound for $\norm{P_{h'}^n (\mathbf{1})}{\Lbb^1 (A, \mu)}$ is also an upper bound for $\norm{P_h^n (\mathbf{1})}{\Lbb^1 (A, \mu)}$. 
Moreover, $\norm{\mathbf{1}-h}{\Lbb^1 (A, \mu)} = 2 \norm{\mathbf{1}-h'}{\Lbb^1 (A, \mu)}$. Hence, if we get 
the bound~\eqref{eq:Tension} for $h'$, up to dividing $\kappa$ by $2$, we also get the bound~\eqref{eq:Tension} 
for $h$. Hence, without loss of generality, we assume from now on that $h \geq 1/2$.

\smallskip

Let $f \in \overline{B}_{\Lip^\infty (A)} (\mathbf{1}, 1/2) \cap S_{\Lbb^1 (A,\mu)} (\mathbf{0},1)$. Then, on the one hand, for all $h \in \Scal_K$,
\begin{equation}\label{eq:DecroissanceExponentielleBorneInf}
\left| \int_A P_h (f) \dd \mu \right|
= \left| \int_A h f \dd \mu \right| 
\geq \int_A h \dd \mu - \int_A h |\mathbf{1}-f| \dd \mu 
\geq \frac{\norm{h}{\Lbb^1 (A, \mu)}}{2}. 
\end{equation}
On the other hand,
\begin{align}
\label{eq:DecroissanceExponentielleBorneSup}
\begin{split}
\left| \int_A P_h (f) \dd \mu \right| 
& \leq \int_A |f| \dd \mu + \int_A (\mathbf{1}-h) |\mathbf{1}-f| \dd \mu - \int_A (\mathbf{1}-h) \dd \mu \\
& \leq \norm{f}{\Lbb^1 (A, \mu)} -\frac{\norm{\mathbf{1}-h}{\Lbb^1 (A, \mu)}}{2} \\
& = 1-\frac{\norm{\mathbf{1}-h}{\Lbb^1 (A, \mu)}}{2}.
\end{split}
\end{align}
From \eqref{eq:DecroissanceExponentielleBorneInf}, we compute:
\begin{align*}
\norm{\frac{P_h (f)}{\int_A P_h (f) \dd \mu}}{\Lip^\infty (A)}
& \leq \frac{2 \norm{P}{L(\Lip^\infty (A))} \norm{hf}{\Lip^\infty (A)}}{\norm{h}{\Lbb^1 (A,\mu)}} \\
& \leq \frac{2\norm{P}{L(\Lip^\infty (A))}K}{\norm{h}{\Lbb^1 (A, \mu)}}\norm{f}{\Lip^\infty (A)} \\
& \leq 4 \norm{P}{L(\Lip^\infty (A))} K \norm{f}{\Lip^\infty (A)} \\
& \leq 6 \norm{P}{L(\Lip^\infty (A))} K.
\end{align*}

Due to Proposition~\ref{prop:DecroissanceExponentielle}, there exists $m \geq 1$ such that, for any $h$ fitting our assumptions, 
for all $f \in \overline{B}_{\Lip^\infty (A)} (\mathbf{1}, 1/2)$,
\begin{equation*}
\norm{\frac{P^{m-1} P_h (f)}{\int_A P_h (f) \dd \mu}-\mathbf{1}}{\Lip^\infty (A)} 
\leq \frac{1}{2}.
\end{equation*}
We fix such a value of $m$. Then, the following map is well-defined:
\begin{equation*}
F : \ \left\{
\begin{array}{lll}
\overline{B}_{\Lip^\infty (A)} (\mathbf{1}, 1/2) \cap S_{\Lbb^1 (A,\mu)} (\mathbf{0},1) & \to & \overline{B}_{\Lip^\infty (A)} (\mathbf{1}, 1/2) \cap S_{\Lbb^1 (A,\mu)} (\mathbf{0},1) \\
f & \mapsto & \frac{P^{m-1} P_h (f)}{\int_A P_h (f) \dd \mu}
\end{array}
\right. .
\end{equation*}
Furthermore, by virtue of \eqref{eq:DecroissanceExponentielleBorneSup}, for all $n \geq 0$,
\begin{equation*}
\left| \int_A \left( P^{m-1} P_h \right)^n f \dd \mu \right|
= \left| \int_A F^n (f) \dd \mu \cdot \prod_{k=0}^{n-1} \int_A P_h (F^k (f)) \dd \mu \right|
\leq \left(1-\frac{\norm{\mathbf{1}-h}{\Lbb^1 (A, \mu)}}{2}\right)^n.
\end{equation*}
Remark that $0 \leq P_h^n f \leq P^n f$ for all non-negative $f \in \Lbb^1$, 
for all $n \geq 0$ and all $h \in \Scal_K$. In addition, $F$ preserves the subset of real-valued 
functions. Fix $h \in \Scal_K$. Then, for all $n \geq 0$,
\begin{equation*}
0 
\leq \int_A P_h^{nm} (\mathbf{1}) \dd \mu
\leq \int_A \left( P^{m-1} P_h \right)^n (\mathbf{1}) \dd \mu
\leq \left(1-\frac{\norm{\mathbf{1}-h}{\Lbb^1 (A, \mu)}}{2}\right)^n 
\leq e^{-\frac{\norm{\mathbf{1}-h}{\Lbb^1 (A, \mu))}}{2}n},
\end{equation*}
so that:
\begin{equation*}
\norm{P_h^n (\mathbf{1})}{\Lbb^1 (A, \mu)} 
\leq \sqrt{e} \max_{0 \leq k < m} \sup_{h \in \Scal_K} \norm{P_h^k}{L(\Lip^\infty (A))} e^{-\frac{\norm{\mathbf{1}-h}{\Lbb^1 (A,\mu)}}{2m}n}.
\end{equation*}
We have proved that the conclusion of the lemma holds if $(A, \mu, T)$ is assumed to be mixing.

\item Finally, assume that $(A, \mu, T)$ is ergodic but not necessarily mixing. Let $(A_k)_{k \in \Z/M\Z}$ 
be its decomposition in components on which $T^M$ is mixing, and write $\mu_k := M \mu_{|A_k}$. Let $K \geq 0$, and let 
$h \in \Scal_K$. Let $k_0$ be such that $\norm{\mathbf{1}-h}{\Lbb^1 (A_{k_0}, \mu_{k_0})} \geq \norm{\mathbf{1}-h}{\Lbb^1 (A, \mu)}$. 
Note that $h \cdot \mathbf{1}_{A_{k_0}}$ is in $\Scal_K$ when we replace $\Lip^\infty (A, \pi, \lambda)$ by $\Lip^\infty (A_{k_0}, \pi_M, \lambda)$. 
Let $\tilde{P}_h (f) := P^M (hf)$ for $f \in \Lip^\infty (A_{k_0}, \pi_M, \lambda)$.
Then, there exist positive constants $C_0, \kappa_0$ depending only on $K$ such that, for all $n \geq 0$,
\begin{equation*}
\norm{\tilde{P}_h^n (\mathbf{1})}{\Lbb^1 (A_{k_0}, \mu_{k_0})} 
\leq C_0 e^{- \kappa_0 \norm{\mathbf{1}-h}{\Lbb^1 (A_{k_0}, \mu_{k_0})} n} 
\leq C_0 e^{- \kappa_0 \norm{\mathbf{1}-h}{\Lbb^1 (A, \mu)} n}.
\end{equation*}
But then, for all $k \in \Z/M\Z$, for all $n \geq 1$,
\begin{align*}
\norm{P_h^{nM} (\mathbf{1})}{\Lbb^1 (A_k, \mu_k)} 
& \leq \norm{P_h^{(n-1)M} (\mathbf{1})}{\Lbb^1 (A_{k_0}, \mu_{k_0})} \\
& \leq \norm{\tilde{P}_h^{n-1} (\mathbf{1})}{\Lbb^1 (A_{k_0}, \mu_{k_0})} \\
& \leq C_0 e^{- \kappa_0 \norm{\mathbf{1}-h}{\Lbb^1 (A, \mu)} (n-1)} \\
& \leq C_0 e^{\kappa_0} e^{- \kappa_0 \norm{\mathbf{1}-h}{\Lbb^1 (A, \mu)} n},
\end{align*}
so that, for all $n \geq 0$:
\begin{equation*}
\norm{P_h^n (\mathbf{1})}{\Lbb^1 (A, \mu)} 
= \frac{1}{M} \sum_{k \in \Z/M\Z} \norm{P_h^n (\mathbf{1})}{\Lbb^1 (A_k, \mu_k)} 
\leq C_0 e^{2 \kappa_0} e^{- \frac{\kappa_0}{M} \norm{\mathbf{1}-h}{\Lbb^1 (A, \mu)} n}. \qedhere
\end{equation*}
\end{itemize}
\end{proof}

Proposition~\ref{prop:Tension} yields an upper bound on the probability that the orbits do not 
visit a given subset of $A$ before a given time.

\begin{corollary}\label{cor:Tension}

Let $(A, \pi, \lambda, \mu, T)$ be an ergodic Gibbs-Markov map. Let $G$ be a set, and 
$(a_p)_{p \in G}$ be a family of non-trivial $\sigma(\pi)$-measurable subsets. Let $C, \kappa > 0$ 
be constants associated with $\Scal_1$ in Proposition~\ref{prop:Tension}.
Let $K > 0$. Let $(\mu_p)_{p \in G}$ be a family of probability measures on $A$ 
such that $\mu_p \ll \mu$ and $\norm{\de \mu_p / \de \mu}{\Lip^\infty (A)} \leq K$ 
for all $p$. Then, for all $n \geq 0$,
\begin{equation}\label{eq:TensionBis}
\mu_p \left(\bigcap_{k=0}^{n-1} \{T^k (x) \notin a_p \} \right) 
\leq KC e^{- \kappa \mu(a_p) n}.
\end{equation}
\end{corollary}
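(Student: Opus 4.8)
The plan is to rewrite the non-return probability as the integral of a weighted transfer operator applied to the density of $\mu_p$, and then feed the resulting estimate into Proposition~\ref{prop:Tension}. First I would fix $p \in G$ and set $h_p := \mathbf{1} - \mathbf{1}_{a_p}$. Since $a_p$ is $\sigma(\pi)$-measurable, $\mathbf{1}_{a_p}$ is constant on each element of $\pi$, so $|h_p|_{\Lip(a,d)} = 0$ for every $a \in \pi$ while $\norm{h_p}{\Lbb^\infty (A, \mu)} \leq 1$; hence $h_p$ takes values in $[0,1]$ with $\norm{h_p}{\Lip^\infty (A)} \leq 1$, that is, $h_p \in \Scal_1$. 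This is the one place the $\sigma(\pi)$-measurability of the $a_p$ is used, and it is what makes the bound uniform in $p$.

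Next I would unwind the event. Writing $P_{h_p} (f) := P(h_p f)$ and iterating the duality relation $\int_A Pf \cdot g \dd\mu = \int_A f \cdot g \circ T \dd\mu$, one obtains the elementary identity
\[
\int_A P_{h_p}^n (f) \dd\mu = \int_A f \cdot \prod_{k=0}^{n-1} \left( h_p \circ T^k \right) \dd\mu
\]
for every $f \in \Lbb^1 (A, \mu)$ and every $n \geq 0$. Applying this to $f = \de\mu_p / \de\mu$ and noting that $\prod_{k=0}^{n-1} (h_p \circ T^k)$ is exactly the indicator of $\bigcap_{k=0}^{n-1} \{ T^k (\cdot) \notin a_p \}$, we get
\[
\mu_p \left( \bigcap_{k=0}^{n-1} \{ T^k (x) \notin a_p \} \right)
= \int_A P_{h_p}^n \left( \frac{\de\mu_p}{\de\mu} \right) \dd\mu.
\]

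Then I would bound. The operator $P_{h_p}$ is positive (as $P$ is positive and $h_p \geq 0$), and $0 \leq \de\mu_p / \de\mu \leq \norm{\de\mu_p/\de\mu}{\Lbb^\infty (A,\mu)} \leq K$, so $P_{h_p}^n (\de\mu_p/\de\mu) \leq K\, P_{h_p}^n (\mathbf{1})$ pointwise, whence the integral above is at most $K \norm{P_{h_p}^n (\mathbf{1})}{\Lbb^1 (A, \mu)}$. Finally, since $h_p \in \Scal_1$ and $\norm{\mathbf{1} - h_p}{\Lbb^1 (A, \mu)} = \mu(a_p)$, Proposition~\ref{prop:Tension} applied with the constants $C, \kappa$ attached to the class $\Scal_1$ (which therefore do not depend on $p$) gives $\norm{P_{h_p}^n (\mathbf{1})}{\Lbb^1 (A, \mu)} \leq C e^{-\kappa \mu(a_p) n}$. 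Combining the three displays yields~\eqref{eq:TensionBis}.

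There is no genuinely hard step here: once Proposition~\ref{prop:Tension} is in hand, the argument is pure bookkeeping, and the only subtleties are the uniform membership $h_p \in \Scal_1$ and the remark that the constants from Proposition~\ref{prop:Tension} are intrinsic to $\Scal_1$, hence uniform over $p \in G$. (The degenerate case $n = 0$ is covered for free, as $\norm{\de\mu_p/\de\mu}{\Lip^\infty(A)} \geq \mu_p(A) = 1$ forces $K \geq 1$ and Proposition~\ref{prop:Tension} at $n = 0$ forces $C \geq 1$.)
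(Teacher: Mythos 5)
Your argument is correct and is essentially the paper's own proof, just written out in more detail: both reduce the non-return probability to $K\int_A P_{\mathbf{1}-\mathbf{1}_{a_p}}^n(\mathbf{1})\dd\mu$ and invoke Proposition~\ref{prop:Tension} with $\norm{\mathbf{1}-h_p}{\Lbb^1(A,\mu)}=\mu(a_p)$. Your explicit checks (that $\sigma(\pi)$-measurability forces $h_p\in\Scal_K$ uniformly, and that the $\Lip^\infty$ bound controls the $\Lbb^\infty$ norm of the density) are exactly the points the paper leaves implicit.
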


\begin{proof}

We compute:
\begin{align*}
\mu_p \left(\bigcap_{k=0}^{n-1} \{T^k (x) \notin a_p \} \right) 
& = \int_A \prod_{k=0}^{n-1} \mathbf{1}_{a_p^c} \circ T^k \cdot \frac{\de \mu_p}{\de \mu} \dd \mu \\
& \leq K \int_A P_{\mathbf{1}-\mathbf{1}_{a_p}}^n (\mathbf{1}) \dd \mu.
\end{align*}
But $\mathbf{1}-\mathbf{1}_{a_p} \in \Scal_1$ for all $p$. All remains is to use Proposition~\ref{prop:Tension}.
\end{proof}

\subsection{Convergence in distribution}
\label{subsec:ConvergenceLoiPetitesBoules}

Let $(A, \mu, T)$ be a sufficiently hyperbolic measure-preserving dynamical system, and let $(A_p)$ 
be a family of measurables subsets such that $\lim_{p \to \infty} \mu (A_p)=0$. Let $\varphi_p$ 
be the first hitting time of $A_p$. As $p$ goes to infinity, hitting this set 
becomes a rare event. Knowing that a trajectory has not hit the set until some time 
gives us little information about later times, which implies that any limit distribution exhibits a loss 
of memory characteristic of the exponential distributions. Hence, one can usually prove that 
$\mu (A_p) \varphi_p$ converges in distribution to a exponential random variable of parameter $1$. 
There is an extensive litterature on the subject; we refer the interested reader 
to the reviews~\cite{Coelho:2000, Saussol:2009, Haydn:2013}. Note that this family of results can usually be strenghtened, 
for instance to show convergence to a Poisson process~\cite[Th\'eor\`eme~3.6]{Saussol:1998}. 
More promisingly, there are also ways to get a rate of convergence~\cite{GalvesSchmitt:1997}, 
which may be adapted to get rates of convergence in Theorem~\ref{thm:ConvergenceNp}.

\smallskip

In the previous Subsection, we showed that, under any probability measure with uniformly bounded density, 
the tail of the hitting time of a $\sigma(\pi)$-measurable set decays exponentially, at a speed which is 
at most inversely proportional to the size of the set. Now, we shall prove that, as the 
size of the sets goes to $0$, the distribution of the renormalized hitting time is asymptotically exponential.
This is the content of Proposition~\ref{prop:ConvergenceVersExponentielle}. Due to some specificities 
of our situation (the hitting sets are not exactly cylinders, and the distribution changes with the sets), 
we prove the convergence ourselves, instead of using some already established theorem.

\smallskip

Afterwards, we shall prove Lemma~\ref{lem:BorneSommeLp}, which is useful in the proof of Theorem~\ref{thm:GM} 
and whose proof uses ideas very similar to the proof of Proposition~\ref{prop:ConvergenceVersExponentielle}.

\begin{proposition}\label{prop:ConvergenceVersExponentielle}

Let $(A, \pi, \lambda, \mu, T)$ be an ergodic Gibbs-Markov map. Let $G$ be a locally compact space, 
and $(a_p)_{p \in G}$ be a family of non-trivial $\sigma(\pi)$-measurable subsets such that 
$\lim_{p \to \infty} \mu(a_p) = 0$. For all $p \in G$ and $x \in A$, let $N_p (x) := \inf \{k \geq 0 : T^k (x) \in a_p\}$. 
Let $(\mu_p)_{p \in G}$ be a family of probability measures on $A$ 
such that $\mu_p \ll \mu$ for all $p$, and:
\begin{equation*}
\sup_{p \in G} \norm{\frac{\de \mu_p}{\de \mu}}{\Lip^\infty (A)} 
< + \infty.
\end{equation*}
Then the family of random variables $(\mu (a_p) N_p)_{p \in G}$ 
defined on the probability space $(A, \mu_p)$ converges in distribution to an exponential 
random variable of parameter $1$.
\end{proposition}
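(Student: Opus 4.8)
The plan is to prove convergence in distribution of $\mu(a_p)N_p$ to an exponential law by showing that, for each fixed $t>0$, we have $\mu_p(\mu(a_p) N_p > t) \to e^{-t}$ as $p\to\infty$. Writing $n = n_p(t) := \lfloor t/\mu(a_p)\rfloor$, this amounts to controlling $\mu_p\big(\bigcap_{k=0}^{n-1}\{T^k x\notin a_p\}\big)$. The natural tool is a ``renewal'' or ``loss of memory'' decomposition: one breaks the orbit into blocks of some intermediate length $m = m_p$ with $1\ll m\ll 1/\mu(a_p)$ (e.g.\ $m_p = \lfloor \mu(a_p)^{-1/2}\rfloor$), and estimates the probability of avoiding $a_p$ block by block. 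For a single block, the transfer operator computation gives $\mu\big(\bigcap_{k=0}^{m-1}\{T^k x\notin a_p\}\big) = \int_A P_{\mathbf 1 - \mathbf 1_{a_p}}^m(\mathbf 1)\dd\mu$; by Proposition~\ref{prop:DecroissanceExponentielle} applied to $P^{m}$ (or iterating Proposition~\ref{prop:Tension}), the function $P_{\mathbf 1 - \mathbf 1_{a_p}}^m(\mathbf 1)$ is, up to an error that is exponentially small in $m\mu(a_p)$ and hence negligible, close to the constant $\big(\int_A P_{\mathbf 1-\mathbf 1_{a_p}}^m(\mathbf 1)\dd\mu\big)\cdot\mathbf 1 = \mu\big(\bigcap_{k<m}\{T^kx\notin a_p\}\big)\cdot\mathbf 1$ in $\Lip^\infty$, with a uniformly bounded $\Lip^\infty$-norm.

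Concretely, first I would record the single-block estimate: there is a constant $C$ (uniform in $p$) such that
\begin{equation*}
\norm{P_{\mathbf 1 - \mathbf 1_{a_p}}^m(\mathbf 1) - c_{m,p}\mathbf 1}{\Lip^\infty(A)} \leq C e^{-\kappa m \mu(a_p)} e^{-\kappa' m},
\end{equation*}
where $c_{m,p} := \mu\big(\bigcap_{k=0}^{m-1}\{T^kx\notin a_p\}\big)$, combining the exponential decay of correlations with the bound $\norm{P_{\mathbf 1-\mathbf 1_{a_p}}^m(\mathbf 1)}{\Lip^\infty}\leq K$ coming from Proposition~\ref{prop:Tension}-type arguments (one uses that $P$ maps $\Lip^1$ into $\Lip^\infty$ and that the $\Lbb^1$-norm decays like $e^{-\kappa m\mu(a_p)}$). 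Second, I would iterate over $\lfloor n/m\rfloor$ blocks: each time we multiply by a factor $c_{m,p} + O(e^{-\kappa m\mu(a_p)}e^{-\kappa'm})$ (the constant term), and the $\Lip^\infty$ control lets the next block see essentially the constant function $\mathbf 1$, so errors do not compound catastrophically. This gives
\begin{equation*}
\mu_p\Big(\bigcap_{k=0}^{n-1}\{T^kx\notin a_p\}\Big) = c_{m,p}^{\lfloor n/m\rfloor}\big(1+o(1)\big),
\end{equation*}
uniformly, using $\sup_p\norm{\de\mu_p/\de\mu}{\Lip^\infty}<\infty$ to absorb the initial density and the leftover $n - m\lfloor n/m\rfloor < m$ steps (whose contribution is $1+o(1)$ since $m\mu(a_p)\to 0$).

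Third, I would identify the limit of $c_{m,p}^{\lfloor n/m\rfloor}$. Since $\mu(a_p)\to 0$ and $m\mu(a_p)\to 0$, the same single-block analysis applied under the measure $\mu$ itself yields $c_{m,p} = 1 - m\mu(a_p)(1+o(1))$: indeed $1 - c_{m,p} = \int_A(\mathbf 1 - P_{\mathbf 1-\mathbf 1_{a_p}}^m(\mathbf 1))\dd\mu$, and a telescoping $\mathbf 1 - P_{\mathbf 1-\mathbf 1_{a_p}}^m = \sum_{j=0}^{m-1}P_{\mathbf 1-\mathbf 1_{a_p}}^j\circ(\mathbf 1 - P_{\mathbf 1-\mathbf 1_{a_p}})$ together with $\mathbf 1 - P_{\mathbf 1-\mathbf 1_{a_p}}(\mathbf 1) = P(\mathbf 1_{a_p}) $ and $\norm{P_{\mathbf 1-\mathbf 1_{a_p}}^j(\mathbf 1) - \mathbf 1}{\Lbb^1}\to 0$ (at a rate controlled uniformly) shows $1 - c_{m,p} = m\mu(a_p)(1+o(1))$ as $m\mu(a_p)\to0$. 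Hence $\lfloor n/m\rfloor\log c_{m,p} = \lfloor n/m\rfloor\cdot(-m\mu(a_p))(1+o(1)) \to -t$, so the avoidance probability tends to $e^{-t}$. Taking complements gives $\mu_p(\mu(a_p)N_p\leq t)\to 1-e^{-t}$ at every continuity point $t>0$, which is convergence in distribution to the exponential law of parameter $1$; the point $t=0$ is automatic.

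The main obstacle I anticipate is making the two-scale estimate genuinely uniform in $p$: one must ensure the constants $C,\kappa,\kappa'$ in the single-block bound do not degrade as $\mu(a_p)\to 0$, and that the error from replacing $P_{\mathbf 1-\mathbf 1_{a_p}}^m(\mathbf 1)$ by its integral does not accumulate over the $\lfloor n/m\rfloor \approx t/(m\mu(a_p))$ blocks. The key is that the per-block error is $O(e^{-\kappa m\mu(a_p)}e^{-\kappa' m})$ while the number of blocks is only polynomially large in $1/\mu(a_p)$ (through the choice $m = \mu(a_p)^{-1/2}$, the number of blocks is $\asymp t\mu(a_p)^{-1/2}$ and the per-block error is $\leq e^{-\kappa'\mu(a_p)^{-1/2}}$, so the total error is still $o(1)$), and the $\Lip^\infty$ bound on each iterate keeps the recursion stable. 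The non-mixing case is handled exactly as in the proof of Proposition~\ref{prop:Tension}, by passing to the $M$-th iterate on the mixing components $A_k$ and noting $\mu(a_p)$, being $\sigma(\pi)$-measurable, splits appropriately.
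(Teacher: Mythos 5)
Your route --- a two-scale block decomposition with a perturbative analysis of $P_{h_p}$, $h_p:=\mathbf{1}-\mathbf{1}_{a_p}$ --- is genuinely different from the paper's, which instead extracts limit points by exponential tightness, identifies them as exponential via the functional equation $F(t+t')=F(t)F(t')$, and pins the parameter to $1$ by Kac's formula on a $\Z/2\Z$-extension. Your scheme can be made to work, but as written it has two gaps. The first concerns the single-block estimate: expanding $P_{h_p}^m=(P-P_{\mathbf{1}_{a_p}})^m$, the deviation of $P_{h_p}^m(\mathbf{1})$ from $c_{m,p}\mathbf{1}$ in $\Lip^\infty$ contains, besides the $O(e^{-\kappa' m})$ from decay of correlations, a term of order $m\mu(a_p)$ coming from the $m$ possible insertions of $P_{\mathbf{1}_{a_p}}$, each of operator norm $O(\mu(a_p))$. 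It is not $O(e^{-\kappa m\mu(a_p)}e^{-\kappa' m})$. With $m=\mu(a_p)^{-1/2}$ the per-block error is thus $O(\mu(a_p)^{1/2})$, and your additive accounting (per-block error times $\asymp t\mu(a_p)^{-1/2}$ blocks) yields $O(t)$, not $o(1)$. The argument still closes, but only because the error enters \emph{relatively}: one must renormalize after each block, $g_{\ell+1}:=P_{h_p}^m(g_\ell)/\int_A P_{h_p}^m(g_\ell)\dd\mu=\mathbf{1}+O(\delta_p)$ with $\delta_p\to 0$, and observe that each multiplicative factor is $1-m\mu(a_p)(1+O(\delta_p))$, so the accumulated relative error over all blocks is $O(t\delta_p)=o(1)$. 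This bookkeeping needs to be made explicit.

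The second gap is the serious one. The identification $1-c_{m,p}=m\mu(a_p)(1+o(1))$ is exactly the point where the conclusion can fail, and your justification (``$\norm{P_{h_p}^j(\mathbf{1})-\mathbf{1}}{\Lbb^1}\to 0$ at a controlled rate'') does not deliver it. The upper bound $1-c_{m,p}\le m\mu(a_p)$ is a union bound; the matching lower bound requires, by Bonferroni, that $\sum_{0\le j<k<m}\mu(T^{-j}a_p\cap T^{-k}a_p)=o(m\mu(a_p))$, i.e.\ a no-short-return estimate. For sets shrinking to a periodic point this fails, and the limit is exponential with parameter equal to the extremal index $\theta<1$; so some hypothesis must be invoked to exclude this. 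Here it is the $\sigma(\pi)$-measurability together with bounded distortion and the big-image property: Lemma~\ref{lem:EncoreUneBorneUniforme} (with stopping time $\equiv 1$) gives $\norm{P\mathbf{1}_{a_p}}{\Lip^\infty(A)}\le K\mu(a_p)$, whence, using $\sup_i\norm{P^i}{\Lcal(\Lip^\infty,\Lip^\infty)}<\infty$, one gets $\mu(a_p\cap T^{-i}a_p)=\int_{a_p}P^i\mathbf{1}_{a_p}\dd\mu\le C\mu(a_p)^2$ uniformly in $i\ge 1$, and the Bonferroni correction is $O(m^2\mu(a_p)^2)=o(m\mu(a_p))$. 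You must make this step; it is the only place in your argument where the structure of the $a_p$ enters essentially (the paper sidesteps the issue entirely through the exact identity $\int_A N_p\cdot P\mathbf{1}_{a_p}\dd\mu=1$ furnished by Kac's formula, which is why it never needs to control short returns). Your reduction of the non-mixing case to the mixing one is consistent with the paper's Step~5 and is fine as sketched.
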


\begin{proof}

At first, we assume that the system is mixing. We work with the distribution function 
$\mathbf{1}-F_{p,f}$ of $N_p$ under the distribution $f \de \mu$, that is, 
for all $t \geq 0$:
\begin{equation*}
F_{p,f} (t) 
= \int_A \mathbf{1}_{\{N_p\ge t\}} f \dd \mu.
\end{equation*}
In a first step, we prove that $F_{p,f}$ does not depend too much on the density $f$. This will 
imply the loss of memory: in the second step, we prove that any limit distribution of 
$\mu (a_p) N_p$ is exponential, and that the limit points do not depend on the choice of $f$.
Then, we have to identify the parameter of the limit distribution, which is done in the third and 
fourth steps. In the third step, we prove that some $\Z/2\Z$-extension of the system is ergodic, at least for large 
$p$'s and, in the fourth step, we use Kac's formula to prove that, for a good choice of $f$ (depending on $p$), the 
expectation of $\mu (a_p) N_p$ is $1$.
Finally, in the last step we extend this result to dynamical systems which are merely ergodic.
We assume in the first four steps that $(A, \mu, T)$ is mixing.

\medskip
\begin{itemize}
\item \textbf{Step 1 (mixing case): Loss of memory.} First, let us prove that $F_{p,f}$ 
does not depend on $f$ as $p$ goes to infinity.
Let $h_p := \mathbf{1}-\mathbf{1}_{a_p}$. Then $h_p \in \Scal_1$ for all $p$. Let $K \geq 1$. 
Let $f \in \overline{B}_{\Lip^\infty (A)} (\mathbf{0},K)$ with $f \geq 0$ and $\int_A f \dd \mu = 1$. 
Let $n, k \in\N$ and $p \in G$. Note that $F_{p,f} (n) = \norm{P_{h_p}^n (f)}{\Lbb^1 (A, \mu)}$. 
Since each $P_{h_p}$ is a weak contraction when acting on $\Lbb^1 (A, \mu)$, 
\begin{align*}
\left| F_{p,f} (n+k) - F_{p, P^k f} (n) \right| 
& = \left| \int_A P_{h_p}^{n+k} (f) - P_{h_p}^n (P^k f) \dd \mu \right| \\
& \leq \left| \int_A P_{h_p}^k (f) - P^k f \dd \mu \right| \\
& \leq \mathbf{1}- F_{p,f} (k).
\end{align*}
In addition,
\begin{equation*}
\left| F_{p, P^k f} (n) - F_{p,1} (n) \right| 
\leq \norm{P^k f-1}{\Lbb^\infty} 
\leq K \norm{P^k - \Pi_0}{L(\Lip^\infty (A))},
\end{equation*}
and:
\begin{equation*}
\mathbf{1}-F_{p,f} (k) 
\leq k \norm{f}{\Lbb^\infty (A, \mu)} \mu(a_p) 
\leq Kk\mu(a_p).
\end{equation*}
Hence, we finally get:
\begin{equation*}
\left| F_{p,f} (n+k) - F_{p, 1} (n) \right| 
\leq Kk\mu(a_p)+K \norm{P^k - \Pi_0}{L(\Lip^\infty (A))}.
\end{equation*}
Since $(A, \mu, T)$ is a mixing Gibbs-Markov map, $\norm{P^k - \Pi_0}{L(\Lip^\infty (A))}$ 
converges to $0$ as $k$ goes to infinity (Proposition~\ref{prop:DecroissanceExponentielle}). Taking 
$n = \lfloor \mu(a_p)^{-1} t \rfloor$ and $k := \lfloor \sqrt{\mu(a_p)^{-1}} \rfloor$ yields:
\begin{equation}
\label{eq:PerteDeMemoire}
F_{p,f} \left( \lfloor \mu(a_p)^{-1}t \rfloor + \lfloor \sqrt{\mu (a_p)}) \rfloor \right)
= F_{p, 1} (\lfloor \mu(a_p)^{-1} t \rfloor) + o(1) \text{ as } p \to \infty,
\end{equation}
uniformly for $f$ in $\overline{B}_{\Lip^\infty (A)} (0,K)$ and $t \geq 0$.

\medskip
\item \textbf{Step 2 (mixing case): Limit distributions.} 
Now, we prove that any limit distribution of $\mu(a_p) N_p$ is $\delta_0$ or exponential, 
and that the limit distributions do not depend on the choice of the measures $\mu_p$.
For every $p\in G$, we set $g_p$ for the density of $\mu_p$ with 
respect to $\mu$. By Corollary~\ref{cor:Tension}, there exist positive constants 
$C$, $\kappa$ such that, for all $t \geq 0$ and for all $p \in G$,
\begin{equation*}
\mu_p (\mu(a_p)N_p \geq t) 
= \int_A \mathbf{1}_{\{\mu(a_p)N_p \geq t\}} g_p \dd\mu 
\leq CKe^{-\kappa t}.
\end{equation*}
Hence, the sequence $(\mu(a_p)N_p)_{p \in G}$ defined on $(A, \mu_p)$ is tight. Let $F$ 
be the tail distribution function of one of its limit points, and let $G_F \subset G$ 
be such that the distribution function of $\mu(a_p)N_p$ converges to $F$ for $p \in G_F$. 
By Equation~\eqref{eq:PerteDeMemoire}, $F$ does not depend on $f$. Note that $F$ is non-increasing 
and c\`adl\`ag. 

\smallskip

If $F (t) = 0$ for all $t >0$, then the limit distribution is $\delta_0$, and we are done. 
Let us assume that there exists $T > 0$ with $F(T) > 0$, and let $t \in [0,T)$. 
Then $F_{p,1} (\lceil \mu(a_p)^{-1}t\rceil) > 0$ for all large enough $p \in G_F$.
We apply Lemma~\ref{lem:EncoreUneBorneUniforme} with the stopping 
time $n_{p}(t):=\lceil \mu(a_p)^{-1}t\rceil$ and the event $A := \bigcap_{k=0}^{n_p(t)-1} T^{-k} a_p^c$, 
which has positive probability if $p$ is large enough. There exists a constant $K'$ 
such that $P_{h_p}^{n_p(t)} (\mathbf{1})/ F_{p,\mathbf{1}} (n_p(t))$ belongs to $\overline{B}_{\Lip^\infty (A)} (\mathbf{0},K')$ 
for all large engouh $p$. But then, for all $k\in\N_+$ and for all $p \in G_F$:
\begin{equation*}
F_{p,\mathbf{1}} \left(n_p(t)+\lfloor \sqrt{\mu(a_p)^{-1}}\rfloor+k\right)
= F_{p,\mathbf{1}} (n_p(t)) \cdot F_{p,\frac{P_{h_p}^{n_p(t)} (\mathbf{1})}{F_{p,\mathbf{1}} (n_p(t))}} (\lfloor \sqrt{\mu(a_p)^{-1}}\rfloor+k).
\end{equation*}
Let $t' \geq 0$ and $k = \lceil \mu(a_p)^{-1}t' \rceil$. 
Letting $p$ go to infinity in $G_F$, by Equation~\eqref{eq:PerteDeMemoire},
\begin{equation*}
F(t+t') 
= F(t)F(t').
\end{equation*}
In addition, trivially, $F=1$ on $\R_-$. Hence, $\mathbf{1}-F$ is the distribution function 
of an exponential random variable with parameter in $[0,\infty]$.

\medskip
\item \textbf{Step 3 (mixing case): Ergodicity of a $\Z/2\Z$-extension.}
We have proved that any limit distribution of $(\mu(a_p)N_p)_{p \in G}$ is exponential; now, we show that 
its parameter must be $1$. To this end, we first prove that a certain $\Z/2\Z$-extension is ergodic. This fact 
shall allow us to apply Kac's formula in the next step, and from there to identify the parameter 
of the limit exponential distribution.

\smallskip

Consider the dynamical system:
\begin{equation*}
T_p : \left\{
\begin{array}{lll}
A \times \Z/2\Z & \to & A \times \Z/2\Z \\
(x,q) & \mapsto & \left\{ 
\begin{array}{ll}
(T(x),q) & \text{ if } x \notin a_p, \\
(T(x),q+1) & \text{ otherwise}
\end{array}
\right.
\end{array}
\right. .
\end{equation*}
Let $\pi_p$ be the canonical projection from $A \times \Z/2\Z$ onto $A$, which is a factor map. We shall prove that 
this extension is ergodic for all large enough $p$. The idea is that otherwise, we could divide 
$A$ into two subsets which communicate only through $a_p$; as the $a_p$ get smaller, this would make 
the communication more difficult, and the mixing arbitrarily slow, which is absurd.

\smallskip

Assume that $(A \times \Z/2\Z, \mu \otimes (\delta_0+\delta_1)/2, T_p)$ is not ergodic. 
Let $I_p$ be a $T_p$-invariant non-trivial measurable subset. Then, since $\pi_p(I_p) = \pi_p \circ T_p (I_p) = T \circ \pi_p(I_p)$, 
we see that $\pi_p(I_p)$ is a non-trivial $T$-invariant subset, so $\pi_p(I_p) = A$. 
Doing the same with $I_p^c$, we see that there exists a measurable partition $(I_{p,0}, I_{p,1})$ 
of $A$ such that $I_p = I_{p,0} \times \{0\} \cup I_{p,1} \times \{1\}$. In addition, neither 
$A \times \{0\}$ nor $A \times \{1\}$ are $T_p$-invariant, so $I_p$ cannot be either, 
and neither $I_{p,0}$ nor $I_{p,1}$ are trivial. Finally, since the $\Z/2\Z$-extension is 
still Gibbs-Markov, its partition into ergodic components is coarser than its underlying 
partition, so both $I_{p,0}$ and $I_{p,1}$ are $\sigma(\pi)$-measurable.

\smallskip

The map $T_p$ sends $I_{p,0} \cap a_p$ into $I_{p,1}$ and $I_{p,1}\cap a_p$ into $I_{p,0}$.
By the big image 
property of Gibbs-Markov maps, there exists a constant $m > 0$ 
such that $\mu(I_{p,i}) \geq m$ for all $p \in G$ and $i \in \Z/2\Z$.
Let $f_p := \mu(I_{p,0})^{-1} \mathbf{1}_{I_{p,0}}$. Then $(f_p)_{p \in G}$ is uniformly bounded in 
$\Lip^\infty (A)$ by $m^{-1}$. Hence, there exist constants $C', \kappa' > 0$ such that 
$\norm{P^n f_p-\mathbf{1}}{\Lip^\infty(A)} \leq C'e^{-\kappa' n}$ for all $p$, $n$. Hence, 
\begin{equation*}
\int_{I_{p,1}} P^n f_p \dd \mu 
\geq m(1  -C'e^{-\kappa' n}).
\end{equation*}
But we know that:
\begin{eqnarray*}
\int_{I_{p,1}} P^n f_p \dd \mu 
& = & \mu (T^{-n} I_{p, 1} |  I_{p,0}) \leq \sum_{k=0}^{n-1} \mu\left(\left.T^{-(k+1)}I_{p,1} \right|T^{-k}I_{p,0} \right) \\
& \leq & n \mu(T^{-1} I_{p, 1} |  I_{p,0}) \leq n\frac{\mu(a_p\cap I_{p,0})}{\mu(I_{p,0})} \leq m^{-1} \mu(a_p) n.
\end{eqnarray*}
There is a contradiction for some $n \geq 0$ and all large enough $p \in G$. 

\medskip
\item \textbf{Step 4 (mixing case): Computation of the parameter of the exponential distribution.}
Now, let us apply Kac's formula. For all large enough $p$, the system $(A \times \Z/2\Z, \mu \otimes (\delta_0+\delta_1)/2, T_p)$ is ergodic. 
Let $\varphi_p$ be the first return time for $T_p$ to $A \times \{0\}$ starting from $A \times \{0\}$. 
By Kac's formula,
\begin{equation*}
\int_A \varphi_p \dd \mu 
=2.
\end{equation*}
But $\varphi_p \equiv 1$ on $a_p^c$, and $\varphi_p \equiv 1+N_p \circ T$ on $a_p$. Hence,
\begin{equation*}
1 
= \int_{a_p} N_p \circ T \dd \mu 
= \int_A N_p \cdot P (\mathbf{1}_{a_p}) \dd \mu 
= \int_A (\mu (a_p) N_p) \cdot \frac{P (\mathbf{1}_{a_p})}{\mu (a_p)} \dd \mu.
\end{equation*}
Let $X$ be a limit in distribution of $(\mu (a_p) N_p)_{p \in G}$, and let $G_X \subset G$ 
be such that $(\mu (a_p) N_p)_{p\in G_X}$ converges to $X$ in distribution.
We already know that $X$ has an exponential distribution of parameter at most $\kappa$.
By Lemma~\ref{lem:EncoreUneBorneUniforme}, using the stopping time $1$, 
there exists a constant $K$ such that, for all $p \in G$, the density $P (\mathbf{1}_{a_p}) / \mu (a_p)$ 
lies in $\overline{B}_{\Lip^\infty (A)}(0,K)$. Hence, due to \eqref{eq:PerteDeMemoire}, 
the limit distribution of $(\mu (a_p) N_p)$ on $(A, \mu(a_p)^{-1} P (\mathbf{1}_{a_p}))$ 
is the limit distribution of $(\mu (a_p) N_p)$ on $(A, \mu)$, that is, the distribution of 
$X$. Furthermore, the tail of 
$(\mu (a_p) N_p)$ on $(A, \mu(a_p)^{-1} P (\mathbf{1}_{a_p}))$ is dominated by a 
decaying exponential, so all the moments converge to those of $X$. In particular, 
$\Ebb [X] = 1$, so $X$ follows an exponential distribution of parameter $1$. 

\medskip
\item \textbf{Step 5: General case.}
We have proved the proposition under the assumption that $(A, \mu, T)$ is mixing. 
Now, let us assume that the system is only ergodic, but not mixing. Let $M \geq 1$ and 
$(A_k)_{k \in \Z/M\Z}$ be as in Proposition~\ref{prop:IterationMelangeante}.
Let $(a_p, \nu_p)$ be a sequence satisfying the hypotheses of the proposition. 
Let $k \in \Z/M\Z$, and let $(\overline{\nu}_p)$ be a sequence of probability measures 
on $A_k$, absolutely continuous with respect to $\mu_k := \mu (\cdot | A_k)=M\mu(\cdot\cap A_k)$, and with densities uniformly bounded in 
$\Lip^\infty (A_k, \pi_M, \lambda)$. We define:
\begin{equation*}
\overline{a}_p 
:= \{x \in A_k : \ \exists \ 0 \leq i < M, \ T^i (x) \in a_p \}
\in \pi_M.
\end{equation*}
Note that $\mu_k (\overline{a}_p)\leq M\sum_{i=0}^ {M-1}\mu(A_{k+i}\cap a_p) = M \mu(a_p)$. Let $0 \leq i_1 < i_2 < M$. 
Then, by \cite[Lemme~1.1.13]{Gouezel:2008e}, $P^{i_2-i_1}$ maps continuously $\Lip^1$ into $\Lip^\infty$, and:
\begin{align*}
\mu_k \left(T^{-i_1} (a_p \cap A_{k+i_1}) \cap \right. & \left. T^{-i_2} (a_p \cap A_{k+i_2}) \right) \\
& \leq  \int_{A_{k+i_1}} P^{i_2-i_1} \mathbf{1}_{a_p \cap A_{k+i_1}} \cdot \mathbf{1}_{a_p \cap A_{k+i_2}} \dd \mu_{k+i_1} \\
& \leq C \mu_{k+i_1} (a_p ) \mu_{k+i_2} (a_p ).
\end{align*}
and so, by Bonferroni's inequality,
\begin{align*}
\mu_k (\overline{a}_p) 
& \geq \sum_{0 \leq i < M} \mu_k (T^{-i} (a_p \cap A_{k+i}))- \sum_{0 \leq i_1 <  i_2 < M} \mu_k \left(T^{-i_1} (a_p \cap A_{k+i_1}) \cap T^{-i_2} (a_p \cap A_{k+i_2}) \right) \\
& \geq M \mu (a_p) - \frac{CM^4}{2} \mu (a_p)^2.
\end{align*}
Hence, $\mu_k (\overline{a}_p) \sim M \mu (a_p)$.

\smallskip

Let $\overline{N}_p$ be the first hitting time of $\overline{a}_p$ for $T^M$. 
Note that $|N_p - M \overline{N}_p| \leq M-1$ on $A_k$. 
Since Proposition~\ref{prop:ConvergenceVersExponentielle} holds for mixing transformations, 
the sequence $(\mu_k (\overline{a}_p) \overline{N}_p)_{p \in G}$ defined on $(A_k, \overline{\nu}_p)$ 
converges in distribution to an exponential random variable of parameter $1$. 
But $\mu_k (\overline{a}_p) \sim M \mu (a_p)$ and $M \overline{N}_p = N_p + O(1)$, 
so $(\mu (a_p) N_p)_{p \in G}$ defined on $(A_k, \overline{\nu}_p)$ converges in distribution 
to the same exponential random variable of parameter $1$.

\smallskip

Finally, let $(\nu_p)_p$ be a sequence of probability measures on $A$ whose densities $(h_p)_p$ with respect to 
$\mu$ are bounded in $\Lip^\infty (A, \pi, \lambda)$. For any $x \in A$, let $0 \leq i < M$ be such that 
$T^i (x) \in A_k$, and set $P(x) := (x, T^i (x)) \in A \times A_k$. Then $\nu_p\mapsto\bar\nu_p:=P_* \nu_p$ is a transference plan 
between $\nu_p$ and a probability measure $\overline{\nu}_p$ on $A_k$, with density:
\begin{equation*}
\overline{h}_p
:= \frac{\de \overline{\nu}_p}{\de \mu_k}
= \frac{1}{M} \sum_{i=0}^{M-1} P^i (\mathbf{1}_{A_{k-i}} h_p).
\end{equation*}
This transference plan yields a coupling between $N_p$ (seen as a random variable on $(A, \nu_p)$) 
and $N_p$ (seen as a random variable on $(A_k, \overline{\nu}_p)$). For the sake of clarity, we shall call 
the second random variable $\widetilde{N}_p$.

\smallskip

The sequence $(\overline{h}_p)$ is bounded in $\Lip^\infty (A_k, \pi_M, \lambda)$. Hence, 
$(\mu (a_p) \widetilde{N}_p)_{p \in G}$ converges in distribution to an exponential random variable 
of parameter $1$.

\smallskip

Let $x \in A$. Let $0 \leq i < M$ be such that $T^i (x) \in A_k$. If $N_p (x) \geq i$, then 
$N_p (x) = i+ N_p (T^i (x))$, so $N_p = i+\widetilde{N}_p$. The event $\{N_p < i\}$ has probability 
$O (\mu (a_p))$, and $|\mu(a_p) N_p - \mu(a_p) \tilde{N}_p| \leq M \mu (a_p)$ outside of this event, 
so $(\mu (a_p) N_p)_{p \in G}$ has the same limit in distribution as $(\mu (a_p) \widetilde{N}_p)_{p \in G}$. 
\qedhere
\end{itemize}
\end{proof}

\begin{remark}
In our applications, $a_p$ will be the set of points $x\in A$ such that the trajectory 
$(S_n F (x))_{n \geq 0}$ of $(x,0)$ under the action of $\widetilde{T}$ 
goes to $A \times \{p\}$ before coming back to $A \times \{0\}$. If the $\Z^d$-extension 
is ergodic, then the $\Z/2\Z$-extension used in the proof is also automatically ergodic, as it is the induced 
system on $A \times \{0, p\}$. Hence, the stage in the proof above where we proved that such a 
$\Z/2\Z$-extension is ergodic for all large enough $n$ is not necessary for our applications. 
This detour however made for a cleaner and more general statement in the proposition.
\end{remark}

The following lemma allows us to control the $\Lbb^q (A, \mu)$ norm of the Birkhoff sum of an observable 
until $N_p$.

\begin{lemma}\label{lem:BorneSommeLp}

Let $(A, \pi, \lambda, \mu, T)$ be an ergodic Gibbs-Markov map. Let $G$ be a locally compact space, 
and $(a_p)_{p \in G}$ be a family of non-trivial $\sigma(\pi)$-measurable subsets such that 
$\lim_{p \to \infty} \mu(a_p) = 0$. For all $p \in G$ and $x \in A$, let $N_p (x) := \inf \{k \geq 0 : T^k (x) \in a_p\}$. 
Let $(\mu_p)_{p \in G}$ be a family of probability measures on $A$ 
such that $\mu_p \ll \mu$ for all $p$. Let $C >1$. Then for all $q \in [1, \infty)$, for all $f \in \Lbb^q (A, \mu)$, 
for all large enough $p \in G$,
\begin{equation}
\norm{\sum_{k=0}^{N_p-1} f \circ T^k}{\Lbb^q (A, \mu_p)} 
\leq C q \alpha (p) \sup_{p \in G} \norm{\frac{\de \mu_p}{\de \mu}}{\Lbb^\infty (A, \mu)} \norm{f}{\Lbb^q (A, \mu)}.
\end{equation}
\end{lemma}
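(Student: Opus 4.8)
The plan is to reduce everything to an estimate on $\norm{\sum_{k=0}^{N_p-1} |f| \circ T^k}{\Lbb^q(A,\mu)}$ with respect to $\mu$ itself, since the passage to $\mu_p$ only costs a factor $\norm{\de\mu_p/\de\mu}{\Lbb^\infty(A,\mu)}$ raised to the power $1/q \le 1$, which is bounded by $\sup_p\norm{\de\mu_p/\de\mu}{\Lbb^\infty(A,\mu)}$. So I may assume $f\ge 0$ and work under $\mu$. Write $T_{\{0\}}$-style notation aside: here the induced map is $T_{a_p}:=T^{N_p\circ T + 1}$, or more precisely I want to split the orbit of length $N_p$ into the portion before the first visit to $a_p$. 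The key tool is Minkowski's inequality in $\Lbb^q$, combined with the exponential tail bound for $N_p$ coming from Corollary~\ref{cor:Tension} and Proposition~\ref{prop:Tension}: under any probability measure whose density lies in $\Scal_1$ (or is uniformly bounded), $\mu(\,\mu(a_p)N_p > t\,) \le C e^{-\kappa t}$ uniformly in $p$, and $\alpha(p)^{-1}=\mu(a_p)$ (or $\mu(N_p>0)$, but for this lemma one uses $\mu(a_p)$; by Theorem~\ref{thm:ConvergenceNp} these are comparable, though for the lemma as stated $\alpha(p)\sim\mu(a_p)^{-1}$ is all that is needed in the large-$p$ regime).

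First I would decompose $\sum_{k=0}^{N_p-1} f\circ T^k$ as a sum over the returns to $a_p$. Introduce the first-return time $\varphi_p$ of $T$ to $a_p$ and the induced map $T_{a_p}$ on $a_p$; write, for $x$ with $N_p(x)\ge 1$ (equivalently the orbit eventually hits $a_p$, which is a.s. the case by ergodicity/conservativity), $N_p(x)-1$ as a hitting time on $a_p$ after the first landing. Actually the cleanest route: condition on the value of the "last block". Using $P(x)=(x,T^{i}x)$-type couplings as in the proof of Proposition~\ref{prop:ConvergenceVersExponentielle} is overkill; instead I would directly bound
\[
\norm{\sum_{k=0}^{N_p-1} f\circ T^k}{\Lbb^q(A,\mu)}
\le \sum_{j\ge 0} \norm{\mathbf{1}_{\{N_p>j\}}\, f\circ T^j}{\Lbb^q(A,\mu)}
\]
by Minkowski. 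Now $\mathbf{1}_{\{N_p>j\}}=\prod_{k=0}^{j}\mathbf{1}_{a_p^c}\circ T^k$, and I want a bound of the form $\norm{\mathbf{1}_{\{N_p>j\}}\, f\circ T^j}{\Lbb^q}^q = \int_A \mathbf{1}_{\{N_p>j\}} |f|^q\circ T^j\dd\mu$. Pushing the indicator forward by $j$ steps: $\int_A \mathbf{1}_{\{N_p>j\}}|f|^q\circ T^j\dd\mu = \int_A P^j(\mathbf{1}_{\{N_p>j\}})\,|f|^q\dd\mu$, and $P^j(\mathbf{1}_{\{N_p>j\}}) = P_{h_p}^j(\mathbf{1})$ where $h_p=\mathbf{1}_{a_p^c}$. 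By Proposition~\ref{prop:Tension} (applied with $K=1$, since $h_p\in\Scal_1$), $\norm{P_{h_p}^j(\mathbf{1})}{\Lbb^1}\le C e^{-\kappa\mu(a_p)j}$; and one also has the trivial pointwise bound $P_{h_p}^j(\mathbf{1})\le P^j(\mathbf{1})=\mathbf{1}$ (in $\Lbb^\infty$). Interpolating, $\int_A P_{h_p}^j(\mathbf{1})\,|f|^q\dd\mu \le \norm{|f|^q}{\Lbb^\infty}\cdot C e^{-\kappa\mu(a_p)j}$ is too crude since $f\notin\Lbb^\infty$; instead I would use that $P_{h_p}^j(\mathbf{1})$ is a decreasing-in-$j$ sequence of densities, and more carefully extract a factor via $\norm{P_{h_p}^j(\mathbf{1})}{\Lbb^\infty}\le \norm{P_{h_p}^{j-1}(\mathbf{1})}{\Lbb^\infty}$... this is the delicate point. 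The honest approach: use that $\int_A P_{h_p}^j(\mathbf{1})|f|^q\dd\mu \le \norm{f}{\Lbb^q}^q$ for all $j$ (since $P_{h_p}^j(\mathbf{1})\le 1$), so that $\norm{\mathbf{1}_{\{N_p>j\}}f\circ T^j}{\Lbb^q}\le \norm{f}{\Lbb^q}$, but then the sum over $j$ diverges unless we truncate. Truncate at $j\approx \mu(a_p)^{-1}$ times a large constant: for $j\le J:=\lceil C'\mu(a_p)^{-1}\rceil$ use the trivial bound to get $\sum_{j\le J}\norm{f}{\Lbb^q}\le (J+1)\norm{f}{\Lbb^q} \lesssim \mu(a_p)^{-1}\norm{f}{\Lbb^q}$; for $j>J$ one must gain decay.

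For the tail $j>J$, here is where the exponential control really enters. For $j>J$, write $\mathbf{1}_{\{N_p>j\}}=\mathbf{1}_{\{N_p>J\}}\cdot(\mathbf{1}_{\{N_p>j\}}$ with the remaining $j-J$ steps$)$, i.e. $P^j(\mathbf{1}_{\{N_p>j\}})=P_{h_p}^{j-J}\big(P_{h_p}^{J}(\mathbf{1})\big)$, so $\int_A P^j(\mathbf{1}_{\{N_p>j\}})|f|^q\dd\mu\le \norm{P_{h_p}^{J}(\mathbf{1})}{\Lbb^\infty}\int_A P_{h_p}^{j-J}(\mathbf{1})|f|^q\dd\mu\le \norm{P_{h_p}^{J}(\mathbf{1})}{\Lbb^\infty}\norm{f}{\Lbb^q}^q$. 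The quantity $\norm{P_{h_p}^J(\mathbf{1})}{\Lbb^\infty}$ is controlled by Proposition~\ref{prop:DecroissanceExponentielle}-type arguments together with Proposition~\ref{prop:Tension}: one has $\norm{P_{h_p}^J(\mathbf{1})}{\Lbb^1}\le Ce^{-\kappa C'}$, and by the regularization property of the Gibbs--Markov transfer operator ($P$ maps $\Lip^1\to\Lip^\infty$, cf.\ \cite[Lemme~1.1.13]{Gouezel:2008e}) combined with quasi-compactness, the $\Lbb^\infty$ norm is comparable to the $\Lbb^1$ norm up to a constant (after one more step); hence $\norm{P_{h_p}^{J}(\mathbf{1})}{\Lbb^\infty}\le C''e^{-\kappa C'}$, which can be made $<\varepsilon$ by choosing $C'$ large. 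More efficiently, iterate: for $j=J+mJ'$ with $J'\asymp\mu(a_p)^{-1}$ one gets geometric decay with ratio $<1$, and summing the geometric series over $m$ contributes another $O(\mu(a_p)^{-1})$, with a constant that can be absorbed into the prefactor $Cq$. Collecting: $\norm{\sum_{k=0}^{N_p-1}f\circ T^k}{\Lbb^q(A,\mu)}\le \tilde C\,\mu(a_p)^{-1}\norm{f}{\Lbb^q(A,\mu)}$ for large $p$, and since $\alpha(p)\sim\mu(a_p)^{-1}$ (so $\mu(a_p)^{-1}\le (1+o(1))\alpha(p)$), the bound becomes $\le C q\,\alpha(p)\norm{f}{\Lbb^q(A,\mu)}$ for $p$ large, with the extra factor $q$ coming from a more careful bookkeeping of the Minkowski sum when one uses $\Lbb^q$-norms of the blocks rather than the crude truncation (the factor $q$ is the familiar one from $\norm{\sum_{j}X_j}{\Lbb^q}\le C q\,(\text{something})$ for sums with exponential tails, e.g.\ via the Rosenthal/Hoffmann-J\o rgensen inequality, or simply from $\sum_j e^{-\kappa j/q}\asymp q$ when one distributes the exponential gain as $e^{-\kappa\mu(a_p)j/q}$ across the $q$-th power). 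Finally, multiply by $\sup_p\norm{\de\mu_p/\de\mu}{\Lbb^\infty}^{1/q}\le\sup_p\norm{\de\mu_p/\de\mu}{\Lbb^\infty}$ (as $\norm{\de\mu_p/\de\mu}{\Lbb^\infty}\ge 1$) to pass from $\mu$ to $\mu_p$. The main obstacle is the bookkeeping that produces exactly the linear-in-$q$ constant while keeping everything uniform in $p$; the exponential-tail input from Proposition~\ref{prop:Tension} and the $\Lip^1\to\Lip^\infty$ smoothing are the two structural facts that make it work, and neither is hard to invoke — the care is entirely in the summation.
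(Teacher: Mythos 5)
Your decomposition is genuinely different from the paper's. You expand $\sum_{k=0}^{N_p-1}f\circ T^k=\sum_{j\ge 0}\mathbf{1}_{\{N_p>j\}}f\circ T^j$ term by term via Minkowski, truncate at $J\asymp\mu(a_p)^{-1}$, and gain decay block by block from $\norm{P_{h_p}^{mJ}(\mathbf{1})}{\Lbb^\infty}$. The paper instead shifts the time origin by $(1+\varepsilon)N$ steps (after enlarging $N_p$ to a stopping time $N_p'$ that ignores the window $[N,2N)$), obtains a single self-improving inequality of the form $\norm{S}{\Lbb^q}\le(1+\varepsilon)N\norm{f}{\Lbb^q}+\norm{P^{(1+\varepsilon)N}(\mathbf{1}_{\{N_p\ge N\}})}{\Lbb^\infty}^{1/q}\norm{S}{\Lbb^q}$ for the truncated sum $S$, and absorbs the second term into the left-hand side. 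The point you flag as delicate --- comparing $\norm{P_{h_p}^{J}(\mathbf{1})}{\Lbb^\infty}$ with the corresponding $\Lbb^1$ quantity --- is handled cleanly by Lemma~\ref{lem:EncoreUneBorneUniforme}: since $P_{h_p}^{J}(\mathbf{1})=P^{J}(\mathbf{1}_{\{N_p\ge J\}})$ and $\{N_p\ge J\}$ is $\sigma(\pi_J)$-measurable, the constant stopping time $\varphi\equiv J$ gives $\norm{P^{J}(\mathbf{1}_{\{N_p\ge J\}})}{\Lip^\infty}\le K\,\mu(N_p\ge J)$ directly; with Corollary~\ref{cor:Tension} your geometric block decay then goes through and yields a bound of order $q\,\alpha(p)\norm{f}{\Lbb^q}$.

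The gap is the constant. The lemma fixes $C>1$ in advance and asserts the prefactor $Cq$; your bookkeeping produces a prefactor of order $K^{1/q}q/\kappa$ (or $K^{1/q}q$ if you upgrade the tightness bound $Ce^{-\kappa t}$ to the exact limit $e^{-t}$ via Proposition~\ref{prop:ConvergenceVersExponentielle}), where $K\ge 1$ is the distortion constant of Lemma~\ref{lem:EncoreUneBorneUniforme}. Your claim that this ``can be absorbed into the prefactor $Cq$'' is not justified: $K$ is a fixed constant of the Gibbs-Markov map and $K^{1/q}$ need not be smaller than $C$ (take $q$ and $C$ both close to $1$). To remove it one needs the paper's extra ingredient: after the distortion bound $\norm{P^{N-1}(\mathbf{1}_{\{N_p\ge N\}})}{\Lip^\infty}\le K$, apply $\varepsilon N$ further iterations of $P$ and use the spectral gap (Proposition~\ref{prop:DecroissanceExponentielle}) to flatten this density to $\mu(N_p\ge N)\bigl(1+O(\rho^{\varepsilon N/M})\bigr)$ in $\Lbb^\infty$, so that the multiplicative constant tends to $1$; combined with $\mu\bigl(N_p\ge\varepsilon\alpha(p)\bigr)\to e^{-\varepsilon}$ and $\varepsilon/(1-e^{-\varepsilon/q})\to q$ this gives exactly $Cq$ for large $p$. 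For the only place the lemma is invoked (Lemma~\ref{lem:sommabi}), a $q$-dependent constant suffices, so your argument proves enough for the paper's applications, but not the statement as written.
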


\begin{proof}

Let $\varepsilon >0$. Let $f \in \Lbb^q (A, \mu)$, which we can assume without loss of generality to be non-negative. 
Fix $p \in G$ and $\Ncal > (1+\varepsilon) N > 0$, such that $\varepsilon N$ is a multiple of the period $M$ of the Gibbs-Markov map.
Define $N_p' (x) := \inf \{n \geq 0: \ n \notin [N, 2N), \ T^n (x) \in a_p\} \geq N_p$. Then:
\begin{align*}
\norm{\sum_{k=0}^{\Ncal \wedge N_p-1} f \circ T^k}{\Lbb^q (A, \mu)} 
& \leq \norm{\sum_{k=0}^{\Ncal \wedge N_p'-1} f \circ T^k}{\Lbb^q (A, \mu)} \\
& \leq \norm{\sum_{k=0}^{((1+\varepsilon)N) \wedge N_p'-1} f \circ T^k}{\Lbb^q (A, \mu)} + \norm{\sum_{k=(1+\varepsilon)N}^{\Ncal \wedge N_p'-1} f \circ T^k}{\Lbb^q (A, \mu)} \\
& \leq (1+\varepsilon) N \norm{f}{\Lbb^q (A, \mu)} + \norm{\sum_{k=2N}^{(2N+\Ncal) \wedge N_p'-1} f \circ T^k}{\Lbb^q (A, \mu)}.
\end{align*}
Now, focus on the right hand-side. We get:
\begin{equation*}
\Ebb_\mu \left[ \left( \sum_{k=2N}^{((1+\varepsilon)N+\Ncal) \wedge N_p'-1} f \circ T^k \right)^p \right] 
= \Ebb_\mu \left[ \left( \sum_{k=0}^{\Ncal \wedge N_p-1} f \circ T^k \right)^p P^{(1+\varepsilon)N} (\mathbf{1}_{N_p \geq N}) \right].
\end{equation*}
By Lemma~\ref{lem:EncoreUneBorneUniforme}, applied to the stopping time whose value is $N-1$ if $N_p < N$ (and $+\infty$ otherwise), 
and to the set $A := \{N_p < N\}$, we get $\norm{P^{N-1} (\mathbf{1}_{N_p \geq N})}{\Lip^\infty (A)} \leq K$. Hence:
\begin{align*}
\norm{P^{(1+\varepsilon)N} (\mathbf{1}_{N_p \geq N})}{\Lbb^\infty (A, \mu)} 
& \leq \left(1+KC\rho^{-\frac{\varepsilon N}{M}}\right) \norm{P^{N-1} (\mathbf{1}_{N_p \geq N})}{\Lip^1 (A)} \\
& = \left(1+KC\rho^{-\frac{\varepsilon N}{M}}\right) \norm{\mathbf{1}_{N_p \geq N}}{\Lbb^1 (A, \mu)} \\
& = \left(1+KC\rho^{-\frac{\varepsilon N}{M}}\right) \mu (N_p \geq N),
\end{align*}
whence:
\begin{equation*}
\norm{\sum_{k=0}^{\Ncal \wedge N_p-1} f \circ T^k}{\Lbb^q (A, \mu)} 
\leq (1+\varepsilon)N \norm{f}{\Lbb^q (A, \mu)} + \left(1+KC\rho^{-\frac{\varepsilon N}{M}}\right)^{\frac{1}{q}} \mu (N_p \geq N)^{\frac{1}{q}} \norm{\sum_{k=0}^{\Ncal \wedge N_p-1} f \circ T^k}{\Lbb^q (A, \mu)}.
\end{equation*}
We choose $N (p) \sim \varepsilon \alpha (p)$. Then $\rho^{-\frac{\varepsilon^2 N(p)}{M}}$ converges to $0$, 
while by Proposition~\ref{prop:ConvergenceVersExponentielle}, $\mu (N_p \geq N(p))$ 
converges to $e^{-\varepsilon} < 1$. For all large enough $p$, this yields:
\begin{equation*}
\norm{\sum_{k=0}^{\Ncal \wedge N_p-1} f \circ T^k}{\Lbb^q (A, \mu)} 
\leq \frac{\varepsilon \alpha (p) (1+\varepsilon+o(1))}{1-e^{-\frac{\varepsilon}{q}}} \norm{f}{\Lbb^q (A, \mu)}.
\end{equation*}
The $o(1)$ is independent from $\Ncal$. We choose $\varepsilon$ small enough that $\varepsilon (1+ 2\varepsilon) < C q (1-e^{-\frac{\varepsilon}{q}})$, 
and then take the limit as $\Ncal$ goes to infinity. 
Finally, notice that $\de \mu_p / \de \mu$ is uniformly bounded (in $\Lbb^\infty (A, \mu)$ norm and in $p$), 
so that this inequality, up to the constant $\sup_{p \in G} \norm{\frac{\de \mu_p}{\de \mu}}{\Lbb^\infty (A, \mu)}$, 
extends to $\norm{\sum_{k=0}^{N_p-1} f \circ T^k}{\Lbb^q (A, \mu_p)}$.
\end{proof}

\subsection{Hitting probabilities and limit theorems}
\label{subsec:ProbabiliteAtteinte}

In this subsection, we work with ergodic, discrete Abelian, Markov extensions of Gibbs-Markov maps. Let $G$ be an 
infinite countable Abelian group. Let $(A, \pi, \lambda, \mu, T)$ be a Gibbs-Markov map, and let 
$F : \ A \to G$ be $\sigma (\pi)$-measurable. We shall assume that the associated extension 
$(\widetilde{A}, \tilde{\mu}, \widetilde{T})$ is conservative and ergodic.

\smallskip

First, we shall relate the probability that an excursion from $0$ hits a specific point $p$ 
with the moments of the time spent in $p$. This is where the results from Subsections~\ref{subsec:Tightness} 
and \ref{subsec:ConvergenceLoiPetitesBoules} are used directly.

\smallskip

For $p \in G$, let $A_p := \{x \in A : \widetilde{T}_{\{0, p\}} (x,0) \in A \times \{p\}\}$ be the set of points $x$ such that 
the excursion starting from $(x,0)$ reaches $A \times \{p\}$ before $A \times \{0\}$. Let $\alpha (p) := \mu (A_p)^{-1}$. 
The function $\alpha$ is well-defined because the extension is conservative and ergodic. The next lemma asserts that it 
converges to infinity as $p$ goes to infinity.

\begin{lemma}
Let $G$ be an infinite countable Abelian group.
Let $(\widetilde{A}, \tilde{\mu}, \widetilde{T})$ be a conservative and ergodic Markov $G$-extension 
of a measure-preserving dynamical system $(A, \mu, T)$. Then $\lim_{p \to \infty} \alpha (p) = + \infty$.
\end{lemma}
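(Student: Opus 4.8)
The plan is to reduce $\alpha(p)^{-1}=\mu(A_p)$ to a statement about the \emph{range} of a single excursion from $A\times\{0\}$, and then to conclude by contradiction using conservativity together with one soft measure‑theoretic fact. No Markov structure, Kac's formula, or translation symmetry of the extension is needed.

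First I would record the reformulation. For $x\in A$ write $\varphi_{\{0\}}(x)=\inf\{k>0:\ S_k(x)=0\}$ for the return time of $\widetilde T$ to $A\times\{0\}$, and
\[
V(x):=\{S_k(x)\ :\ 0\le k<\varphi_{\{0\}}(x)\}\subset G
\]
for the set of sites visited by the excursion issued from $(x,0)$. Since $\widetilde T$ is conservative and $\tilde\mu(A\times\{0\})=\mu(A)<+\infty$, Poincar\'e recurrence gives $\varphi_{\{0\}}(x)<+\infty$ for $\mu$-a.e.\ $x$; as $|V(x)|\le\varphi_{\{0\}}(x)$, the set $V(x)$ is finite for $\mu$-a.e.\ $x$. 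On the other hand, for $p\in G\setminus\{0\}$ one has $\widetilde T_{\{0,p\}}(x,0)\in A\times\{p\}$ precisely when the $\widetilde T$-orbit of $(x,0)$ meets $A\times\{p\}$ strictly before returning to $A\times\{0\}$, i.e.\ precisely when $p\in V(x)$ (recall $S_0(x)=0\neq p$). Hence $\alpha(p)^{-1}=\mu(A_p)=\mu(\{x:\ p\in V(x)\})$, and it suffices to prove that this quantity tends to $0$ as $p\to\infty$.

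Next I would argue by contradiction. If the conclusion fails, there exist $\delta>0$ and pairwise distinct points $p_1,p_2,\dots\in G$ leaving every finite subset of $G$ (so $p_n\to\infty$) with $\mu(E_n)\ge\delta$ for all $n$, where $E_n:=\{x:\ p_n\in V(x)\}$. For every $N$,
\[
\mu\Big(\bigcup_{n\ge N}E_n\Big)\ \ge\ \sup_{n\ge N}\mu(E_n)\ \ge\ \delta,
\]
and the sets $\bigcup_{n\ge N}E_n$ decrease with $N$, so by continuity from above of the probability measure $\mu$ we get $\mu\big(\limsup_n E_n\big)=\lim_{N}\mu\big(\bigcup_{n\ge N}E_n\big)\ge\delta>0$. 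But for $x\in\limsup_n E_n$ the set $V(x)$ contains $p_n$ for infinitely many $n$, hence is infinite since the $p_n$ are distinct, contradicting the fact that $V(x)$ is finite for $\mu$-a.e.\ $x$. Therefore $\mu(A_p)\to 0$, i.e.\ $\alpha(p)\to+\infty$.

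I do not anticipate a genuine obstacle; the subtlety is only in resisting the temptation to seek quantitative control (the identity $\Ebb_\mu[N_p]=1$ from Kac's formula, or the $G$-equivariance $\widetilde T\sigma_g=\sigma_g\widetilde T$ which yields $\alpha(p)=\alpha(-p)$): the pair ``excursions are a.s.\ finite'' and ``a $\limsup$ of sets of measure $\ge\delta$ has measure $\ge\delta$'' already closes the argument. Ergodicity of the extension is used only, as noted just before the statement, to ensure $\mu(A_p)>0$ for every $p$ so that $\alpha$ is a genuine $(1,+\infty)$-valued function; it plays no role in the limit itself.
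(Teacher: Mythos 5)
Your proof is correct and rests on exactly the same fact as the paper's: the excursion from $A\times\{0\}$ is a.s.\ finite (by conservativity), hence its range is a.s.\ finite, and continuity of the probability measure $\mu$ then forces $\mu(A_p)\to 0$. The paper phrases this directly via an exhaustion $(K_n)$ of $G$ (so that $A_p\subset\{N>n\}$ for $p\notin K_n$) rather than by contradiction with a $\limsup$, but the two arguments are the same in substance.
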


\begin{proof}
Let $(K_n)_{n \geq 0}$ be an exhaustion of $G$ by an increasing sequence of finite subsets of $G$. 
For all $x \in A$ such that $\varphi_{\{0\}} (x)$ is finite, set:
\begin{equation*}
N (x) 
:= \max_{0 \leq k < \varphi_{\{0\}} (x)} \min \{n \geq 0 : \ \widetilde{T}^k (x,0) \in A \times K_n\}.
\end{equation*}
Then $A = \bigcup_{n \geq 0} N^{-1} (n)$ up to set of measure 0, so that $\lim_{n \to + \infty} \mu \left(N>n \right) 
= 0$.
But, if $p \notin K_n$, then $A_p \subset \{ N>n\}$, 
so $\lim_{n \to + \infty} \sup_{p \in K_n^c} \mu(A_p) = 0$,
i.e. $\lim_{n \to + \infty} \inf_{p \in K_n^c} \alpha(p) = +\infty$.
\end{proof}

Let us go back to the study of the local time. Recall that, for $p \in G $ and $x \in A$, we set:
\begin{equation*}
f_{p,\{0\}} (x) 
:= N_p (x) -1 
= \left( \sum_{k = 0}^{\varphi_{\{0\}} (x)-1} \mathbf{1}_{\{S_k F (x) = p\}} \right)-1,
\end{equation*}
which is the difference between the time spent in $A \times \{p\}$ and $A \times \{0\}$ in the excursion starting from 
$(x,0)$. Our next goal in this subsection is to evaluate the tail and moments of $f_{p,\{0\}}$ as $p$ goes to infinity.

\begin{proposition}\label{prop:DistributionLp}

Let $(A, \pi, d, \mu, T)$ be a Gibbs-Markov map, and $G$ be a countable Abelian group. Let 
$(\widetilde{A}, \tilde{\mu}, \widetilde{T})$ be a conservative and ergodic Markov $G$-extension of $(A, \pi, d, \mu, T)$. 

\smallskip

The conditional distributions $\alpha(-p)^{-1} N_p | \{N_p > 0\}$ have exponential tails, 
uniformly in $p$. In addition, $\alpha(-p)^{-1} N_p$, seen as a random variable on $(A, \mu (\cdot | A_p))$, 
converges in distribution and in moments to an exponential distribution of parameter $1$.
\end{proposition}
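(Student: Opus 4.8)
The plan is to deduce Proposition~\ref{prop:DistributionLp} from the two general results about small-target hitting times established just above: the tightness estimate (Corollary~\ref{cor:Tension}, via Proposition~\ref{prop:Tension}) and the convergence to an exponential law (Proposition~\ref{prop:ConvergenceVersExponentielle}). The key observation is that, if we induce the extension $(\widetilde A,\tilde\mu,\widetilde T)$ on $A\times\{0\}\simeq A$, getting the Gibbs-Markov map $(A,\pi_{\{0\}},\lambda,\mu,\widetilde T_{\{0\}})$ (this is a Gibbs-Markov map by Proposition~\ref{prop:ExtensionsGMGM}), then $N_p-1$ conditioned on $\{N_p>0\}$ is exactly the first hitting time, for the dynamics $\widetilde T_{\{0\}}$, of the $\sigma(\pi_{\{0\}})$-measurable set $A_p$. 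Indeed, once the excursion has reached $A\times\{p\}$ (an event of probability $\mu(A_p)=\alpha(p)^{-1}$), the number of additional visits to $A\times\{p\}$ before returning to $A\times\{0\}$ is governed by how many times the $\widetilde T_{\{0\}}$-orbit returns to the set of points whose excursion reaches $p$ — i.e.\ by $N_{0,p}$ computed from $A_p$ instead of from the origin, under the appropriate starting measure.

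First I would set up this dictionary carefully: identify $\{N_p>0\}=A_p$, note $\mu(A_p)\to 0$ as $p\to\infty$ (this is the preceding lemma), and check that the conditional measure $\mu(\cdot\mid A_p)$ has density $\alpha(p)\mathbf 1_{A_p}$ with respect to $\mu$ — which is \emph{not} uniformly bounded in $\Lip^\infty$. So one cannot apply the propositions directly to $\mu(\cdot\mid A_p)$; instead one pushes this conditional measure forward one step by $\widetilde T_{\{0\}}$ (or uses $P^{(\varphi_{\{0\}})}\mathbf 1_{A_p}/\mu(A_p)$), whose $\Lip^\infty$ norm is uniformly bounded by Lemma~\ref{lem:EncoreUneBorneUniforme}. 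Second, one needs the symmetrization $\alpha(p)\sim\alpha(-p)$, so that the renormalization $\alpha(-p)^{-1}N_p$ is the same (asymptotically) as $\mu(A_p)N_p$; this will follow from the asymptotic relations proved in Theorem~\ref{thm:ConvergenceNp} — or, self-containedly, by noting that the $\Z/2\Z$-extension obtained by recording parity of the count of visits to $\{0,p\}$ has return times to $A\times\{0\}$ and to $A\times\{p\}$ with equal mean $2$ by Kac, whence $\mu(A_p)=\mu(A_{-p})$ (the proof being literally that of Step~3/4 in Proposition~\ref{prop:ConvergenceVersExponentielle}). Third, apply Corollary~\ref{cor:Tension} with $a_p=A_p$ and the family of measures $\mu_p$ of uniformly bounded density described above to get the uniform exponential tail bound for $\alpha(-p)^{-1}N_p\mid\{N_p>0\}$; then apply Proposition~\ref{prop:ConvergenceVersExponentielle} with the same data to get convergence in distribution to an exponential of parameter $1$. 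Convergence in moments then follows from the uniform exponential tail (uniform integrability of all powers).

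The main obstacle, and the step requiring the most care, is the reduction from "number of visits to $A\times\{p\}$ in an excursion from $A\times\{0\}$" to "hitting time of $A_p$ for the induced map $\widetilde T_{\{0\}}$", because the bookkeeping of which excursion one is in, and which starting distribution is relevant after the first visit to $p$, is delicate: one must verify that, conditioned on $\{N_p>0\}$, the position $x$ after the excursion first hits $A\times\{p\}$ has a distribution whose density (with respect to $\mu$ on the relevant ergodic component) is uniformly bounded in $\Lip^\infty$, uniformly in $p$ — this is precisely where Lemma~\ref{lem:EncoreUneBorneUniforme} and the Gibbs-Markov structure of the induced system (Proposition~\ref{prop:ExtensionsGMGM}) enter, and one should be explicit that the relevant $\sigma(\pi_{\{0\}})$-measurability of $A_p$ holds because $F$ is $\sigma(\pi)$-measurable, so that membership in $A_p$ is determined by the cylinder of the current point. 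Once this identification is in place, the rest is a direct invocation of the two general propositions, plus the symmetry $\alpha(p)\sim\alpha(-p)$, and the argument closes.

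Concretely I would structure the proof as: Step~1, identify $\{N_p>0\}=A_p$ and record $\mu(A_p)\to 0$; Step~2, establish $\mu(A_p)=\mu(A_{-p})$ via the Kac argument on the $\Z/2\Z$-extension; Step~3, observe that on $\{N_p>0\}$ the shifted variable $N_p-1$ is the $\widetilde T_{\{0\}}$-hitting time of $A_p$ started from a measure of uniformly bounded density (Lemma~\ref{lem:EncoreUneBorneUniforme}); Step~4, apply Corollary~\ref{cor:Tension} to obtain $\sup_{p}\mu\bigl(\mu(A_p)N_p>t\mid N_p>0\bigr)\le Ce^{-\kappa t}$; Step~5, apply Proposition~\ref{prop:ConvergenceVersExponentielle} to the induced Gibbs-Markov map $(A,\pi_{\{0\}},\lambda,\mu,\widetilde T_{\{0\}})$, the sets $a_p=A_p$, and the measures from Step~3, to conclude convergence in distribution of $\mu(A_p)N_p\mid\{N_p>0\}$ — equivalently $\alpha(-p)^{-1}N_p\mid\{N_p>0\}$ by Step~2 — to $\mathrm{Exp}(1)$; Step~6, upgrade to convergence of all moments using the uniform exponential tail from Step~4.
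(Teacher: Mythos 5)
Your overall architecture matches the paper's: realize $N_p-1$ on $\{N_p>0\}$ as the hitting time of a small $\sigma(\pi_{\{0\}})$-measurable set for an induced Gibbs-Markov map, start from an image measure whose density is uniformly bounded in $\Lip^\infty$ by Lemma~\ref{lem:EncoreUneBorneUniforme}, and conclude via Corollary~\ref{cor:Tension} and Proposition~\ref{prop:ConvergenceVersExponentielle}. However, the reduction in your Step~3 identifies the wrong target set, and this is precisely the step you flag as the delicate one. Once the excursion has reached $A\times\{p\}$ at $(T_p(x),p)$, the run of further visits to $A\times\{p\}$ stops at the first point $(y,p)$ whose $\widetilde{T}$-orbit reaches $A\times\{0\}$ before returning to $A\times\{p\}$; translating by $-p$, this is the condition $y\in A_{-p}$, not $y\in A_p$. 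Thus $N_p-1$ on $A_p$ is the hitting time of $A_{-p}$ for the induced map on $A\times\{p\}$ (which coincides with $\widetilde{T}_{\{0\}}$ after translation so long as one has not yet exited), under the starting measure $\alpha(p)P_{\{0,p\}}\mathbf{1}_{A_p}\dd\mu$ — note the first arrival at $A\times\{p\}$ is a step of $\widetilde{T}_{\{0,p\}}$, not of $\widetilde{T}_{\{0\}}$, and the density is bounded in $\Lip^\infty(A,\pi_{\{0,p\}},\lambda)$, hence in $\Lip^\infty(A,\pi_{\{0\}},\lambda)$ since $\pi_{\{0\}}\leq\pi_{\{0,p\}}$. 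This is exactly why the proposition is normalized by $\alpha(-p)^{-1}=\mu(A_{-p})$: with the correct set, the two general results apply directly and no symmetry between $p$ and $-p$ is needed at this stage.

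Your proposed patch (Step~2) does not close the gap as written. Kac's formula on the induced system over $A\times\{0,p\}$ gives that the return times to $A\times\{0\}$ and to $A\times\{p\}$ both have mean $2$, i.e.\ $\Ebb_\mu[N_p]=\Ebb_\mu[N_{-p}]=1$; this is an identity for $\mu(A_{\pm p})\,\Ebb_\mu[N_{\pm p}\mid N_{\pm p}>0]$ and does not yield $\mu(A_p)=\mu(A_{-p})$ without first knowing $\Ebb_\mu[N_p\mid N_p>0]=\Ebb_\mu[N_{-p}\mid N_{-p}>0]$ — which in the paper is only obtained (asymptotically) as a \emph{consequence} of the present proposition, so invoking Theorem~\ref{thm:ConvergenceNp} here would be circular. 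The exact equality $\mu(A_p)=\mu(A_{-p})$ does in fact hold, but by a different (flux) argument: $T_{\{0,p\}}$ preserves $\mu\otimes(\delta_0+\delta_p)$, the sets $A_p\times\{0\}=T_{\{0,p\}}^{-1}(A\times\{p\})\cap(A\times\{0\})$ and $A_{-p}\times\{p\}=T_{\{0,p\}}^{-1}(A\times\{0\})\cap(A\times\{p\})$ are the two "crossing" pieces, and they must have equal measure because $A\times\{0\}$ and $A\times\{p\}$ do. Either supply that argument, or — more simply — correct the target set to $A_{-p}$ and delete Step~2 altogether.
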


\begin{proof}

The random variable $N_p (x)$ counts the time the process starting from $(x,0)$ 
spends in $p$ before going back to $0$. On $A_p$, it is positive. For $x$ in $A_p$, 
let $T_p(x)$ be such that $\widetilde{T}_{\{0,p\}} (x,0) = (T_p (x),p)$. Then, on $A_p$,
\begin{align*}
N_p (x) 
& = \inf \{k \geq 1 : \ \widetilde{T}_{\{0,p\}}^k (T_p(x),p) \in A\times\{0\} \} \\
& = 1+\inf \{k \geq 0 : \ \widetilde{T}_{\{0,p\}}^k (T_p(x),p) \in A_{-p} \times\{p\} \}.
\end{align*}

But, if $y \notin A_{-p}$, then the first return time of $(\widetilde{T}^k (y,p))$ to $A \times \{0,p\}$ 
is the first return time of $(\widetilde{T}^k (y,p))$ to $A \times \{p\}$. Hence, 
$\widetilde{T}_{\{0,p\}} (y,p) = (\widetilde{T}_{\{0\}} (y),p)$, and:
\begin{align*}
N_p (x) 
= 1+\inf \{k \geq 0 : \ \widetilde{T}_{\{0\}}^k (T_p(x))  \in A_{-p} \}.
\end{align*}
Let $N_p^{(0)}$ be the hitting time of $A_{-p}$ for the process $(\widetilde{T}_{\{0\}}^k (x))_{k \geq 0}$. Then the random variable $N_p$ 
seen on $(A, \alpha (p) \mathbf{1}_{A_p} \dd \mu)$ has the same distribution as the random variable $\mathbf{1}+N_p^{(0)}$ seen on 
$(A, \alpha (p) P_{\{0,p\}} \mathbf{1}_{A_p} \dd \mu)$. We write $\pi_{\{0\}}:=\pi_{\varphi_{\{0\}}}$. In addition, each $A_{-p}$ is non trivial (as the extension is 
conservative and ergodic), and each $A_{-p}$ is $\sigma (\pi_{\{0\}})$-measurable (because $\sigma (\pi_{\{0\}})$ 
contains all the information about the sites visited in an excursion, and in particular whether $-p$ is visited or not).

\smallskip

Due to Lemma~\ref{lem:EncoreUneBorneUniforme} with the stopping time $\varphi_{\{0,p\}}$, 
the sequence of densities $(\alpha (p) P_{\{0,p\}} \mathbf{1}_{A_p})_{p \in G \setminus \{0\}}$ 
is uniformly bounded in $\Lip^\infty (A, \pi_{\{0,p\}},\lambda)$. Since $\pi_{\{0\}} \leq \pi_{\{0,p\}}$, 
it is also uniformly bounded in $\Lip^\infty (A, \pi_{\{0\}},\lambda)$.
By Proposition~\ref{prop:ConvergenceVersExponentielle}, the sequence of random variables $\mu(A_{-p}) N_p(\cdot)$ seen on 
$(A, \alpha (p) P_{\{0,p\}} \mathbf{1}_{A_p} \dd \mu)$ converges in distribution to an exponential random variable of 
parameter $1$. By Corollary~\ref{cor:Tension}, this sequence of random variables is also exponentially tight, 
so it converges in moments, which proves the first part of Proposition~\ref{prop:DistributionLp}.
Since $(\alpha(-p))_{p \in G}$ goes to infinity as $p$ goes to infinity, $(\alpha(-p)^{-1} N_p)_{p \in G}$, with respect to $(\mu (\cdot | A_p))_{p \in G}$, 
converges in distribution and in moments to an exponential 
random variable of parameter $1$.
\end{proof}

Proposition~\ref{prop:DistributionLp} yields directly a rough description of the distribution of $f_{p,\{0\}}$ for large $p$'s: 
it is $-1$ with probability $1-\alpha (p)^{-1}$, and an exponential random variable of parameter $\alpha (-p)$ 
on the remaining set. This is part of Theorem~\ref{thm:ConvergenceNp}.

\begin{proof}[Proof of Theorem~\ref{thm:ConvergenceNp}]

Let $(\widetilde{A},\tilde{\mu},\widetilde{T})$ be a conservative and ergodic Markov 
$\Z^d$-extension of a Gibbs-Markov map $(A, \mu, T)$. We prove the second item, then the 
third, and we finish by the first item.

\smallskip

Let $p \in \Z^d \setminus \{0\}$. The dynamical system $(A \times \{0,p\}, \mu \otimes (\delta_0+\delta_p)/2, T_{\{0,p\}})$ 
is ergodic. In this new system, by Kac's formula,
\begin{equation*}
\int_{A \times \{0\}} (1+N_p) \dd \frac{\mu \otimes (\delta_0+\delta_p)}{2} 
= \int_{A \times \{0\}} \varphi_{\{0\}} \dd \frac{\mu \otimes (\delta_0+\delta_p)}{2} 
= 1.
\end{equation*}
Hence, 
\begin{equation*}
1+\Ebb_\mu [N_p] 
= \int_A 1+N_p \dd \mu 
= 2,
\end{equation*}
so $\Ebb_\mu [N_p] = 1$. But, by Proposition~\ref{prop:DistributionLp},
\begin{equation*}
\Ebb_\mu [N_p] 
= \frac{\alpha (-p)}{\alpha(p)} \Ebb_\mu [\alpha(-p)^{-1} N_p|A_p] 
\sim_{p \to \infty} \frac{\alpha (-p)}{\alpha(p)}.
\end{equation*}
Hence, $\alpha (p) \sim \alpha (-p)$ as $p$ goes to infinity. Together with Proposition~\ref{prop:DistributionLp}, 
this yields the second item of Theorem~\ref{thm:ConvergenceNp}.

\smallskip

Let $q > 1$, and apply Proposition~\ref{prop:DistributionLp} to the moments of order $q$ of $N_p$. This yields:
\begin{equation*}
\norm{f_{p,\{0\}}+1}{\Lbb^q (A, \mu)}^q
= \int_{A_p} N_p^q \dd \mu 
= \alpha(p)^{-1} \alpha(-p)^q \norm{\alpha(-p)^{-1} N_p}{\Lbb^q (A, \mu (\cdot | A_p))}^q
\sim \alpha(p)^{q-1} \alpha(-p)^q \Ebb [\Ecal^q],
\end{equation*}
where $\Ecal$ is a random variable with an exponential distribution of parameter $1$. 
Finally, we use the fact that $\alpha(-p) \sim \alpha (p)$ and that $\Ebb [\Ecal^q] = \Gamma (1+q)$ 
to get the third item of Theorem~\ref{thm:ConvergenceNp}.

\smallskip

We have proved that $\alpha(p) \sim \alpha(-p) \sim \Ebb_\mu [f_{p,\{0\}}^2]/2$. 
Due to Proposition~\ref{prop:DistributionLp}, $\alpha(-p)^{-1} \Ebb_\mu[N_p|N_p>0] \sim \alpha(p)^{-1} \Ebb_\mu[N_p|N_p>0] = \Ebb_\mu[N_p] = 1$.
Due to Proposition~\ref{prop:ConvergenceVersExponentielle} and~\ref{prop:Tension}, the random variable $N_{0,p}$, 
which is the first hitting time of $A_p$ for $T_{\{0\}}$, once divided by $\alpha (p)$, converges in distribution and in moments to an exponential random 
variable of parameter $1$. Hence,
\begin{equation*}
\Ebb_\mu [N_{0,p}] 
\sim \alpha (p),
\end{equation*}

\smallskip

Now let us prove the link with $\sigma_{GK}^2 (f_{p,\{0\}}, A, \mu, T_{\{0\}})$.
Note that $f_{p,\{0\}}$ is constant on elements on $\pi_{\{0\}}$, and that $\norm{f_{p,\{0\}}}{\Lbb^1 (A,\mu)} \leq 1+\norm{N_p}{\Lbb^1 (A, \mu)} \leq 2$. 
Hence, $\norm{f_{p,\{0\}}}{\Lip^1 (A, \pi_{\{0\}},\lambda,\mu)}$, as a function of $p$, is bounded. Since $P_{\{0\}}$ sends 
$\Lip^1 (A,\pi_{\{0\}},\lambda,\mu)$ continuously into $\Lip^\infty (A,\pi_{\{0\}},\lambda,\mu)$, and $P_{\{0\}}^M$ contracts exponentially 
fast on the subspace of functions in $\Lip^\infty (A,\pi_{\{0\}},\lambda,\mu)$ with zero average on each of the $M$ ergodic components of $T^M$, 
all the terms $\int_A f_{p,\{0\}} \circ \widetilde{T}_{\{0\}}^k \cdot f_{p,\{0\}} \circ \widetilde{T}_{\{0\}}^\ell \dd \mu$ 
with $k \neq \ell$ have a bounded contribution. Hence,
\begin{equation*}
\sup_{p \in G} \left|\sigma_{GK}^2 (f_{p,\{0\}}, A, \mu, \widetilde{T}_{\{0\}}) - \Ebb_\mu [f_{p,\{0\}}^2] \right|
<+\infty.
\end{equation*}
Note that, if the system is a random walk, then $P_{\{0\}}$ sends any function which is constant on elements of the partition to its average, 
which is $0$ for $f_{p,\{0\}}$. In this case, the supremum above is actually $0$.
\end{proof}

\subsection{Proof of Theorem~\ref{thm:GM}}
\label{subsec:PreuveGM}

In this section we prove Theorem~\ref{thm:GM}. Our goal is mostly to get a more explicit integrability condition 
in the statement of~\cite[Theorem~6.8]{Thomine:2015}. We first give a lemma which gives a good sufficient condition 
for this integrability condition to hold.

\begin{lemma}\label{lem:sommabi}

Let $(\widetilde{A},\tilde{\mu},\widetilde{T})$ be a conservative and ergodic Markov $G$-extension of 
a Gibbs-Markov map $(A, \mu, T)$. Let $f : \ A \times G \to \R$ be measurable. 
Let $q \in [1, \infty)$. Assume that:
\begin{equation}\label{eq:ConditionSommabilite}
\sum_{p \in G} \alpha(p)^{1-\frac{1}{q}} \norm{f(\cdot, p)}{\Lbb^q (A, \mu)} 
< + \infty.
\end{equation}
Then $f_{\{0\}} \in \Lbb^q (A, \mu)$. 
\end{lemma}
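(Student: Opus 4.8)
The goal is to bound $\norm{f_{\{0\}}}{\Lbb^q(A,\mu)}$ by the series in~\eqref{eq:ConditionSommabilite}, using the triangle inequality (Minkowski's inequality) to decompose the excursion sum over sites, and then controlling the contribution of each site $p$ via the local-time estimates from Subsection~\ref{subsec:ProbabiliteAtteinte}. First I would write, pointwise on $A$,
\[
|f_{\{0\}}(x)|
= \left| \sum_{k=0}^{\varphi_{\{0\}}(x)-1} f\circ\widetilde{T}^k(x,0) \right|
\leq \sum_{p \in G} \norm{f(\cdot,p)}{\Lbb^\infty(A,\mu)}\, N_p(x),
\]
which is essentially~\eqref{eq:DominationParTempsLocaux}; but to get the sharper exponent $\alpha(p)^{1-1/q}$ (rather than just $\norm{N_p}{\Lbb^q}$) and to allow $\Lbb^q$ control of $f(\cdot,p)$ instead of $\Lbb^\infty$, I would instead keep $f(\cdot,p)$ inside an $\Lbb^q$ norm and apply Minkowski's inequality in $\Lbb^q(A,\mu)$ directly to $f_{\{0\}} = \sum_{p} \sum_{k : S_k F = p,\ k<\varphi_{\{0\}}} f(\cdot,p)\circ T^k$.

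Concretely, write $f_{\{0\}} = \sum_{p\in G} g_p$ where $g_p(x) := \sum_{k=0}^{N_p(x)-1} f(\cdot,p)\circ \widetilde{T}_{\{0\}}^k$ is the Birkhoff-type sum of $f(\cdot,p)$ along the visits of the excursion to the fibre over $p$ — more precisely, it is the sum of $f(\cdot,p)$ over the return times of the induced map $\widetilde{T}_{\{0\}}$ restricted to those orbit segments landing in $A\times\{p\}$. By Minkowski's inequality, $\norm{f_{\{0\}}}{\Lbb^q(A,\mu)} \leq \sum_{p\in G} \norm{g_p}{\Lbb^q(A,\mu)}$. The heart of the matter is then to show
\[
\norm{g_p}{\Lbb^q(A,\mu)} \leq C\, q\, \alpha(p)^{1-\frac1q}\, \norm{f(\cdot,p)}{\Lbb^q(A,\mu)}
\]
for all large enough $p$ (the finitely many small $p$ are harmless). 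This is exactly the kind of estimate packaged in Lemma~\ref{lem:BorneSommeLp}: taking $G$ there to be our index set, $a_p := A_p$ (so $\mu(a_p) = \alpha(p)^{-1} \to 0$), the induced map $\widetilde{T}_{\{0\}}$ in place of $T$ (it is again Gibbs-Markov by Proposition~\ref{prop:ExtensionsGMGM}), and $\mu_p := \mu(\cdot\mid A_p)$ with density $\alpha(p)\mathbf{1}_{A_p}$, which has uniformly bounded $\Lbb^\infty$-norm — or, more carefully, one works on the event $\{N_p>0\} = A_p$ and uses that $g_p$ vanishes off a set whose structure is controlled by $\varphi_{\{0,p\}}$, so the relevant density $\alpha(p) P_{\{0,p\}}\mathbf{1}_{A_p}$ is uniformly bounded in $\Lip^\infty$ by Lemma~\ref{lem:EncoreUneBorneUniforme}. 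The factor $\alpha(p)$ is precisely the $\alpha(p)$ in Lemma~\ref{lem:BorneSommeLp}, and the extra $\alpha(p)^{-1/q}$ comes from the normalization $\mu(A_p)^{1/q} = \alpha(p)^{-1/q}$ when passing from the conditional measure back to $\mu$. Summing over $p$ gives
\[
\norm{f_{\{0\}}}{\Lbb^q(A,\mu)}
\leq C q \sum_{p\in G} \alpha(p)^{1-\frac1q}\, \norm{f(\cdot,p)}{\Lbb^q(A,\mu)}
< +\infty,
\]
which is the claim.

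**Main obstacle.** The genuinely delicate point is matching the abstract hypotheses of Lemma~\ref{lem:BorneSommeLp} to the situation at hand: that lemma is about the first \emph{hitting} time $N_p$ of a shrinking set and the Birkhoff sum up to that time, whereas $g_p$ is a sum over \emph{all} visits of an excursion to $A\times\{p\}$, which could in principle involve several returns. I would reconcile these exactly as in the proof of Proposition~\ref{prop:DistributionLp}: decompose the excursion at its successive visits to $A\times\{0,p\}$, observe that between consecutive visits to $A\times\{p\}$ the dynamics is governed by the induced map on $A\times\{0\}$, and re-express $g_p$ in terms of iterated hitting times of $A_{-p}$ (resp.\ $A_p$) for $\widetilde{T}_{\{0\}}$. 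Then each piece is handled by Lemma~\ref{lem:BorneSommeLp}, and the number of pieces is itself controlled by $N_p$, whose distribution (conditionally on being positive) is asymptotically exponential with scale $\alpha(p)$ by Theorem~\ref{thm:ConvergenceNp}; the uniform exponential tails there prevent any loss. A secondary bookkeeping point is the non-mixing case (period $M>1$), but this only affects constants and is absorbed exactly as in the proofs of Lemmas~\ref{lem:BorneSommeLp} and~\ref{lem:EncoreUneBorneUniforme}. I do not expect the $\Lbb^q$-versus-$\Lbb^\infty$ distinction on $f(\cdot,p)$ to cause trouble, since Lemma~\ref{lem:BorneSommeLp} already takes $f\in\Lbb^q$ as input.
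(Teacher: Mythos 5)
Your proposal is correct and follows essentially the same route as the paper: Minkowski's inequality to reduce to the single-site pieces $(f\mathbf{1}_p)_{\{0\}}$, re-expression of each piece as a Birkhoff sum of $f(\cdot,p)$ under $\widetilde{T}_{\{0\}}$ up to the hitting time of $A_{-p}$ starting from the first-visit distribution $\alpha(p)\,\widetilde{T}_{\{0,p\}*}\mu(\cdot|A_p)$ (uniformly bounded by Lemma~\ref{lem:EncoreUneBorneUniforme}), then Lemma~\ref{lem:BorneSommeLp} together with the normalization $\mu(A_p)^{1/q}=\alpha(p)^{-1/q}$ and $\alpha(p)\sim\alpha(-p)$. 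The only cosmetic difference is that you add an extra layer of control on ``the number of pieces'' via the exponential tails of $N_p$, which is not needed once one observes that the number of visits is exactly one plus the hitting time handled by Lemma~\ref{lem:BorneSommeLp}.
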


\begin{proof}

Now, consider a function $f$ satisfying the condition~\eqref{eq:ConditionSommabilite}. 
Without loss of generality, we can assume $f$ to be non-negative. Note that:
\begin{equation*}
\norm{f_{\{0\}}}{\Lbb^q (A, \mu)} 
= \norm{\sum_{p \in \Z^d} (f \mathbf{1}_p)_{\{0\}}}{\Lbb^q (A, \mu)} 
\leq \sum_{p \in \Z^d} \norm{(f \mathbf{1}_p)_{\{0\}}}{\Lbb^q (A, \mu)}.
\end{equation*}
Then, for all $p \in G \setminus \{0\}$,
\begin{align*}
\norm{(f \mathbf{1}_p)_{\{0\}}}{\Lbb^q (A, \mu)}^q 
& = \int_{A_p} \left( \sum_{k=0}^{N_{-p}-1}f \circ \widetilde{T}_{\{0\}}^k \circ \widetilde{T}_{\{0,p\}} \right)^q \dd \mu \\
& = \frac{1}{\alpha(p)} \int_A \left( \sum_{k=0}^{N_{-p}-1} f \circ \widetilde{T}_{\{0\}}^k \right)^q \dd \widetilde{T}_{\{0,p\}*} \mu (\cdot | A_p).
\end{align*}
By Lemma~\ref{lem:EncoreUneBorneUniforme}, $\widetilde{T}_{\{0,p\}*} \mu (\cdot | A_p) \ll \mu$, with a density which is bounded in 
$\Lip^\infty$ norm, and a fortiori in $\Lbb^\infty (A, \mu)$ norm. We can thus apply Lemma~\ref{lem:BorneSommeLp} : there exists 
a constant $C$, independent from $p$, such that:
\begin{equation*}
\norm{(f \mathbf{1}_p)_{\{0\}}}{\Lbb^q (A, \mu)}^q 
\leq C^q \frac{\alpha (-p)^q}{\alpha(p)} \norm{f \mathbf{1}_p}{\Lbb^q (A, \mu)}^q.
\end{equation*}
Since $\alpha (p) \sim_{p \to \infty} \alpha (-p)$ by Theorem~\ref{thm:ConvergenceNp}, 
up to taking a larger value of $C$, 
\begin{equation*}
\norm{(f \mathbf{1}_p)_{\{0\}}}{\Lbb^q (A, \mu)} 
\leq C \alpha (p)^{1-\frac{1}{q}} \norm{f (\cdot, p) }{\Lbb^q (A, \mu)},
\end{equation*}
whence:
\begin{equation*}
\norm{f_{\{0\}}}{\Lbb^q (A, \mu)} 
\leq C \sum_{p \in \Z^d} \alpha (p)^{1-\frac{1}{q}} \norm{f (\cdot, p)}{\Lbb^q (A, \mu)}. \qedhere
\end{equation*}
\end{proof}

Finally, we prove Theorem~\ref{thm:GM}.

\begin{proof}[Proof of Theorem~\ref{thm:GM}]

Let $(A, \pi, \lambda, \mu, T)$ be an ergodic Gibbs-Markov map. Let $F : \ A \to \Z^d$ 
be $\sigma (\pi)$-measurable, integrable, and such that $\int_A F \dd \mu=0$. 
Assume that the distribution of $F$ with respect to $\mu$ is in the domain of attraction of an 
$\alpha$-stable distribution, and that the Markov extension $(\widetilde{A},\tilde{\mu},\widetilde{T})$ 
is conservative and ergodic.

\smallskip

We first assume that the extension $(\widetilde{A},\tilde{\mu},\widetilde{T})$ is aperiodic.

\medskip
\item \textbf{Aperiodic case.}
By Proposition~\ref{prop:GMHHH}, this extension satisfies Hypothesis~\ref{hyp:HHH}. 
We can thus apply Theorem~\ref{thm:gene}. Let $\beta: \Z^d \rightarrow \R$ be such that:
\begin{itemize}
\item $\beta$ has finite support;
\item $\sum_{p\in\Z^d} \beta(p)=0$.
\end{itemize}
Let $f(x,p):=\beta(p)$. Then:
\begin{equation*}
\frac{S_n^{\widetilde{T}} f}{\sqrt{\sum_{k=0}^{n-1} \mu (S_k=0)}}
\Rightarrow \sigma_{GK} (f, \widetilde{A}, \tilde{\mu}, \widetilde{T}) \Ycal,
\end{equation*}
where $\Ycal$ is a standard $MLGM(1-\frac{d}{\alpha})$ random variable and the convergence is strong in distribution.

\smallskip

We can also apply~\cite[Theorem~6.8]{Thomine:2015}, with $r \equiv 1$. 
The regularity conditions are satisfied, since $f$ and $r$ are constant on the subsets of the Markov partition. 
The integrability condition ``$|f|_{\{0\}} \in \Lbb^p (A, \mu)$ for some $p>2$'' 
is satisfied thank to~\cite[Lemma~6.6]{Thomine:2015}. Hence,
\begin{equation*}
\frac{S_n^{\widetilde{T}} f}{\sqrt{\sum_{k=0}^{n-1} \mu (S_k=0)}}
\Rightarrow \sigma (f) \Ycal,
\end{equation*}
where $\Ycal$ is a standard $MLGM(1-\frac{d}{\alpha})$ random variable and the convergence is strong in distribution, 
and where:
\begin{equation*}
\sigma (f) 
= \lim_{N \to + \infty} \frac{1}{N} \int_A \left( \sum_{k=0}^{n-1} f_{\{0\}} \circ \widetilde{T}_{\{0\}}^k \right)^2 \dd \mu.
\end{equation*}
Following the proof of Lemma~\ref{lem:InvarianceGKFacile}, $\sigma (f_{\{0\}}) = \sigma_{GK} (f_{\{0\}}, A, \mu, \widetilde{T}_{\{0\}})$.

\smallskip

Hence, for any function $\beta$ on $\Z^d$ with finite support and which sums to $0$,
\begin{equation*}
\sigma_{GK} (f, \widetilde{A}, \tilde{\mu}, \widetilde{T}) 
= \sigma_{GK} (f_{\{0\}}, A, \mu, \widetilde{T}_{\{0\}}).
\end{equation*}
Take $\beta := \mathbf{1}_p-\mathbf{1}_0$. Then, for all $q \in (1, \infty)$, 
\begin{equation*}
g(p) 
\sim_{p \to \infty} \frac{\sigma_{GK}^2 (f_p, \widetilde{A}, \tilde{\mu}, \widetilde{T})}{2}
= \frac{\sigma_{GK}^2 (f_{p,\{0\}}, A, \mu, \widetilde{T}_{\{0\}})}{2} 
\sim_{p \to \infty} \alpha (p),
\end{equation*}
where we used Theorem~\ref{thm:ConvergenceNp} to get the last equivalence. Note that we 
already obtain Corollary~\ref{cor:AlphaG}.

\smallskip

Let $\varepsilon >0$. Let $\delta >0$ and $q >2$ be small enough such that:
\begin{equation}\label{eq:ConditionEpsilonDeltaQ}
(\alpha-d+\delta)\left(2-\frac{2}{q}\right)
\leq \alpha-d+2\varepsilon.
\end{equation}
By Proposition~\ref{prop:renouvellement} and Potter's bound, 
$g(p) = O((1+|p|)^{\alpha-d+\delta})$, so $\alpha (p) = O((1+|p|)^{\alpha-d+\delta})$.

\smallskip

We are now ready to apply again~\cite[Theorem~6.8]{Thomine:2015}. Let $f:\widetilde{A} \to \R$ be such that:
\begin{itemize}
\item the family of function $(f (\cdot, p))_{p \in \Z^d}$ is uniformly locally $\eta$-H\"older for some $\eta > 0$;
\item $\int_{\widetilde{A}} (1+|p|)^{\frac{\alpha-d}{2}+\varepsilon} \norm{f(\cdot,p)}{\Lbb^q (A,\mu)} \dd \tilde{\mu}(x,p) <+\infty$ for some $\varepsilon >0$ and $q>2$; 
\item $\int_{\widetilde{A}} f \dd \tilde{\mu}=0$.
\end{itemize}
To apply~\cite[Theorem~6.8]{Thomine:2015}, we only need to check that:
\begin{itemize}
\item $\Ebb_\mu (\sup_{p \in \Z^d} D(f(\cdot, p)) < + \infty$;
\item $|f|_{\{0\}} \in \Lbb^q (A, \mu)$;
\end{itemize}
where $D(f) (x)$ is the Lipschitz norm of $f$ restricted to the Markov subset to which $x$ belongs. 

\smallskip

Without loss of generality, we can use the metric $d^\eta$ on $A$, so that $(f (\cdot, p))_{p \in \Z^d}$ 
is uniformly locally Lipschitz. Then $D(f(\cdot, p))$ is, by hypothesis, bounded uniformly in $p$. 
Hence, $\sup_{p \in \Z^d} D(f(\cdot, p))$ is bounded, and a fortiori integrable: the first point holds.

\smallskip

All is left is to check the second point. We adapt an argument by Cs\'aki, Cs\"org\H{o}, F\"oldes and R\'ev\'esz~\cite[Lemma~3.1]{CsakiCsorgoFoldesRevesz:1992} 
to control the norm of $|f|_{\{0\}}$. Up to choosing a smaller value of $q$, there exists $\delta >0$ which satisfies the 
condition~\eqref{eq:ConditionEpsilonDeltaQ}. Then:
\begin{align*}
\sum_{p \in \Z^d} \alpha(p)^{1-\frac{1}{q}} \norm{f(\cdot, p)}{\Lbb^q (A, \mu)} 
& \leq C \sum_{p \in \Z^d} (1+|p|)^{(\alpha-d+\delta)(1-\frac{1}{q})} \norm{f(\cdot, p)}{\Lbb^q (A, \mu)} \\
& \leq C \sum_{p \in \Z^d} (1+|p|)^{\frac{\alpha-d}{2}+\varepsilon} \norm{f(\cdot, p)}{\Lbb^q (A, \mu)} 
< + \infty.
\end{align*}
By Lemma~\ref{lem:sommabi}, $|f|_{\{0\}} \in \Lbb^q (A, \mu)$. This proves the theorem 
for aperiodic extensions.

\medskip
\item \textbf{Non-aperiodic case.}
For the remainder of this proof, we do not assume that the extension is aperiodic. 

\smallskip

Let $\Lambda \subset \Z^d$ be the set of the essential values of the extension~\cite{Aaronson:1997}. 
Since the extension is ergodic, $\Lambda \simeq \Z^d$ is a cocompact lattice. Let $0 \in B  \subset \Z^d$ 
be a fundamental domain for this lattice. Let:
\begin{itemize}
\item $A_B := A \times B$;
\item $\mu_B := |B|^{-1} \mu \otimes \sum_{b \in B} \delta_b$;
\item $T_B (x,b) = (T(x), b+F(x) [\Lambda])$.
\end{itemize}
Since $(\widetilde{A},\tilde{\mu},\widetilde{T})$ is ergodic, $(A_B, \mu_B, T_B)$ 
is a measure-preserving ergodic dynamical system, which is Gibbs-Markov. There 
exists $F_\Lambda : A_B \to \Lambda$, constant on the elements of the Gibbs-Markov partition, 
such that $(\widetilde{A},\tilde{\mu},\widetilde{T})$ is isomorphic to the 
extension $(\widetilde{A}_B,\tilde{\mu}_B,\widetilde{T}_B)$ with step function $F_\Lambda$. 
The later extension is a conservative, ergodic, aperiodic Markov extension of a Gibbs-Markov map. 

\smallskip

The function $f$ still satisfies our assumptions for the new system (it is uniformly locally H\"older, 
decays at a sufficient rate at infinity, and has zero integral). Thus, we can apply 
the version of Theorem~\ref{thm:GM} for aperiodic systems; this yields:
\begin{equation*}
\frac{S_n^{\widetilde{T}} f}{\sqrt{\sum_{k=0}^{n-1} \mu (S_k \in B)}}
\Rightarrow \sigma_{GK} (f_B, A_B, \mu_B, \widetilde{T}_{B,\{0\}}) \Ycal,
\end{equation*}
where $\Ycal$ is a standard $MLGM(1-\frac{d}{\alpha})$ random variable, the convergence is strong in distribution, and:
\begin{equation*}
\sigma_{GK}^2 (f_B, A_B, \mu_B, \widetilde{T}_{B,\{0\}})
:= \lim_{n \to + \infty} \int_{A_B} f_B^2 \dd\mu_B + 2 \sum_{k=1}^n \int_{A_B} f_B \cdot f_B \circ \widetilde{T}_{B,\{0\}}^k \dd \mu_B,
\end{equation*}
where the limit is taken in the Ces\`aro sense.

\smallskip

The proof of~\cite[Lemma~3.7.4]{Aaronson:1997} can be adapted to ergodic Gibbs-markov maps (instead of continued fraction mixing maps), 
by replacing $\widetilde{T}_A^k$ with $M^{-1} \sum_{k=0}^{M-1} \widetilde{T}_A^k$, which can be done up to a uniformly 
bounded error term. As $\widetilde{T}_B$ is an ergodic Gibbs-markov map, $A \times B$ is thus also a Darling-Kac set, 
and a set on which R\'enyi's inequality is satisfied. By~\cite[Theorem~3.3.1]{Aaronson:1997},
\begin{equation}\label{eq:AperiodicToPeriodic1}
\lim_{n \to + \infty} \frac{\sum_{k=0}^{n-1} \mu (S_k \in B)}{\sum_{k=0}^{n-1} \mu (S_k = 0)} 
= \lim_{n \to + \infty} \frac{\sum_{k=0}^{n-1} \tilde{\mu} (A \times \{0\} \cap \widetilde{T}^{-k} (A \times B)) }{\sum_{k=0}^{n-1} \tilde{\mu} (A \times \{0\} \cap \widetilde{T}^{-k} (A \times \{0\})) } 
= |B|.
\end{equation}
Using the induction invariance of the Green-Kubo formula (Lemma~\ref{lem:InvarianceGKFacile}) with the observable 
$f_B$ on $(A_B, \mu_B, \widetilde{T}_{B,\{0\}})$, noting that the induced transformation 
on $A \times \{0\}$ is $\widetilde{T}_{\{0\}}$, we get:
\begin{align}
\sigma_{GK}^2 (f_B, A_B, \mu_B, \widetilde{T}_{B,\{0\}}) 
& = \lim_{n \to + \infty} \int_A f^2 \dd\mu_B + 2 \sum_{k=1}^n \int_A f \cdot f \circ \widetilde{T}_{\{0\}}^k \dd \mu_B \nonumber \\
& = |B|^{-1} \sigma_{GK}^2 (f, A, \mu, \widetilde{T}_{\{0\}}), \label{eq:AperiodicToPeriodic2}
\end{align}
where the limit is taken in the Ces\`aro sense. Equations~\eqref{eq:AperiodicToPeriodic1} and~\eqref{eq:AperiodicToPeriodic2} 
together yield the claim.
\end{proof}

\section{Applications}
\label{sec:Applications}

In this section, we prove our claims of Subsection~\ref{subsec:Applications}, 
starting with the geodesic flow and finishing with the billiards.

\subsection{Periodic planar billiard in finite horizon}
\label{subsec:ApplicationBillard}

Recall that the billiard table is $\R^2 \setminus \bigcup_{i\in\Ical,\ p\in \Z^2} (p+O_i)$, where 
$(O_i)_{i \in \Ical}$ corresponds to a finite family of open convex subsets of $\Tbb^2$, whose boundaries 
are non-overlapping, $\Ccal^3$, and with non-vanishing curvature. For the collision map, the phase space is $\Omega := \partial Q \times [-\pi/2, \pi/2]$. 
The invariant measure is the Liouville measure $\cos (\phi) \dd x \dd \phi$ in $(x, \phi)$, 
where $x$ is the curvilinear coordinate on $\partial Q$.

\smallskip

A particle has configuration $(x,\phi,i,p)$ if it is located in $p+\partial O_i$, 
with curvilinear coordinate $x$ on $\partial O_i$ (for some counterclockwise
curvilinear parametrization of $\partial O_i$) and if its reflected vector $V$ makes the angle $\phi$
with the inward normal vector to $\partial O_i$. The billiard map $\widetilde{T}_0:\Omega\to \Omega$
maps a configuration in $\Omega$ to the configuration corresponding to the next collision time.
This transformation preserves the Liouville measure $\tilde{\nu}$, which has infinite mass.

\smallskip

We consider a particle starting from the \textit{original cell} $\Ccal_0 = \bigcup_{i \in \Ical} O_i$ 
with initial distribution $\nu := \tilde{\nu} (\cdot|\Ccal_0)$.

\smallskip

The associated compact billiard is the system $(M,\nu,T_0)$, with $M := \Ccal_0$ 
and $\widetilde{T}_0 (x,\phi,i,p) = (T_0(x,\phi,i),p+H(x,\phi,i))$
Then $(\Omega,\tilde{\nu}/\tilde{\nu}(\Ccal_0), \widetilde{T}_0)$ is the $\Z^2$-extension 
of $(M,\nu,T_0)$ with step function $H:M\to\Z^2$ corresponding to the 
change of cells. The quantity $S_n^T H (y) := \sum_{k=0}^{n-1} H \circ T_0^k (y)$ corresponds 
to the index of the cell containing $\widetilde{T}_0^n (y)$, for all $y \in \Ccal_0$.

\smallskip

Let $\varepsilon >0$, and $\beta:\Z^2 \to \R$ be such that $\sum_{p\in\Z^2} \beta(p)=0$. 
We associate the value $\beta(p)$ to the cell $\Ccal_p$, and put for $y \in \Ccal_0$:
\begin{equation*}
\Ycal_n (y)
:= \sum_{k=1}^n \beta \left(S_k^{T_0} H(y) \right).
\end{equation*}

\begin{proof}[Proof of Corollaries~\ref{coro:bill1} and~\ref{coro:bill2}]

Due to Young's towers~\cite{Young:1998}, we know that there exists a dynamical
system $(A,\mu,T)$ such that $(A,\mu,T)$ and $(M,\nu,T_0)$ are both factors of another dynamical system
$(\widehat{A},\hat{\mu},\widehat{T})$. This means that there exist two maps 
$\hat{\pi}:(\widehat{A},\hat{\mu},\widehat{T})\to(A,\mu,T)$ and 
$\pi:(\widehat{A},\hat{\mu},\widehat{T})\to(M,\nu,T_0)$ such that:
\begin{align*}
\hat{\pi}\circ\hat{T} & = T \circ \hat{\pi}, \\
\pi\circ \widehat{T} & = T_0 \circ \pi,\\
\hat{\pi}_* \hat{\mu} & = \mu, \\
\pi_* \hat{\mu} & = \nu.
\end{align*}
Moreover, there exist $F:A\to\Z^d$ and $\beta:A\to \Z^2$ such that 
$F\circ\hat{\pi}=H\circ\pi$ and $\hat{\beta} \circ \hat{\pi}=\beta \circ\pi$.

\smallskip

The properties of the family of transfer operators $P_u=P(e^{i\langle u, F \rangle})$ 
for such step function $F$ have been studied: see for instance~\cite{SzaszVarju:2004,DolgopyatSzaszVarju:2008,Pene:2009a,Pene:2009b},
in which local limit theorems with various remainder terms have been established. The matrix $\Sigma$ 
corresponds to the asymptotic variance matrix of $(S_n^T F/\sqrt{n})_{n \geq 1}$ with respect to $\mu$, 
which is the same as the asymptotic variance matrix of $(S_n^{T_0}H/\sqrt{n})_{n \geq 1}$ with respect to $\nu$, and is given by:
\begin{equation*}
\Sigma 
= \sum_{k\in\Z} C(H,H\circ T^k),
\end{equation*}
where $C(H,H\circ T^k)$ denotes the matrix of covariances of $H$ and $H\circ T^k$ with respect to $\nu$.
Recall that $(S_n^{T_0}H/\sqrt{n})_{n \geq 1}$ converges in distribution to
a centered gaussian random variable with variance matrix $\Sigma$.

\smallskip

Let $\Zcal_n: M\to \R$ be defined by $\Zcal_n(x):=\sum_{k=0}^{n-1} \hat{\beta} (S_k^T F(x))$.
This function satisfies $\Zcal_n \circ\hat{\pi} = \Ycal_n \circ \pi$ on $\hat{A}$.
Applying Theorem~\ref{thm:gene} to the dynamical system $(A,\mu,T)$, step function $F$ (respectively, the first coordinate 
$F_1:A\to \Z$ of $F$) and $\hat{\beta}$ (respectively, $p\mapsto \hat{\beta}(p,0)$), 
we obtain Corollary~\ref{coro:bill2} (respectively, Corollary~\ref{coro:bill1}).
\end{proof}

\subsection{Geodesic flow on periodic hyperbolic manifolds}
\label{subsec:ApplicationFlotGeodesique}

We recall that $M$ is a compact, connected manifold with a Riemannian metric of 
negative sectional curvature, and $\varpi:N \to M$ be a connected $\Z^d$-cover of $M$, with $d \in \{1, 2\}$. 
The manifold $T^1 N$ is endowed with the $\sigma$-finite lift $\mu_N$ of a Gibbs 
measure $\mu_M$ corresponding to a reversible H\"older potential. The geodesic flow on $T^1 N$ 
is denoted by $(g_t)_{t \in \R}$. 

\smallskip

Let $(A,\mu,T)$ be a Markov section for the geodesic flow on $T^1 M$, as constructed by Bowen \cite{Bowen:1973}, \cite[Theorem~3.12]{Bowen:1975}. 
The section $A$ is constructed by carefully choosing a finite number of pieces of strong unstable manifolds $(W^u (a))_{a \in \pi}$, 
then, for all $x \in W^u (a)$, adding a piece of strong stable manifold $W^s (x)$ to get rectangles. We shall denote by 
$p_+$ the projection onto unstable manifolds, defined by $p_+ (y)=x$ whenever $y \in W^s (x)$ and $x \in W^u (a)$.
Let $r$ be the return time to $A$; by Bowen's construction, $r(x)$ depends only on the future (the non-negative coordinates) 
of $x$. Finally, we put $A_+ := \bigcup_{a \in \pi} W^u (a)$ as the state space of the one-sided transformation.

\smallskip

The set $\widetilde{A} := \varpi^{-1} (A)$ is a section for the geodesic flow on $T^1 N$, 
with return time $\tilde{r} = r \circ \varpi$. The induced map on $\widetilde{A}$ is the $\Z^d$-extension of the natural extension 
of a Gibbs-Markov map, with step function $F$. Without loss of generality, we may refine the Markov partition on $A$ 
so that $F$ depends only on the first coordinate of the shift; then, the extension $(\widetilde{A}, \tilde{\mu}, \widetilde{T})$ is Markov.
The geodesic flow on $T^1 N$ is thus isomorphic to the suspension flow over $(\widetilde{A}, \tilde{\mu}, \widetilde{T})$ 
with roof function $\tilde{r}$. In particular, $T^1 N \simeq \{(x,q,t): \ x \in A, \ q \in \Z^d, \ t \in [0,r(x))\}$.

\smallskip

Let $f : T^1 N \to \R$ be H\"older. The following lemma asserts that, up to adding a coboundary, 
we can assume that $f$ depends only on the future, which allows us to work with Gibbs-Markov maps 
instead of their natural extension. While this lemma is classic~\cite[Lemma~1.6]{Bowen:1975}, 
we give a statement which is valid in the context of $\Z^d$-extensions.

\begin{lemma}
\label{lem:InversibleCobords}

Let $(A, \pi, \lambda, \mu, T)$ be the natural extension of an ergodic Gibbs-Markov 
map\footnote{The metric being defined by $\lambda^{-s}$, where $s$ is the two-sided separation time.}. 
Let $(A \times \Z^d, \tilde{\mu}, \widetilde{T})$ be a Markov $\Z^d$-extension with step function $F$. 
Let $f$ be a measurable real-valued function on $A \times \Z^d$. 
Assume that:
\begin{equation*}
\norm{D(f)}{\infty} 
:= \norm{f}{\infty} + \sup_{q \in \Z^d} \sup_{a \in \pi} |f|_{\Lip(a \times\{q\})} 
< +\infty.
\end{equation*}
Then there exists a function $u$ which is bounded by $\lambda (\lambda-1)^{-1} \norm{D(f)}{\infty}$, 
uniformly $1/2$-H\"older, and such that the function $f_+ := f + u \circ T-u$ is $\Bcal_+$-measurable, 
with $\Bcal_+ := \left( \bigvee_{n \geq 0} T^{-n} \pi \right) \otimes \Pcal (\Z^d)$.
\end{lemma}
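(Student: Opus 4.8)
The plan is to follow the classical Livšic-type cohomological argument (as in \cite[Lemma~1.6]{Bowen:1975}), but carrying along the $\Z^d$-coordinate. First I would reduce to the one-sided picture: since $(A,\pi,\lambda,\mu,T)$ is the natural extension of a Gibbs-Markov map, each point $x\in A$ is determined by a bi-infinite sequence, and $T$ acts as the shift; the goal is to produce $u$ so that $f_+ := f+u\circ T-u$ depends only on the non-negative coordinates together with the $\Z^d$-coordinate, i.e.\ is $\Bcal_+$-measurable. The standard device is to pick, for each cylinder (each element $a\in\pi$ fixing the zeroth symbol, or more precisely a section of the stable-manifold foliation), a reference point, and to define $u(x)$ as a convergent telescoping series measuring how far $f$ along the forward orbit of $x$ differs from $f$ along the forward orbit of its ``projection'' onto the chosen local stable leaf.

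Concretely, I would fix for each $a\in\pi$ a point and a projection $r_a$ sending $x$ to the point with the same future (same non-negative symbols) lying on the distinguished stable leaf; write $\bar x$ for this projection on the relevant cylinder. Set
\begin{equation*}
u(x) := \sum_{n=0}^{\infty}\left(f\bigl(\widetilde T^n(x)\bigr) - f\bigl(\widetilde T^n(\overline{x})\bigr)\right),
\end{equation*}
where $\widetilde T^n(x)$ and $\widetilde T^n(\overline{x})$ are taken with the same $\Z^d$-component updates — this makes sense because $F$ depends only on the first coordinate of the shift (we are told the partition may be refined so that this holds), hence $F$ agrees on $x$ and $\overline{x}$ and on all their forward images, so the two orbits carry identical $\Z^d$-coordinates and only the $A$-coordinate differs. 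Since $x$ and $\overline{x}$ lie on a common stable leaf, $d(T^n x, T^n \overline{x})\leq \lambda^{-n} d(x,\overline{x}) \leq \lambda^{-n}$ (diameter normalized to $1$), so each summand is bounded by $\norm{D(f)}{\infty}\lambda^{-n}$ and the series converges absolutely with $\norm{u}{\infty}\leq \sum_{n\geq 0}\lambda^{-n}\norm{D(f)}{\infty} = \lambda(\lambda-1)^{-1}\norm{D(f)}{\infty}$, which is exactly the claimed bound.

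Then I would check the two remaining assertions. For the telescoping identity: $u(x) - u(Tx) $ unwinds to $f(x)-f(\overline{x}) + \bigl(\text{correction from }\overline{Tx}\text{ vs }T\overline{x}\bigr)$; because $\overline{Tx}$ and $T\overline{x}$ lie on the same \emph{unstable} leaf (the projection only changes the past), one gets $f_+(x):=f(x)+u(Tx)-u(x)$ equal to a quantity depending only on the future of $x$ and on the $\Z^d$-coordinate, i.e.\ $\Bcal_+$-measurable — this is the usual Bowen bookkeeping and I would spell it out carefully, tracking that $f_+ = f + u\circ T - u$ with the sign convention matching the statement. For the Hölder bound on $u$: if $x,y$ have separation time $s(x,y)=N$ (two-sided), then for $n< N$ the points $T^n x, T^n y$ are in the same partition element and close at rate controlled by both the forward and backward contraction, while for $n\geq N$ one estimates each term by $2\norm{D(f)}{\infty}\lambda^{-n}$; summing gives $|u(x)-u(y)| = O(\lambda^{-N/2})$, i.e.\ uniform $1/2$-Hölder continuity with respect to the metric $\lambda^{-s}$. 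The main obstacle I expect is purely notational: being careful that the $\Z^d$-extension is genuinely compatible with the stable/unstable projections — that is, verifying that $F$'s dependence only on the first coordinate (after refining $\pi$) makes $\widetilde T^n(x)$ and $\widetilde T^n(\overline x)$ share the same group coordinate for all $n\geq 0$ — and that the sign conventions and the choice of base points are globally consistent across cylinders; the analytic estimates themselves are routine geometric series once that setup is in place.
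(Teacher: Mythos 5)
Your construction is exactly the one the paper uses: $u := \sum_{n\geq 0} f\circ\widetilde T^n - f\circ\widetilde T^n\circ\tilde p_+$ with $\tilde p_+(x,q)=(p_+(x),q)$, and the key point you flag --- that $\widetilde T^n(x)$ and $\widetilde T^n(\overline x)$ carry the same $\Z^d$-coordinate for all $n\geq 0$ because $F$ depends only on the future --- is precisely the one observation the paper singles out before deferring the remaining estimates to the standard Bowen-type argument. The only slip is terminological: the reference piece onto which $p_+$ projects is an unstable leaf $W^u(a)$ (points sharing a future lie on a common \emph{stable} leaf), and likewise $\overline{Tx}$ and $T\overline{x}$ share a future; this does not affect the argument.
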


\begin{proof}

Let $\tilde{p}_+ (x,q) := (p_+(x),q)$ be defined on $\widetilde{A}$. We put: 
\begin{equation*}
u 
:= \sum_{n=0}^{+ \infty} f \circ \widetilde{T}^n - f \circ \widetilde{T}^n \circ \tilde{p}_+.
\end{equation*}
The proof then proceeds as in~\cite[Lemma~6.11]{Thomine:2015}: the function $u$ satisfies the conclusion 
of the lemma. Most changes in the proof of~\cite[Lemma~6.11]{Thomine:2015} are straightforward; 
the only observation needed is that, if $x$ and $y$ are in the same cylinder of length $n$ in $A$, 
then $\widetilde{T}^k (x,q)$ and $\widetilde{T}^k (y,q)$ are in the same set $A \times \{S_k F(x)\}$ for 
$|k| \leq n$, so that we can use the Lipschitz estimate for each $f(\cdot, S_k F(x))$.
\end{proof}

We are now ready to prove Proposition~\ref{prop:FlotGeodesique}.

\begin{proof}[Proof of Proposition~\ref{prop:FlotGeodesique}]

The proof follows the one in~\cite[Proposition~6.12]{Thomine:2015}, with a few significant modifications. The first step is to 
eliminate to past, that is, add a coboundary to get an observable which depends only on the future, to be able 
to use~\cite[Proposition~6.1]{Thomine:2015}. Let $\eta \in (0,1]$. Let $f : T^1 N \to \R$ be a $\eta$-H\"older observable, 
which satisfies the hypotheses of the proposition. We put:
\begin{itemize}
\item $f_{\widetilde{A}} (x,q) := \int_0^{r(x)} f(x,q,s) \dd s$;
\item $u_{\widetilde{A}}$ the function obtained from $f_{\widetilde{A}}$ by the construction of Lemma~\ref{lem:InversibleCobords};
\item $f_{+,\widetilde{A}} := f_{\widetilde{A}} + u_{\widetilde{A}} \circ \widetilde{T}-u_{\widetilde{A}}$;
\item $f_+ (x,q,t) := r(x)^{-1} f_{+,\widetilde{A}} (x,q)$.
\end{itemize}
By Lemma~\ref{lem:InversibleCobords}, the function $u_{\widetilde{A}}$ is $\eta/2$-H\"older and bounded. 
Then, using the fact that $f_{+,\widetilde{A}}-f_{\widetilde{A}}$ is a coboundary,
\begin{equation}\label{eq:BorneObservableCobord}
\sup_{t \geq 0} \norm{\int_0^t f \circ g_s \dd s - \int_0^t f_+ \circ g_s \dd s}{\infty} 
\leq 2 \norm{u_{\widetilde{A}}}{\infty} + 2 \norm{f_{+,\widetilde{A}}}{\infty} + 2 \norm{r}{\infty} \norm{f}{\infty} 
< + \infty.
\end{equation}
Hence, it is enough to prove the limit theorem for $f_+$. Note that $f_+$ is a coboundary if and only if 
$f$ is a coboundary. 

\smallskip

Let $\varphi_{A\times\{0\}}$ be the first return time to $A \times \{0\}$ 
for the geodesic flow, and $\overline{\varphi}_{A \times \{0\}}$ the first return time 
to $A \times \{0\}$ for $\widetilde{T}$. The proof then proceeds as in~\cite{Thomine:2015}, 
with the same weakened criterion: we only need to check that, for some $\delta >0$,
\begin{equation}\label{eq:FlotGeodesiqueConditionLpF+}
\sup_{0 \leq t \leq \varphi_{A \times \{0\}}} \left| \int_0^t f_+ \circ g_s \dd s \right| 
\in \Lbb^{2+\delta} (A \times \{0\});
\end{equation}

\smallskip

Now, we shall go back to the initial (invertible) system to use the integrability assumption on $f$. 
Equations~\eqref{eq:BorneObservableCobord} and~\eqref{eq:FlotGeodesiqueConditionLpF+} together 
yield:
\begin{equation}\label{eq:FlotGeodesiqueConditionLp}
\sup_{0 \leq t \leq \varphi_{A \times \{0\}}}  \left| \int_0^t f \circ g_s \dd s \right| 
\in \Lbb^{2+\delta} (A \times \{0\}).
\end{equation}

Finally, once again, we go to the non-invertible factor. 
Let $\overline{f}_{\widetilde{A}} (x,q) := \norm{\int_0^{r(\cdot)} |f| (\cdot, q, t) \dd t}{\infty}$ . Then:
\begin{equation*}
\sup_{0 \leq t \leq \varphi_{A \times \{0\}}} \left| \int_0^t f \circ g_s (x,0,0) \dd s \right| 
\leq \sum_{n=0}^{\overline{\varphi}_{A \times \{0\}} (x,0)-1} \overline{f}_{\widetilde{A}} \circ \widetilde{T}^n (x).
\end{equation*}
The function $\overline{f}_{\widetilde{A}}$ is an upper bound on $|f|_{\widetilde{A}}$ which depends only on $q$, 
and thus not on the past. Hence, it factorizes as a function of $A_+ \times \Z^d$. 
In addition, the integrability assumptions yields:
\begin{equation*}
\sum_{q \in \Z^d} |q|^{1-\frac{d}{2}+\varepsilon} \norm{\overline{f}_{\widetilde{A}} (\cdot, q)}{\infty} 
< + \infty.
\end{equation*}
By Lemma~\ref{lem:sommabi}, $\overline{f}_{\widetilde{A}}$ belongs to $\Lbb^{2+\delta} (A \times \{0\})$ 
for all small enough $\delta >0$, which yields Equation~\eqref{eq:FlotGeodesiqueConditionLp}.
\end{proof}

\appendix

\section{About Green-Kubo's formula}
\label{sec:GreenKubo}

The spirit behind Corollary~\ref{cor:InvarianceInduction}, and thus of our alternative 
proof of Spitzer's theorem~\cite[Chap.~III.11, P5]{Spitzer:1976}, is that Green-Kubo's formula 
satisfies an invariance by induction which is reminiscent of Kac's theorem. We shall draw this parallel 
here, as well as prove a specific instance of this phenomenon which is useful in the proof of Theorem~\ref{thm:GM}. 
In what follows, the measure may be finite or $\sigma$-finite.

\smallskip

Given an ergodic, conservative, measure-preserving dynamical system $(A, \mu, T)$ and a 
measurable subset $B \subset A$ such that $\mu (B) >0$, one may define the system induced 
on $B$ by $(B, \mu_{|B}, T_B)$. Given any measurable observable $f : A \to \C$, 
we also define the induced observable $f_B$ by:
\begin{equation*}
f_B (x) 
= \sum_{k=0}^{\varphi_B (x)-1} f(T^k (x)),
\end{equation*}
where $\varphi_B$ is the first return time to $B$. Then, a generalization of 
Kac's theorem~\cite{Kac:1947} asserts that the integral is invariant by induction.

\begin{theorem}[Kac's theorem: induction invariance of the integral]

Let $(A, \mu, T)$ be an ergodic, conservative, measure-preserving dynamical system. 
Let $B \subset A$ be a measurable subset with $0<\mu(B)<+\infty$. Then, for all 
$f \in \Lbb^1 (A, \mu)$,
\begin{equation}\label{eq:KacGeneral}
\int_A f \dd \mu 
= \int_B f_B \dd \mu.
\end{equation}
\end{theorem}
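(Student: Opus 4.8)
The plan is to realize $(A,\mu,T)$ as the Kakutani skyscraper over the induced system $(B,\mu_{|B},T_B)$ built with the first return time $\varphi_B$, and to read off \eqref{eq:KacGeneral} from the resulting isomorphism. First I would reduce to non-negative $f$: since $(A,\mu,T)$ is conservative, Poincar\'e recurrence gives $\varphi_B<+\infty$ $\mu$-a.e.\ on $B$, so $f_B$ is a well-defined $[0,+\infty]$-valued function whenever $f\geq 0$. Once \eqref{eq:KacGeneral} is proved for non-negative functions, applying it to $f^+$ and $f^-$ shows $(f^\pm)_B\in\Lbb^1(B,\mu_{|B})$, so $f_B=(f^+)_B-(f^-)_B$ is $\mu$-a.e.\ finite and \eqref{eq:KacGeneral} follows by linearity (and, for complex $f$, by splitting into real and imaginary parts).

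Next I would set up the tower. Put $\widehat A:=\{(x,k):x\in B,\ 0\leq k<\varphi_B(x)\}$, let $\widehat\mu$ be the restriction to $\widehat A$ of the product of $\mu_{|B}$ with the counting measure on $\N$, and define $\widehat T(x,k):=(x,k+1)$ if $k+1<\varphi_B(x)$ and $\widehat T(x,\varphi_B(x)-1):=(T_Bx,0)$; a direct computation using that $T_B$ preserves $\mu_{|B}$ shows $\widehat T$ preserves $\widehat\mu$. Let $\pi:\widehat A\to A$, $\pi(x,k):=T^k(x)$. Then $\pi$ is measurable, $\pi\circ\widehat T=T\circ\pi$, and, since $T^k(x)\notin B$ for $1\leq k<\varphi_B(x)$ by the very definition of the return time, one has $\pi^{-1}(B)=B\times\{0\}$, on which $\pi$ is the identity. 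By Tonelli's theorem, summing over the columns of the tower,
\[
\int_{\widehat A}f\circ\pi\dd\widehat\mu
=\int_B\Biggl(\,\sum_{k=0}^{\varphi_B(x)-1}f\bigl(T^k(x)\bigr)\Biggr)\dd\mu(x)
=\int_B f_B\dd\mu
\]
for every measurable $f\geq 0$. Hence the theorem reduces to the single identity $\pi_*\widehat\mu=\mu$.

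To obtain $\pi_*\widehat\mu=\mu$ I would set $\nu:=\pi_*\widehat\mu$. From $\pi\circ\widehat T=T\circ\pi$ and the $\widehat T$-invariance of $\widehat\mu$, the measure $\nu$ is $T$-invariant. Writing $B_n:=\{x\in B:\varphi_B(x)=n\}$, one has $\nu(E)=\sum_{n\geq 1}\sum_{k=0}^{n-1}\mu\bigl(B_n\cap T^{-k}E\bigr)$ for every measurable $E$, which vanishes whenever $\mu(E)=0$ because $T$ preserves $\mu$; thus $\nu\ll\mu$. Moreover $\nu$ is $\sigma$-finite: since $\mu(B)>0$ and $T$ is ergodic, the sets $G_j:=\bigcup_{i=0}^{j}T^{-i}B$ increase to $A$ mod $\mu$, and $\nu(G_j)\leq\sum_{i=0}^{j}\nu(T^{-i}B)=(j+1)\,\nu(B)=(j+1)\,\widehat\mu(B\times\{0\})=(j+1)\,\mu(B)<+\infty$. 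A $T$-invariant measure absolutely continuous with respect to an ergodic, conservative, $\sigma$-finite invariant measure is proportional to it (see e.g.\ \cite[Chapter~1]{Aaronson:1997}), so $\nu=c\,\mu$ for some $c\in[0,+\infty)$; evaluating on $B$ and using $0<\mu(B)<+\infty$ gives $c\,\mu(B)=\nu(B)=\mu(B)$, hence $c=1$. Combining $\pi_*\widehat\mu=\mu$ with the displayed identity yields \eqref{eq:KacGeneral} for non-negative $f$, and then for all $f\in\Lbb^1(A,\mu)$ by the reduction above.

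The main obstacle will be precisely the equality $\pi_*\widehat\mu=\mu$; this is nothing but the classical fact that a conservative ergodic measure-preserving system is isomorphic to the Kakutani tower over any of its first-return (induced) systems, the structural statement underlying Kac's formula~\cite{Kac:1947}, and the argument above is a self-contained version of it. Everything else — that $\widehat T$ preserves $\widehat\mu$, the Tonelli interchange, the absolute continuity and $\sigma$-finiteness of $\pi_*\widehat\mu$, and the reduction to $f\geq 0$ — is routine and I would only sketch it.
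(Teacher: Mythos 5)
Your proof is correct, and it is a complete, carefully checked version of the first of the two strategies the paper only sketches in Appendix~\ref{sec:GreenKubo} (the measure-preservation/Kakutani route; the second sketch uses Hopf's ratio ergodic theorem). The difference is in how the key identification is made. The paper's sketch passes to the natural extension, decomposes $A$ by the backward hitting time $\varphi_{-1,B}(x)=\inf\{n\geq 0:\ T^{-n}(x)\in B\}$, and establishes the set-by-set identity $\int_A f\,\mathbf{1}_{\{\varphi_{-1,B}=n\}}\dd\mu=\int_B f\circ T^n\,\mathbf{1}_{\{\varphi_B\geq n\}}\dd\mu$ directly from invertibility and measure preservation, then sums over $n$. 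You instead build the abstract Kakutani tower $(\widehat{A},\widehat{\mu},\widehat{T})$ over the induced system and reduce everything to $\pi_*\widehat{\mu}=\mu$, which you obtain from the uniqueness (up to a constant) of $\sigma$-finite absolutely continuous invariant measures for a conservative ergodic system. This buys you a proof that never leaves the non-invertible setting and avoids any explicit bookkeeping of preimages, at the cost of invoking that uniqueness statement (standard, but itself resting on the fact that $\widehat{T}$-invariant densities are constant for conservative ergodic maps) and of your checking $\sigma$-finiteness of $\pi_*\widehat{\mu}$ — a point you handle correctly via $\nu(T^{-i}B)=\nu(B)=\mu(B)$. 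One minor imprecision: the exhaustion $\bigcup_{i\leq j}T^{-i}B\uparrow A$ mod $\mu$ uses conservativity together with ergodicity (to see that $B$ lies in the invariant set $\bigcap_n T^{-n}\bigcup_{i\geq 0}T^{-i}B$ one needs recurrence of $B$), not ergodicity alone; since conservativity is a standing hypothesis this is cosmetic.
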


A consequence is that the map $f \mapsto f_B$ is a weak contraction from $\Lbb^1 (A, \mu)$ 
to $\Lbb^1 (B, \mu)$. There are two different ways to prove this theorem:
\begin{itemize}
\item Using the fact that the system is measure-preserving~\cite{Kakutani:1943}: up to going to the natural extension, 
we can define $\varphi_{-1, B} (x) := \inf \{n \geq 0 : \ T^{-n} (x) \in B\}$, and then using, for all $n \geq 0$,
\begin{equation*}
\int_A f \mathbf{1}_{\varphi_{-1, B} = n} \dd \mu 
= \int_B f \circ T^n \mathbf{1}_{\varphi_B \geq n} \dd \mu.
\end{equation*}
\item Using a convergence theorem, such as Hopf's ergodic theorem~\cite[\S$14$, Individueller Ergodensatz f\"ur Abbildungen]{Hopf:1937}, 
and the preservation of the measure for the induced system. Setting $g := \mathbf{1}_B$, one can identify the almost sure limit 
of $(S_n^T f) / (S_n^T g)$ with that of $(S_n^{T_B} f_B)/ n$, and conclude.
\end{itemize}

Green-Kubo's formula\footnote{The discussion can be generalized by taking two different observables: 
what is invariant is actually the underlying bilinear form.}, at least at a formal level, behaves the same. 
For any square-integrable function $f$ with zero integral,
\begin{equation}\label{eq:GKGeneral}
\int_A f^2 \dd \mu + 2 \sum_{n=1}^{+ \infty} \int_A f \cdot f \circ T^n \dd \mu 
= \int_B f_B^2 \dd \mu + 2 \sum_{n=1}^{+ \infty} \int_B f_B \cdot f_B \circ T_B^n \dd \mu.
\end{equation}
The reader may compare Equations~\eqref{eq:KacGeneral} and~\eqref{eq:GKGeneral}. As 
with Kac's theorem, we may choose different strategies to prove rigorously such an identity. 
Using the fact that the system is meaure-preserving, and cutting in a well-chosen way the integrals 
above, one can see that they are formally the same. However, to get a rigorous proof, one 
would have to use Fubini's theorem, which fails in this case. This is not surprising, as the 
infinite sum may not be well-defined, or a periodicity of the system may require us to take 
a weaker kind of limit (e.g.\ in the Ces\`aro sense, as in Theorem~\ref{thm:GM}). One cannot 
hope~\eqref{eq:GKGeneral} to hold for any dynamical system, or any square-integrable centered function.

\smallskip

Another strategy is to use a distributional limit theorem: for sufficiently hyperbolic systems 
and nice enough observables, Green-Kubo's formula is the asymptotic variance in a central limit theorem. 
Working at two different time scales (with the initial system and with the induced system), 
one can prove that this invariance holds. A very simple example is given by the following lemma.

\begin{lemma}\label{lem:InvarianceGKFacile}

Let $(A, \pi, \lambda, \mu, T)$ be a Gibbs-Markov map. Let $f \in \Lbb^2 (A, \mu)$ be a real-valued function such that:
\begin{itemize}
\item $f$ is locally H\"older: $\sum_{a \in \pi} \mu (a) |f|_{\Lip (a)} < +\infty$;
\item $\int_A f \dd \mu = 0$.
\end{itemize}

Let $B \subset A$, with $\mu(B)>0$, be $\sigma(\pi)$-measurable. Assume that $\varphi_B$ is 
essentially constant. Then:
\begin{equation*}
\lim_{n \to + \infty} \int_A f^2 \dd \mu + 2 \sum_{k=1}^n \int_A f \cdot f \circ T^k \dd \mu 
= \lim_{n \to + \infty} \int_B f_B^2 \dd \mu + 2 \sum_{k=1}^n \int_B f_B \cdot f_B \circ T_B^k \dd \mu,
\end{equation*}
where both limits are taken in the Ces\`aro sense.
\end{lemma}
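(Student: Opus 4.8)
The plan is to prove the identity via a distributional limit theorem, exploiting the fact that both sides of the claimed equality are the (Ces\`aro) asymptotic variances appearing in a central limit theorem, one at time scale $n$ for $T$ and one at a proportional time scale for $T_B$. Since $\varphi_B$ is essentially constant, say $\varphi_B \equiv N$, the ergodic sum $S_n^T f$ and the induced ergodic sum $S_{n/N}^{T_B} f_B$ differ by a uniformly bounded error term; more precisely, for $x \in B$ we have $S_{kN}^T f (x) = S_k^{T_B} f_B (x)$ exactly, and for general $n$ the remainder is controlled by $N \norm{f}{\infty}$-type bounds, except that $f$ need not be bounded. To handle unboundedness, I would first observe that $f_B \in \Lbb^2(B, \mu)$: indeed $\norm{f_B}{\Lbb^2(B,\mu)} \leq N \norm{f}{\Lbb^2(A,\mu)}$ by Minkowski's inequality, using that $\varphi_B \equiv N$ so $f_B = \sum_{k=0}^{N-1} f \circ T^k$ on $B$ (with $T$ the original map, not $T_B$).

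First I would set up the Gibbs-Markov spectral machinery: since $(A,\pi,\lambda,\mu,T)$ is Gibbs-Markov and $f$ is locally H\"older with zero integral, the transfer operator $P$ has a spectral gap on $\Lip^\infty$ when restricted to each mixing component, and the classical Nagaev-Guivarc'h perturbation argument gives a central limit theorem. In the Ces\`aro-limit formulation, the asymptotic variance is exactly $\sigma_{GK}^2(f, A, \mu, T) = \lim_{n} \int_A f^2 \dd\mu + 2\sum_{k=1}^n \int_A f \cdot f \circ T^k \dd\mu$ (Ces\`aro), and similarly for the induced system, which by \cite[Proposition~4.6.2]{Aaronson:1997} (the induction stability lemma quoted in the excerpt) is again Gibbs-Markov. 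The key point is that the induced map $T_B$ on the $\sigma(\pi)$-measurable set $B$ with constant return time $N$ is simply (an isomorphic copy of) $T^N$ restricted to $B$, so its dynamics is transparent.

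The core of the argument is then to compare the two CLTs. I would show that $\frac{1}{\sqrt n} S_n^T f$ and $\frac{1}{\sqrt n} S_{\lfloor n/N \rfloor}^{T_B} f_B \circ (\text{projection})$ have the same limiting distribution on $(A,\mu)$ — this follows because the difference is a sum of at most $N$ terms of the form $f \circ T^j$ over a boundary region of vanishing relative measure, combined with the $\Lbb^2$ control above, so the difference goes to zero in probability after dividing by $\sqrt n$. Since both are asymptotically Gaussian (by the Gibbs-Markov CLT applied to $f$ on $(A,\mu,T)$ and to $f_B$ on $(B,\mu_{|B},T_B)$, noting $\mu_{|B}$ is $T_B$-invariant and the relevant scaling is $n = N \cdot (n/N)$), matching the variances gives $\sigma_{GK}^2(f,A,\mu,T) = \frac{1}{N} \cdot N \cdot \sigma_{GK}^2(f_B, B, \mu_{|B}, T_B)$; the normalizing constant works out because $\mu(B) = 1/N$ when $\mu$ is a probability and a factor of $N$ enters from the time rescaling. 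One must be a little careful with the normalization of $\mu_{|B}$ versus $\mu(\cdot|B)$, but Kac's theorem ($\int_B \varphi_B \dd\mu = \mu(A) = 1$, hence $\mu(B) = 1/N$) pins everything down.

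The main obstacle I anticipate is making the "difference goes to zero" step fully rigorous when $f$ is merely $\Lbb^2$ and locally H\"older rather than bounded: one needs that the contribution of the incomplete excursion straddling time $n$ is negligible after normalization by $\sqrt n$, which requires a maximal-type or $\Lbb^2$ estimate on partial sums $\max_{0 \leq j < N} |S_j^T f|$ restricted to appropriate sets, uniformly in $n$. Here the constancy of $\varphi_B$ is what saves us: the straddling excursion has length exactly $N$, a fixed constant, so $\sup_n \|\max_{0\le j< N}|S_j^T f \circ T^{n-j}|\|_{\Lbb^2} \le N\|f\|_{\Lbb^2}$, and dividing by $\sqrt n \to \infty$ kills it. The remaining routine work is checking that the Ces\`aro limits in the statement genuinely coincide with the CLT variances — this is standard for Gibbs-Markov maps via the spectral decomposition (the off-diagonal correlation sums are absolutely Ces\`aro-summable because $P^M$ contracts on the zero-average subspace of $\Lip^\infty$), and I would cite \cite[Corollaire~1.1.21]{Gouezel:2008e} together with the observation that $f_B$ and its correlations inherit summability from Lemma~\ref{lem:EncoreUneBorneUniforme}-type distortion bounds.
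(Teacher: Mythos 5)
Your proposal is correct and follows essentially the same route as the paper: prove a CLT for $S_n^T f$ on $(A,\mu,T)$ and one for $S_n^{T_B} f_B$ on the induced Gibbs-Markov system, exploit the constant return time to relate the two at time scales $n$ and $n/N$, match the Gaussian variances, and then identify each variance with the Ces\`aro limit of the correlation sums. The only place where the paper is slicker is that it sidesteps your ``straddling excursion'' estimate entirely: instead of comparing $S_n^T f$ with $S_{\lfloor n/N\rfloor}^{T_B} f_B$ on all of $(A,\mu)$, it transfers the first CLT to the probability measure $N\mu_{|B}$ on $B$ via Zweim\"uller's strong distributional convergence and then uses the exact identity $S_n^{T_B} f_B = S_{Nn}^T f$ on $B$, so no boundary term or maximal inequality is needed.
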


\begin{proof}

Let $M := \varphi_B$ almost everywhere. Under the assumptions, the Birkhoff sums (for $T$) of $f$ satisfy a central limit theorem 
(see e.g.\ \cite[Th\'eor\`eme~4.1.4]{Gouezel:2008e}, and use the Taylor expansion of $(I-P)^{-1}$): 
\begin{equation*}
\frac{S_n^T f}{\sqrt{n}} 
\to \sigma \Ncal,
\end{equation*}
where the convergence is in distribution on $(A, \mu)$, $\Ncal$ 
follows a standard Gaussian distribution, and:
\begin{equation*}
\sigma^2 
= \lim_{n \to + \infty} \frac{1}{n} \int_A (S_n^T f)^2 \dd \mu.
\end{equation*}
By~\cite[Theorem~1]{Zweimuller:2007}, the same central limit theorem holds strongly in distribution, 
that is, when the initial measured space is $(A, \nu)$, with $\nu \ll \mu$. This holds in particular 
on $(B, M \mu_{|B})$.

\smallskip

Under the same assumptions, the Birkhoff sums (for $T_B$) of $f_B$ satisfy a central limit theorem. 
Then:
\begin{equation*}
\frac{S_n^{T_B} f_B}{\sqrt{n}} 
\to \sigma' \Ncal,
\end{equation*}
where the convergence is in distribution on $(B, M \mu_{|B})$, $\Ncal$ 
follows a standard Gaussian distribution, and:
\begin{equation*}
(\sigma')^2 
= \lim_{n \to + \infty} \frac{M}{n} \int_B (S_n^{T_B} f_B)^2 \dd \mu.
\end{equation*}
Note that $S_n^{T_B} f_B = S_{Mn}^T f$, whence $\sigma' = \sqrt{M} \sigma$. 
This yields:
\begin{equation*}
\lim_{n \to + \infty} \frac{1}{n} \int_A (S_n^T f)^2 \dd \mu 
= \lim_{n \to + \infty} \frac{1}{n} \int_B (S_n^{T_B} f_B)^2 \dd \mu.
\end{equation*}

Finally, note that:
\begin{align*}
\frac{1}{N} \int_A (S_N^T f)^2 \dd \mu 
& = \frac{1}{N} \sum_{k, n=0}^{N-1} \int_A f \circ T^k \cdot f \circ T^n \dd \mu \\
& = \frac{1}{N} \sum_{n=0}^{N-1} \left[ \int_A f^2 \dd \mu + 2 \sum_{k=1}^n \int_A f \cdot f \circ T^k \dd \mu \right].
\end{align*}
Hence, $\sigma^2$ is the Ces\`aro-limit of $(\int_A f^2 \dd \mu + 2 \sum_{k=1}^n \int_A f \cdot f \circ T^k \dd \mu)_{n \geq 1}$. 
The same manipulation with $\frac{1}{n} \int_B (S_n^{T_B} f_B)^2 \dd \mu$ yields the lemma.
\end{proof}

In this article, the proof of Corollary~\ref{cor:InvarianceInduction} relies on this approach: 
we obtain two distributional limit theorems by working at two different time scales, and then identify the limits. 
However, as can be seen, obtaining these limit theorems gets much more challenging when working 
with null recurrent processes.

\end{document}